\documentclass[1 leqno,11pt]{amsart}
\usepackage{amssymb, amsmath,latexsym,amsfonts,amsbsy, amsthm,mathtools,graphicx,,color}
%%%%%%%%%%%%%%%%%%%%%%%%%%%%%%%%%%
%\usepackage{verbatim}
\usepackage{float}
%\usepackage{tikz}
%\usetikzlibrary{arrows, decorations.markings}
%\usepackage{subfigure}
\usepackage{hyperref}
%\usepackage{mathpazo}
%\usetikzlibrary{shapes.geometric, arrows}
%\usepackage{pgfplots}

%\usepackage{color}
%\definecolor{myback}{RGB}{204,232,207}
%\pagecolor{myback}

%\usepackage{pdfsync}
\setlength{\oddsidemargin}{0mm}
\setlength{\evensidemargin}{0mm} \setlength{\topmargin}{0mm}
\setlength{\textheight}{220mm} \setlength{\textwidth}{155mm}

\numberwithin{equation}{section}

\allowdisplaybreaks
%%%%%%%%%%%%%%%%%%%%%%%%%%%%%%%%%%

%%%My setting%%%%%%%%%%%%%%%%%%%%%%%%%%%%%%%%%%%%%%%%%%%%%%%%%%%

%%%ABREVIATIONS%%%%%%
\let\al=\alpha

\let\e=\varepsilon

\let\la=\lambda

\let\f=\frac

\let\om=\omega

\let\La=\Lambda

\let\Om=\Omega

\let\na=\nabla
\let\th=\theta
\let\pa=\partial

%%LETTRES RONDES%%

%%MACROS SANS ARGUMENTS%%%%%%%%%%%%%%%%%

\def\R{\mathbb R}

\def\no{\noindent}

\def\dv{\mbox{div}}

\newcommand{\beq}{\begin{equation}}
\newcommand{\eeq}{\end{equation}}
\newcommand{\ben}{\begin{eqnarray}}
\newcommand{\een}{\end{eqnarray}}
\newcommand{\beno}{\begin{eqnarray*}}
\newcommand{\eeno}{\end{eqnarray*}}

%%theorem%%%%%%%%%%%%%%%%%%%%%%%%%%%%%%%%
\newtheorem{theorem}{Theorem}[section]

\newtheorem{lemma}[theorem]{Lemma}
\newtheorem{proposition}[theorem]{Proposition}

\newtheorem{remark}[theorem]{Remark}
%%

%%%%%%%%%%%%%%%%%%%%%%%%%%%%%%%%%%%%%%%%%%%%%%

%%%%%%%%%%%%%%end of my setting%%%%%%%%%%%%%%%%%%%%%%%%%%%%%%%%%%%%%%
\begin{document}

\title{Tollmien-Schlichting waves in the subsonic regime} 

\author[N. Masmoudi]{Nader Masmoudi}
\address{NYUAD Research Institute, New York University Abu Dhabi, Saadiyat Island, Abu Dhabi, P.O. Box 129188, United Arab Emirates, Courant Institute of Mathematical Sciences, New York University, 251 Mercer Street New York, NY 10012 USA}
\email{masmoudi@cims.nyu.edu}

\author[Y. Wang]{Yuxi Wang}
\address{School of Mathematics, Sichuan University, Chengdu 610064, China}
\email{wangyuxi@scu.edu.cn}

\author[D. Wu]{Di Wu}
\address{School of Mathematics, South China University of Technology, Guangzhou, 510640,  P. R. China}
\email{wudi@scut.edu.cn}

\author[Z. Zhang]{Zhifei Zhang}
\address{School of Mathematical Sciences, Peking University, 100871, Beijing, P. R. China}
\email{zfzhang@math.pku.edu.cn}

\maketitle

\begin{abstract}
The Tollmien-Schlichting (T-S) waves play a key role in the early stages of boundary layer transition. In a breakthrough work \cite{GGN-DMJ}, Grenier, Guo and Nguyen gave the first rigorous construction of the T-S waves of temporal mode for the incompressible fluid. Yang and Zhang \cite{YZ} recently made an important contribution by constructing the compressible T-S waves of temporal mode for certain boundary layer profiles with Mach number $m<\f1{\sqrt 3}$. In this paper,  we construct the T-S waves of both temporal mode and spatial mode to the linearized compressible Navier-Stokes system around the boundary layer flow in the whole subsonic regime $m<1$, including the Blasius profile. Our approach is based on a novel iteration scheme between the quasi-incompressible and quasi-compressible systems, with a key ingredient being the solution of an Orr-Sommerfeld type equation using a new Airy-Airy-Rayleigh iteration  instead of Rayleigh-Airy iteration introduced by Grenier, Guo and Nguyen. We believe the method developed in this work can be applied in solving other related problems for subsonic flows.

 \end{abstract}

\section{Introduction}

In this paper, we are concerned with the stability problem for an isentropic compressible flow over a flat plate at high Reynolds number regime $Re\gg1 $. The motion of the fluid is described by the non-dimensional compressible Navier-Stokes system
\begin{align}\label{eq:NS}
\left\{
\begin{aligned}
&\pa_t\rho^\nu+\na\cdot(\rho^\nu U^\nu)=0,\\
&\rho^\nu\big(\pa_tU^\nu+U^\nu\cdot \na U^\nu\big)-\mu\nu\Delta U^\nu-\lambda\nu\na \na\cdot U^\nu+\na P(\rho^\nu)=0,\\
&U^\nu|_{y=0}=0.
\end{aligned}
\right.
\end{align}
Here $\rho^\nu$ is the density, $U^\nu$ is the velocity, and  $P(\rho^\nu)$ is the pressure.  Here $\mu, \lambda$ are the viscosity coefficients and $\nu>0$ is propositional to  $Re^{-1}$.

At high Reynolds number, the behavior of the flow away from the boundary can be described by the Euler equations and the behavior near the boundary can be  described by the Prandtl equation. For the outer flow $\rho^e=1, U^e=(1,0)$, the behavior of the solution could be described by the Blasius solution, which is a self-similar solution of the steady Prandtl equation and takes the form
\begin{align}\nonumber
 (u_B,v_B)=\Big(f'(\zeta),\frac{1}{2\sqrt{x+x_0}}\{\zeta f'(\zeta)-f(\zeta)\}\Big),
\end{align}
where $\zeta=\frac{y}{\sqrt{\nu(x+x_0)}}$ with $x_0>0$ as a free parameter, and the profile $f$ satisfies
\beno
\frac{1}{2}ff''+f'''=0,\quad  f(0)=f'(0)=0,\quad \lim_{\zeta\to +\infty}f'(\zeta)=1.
\eeno
The Blasius solution has been experimentally confirmed with remarkable accuracy as a basic flow over a flat plate \cite{Sch} . 
In a remarkable work \cite{IM}, Iyer and Masmoudi justify the Prandtl expansion with the Blasius solution as a leading order approximation of the steady incompressible Navier-Stokes equations. In a recent work \cite{GWZ}, Guo, Wang and Zhang prove that the Blasius solution is also a stable solution of the unsteady Prandtl equation. This gives another indication why the Blasius solution is reasonable as a basic flow. 
See \cite{GI, Iyer, IM-Ars, WZ} and references therein for more related works. 

Considering a local region in the streamwise direction, the flow $U_s^\nu=\Big(U_s\big(\f y {\sqrt{\nu}}\big),0\Big)$ with a Blasius profile $U_s(Y)$ could be seen as an approximation of the Blasius solution. To predict the boundary layer transition, a key problem is to study the stability of the flow under the perturbations. To this end,  we consider  the linearized compressible Navier-Stokes system(CNS) around the boundary layer flow $(1,U_s^\nu)$ on $\Omega=[0,L]\times\mathbb R_+$:
\begin{align}\label{eq:LCNS}
	\left\{
	\begin{aligned}
		&\partial_t\varrho+U_s\partial_x\rho+\nabla\cdot U=0,\\
		&\partial_tU+U_s\partial_x U+m^{-2}\nabla p+\partial_y U_s(\frac{y}{\sqrt{\nu}})(U_2,0)-\nu\Delta U-\lambda\nu\nabla(\nabla\cdot U)+\rho\partial_Y^2U_s=0,\\
		&U|_{y=0}=\lim_{y\to+\infty} U(y)=0.
	\end{aligned}
	\right.
\end{align}
Here $m=\f 1 {\sqrt{P'(1)}}$ is the Mach number and $\mu$ is taken to be 1 for convenience. 

Our goal is to construct  the solution to the linearized CNS \eqref{eq:LCNS} with the following form 
\begin{align*}
	(\varrho,U_1, U_2)(t,x,y)=(\rho,u,v)(Y)e^{\mathrm i\alpha(X-c\tau)}+\mathrm c.\mathrm c.,
\end{align*}
where
\begin{align*}
	\tau=\frac{t}{\sqrt{\nu}},\quad X=\frac{x}{\sqrt{\nu}},\quad Y=\frac{y}{\sqrt{\nu}}.
\end{align*}
Then $(\rho(Y),u(Y),v(Y))$ satisfies the following steady system
\begin{align}\label{eq:LCNS-Y}
	\left\{
	\begin{aligned}
		&\mathrm i\alpha(U_s-c)\rho+\mathrm{div}_\alpha(u,v)=0,\\
		&\sqrt{\nu}(\partial_Y^2-\alpha^2)u+\mathrm i\alpha\lambda\sqrt{\nu}\mathrm{div}_\alpha(u,v)-\mathrm i\alpha(U_s-c)u\\
		&\qquad\quad-(\mathrm i\alpha m^{-2}+\sqrt{\nu}\partial_Y^2 U_s)\rho-v\partial_Y U_s=0,\\
		&\sqrt{\nu}(\partial_Y^2-\alpha^2)v+\lambda\sqrt{\nu}\partial_Y\mathrm{div}_\alpha(u,v)-\mathrm i\alpha(U_s-c)v-m^{-2}\partial_Y\rho=0,
	\end{aligned}
	\right.
\end{align}
where $\mathrm{div}_\alpha(u,v)=\mathrm i\alpha u+\partial_Yv$ and $(u,v)$ satisfies the non-slip boundary condition
\begin{align}\label{BC: u(0)=v(0)=0}
	u(0)=v(0)=\lim_{Y\to\infty}u(Y)=\lim_{Y\to\infty}v(Y)=0.
\end{align}
 For convenience, we define the linear operator in \eqref{eq:LCNS-Y} by $\mathcal{L}$.\smallskip
 
  In this paper, we will seek two kinds of important unstable modes, which are so called Tollmien-Schlichting (T-S) waves.
 
 \begin{itemize}
 
 \item[(1)] {\bf Temporal mode}: $\al\in \R$ and $c\in \mathbb{C}$. When $\al>0$ and $\mathrm {Im} c=c_i>0$, this kind of solution will grow in time;
 
 \item[(2)] {\bf Spatial mode}: $\al\in \mathbb{C}$ and $\alpha c\in \R$. When $\mathrm{Im} \al=\al_i<0$, this kind of solution will grow along the streamwise direction $x$. 
 So, it is more relevant in physics and the growth rate $-\al_i$ is a key quantity of $e^N$ criterion in the boundary layer transition.
 \end{itemize}

 For the incompressible fluid,  the construction of T-S waves started with a series of  classical works by Heisenberg, Tollmien, Schlichting, C. C. Lin etc. \cite{Lin, DR}.  Based on the asymptotic analysis,  they found that there exist lower and upper marginal stability branches $\al_{low}(\nu),\al_{up}(\nu)$ depending on the profile $U_s(y)$ so that  when $\al\in (\al_{low},\al_{up})$, the Orr-Sommerfeld equation exist unstable solutions with $\mathrm{Im}(c)>0$. In a breakthrough work \cite{GGN-DMJ}, Grenier, Guo and Nyugen constructed the T-S waves of temporal mode for $\al$ lying in an interval [a,b] with $\al_{low}\ll a<b\ll \al_{up}$ for generic flows $U_s(y)$. Moreover, they develop a robust Rayleigh-Airy iteration method to solve the Orr-Sommerfeld equation. Let us refer to \cite{GMM-duke, GMM-arxiv, GM, GZ, CWZ1, CWZ2, GN-arxiv, GN} and references therein for more works on nonlinear stability(instability) of boundary layer flows.  Let us refer to \cite{Rom, GGN-AM, AH, CWZ-CPAM, CWZ-CMP}  for the linear stability results about the channel flow or pipe flow.

 In a recent important work \cite{YZ},  Yang and Zhang give a first rigorous construction of  the compressible T-S waves of temporal mode for $\al$ near the lower branch $\al_{low}$ when the Mach number $m<\f 1 {\sqrt{3}}$ for a class of boundary layer profiles satisfying strong structure assumptions, which excludes the Blasius profile. Their proof does not use the Rayleigh-Airy iteration method developed in \cite{GGN-DMJ}. Instead, they developed a quai-compressible and Stokes iteration. \smallskip

Throughout this paper, we make the following structure assumptions on the profile $U_s(Y)$:
\begin{align}\label{eq:S-A}
\begin{split}
	&U_s(0)=0,\quad\lim_{Y\to\infty}U_s(Y)=1,\\
	&0<U_s<1, \quad \text{ for } 0<Y<\infty,\quad U_s'(0)>0,\\
	&\sup_{Y\geq0}|e^{\eta_0 Y}\partial_Y^k(U_s(Y)-1)|<+\infty,\quad k=0,1,\dots,4.
\end{split}
\end{align}

Our main result is stated as follows.

 \begin{theorem}\label{thm:main}
	Let $m<1$. Suppose that $U_s(Y)$ satisfies the structure assumption \eqref{eq:S-A}. Then there exist $0<\nu(m)\ll1$ and $1\ll A_0<B_0$ such that for any $\nu\leq \nu(m)$ the following statement holds true. For any $\alpha\in\mathbb G=\big\{\alpha\in \mathbb C:\alpha_r=A\nu^\f18 \text{ with }A\in(A_0,B_0), |\alpha_i|\leq \gamma\alpha_r \text{ with }\gamma\ll1 \big\}$, we can find $c(\alpha)$ such that there exists a solution $(\varrho,U_1, U_2)$ to  the system \eqref{eq:LCNS} with the form of 
	\begin{align*}
	(\varrho,U_1, U_2)(t,x,y)=(\rho,u,v)(\frac{y}{\sqrt{\nu}})e^{-\mathrm i c\alpha\nu^{-\f12}t}e^{\mathrm i\alpha\nu^{-\f12}x}+\mathrm c.\mathrm c.,
    \end{align*}
    where $(\rho(Y),u(Y),v(Y))\in W^{1,\infty}\times W^{2,\infty}\times W^{2,\infty}$ is a solution to the system \eqref{eq:LCNS-Y} and  \eqref{BC: u(0)=v(0)=0}. Moreover, such $c,\alpha$ satisfy the following properties
	\begin{align*}
	c_r=U'_s(0)^{-1}(1-m^2)^{-\f12}\al_r+o(\al),\quad	0<c_i-U_s'(0)^{-1}(1-m^2)^{-\f12}\alpha_i\sim A^{-1}\nu^\f18.
	\end{align*}
In particular, for any $\alpha_r=A\nu^\f18$ with $A\in(A_0,B_0)$, we have the following results:
	\begin{itemize}
		\item there exists  $\alpha^0=\alpha_r+\mathrm i\alpha^0_i\in\mathbb G$  such that 
		      \begin{align}\label{eq:S-mode}		      	
		      \alpha^0c(\alpha^0)\in\mathbb R,\quad \alpha_i^0<0\text{ and }|\alpha_i^0|\sim\nu^\f18.
		      \end{align}

		\item there exists  $c(\alpha_r)$ such that    
		      \begin{align}\label{eq:T-mode}
		      	c_i(\alpha_r)>0 \text{ and }|c_i(\alpha_r)|\sim\nu^\f18.
		      \end{align}
		     
		      \end{itemize}
\end{theorem}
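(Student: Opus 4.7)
The plan is to reduce the linearized compressible Navier--Stokes system \eqref{eq:LCNS-Y} to a dispersion relation for $(\alpha,c)$ and analyze the roots asymptotically. The central idea, stated in the abstract, is to split the solution as
\begin{align*}
(\rho,u,v) = (\rho_{qc},u_{qc},v_{qc}) + (\rho_{qi},u_{qi},v_{qi}),
\end{align*}
where the \emph{quasi-compressible} part carries the pressure/acoustic response but is arranged to suppress the fast viscous boundary layer, while the \emph{quasi-incompressible} part is approximately divergence-free and satisfies an Orr--Sommerfeld-type equation with a forcing coming from $(\rho_{qc},u_{qc},v_{qc})$. The two systems are coupled through their boundary values, and I would close the construction by a contraction iteration between the two, exploiting that the off-diagonal coupling is small when $\alpha\sim \nu^{1/8}$ and $m<1$.

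For the quasi-compressible subsystem, after eliminating $u_{qc},v_{qc}$ the density obeys a second-order ODE of Helmholtz type whose effective symbol is $\partial_Y^2-\alpha^2\bigl(1-m^2(U_s-c)^{-2}(\,\cdots)\bigr)$; the hypothesis $m<1$ keeps this symbol elliptic in the relevant $Y$-range and lets one write down two exponentially decaying/growing building blocks via a WKB/contraction argument. For the quasi-incompressible subsystem the stream function $\phi$ (with $v_{qi}=i\alpha\phi$) solves a modified Orr--Sommerfeld equation
\begin{align*}
\sqrt{\nu}(\partial_Y^2-\alpha^2)^2\phi - i\alpha(U_s-c)(\partial_Y^2-\alpha^2)\phi + i\alpha U_s''\phi = \mathcal{F}[\rho_{qc},u_{qc},v_{qc}].
\end{align*}
Here I would follow the authors' \emph{Airy-Airy-Rayleigh} strategy: construct an inviscid Rayleigh solution $\phi_{R}$ (slow, decaying at infinity) by a contraction on the Rayleigh operator in an exponentially weighted space; then correct near the critical layer $U_s(Y)=c$ and near $Y=0$ by \emph{two} Airy correctors $\phi_{A,1},\phi_{A,2}$, built from Langer-type transformed Airy integrals. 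The second Airy step is what distinguishes this scheme from the Rayleigh--Airy iteration of \cite{GGN-DMJ}: it is needed to absorb the residual from $U_s''\phi$ produced by the first Airy and the compressible coupling, so that the iteration closes even for profiles (such as Blasius) with $U_s''(0)\neq 0$.

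With the three independent decaying solutions $(\phi_R,\phi_{A,1},\phi_{A,2})$ of the quasi-incompressible problem plus the decaying solution of the quasi-compressible one, I would impose the two boundary conditions $u(0)=v(0)=0$ in \eqref{BC: u(0)=v(0)=0}, producing a $2\times 2$ linear system whose determinant is the dispersion relation $D(\alpha,c;\nu,m)=0$. Using the asymptotic expansions (Rayleigh: $\phi_R(0)\approx U_s'(0)^{-1}c$ at leading order; Airy: $\phi_{A,j}(0)/\phi_{A,j}'(0)\approx \nu^{1/8}\mathrm{Ai}$-ratio with phase $e^{i\pi/4}$; quasi-compressible: the $(1-m^2)^{-1/2}$ factor from the acoustic branch) this relation unfolds schematically to
\begin{align*}
U_s'(0)\,c \;=\; (1-m^2)^{-1/2}\alpha + C\, A^{-1}\nu^{1/8}e^{i\pi/4} + o(\nu^{1/8}),
\end{align*}
for $\alpha=A\nu^{1/8}\in\mathbb G$. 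The temporal mode \eqref{eq:T-mode} is then obtained by the implicit function theorem: fix $\alpha=\alpha_r\in\mathbb R$ and solve for $c$; the imaginary part $\mathrm{Im}(Ce^{i\pi/4})>0$ yields $c_i\sim \nu^{1/8}$. The spatial mode \eqref{eq:S-mode} follows by instead imposing $\alpha c\in\mathbb R$ and solving for $\alpha$: the perturbative inversion then forces $\alpha_i<0$ with $|\alpha_i|\sim\nu^{1/8}$, consistent with $c_i - U_s'(0)^{-1}(1-m^2)^{-1/2}\alpha_i \sim A^{-1}\nu^{1/8}$ in both cases.

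The main obstacle is engineering the Airy-Airy-Rayleigh iteration to converge \emph{uniformly in $m\in[0,1)$} and \emph{without the strong structure assumption} used in \cite{YZ}. Three points will be delicate: (i) keeping the quasi-compressible symbol elliptic as $m\uparrow 1$, where constants degenerate like $(1-m^2)^{-1/2}$ and must be tracked carefully through every contraction estimate; (ii) controlling the forcing $\mathcal F[\rho_{qc},u_{qc},v_{qc}]$ in a weighted norm fine enough that the Airy-Airy-Rayleigh scheme, originally designed for the homogeneous Orr--Sommerfeld equation, still closes when driven; and (iii) bookkeeping the Rayleigh-contribution $U_s''\phi$ near the critical layer for the Blasius profile, which forces the second Airy step. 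Once these three estimates are in place, the coupling between the quasi-compressible and quasi-incompressible systems is a small linear perturbation at the scale $\alpha\sim\nu^{1/8}$, and the implicit function argument on the dispersion relation produces both the temporal and spatial T--S modes asserted in the theorem.
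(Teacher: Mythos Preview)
Your overall architecture is broadly right—split into a quasi-incompressible part governed by an Orr–Sommerfeld-type equation and a quasi-compressible part governed by an elliptic density equation, iterate between them, then read off the dispersion relation from $u(0)=v(0)=0$—but two key structural points are off in ways that would derail the actual execution.

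First, the Orr–Sommerfeld operator you wrote down is the \emph{incompressible} one. In the paper the stream function satisfies
\[
\varepsilon\,\Lambda(\partial_Y^2-\alpha^2)\phi-(U_s-c)\,\Lambda\phi+\partial_Y(A^{-1}\partial_YU_s)\phi=F,
\qquad \Lambda=\partial_Y(A^{-1}\partial_Y)-\alpha^2,\quad A=1-m^2(U_s-c)^2,
\]
not $\sqrt{\nu}(\partial_Y^2-\alpha^2)^2\phi-\cdots$. The whole point of the Airy–Airy–Rayleigh scheme is that $[\Lambda,\partial_Y^2]\neq 0$ when $m>0$: a single Airy step $Airy[\Lambda\psi]=F$ leaves a commutator error $\varepsilon[\Lambda,\partial_Y^2]\psi$ that costs one derivative, and a direct Rayleigh correction of this would not close. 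The \emph{second} Airy step is there precisely to eat $\varepsilon[\Lambda,\partial_Y^2]\psi^{(0)}+\partial_Y(A^{-1}\partial_YU_s)\psi^{(0)}$ before passing to Rayleigh. Your explanation (``absorb the residual from $U_s''\phi$ produced by the first Airy \ldots\ which forces the second Airy step'') misidentifies the mechanism; the second Airy is not about $U_s''(0)\neq0$ or the Blasius profile, it is about the derivative loss from the $m$-dependent operator $\Lambda$. If you set up the OS equation as you wrote it, you would see no need for a second Airy and would then be unable to explain where the subsonic restriction $m<1$ enters the OS analysis.

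Second, your counting of independent solutions is wrong. You propose three quasi-incompressible solutions $(\phi_R,\phi_{A,1},\phi_{A,2})$ plus one quasi-compressible one, leading to a $2\times2$ system. The paper instead builds exactly \emph{two} decaying solutions of the full linearized system: a \emph{slow mode} $(\rho^s,u^s,v^s)$ whose quasi-incompressible stream function is a perturbation of the Rayleigh solution $\varphi_{Ray}$, and a \emph{fast mode} $(\rho^f,u^f,v^f)$ whose stream function is a perturbation of the homogeneous Airy solution $\psi_a$. Each of these is completed to a full solution by the quasi-incompressible/quasi-compressible iteration of Section~8 (note also that the paper's quasi-compressible system has a nonlocal term $-\int_Y^\infty i\alpha\partial_YU_s\,v\,dY'$ inserted specifically to decouple an elliptic density equation). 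The dispersion relation is then the single scalar equation $u^s(0)/v^s(0)=u^f(0)/v^f(0)$, which after expansion becomes
\[
c-\frac{\alpha}{U_s'(0)(1-m^2)^{1/2}}+\frac{U_s'(0)\tilde{\mathcal A}(2,0)}{\tilde{\mathcal A}(1,0)}+\mathcal R(\alpha,c)=0,
\]
and your asymptotic conclusions (including the $e^{i\pi/4}$ phase and $(1-m^2)^{-1/2}$ factor) do match this, so the endgame is fine once the construction is set up correctly.
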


\medskip

Let us give some remarks on our results.

\begin{enumerate}

\item  This is the first time that  a mixed type unstable mode has been constructed, which will grow in both  space and time: 	
\begin{align}\nonumber
	\left|(\varrho,U_1, U_2)(t,x,y)\right|\sim e^{C\nu^{-\f14}t}e^{CA\nu^{-\f38}U'_s(0)^{-1}(1-m^2)^{-\f12}\al_i t}e^{-\nu^{-\f12}\al_i x}.
	\end{align}	
The special case \eqref{eq:S-mode} is related to the T-S waves of spatial mode, which will grow in the space:
\begin{align}\nonumber
|(\varrho,U_1,U_2)(t,x,y)|\sim e^{C\nu^{-\f38}x}.
\end{align}
The special case \eqref{eq:T-mode} is related to  the T-S waves of temporal mode, which will grow in the time:
 \begin{align}\nonumber
|(\varrho,U_1,U_2)(t,x,y)|\sim e^{c\nu^{-\f14}t}
\end{align}

	 \item  The Blasius  profile satisfies the structure assumptions \eqref{eq:S-A}.
	 
	 \item All the estimates in our paper are uniformly in $m\in(0,1)$. When $m\to 0$, we can construct the T-S waves of both temporal mode and spatial mode for the incompressible flow, where temporal mode was analyzed in \cite{GGN-DMJ} by Grenier-Guo-Nguyen. The vanishing Mach number limit for temporal mode was also obtained in the work of Yang and Zhang \cite{YZ}.

	 \item  Our method could be used to construct the T-S waves for $\al\in [a,b]$ with $\al_{low}\ll a<b\ll \al_{up}$ as in \cite{GGN-DMJ}.
	  
\item For the supersonic flow $m>1$, there exist inviscid unstable modes, which plays more dominant role in the boundary layer transition \cite{LL}. This will be conducted in future works.
\end{enumerate}

\section{Sketch of the proof}

The key ingredient of the proof is to construct a non-trivial solution to the system \eqref{eq:LCNS-Y}.
To this end,  we will introduce a new iteration scheme between the quasi-incompressible system \eqref{eq:LCNS-OS} 
and the quasi-compressible system\eqref{eq:LCNS-Stokes} and start this iteration scheme with the slow or fast solution to the quasi-incompressible  system \eqref{eq:LCNS-OS}. See the beginning of section 8 for the detailed description of iteration scheme. \smallskip

First of all, motivated by \cite{YZ}, we introduce the following quasi-incompressible system
  \begin{align}\label{eq:LCNS-OS}
	\left\{
	\begin{aligned}
		&\mathrm i\alpha(U_s-c)\rho_{os}+\mathrm i\alpha u_{os}+\partial_Y v_{os}=0,\\
		&\sqrt{\nu}(\partial_Y^2-\alpha^2)[u_{os}+(U_s-c)\rho_{os}]-\mathrm i\alpha(U_s-c)u_{os}-v_{os}\partial_Y U_s-\mathrm i\alpha m^{-2}\rho_{os}=f_u,\\
		&\sqrt{\nu}(\partial_Y^2-\alpha^2)v_{os}-\mathrm i\alpha(U_s-c)v_{os}-m^{-2}\partial_Y\rho_{os}=f_v.
	\end{aligned}
	\right.
\end{align}
Compared with \eqref{eq:LCNS-Y}, the system \eqref{eq:LCNS-OS} mainly drop all the divergence terms in the equations of velocity in \eqref{eq:LCNS-Y} and add some terms such that we can derive an Orr-Sommerfeld type equation. Indeed, let us introduce the generalized stream function $\phi(Y)$ defined by
\begin{align}\nonumber
	\partial_Y\phi(Y)=u_{os}+(U_s-c)\rho_{os}, \quad-\mathrm i\alpha\phi(Y)=v_{os}.
\end{align}
Then the system \eqref{eq:LCNS-OS} can be reduced to an Orr-Sommerfeld type system
\begin{align}\label{eq:GOS}
	\left\{
	\begin{aligned}
		&\varepsilon\Lambda(\partial_Y^2-\alpha^2)\phi-(U_s-c)\Lambda\phi+\partial_Y(A^{-1}\partial_Y U_s)\phi=\Om(f_u, f_v),\\
		&\rho=m^2A^{-1}(Y)\left(\varepsilon(\partial_Y^2-\alpha^2)\partial_Y\phi-(U_s-c)\partial_Y\phi+\partial_Y U_s\phi-\frac{f_u}{\mathrm i\alpha}\right),
	\end{aligned}
	\right.
\end{align}
where 
\begin{align}
\varepsilon=\f{\sqrt{\nu}}{\mathrm i\al},\quad \Lambda=\partial_Y(A^{-1}\partial_Y)-\alpha^2,\quad A(Y)=1-m^2(U_s-c)^2,
\end{align}
and 
\begin{align}
\Om(f_u, f_v)=&\f{1}{\mathrm i\al}\pa_Y(A^{-1}f_u)-f_v.\nonumber
\end{align}
When $m=0$, the system \eqref{eq:GOS} is reduced to the classical Orr-Sommerfeld equation. See section 6 for the details.

To control the compressible part, we introduce a quasi-compressible system as follows
\begin{align}\label{eq:LCNS-Stokes}
	\left\{
	\begin{aligned}
		&\mathrm i\alpha(U_s-c)\rho_{st}+\mathrm{div}_\alpha(u_{st},v_{st})=0,\\
		&\sqrt{\nu}(\partial_Y^2-\alpha^2)u_{st}+\mathrm i\alpha\lambda\sqrt{\nu}\mathrm{div}_\alpha(u_{st},v_{st})\\
		&\qquad\quad-\mathrm i\alpha(U_s-c)u_{st}-(\mathrm i\alpha m^{-2}+\sqrt{\nu}\partial_Y^2 U_s)\rho_{st}=q_1,\\
		&\sqrt{\nu}(\partial_Y^2-\alpha^2)v_{st}+\lambda\sqrt{\nu}\partial_Y\mathrm{div}_\alpha(u_{st},v_{st})-\mathrm i\alpha(U_s-c)v_{st}\\
		&\qquad\quad-\int_Y^\infty \mathrm i\al \pa_Y U_s v_{st} dY'-m^{-2}\partial_Y\rho_{st}=q_2.
	\end{aligned}
	\right.
\end{align}
Compared with the system \eqref{eq:LCNS-Y}, the main difference in \eqref{eq:LCNS-Stokes} is that we remove non-local term $-\pa_Y U_s v$ in the second equation and add a new term $-\int_Y^\infty i\al \pa_Y U_s v_{st} dY'$ in the third equation. With theses modifications, we can derive a good equation for the density by taking divergence of the equations of velocity. Indeed,  the density $\rho_{st}$ satisfies 
 \begin{align*}
-\mathrm i\al \sqrt\nu (1+\la)(\pa_Y^2-\al^2)((U_s-c)\rho_{st})&-\al^2(U_s-c)^2 \rho_{st}-m^{-2}(\pa_Y^2-\al^2)\rho_{st}\\
\nonumber
&-\mathrm i\al \sqrt\nu \pa_Y^2 U_s\rho_{st}=\mathrm i\al q_1+\pa_Yq_2.
\end{align*}
Furthermore, the density equation can be written as 
\begin{align*}
	\begin{split}
			(\partial_Y^2-\beta^2)\rho_{st}=&\alpha^2(A(Y)-A_{\infty})\rho_{st}-\mathrm i m^2\al \sqrt\nu (1+\la)(\pa_Y^2-\al^2)((U_s-c)\rho_{st})\\
	&-\mathrm i\alpha m^2\sqrt{\nu}\partial_Y^2U_s\rho_{st}-\mathrm i\alpha m^2q_1-m^2\partial_Y q_2,
	\end{split}
\end{align*}
where 
\begin{align*}
\beta=\al A_{\infty}^\f12,\quad A_\infty=\lim_{Y\to +\infty} A(Y)=1-m^2(1-c)^2.
\end{align*}
When $m<1$, this could be viewed as an elliptic equation. Once we obtain the density $\rho_{st}$, thus $\mathrm{div}_\alpha(u_{st},v_{st})$, then $u_{st}$ can be recovered by solving the following Airy type equation
 \begin{align*}
	\varepsilon(\partial_Y^2-\alpha^2)u_{st}-(U_s-c)u_{st}=(\mathrm i\alpha^{-1})q_1+\big(m^{-2}+\varepsilon\partial_Y^2U_s+\mathrm i\lambda\sqrt{\nu}\alpha(U_s-c)\big)\rho_{st}.
\end{align*}
The vertical velocity $v_{st}$ can be recovered by solving the following equation
   \begin{align*}
&(U_s-c)v_{st}+\int_Y^\infty \pa_Y (U_s-1) v_{st} dY'\\
\nonumber
&=\e(\partial_Y^2-\alpha^2)v_{st}+\lambda\e\partial_Y\mathrm{div}_\alpha(u_{st},v_{st})-m^{-2}(\mathrm i\al)^{-1}\partial_Y\rho_{st}-(\mathrm i\al)^{-1}q_2,
\end{align*}
which can be written as 
\begin{align*}
(1-c+\e\al^2)v_{st}=&\int_Y^\infty  (U_s-1) \pa_Yv_{st} dY'+\e\pa_Y^2 v_{st}+\lambda\e\partial_Y\mathrm{div}_\alpha(u_{st},v_{st})\\
&-m^{-2}(\mathrm i\al)^{-1}\partial_Y\rho_{st}-(\mathrm i\al)^{-1}q_2.
\end{align*}
See section 7 for the details. 
\smallskip

Most of the paper will be devoted to solving the Orr-Sommerfeld type system:
\begin{align}\label{eq:OS-F}
\varepsilon\Lambda(\partial_Y^2-\alpha^2)\phi-(U_s-c)\Lambda\phi+\partial_Y(A^{-1}\partial_Y U_s)\phi=F.
\end{align}
In the incompressible case, Grenier, Guo and Nguyen \cite{GGN-DMJ} introduce a robust Rayleigh-Airy iteration scheme to solve the Orr-Sommerfeld equation. However, this iteration scheme does not work in the compressible case due to the fact that the commutator $[\Lambda,\partial_Y^2]$ will lead to one loss of the derivative. Note that $[\Lambda,\partial_Y^2]=0$ when $m=0$. To overcome this difficulty, we introduce a new Airy-Airy-Rayleigh
iteration scheme. See the beginning of section 5 for the details. To carry out this iteration scheme, we solve the Rayleigh type equation conducted in section 3:
\begin{align*}
	(U_s-c)\Lambda\varphi-\partial_Y(A^{-1}\partial_YU_s)\varphi=F.
\end{align*}
The proof is based on an iteration argument in a well-chosen functional space. 
In section 4, we solve the Airy type equation
\begin{align*}
\varepsilon(\partial_Y^2-\alpha^2)\big(\partial_Y(A(Y)^{-1}\partial_Y)-\alpha^2\big)\psi-(U_s-c)\big(\partial_Y(A(Y)^{-1}\partial_Y)-\alpha^2\big)\psi =F.
\end{align*}
To solve the Airy type equation, we introduce a modified Langer transform, which is the key to avoid using the analyticity of $U_s(Y)$. One of key ingredients is the estimate of the Green function related to the approximate Airy equation, see Lemma \ref{lem:A1A2}. 
Our approach is different from Yang and Zhang \cite{YZ}. They made strong assumptions about the structure of boundary layer profiles, excluding the Blasius flow, to obtain energy estimates for the Airy-type equation \eqref{eq:OS-F} with Mach number $m<1/\sqrt{3}$. In contrast, our method provides pointwise estimates for $\phi$ in the entire subsonic regime, and includes the Blasius flow by relaxing assumptions on the boundary layer profiles.

To construct the T-S waves, we construct the slow mode $(\rho^s,u^s,v^s)$ and fast mode $(\rho^f,u^f,v^f)$ to the system \eqref{eq:LCNS-Y} with the boundary condition
\begin{align*}
	\lim_{Y\to\infty}u^s(Y)=\lim_{Y\to\infty}v^s(Y)=\lim_{Y\to\infty}u^f(Y)=\lim_{Y\to\infty}v^f(Y)=0.
\end{align*}
See section 8 for the construction, which is based on the iteration  between the system \eqref{eq:LCNS-OS} 
and the system \eqref{eq:LCNS-Stokes}.

To match the non-slip boundary condition,  we will find the solution $(\rho, u, v)$ of the system \eqref{eq:LCNS-Y}  with the form
\begin{align*}
	(\rho,u,v)=C_s(\rho^s,u^s,v^s)+C_f(\rho^f,u^f,v^f),
\end{align*}
where
\begin{align*}
	C_su^s(0)+C_fu^f(0)=0\text{ and }C_sv^s(0)+C_fv^f(0)=0.
\end{align*}
The existence of $(C_s,C_f)$ is guaranteed by the following dispersion relation:
\begin{align*}
	\frac{u^s(0)}{v^s(0)}=\frac{u^f(0)}{v^f(0)}.
\end{align*}
By our construction, we have an explicit asymptotic expansion for the main parts of the slow mode and fast mode. 
In a suitable regime of $(\al, c)$, we can find a solution of the dispersion relation via an implicit function theorem. 
See section 9 for the details. \smallskip

Let us conclude this section by the following fact: for $m\in(0,1)$, $A(Y)$ has upper and below bound,  i.e.,  there exists $C>1$ such that
\begin{align}\label{est: A}
0<C^{-1}\leq A(Y)\leq C\quad \text{ for any } Y\geq 0.
\end{align}

\section{The Rayleigh type equation}
As we mentioned in section 2, to solve the Orr-Sommerfeld type equation \eqref{eq:OS-F}, we need to perform an Airy-Airy-Rayleigh iteration. In this section, we first solve the following Rayleigh type equation
\begin{align}\label{eq:Ray-non}
	(U_s-c)\Lambda\varphi-\partial_Y(A^{-1}\partial_YU_s)\varphi=F,
\end{align}
where $\Lambda=\partial_Y(A^{-1}\partial_Y)-\alpha^2$ with $A(Y)=1-m^2(U_s-c)^2$.
When $m=0$, this is reduced to the classical Rayleigh equation. For convenience, we denote
\begin{align}\nonumber%\label{def: equation-Ray}
Ray[\cdot]=(U_s-c)\La-\partial_Y(A^{-1}\partial_YU_s).
\end{align}

We construct the solution to the Rayleigh equation in the following functional spaces:
\begin{align}\label{def: Y_theta}
	\|f\|_{\mathcal{Y}_\theta}=&|\e|^\f23\|(U_s-c)\pa_Y^4 f\|_{L^\infty_{\theta}}+|\e|^\f13\|(U_s-c)\pa_Y^3 f\|_{L^\infty_{\theta}}+\|(U_s-c)\pa_Y^2 f\|_{L^\infty_{\theta}}\\\nonumber
&+\|\pa_Y f\|_{L^\infty_{\theta}}+\| f\|_{L^\infty_{\theta}},
\end{align}
and
\begin{align}\label{def: Y_bar-theta}
	\|f\|_{\tilde{\mathcal Y}_\theta}=\|(U_s-c)\partial_Y^2 f\|_{L^\infty_\theta}+\|\pa_Y f\|_{L^\infty_{\theta}}+\| f\|_{L^\infty_{\theta}},
\end{align}
where
\begin{align*}
	\|f\|_{L^\infty_\theta}=\|e^{\th Y} f\|_{L^\infty}.
\end{align*}

We denote
\begin{align}\label{def:H1set}
	\mathbb H_1=\big\{(\alpha,c)\in\mathbb C^2:(|\alpha|+|c|)|\log c_i|\ll1,\alpha_r\geq C_0|\alpha_i|,c_i>0\big\}.
\end{align}

In this section, we always assume that $(\al, c)\in \mathbb H_1$.

\subsection{Non-homogeneous Rayleigh equation}

In this subsection, we construct the solution to \eqref{eq:Ray-non} in the case when $e^{\theta Y}F\in L^\infty(\mathbb R_+)$, where $\th>0$ is a universe constant. Such results will be applied to construct a non-trivial solution to the homogeneous Rayleigh type equation \eqref{eq:homo-ray} and perform the Airy-Airy-Rayleigh iteration in section 5.
\smallskip

According to the definition of $\Lambda$, the equation \eqref{eq:Ray-non} can be written as
\begin{align*}
	\partial_Y\left(A^{-1}(U_s-c)^2\partial_Y\left(\frac{\varphi_{non}}{U_s-c}\right) \right)=F+\alpha^2(U_s-c)\varphi_{non}.
\end{align*}
We use the method of iteration to construct a solution to \eqref{eq:Ray-non}. To this end, we define 
\begin{align}\label{def: varphi^(0)_non}
	\varphi^{(0)}_{non}(Y)=(U_s-c)\int_Y^{+\infty}\frac{A(Y')\int_{Y'}^{+\infty}F(Z)dZ}{(U_s-c)^2}dY',
\end{align}
which satisfies 
\begin{align}\label{eq:Ray-non-app-d}
	\partial_Y\left(A^{-1}(U_s-c)^2\partial_Y\left(\frac{\varphi^{(0)}_{non}}{U_s-c}\right) \right)=F.
\end{align}
For any $j\geq 0$, we define 
\begin{align}\label{def: varphi^(j+1)_non}
	\varphi_{non}^{(j+1)}(Y)=\alpha^2(U_s-c)\int_Y^{+\infty}\frac{A(Y)\int_{Y'}^{+\infty}(U_s(Z)-c)\varphi_{non}^{(j)}(Z)dZ}{(U_s-c)^2}dY',
\end{align}
which satisfies 
\begin{align*}
	Ray[\varphi_{non}^{(j+1)}]=\alpha^2(U_s-c)(\varphi_{non}^{(j)}-\varphi_{non}^{(j+1)}).
\end{align*}
Formally, $\varphi_{non}=\sum_{j=0}^\infty\varphi_{non}^{(j)}$ is a solution to \eqref{eq:Ray-non}.

\begin{lemma}\label{lem: integral}
Suppose $|c|\ll 1$ and $c_i\neq0$. Let $Y_c$ satisfy $U_s(Y_c)-c_r=0$ and $Y, Y_0$ be constants such that $0\leq Y<Y_c<Y_0$. Then it holds that
\begin{align*}
\Big|\int_{Y}^{Y_0}\f{G(Y')}{(U_s-c)^2}dY'-\f{G(Y)}{U_s'(Y_c)(U_s(Y)-c)}\Big|
\leq C(\|G\|_{L^\infty}+\|\pa_YG\|_{L^\infty})|\log |c_i||.
\end{align*}

\end{lemma}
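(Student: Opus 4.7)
The plan is to reduce the estimate to an integration by parts against a single antiderivative of the singular factor. Using $\pa_{Y'}\!\left(-(U_s(Y')-c)^{-1}\right)=U_s'(Y')/(U_s-c)^2$, I write
\begin{align*}
\f{G(Y')}{(U_s-c)^2}=\f{G(Y')}{U_s'(Y')}\,\pa_{Y'}\!\left(-\f{1}{U_s(Y')-c}\right)
\end{align*}
and integrate by parts to obtain
\begin{align*}
\int_Y^{Y_0}\f{G(Y')}{(U_s-c)^2}dY'=\f{G(Y)}{U_s'(Y)(U_s(Y)-c)}-\f{G(Y_0)}{U_s'(Y_0)(U_s(Y_0)-c)}+\int_Y^{Y_0}\f{\pa_{Y'}(G(Y')/U_s'(Y'))}{U_s(Y')-c}dY'.
\end{align*}
The first term is the desired main contribution up to replacing $U_s'(Y)$ by $U_s'(Y_c)$; the second is a harmless boundary piece at the fixed point $Y_0$; the third carries the $|\log|c_i||$ loss.

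Because $|c|\ll 1$ together with $U_s(0)=0$ and $U_s'(0)>0$ places $Y_c$ in a small neighborhood of $0$ on which $U_s$ is strictly increasing with $U_s'$ bounded below, the mean value theorem gives $|U_s(Y)-c|\ge|U_s(Y)-c_r|\gtrsim|Y-Y_c|$ for $Y\in[0,Y_c]$, while $|U_s'(Y)-U_s'(Y_c)|\le\|U_s''\|_{L^\infty}|Y-Y_c|$ follows from \eqref{eq:S-A}. Consequently the replacement error
\begin{align*}
\left|\f{G(Y)}{U_s'(Y)(U_s(Y)-c)}-\f{G(Y)}{U_s'(Y_c)(U_s(Y)-c)}\right|=\f{|G(Y)|\,|U_s'(Y_c)-U_s'(Y)|}{|U_s'(Y)U_s'(Y_c)|\,|U_s(Y)-c|}
\end{align*}
is bounded by $C\|G\|_{L^\infty}$. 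The boundary contribution at $Y_0$ is likewise bounded by $C\|G\|_{L^\infty}$ since $|U_s(Y_0)-c|$ and $|U_s'(Y_0)|$ stay bounded below, with $C$ absorbing the $Y_0$ dependence.

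For the remainder integral, the expansion $\pa_{Y'}(G/U_s')=G'/U_s'-GU_s''/(U_s')^2$ together with the boundedness of $1/U_s'$ and $U_s''/(U_s')^2$ on the relevant range shows that the integrand is dominated by $C(\|G\|_{L^\infty}+\|\pa_YG\|_{L^\infty})/|U_s(Y')-c|$. The central estimate is then
\begin{align*}
\int_Y^{Y_0}\f{dY'}{|U_s(Y')-c|}\le C|\log|c_i||,
\end{align*}
which I derive from the pointwise bound $|U_s(Y')-c|^2\ge c_0^2(Y'-Y_c)^2+c_i^2$ near $Y_c$ (away from $Y_c$ the integrand is bounded) via the substitution $u=c_0(Y'-Y_c)/c_i$ and the explicit antiderivative $\int du/\sqrt{u^2+1}=\log(u+\sqrt{u^2+1})$. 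The main subtlety in the argument lies in this logarithmic accumulation near the critical layer; the rest is bookkeeping requiring only $U_s'(0)>0$ and pointwise bounds on $U_s''$, so the argument is insensitive to regularity of $U_s$ beyond $C^2$ and hence applies to the Blasius profile.
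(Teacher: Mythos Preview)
Your proof is correct and essentially equivalent to the paper's. The only cosmetic difference is where the normalization by $U_s'$ is placed: you write $(U_s-c)^{-2}=(U_s'(Y'))^{-1}\pa_{Y'}\!\big(-(U_s-c)^{-1}\big)$ and integrate by parts, then correct $U_s'(Y)$ to $U_s'(Y_c)$ in the resulting boundary term; the paper instead inserts $1=U_s'(Y')/U_s'(Y_c)+(U_s'(Y_c)-U_s'(Y'))/U_s'(Y_c)$ first and integrates by parts only against the constant $1/U_s'(Y_c)$, leaving a separate remainder $\int G\,(U_s'(Y_c)-U_s')/(U_s-c)^2$ that is handled via $|U_s'(Y_c)-U_s'|\lesssim|U_s-c|$. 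Both decompositions reduce to the same logarithmic estimate $\int_Y^{Y_0}|U_s-c|^{-1}dY'\le C|\log|c_i||$ and use the same ingredients ($U_s'$ bounded below on $[0,Y_0]$, $U_s''\in L^\infty$). The paper's version has the minor advantage of never evaluating $1/U_s'(Y_0)$ or differentiating $1/U_s'$, but this makes no difference under the standing assumptions.
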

\begin{proof}
Thanks to $1=\f{U_s'(Y)}{U_s'(Y_c)}+\f{U_s'(Y_c)-U_s'(Y)}{U_s'(Y_c)}$, we get by integration by parts that
\begin{align*}
\int_{Y}^{Y_0}\f{G(Y')}{(U_s-c)^2}dY'=&-\f{1}{U_s'(Y_c)}\int_{Y}^{Y_0} G(Y') d\Big(\f{1}{U_s-c}\Big)+\f{1}{U_s'(Y_c)}\int_{Y}^{Y_0}\f{(U_s'(Y_c)-U_s')G(Y')}{(U_s-c)^2}dY'\\
=&\f{1}{U_s'(Y_c)}\f{G(Y)}{U_s(Y)-c}-\f{1}{U_s'(Y_c)}\f{G(Y_0)}{U_s(Y_0)-c}+\f{1}{U_s'(Y_c)}\int_{Y}^{Y_0}\f{\pa_{Y'}G(Y')}{U_s-c} dY'\\
&+\f{1}{U_s'(Y_c)}\int_{Y}^{Y_0}\f{(U_s'(Y_c)-U_s')G(Y')}{(U_s-c)^2}dY'\\
=&\f{1}{U_s'(Y_c)}\f{G(Y)}{U_s(Y)-c}+I_1+I_2+I_3.
\end{align*}

Due to $|U_s(Y_0)-c|\geq C^{-1}>0$ for $Y_0>Y_c$, we infer that 
\begin{align*}
|I_1|\leq C|G(Y_0)|\leq C\|G\|_{L^\infty}.
\end{align*}
 Thanks to
\begin{align*}
\int_{Y}^{Y_0}\f{1}{|U_s-c|} dY'\leq C\int_{Y}^{Y_0}\f{1}{|Y-Y_c|+|c_i|}dY'\leq C|\log |c_i||,
\end{align*}
we have
\begin{align*}
|I_2|\leq C\|\pa_YG\|_{L^\infty}\int_{Y}^{Y_0}\f{1}{|U_s-c|} dY'\leq C\|\pa_YG\|_{L^\infty}|\log |c_i||,\\
|I_3|\leq C\|G\|_{L^\infty}\int_{Y}^{Y_0}\Big|\f{U_s'(Y_c)-U_s'}{(U_s-c)^2}\Big| dY'\leq C\|G\|_{L^\infty}|\log |c_i||,
\end{align*}
by using the fact $\Big|\f{U_s'(Y_c)-U_s'}{(U_s-c)^2}\Big|\leq \f{C}{|U_s-c|} $. Then the lemma follows.
\end{proof}

\begin{proposition}\label{pro: phi_non}
If  $e^{\theta Y}F\in L^\infty$, then there exists a solution $\varphi_{non}\in \tilde{\mathcal Y}_\theta$ to \eqref{eq:Ray-non} with 	
\begin{align*}
&\|\varphi_{non}\|_{\tilde{\mathcal Y}_\theta}\leq C|\log c_i|\|e^{\theta Y}F\|_{L^\infty}.
\end{align*}
In addition, if $e^{\theta Y} \partial_Y^jF\in L^\infty$ for $j=1,2$ and $c_i\geq c_0|\e|^\f13>0$, then $\varphi_{non}\in W^{4,\infty}$ satisfies
\begin{align*}
	&|\e|^\f13|e^{\theta Y}(U_s-c)\partial_Y^3\varphi_{non}(Y)|\leq C|\log c_i|\|e^{\theta Y}F\|_{L^\infty}+C|\e|^\f13\|e^{\theta Y}\pa_YF\|_{L^\infty},\\
&|\e|^\f23|e^{\theta Y}(U_s-c)\partial_Y^4\varphi_{non}(Y)|\leq C|\log c_i|\|e^{\theta Y}F\|_{L^\infty}+C|\e|^\f13\|e^{\theta Y}\pa_YF\|_{L^\infty}+C|\e|^\f23\|e^{\theta Y}\pa_Y^2F\|_{L^\infty}.
\end{align*}
As a  consequence, we obtain 
\begin{align*}
	\|\varphi_{non}\|_{\mathcal Y_\theta}\leq C|\log c_i|\|e^{\theta Y}F\|_{L^\infty}+C|\e|^\f13\|e^{\theta Y}\pa_YF\|_{L^\infty}+C|\e|^\f23\|e^{\theta Y}\pa_Y^2F\|_{L^\infty}.
\end{align*}

\end{proposition}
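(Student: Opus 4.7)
The plan is to show that the series $\varphi_{non} = \sum_{j=0}^\infty \varphi_{non}^{(j)}$ defined in \eqref{def: varphi^(0)_non}--\eqref{def: varphi^(j+1)_non} converges geometrically in $\tilde{\mathcal Y}_\theta$ to a solution of $Ray[\varphi_{non}]=F$. Since each iterate obeys $Ray[\varphi_{non}^{(j+1)}] = \alpha^2(U_s-c)(\varphi_{non}^{(j)}-\varphi_{non}^{(j+1)})$, summing and telescoping yields $Ray\big[\sum_{j=0}^N \varphi_{non}^{(j)}\big] = F - \alpha^2(U_s-c)\varphi_{non}^{(N)}$, so the whole argument reduces to (i) a base-case estimate on $\varphi_{non}^{(0)}$, and (ii) the inductive bound $\|\varphi_{non}^{(j+1)}\|_{\tilde{\mathcal Y}_\theta}\leq C|\alpha|^2|\log c_i|\,\|\varphi_{non}^{(j)}\|_{\tilde{\mathcal Y}_\theta}$, which is a genuine contraction because $(|\alpha|+|c|)|\log c_i|\ll 1$ on $\mathbb H_1$ forces $|\alpha|^2|\log c_i|\ll 1$.

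For the base case I set $G(Y') = \int_{Y'}^\infty F(Z)\,dZ$, so that $\|e^{\theta Y}G\|_\infty \lesssim \theta^{-1}\|e^{\theta Y}F\|_\infty$ and $\partial_Y G = -F$. Applying Lemma \ref{lem: integral} with $AG$ in place of $G$ to the outer integral in \eqref{def: varphi^(0)_non} gives, for $Y<Y_c$,
\begin{align*}
\int_Y^\infty \frac{A(Y')G(Y')}{(U_s-c)^2}\,dY' = \frac{A(Y)G(Y)}{U_s'(Y_c)(U_s(Y)-c)} + O\bigl(|\log c_i|\,\|e^{\theta Y}F\|_\infty\, e^{-\theta Y}\bigr),
\end{align*}
so that $\varphi_{non}^{(0)}(Y) = A(Y)G(Y)/U_s'(Y_c) + (U_s-c)\cdot O(|\log c_i|\|e^{\theta Y}F\|_\infty e^{-\theta Y})$; for $Y\geq Y_c$ the pointwise bound $|U_s-c|\gtrsim (Y-Y_c)+c_i$ controls the integral directly without Lemma \ref{lem: integral}. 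The same two-region analysis, after differentiating \eqref{def: varphi^(0)_non} once, yields the $\|\partial_Y\varphi_{non}^{(0)}\|_{L^\infty_\theta}$ estimate. Rather than differentiate the integral representation a second time (which would re-introduce $(U_s-c)^{-1}$ factors), I recover $(U_s-c)\partial_Y^2\varphi_{non}^{(0)}$ algebraically from the equation: expanding $\Lambda\varphi = A^{-1}\partial_Y^2\varphi + \partial_Y(A^{-1})\partial_Y\varphi - \alpha^2\varphi$ in $Ray[\varphi_{non}^{(0)}]=F$ gives
\begin{align*}
(U_s-c)\partial_Y^2\varphi_{non}^{(0)} = A F + A\partial_Y(A^{-1}\partial_Y U_s)\varphi_{non}^{(0)} - A(U_s-c)\partial_Y(A^{-1})\partial_Y\varphi_{non}^{(0)} + \alpha^2 A(U_s-c)\varphi_{non}^{(0)},
\end{align*}
whose right-hand side is bounded in $L^\infty_\theta$ by the already-established bounds, completing the $\tilde{\mathcal Y}_\theta$ estimate. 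The induction then follows by applying the base-case estimate with source $F$ replaced by $\alpha^2(U_s-c)\varphi_{non}^{(j)}$.

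For the higher-derivative bounds in the $\mathcal Y_\theta$ norm, the extra hypothesis $c_i\geq c_0|\e|^{1/3}$ gives $|U_s-c|^{-1}\leq c_0^{-1}|\e|^{-1/3}$, which lets me afford one loss per differentiation. Differentiating $Ray[\varphi_{non}]=F$ in $Y$ produces, after reorganizing, $(U_s-c)\partial_Y^3\varphi_{non} = A\partial_Y F + (\text{terms in }\varphi_{non},\partial_Y\varphi_{non},\partial_Y^2\varphi_{non})$, where the lower-order terms cost at most one factor of $|U_s-c|^{-1}$; multiplying through by $|\e|^{1/3}$ absorbs the loss and produces the claimed bound. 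Differentiating once more and paying $|\e|^{-2/3}$ for two $|U_s-c|^{-1}$ losses, then multiplying by $|\e|^{2/3}$, gives the $\partial_Y^4$ estimate. The main obstacle throughout is the careful treatment of the outer integral across the critical layer $Y=Y_c$ in the base case: Lemma \ref{lem: integral} cleanly handles $Y<Y_c$, but the complementary region $Y>Y_c$ must be treated by direct estimation using $|U_s-c|\gtrsim(Y-Y_c)+c_i$ together with the exponential decay of $G$, and the exponential weight $e^{\theta Y}$ must be tracked through both regions to produce $L^\infty_\theta$ (not merely $L^\infty$) bounds. Once this is in place, the iteration, the reduction to the equation for $\partial_Y^2$, and the $|\e|^{1/3}$-powers for higher derivatives are largely bookkeeping.
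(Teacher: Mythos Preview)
Your approach is essentially the paper's: iterate \eqref{def: varphi^(0)_non}--\eqref{def: varphi^(j+1)_non}, control the base step via Lemma~\ref{lem: integral}, recover $(U_s-c)\partial_Y^2\varphi_{non}$ from the equation, and get $\partial_Y^3,\partial_Y^4$ by differentiating the equation and cashing in $|U_s-c|^{-1}\le c_0^{-1}|\e|^{-1/3}$.

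One point in your sketch needs repair. You split at $Y_c$ and claim that for $Y\ge Y_c$ the crude bound $|U_s-c|\gtrsim (Y-Y_c)+c_i$ handles the outer integral ``directly.'' That is fine for $\varphi_{non}^{(0)}$ itself, because the prefactor $(U_s-c)$ absorbs the $((Y-Y_c)+c_i)^{-1}$ coming from the integral. It fails for $\partial_Y\varphi_{non}^{(0)}$: after differentiating \eqref{def: varphi^(0)_non} you get
\[
\partial_Y\varphi_{non}^{(0)}(Y)=U_s'(Y)\int_Y^\infty\frac{A\,G}{(U_s-c)^2}\,dY'-\frac{A(Y)G(Y)}{U_s(Y)-c},
\]
and for $Y$ just above $Y_c$ each term separately is of size $((Y-Y_c)+c_i)^{-1}$, which can be as large as $c_i^{-1}$. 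The $|\log c_i|$ bound comes only from the \emph{cancellation} between these two terms, and that cancellation is exactly what the integration-by-parts underlying Lemma~\ref{lem: integral} captures. In other words, the region $Y_c\le Y\le Y_0$ still needs the Lemma~\ref{lem: integral} argument (whose proof works verbatim there), not direct estimation. The paper sidesteps this by splitting at $Y_0$---a fixed point with $|U_s(Y_0)-c|\ge C^{-1}$---rather than at $Y_c$: for $Y\ge Y_0$ the direct bound is legitimate, and for all of $0\le Y\le Y_0$ one applies Lemma~\ref{lem: integral}. If you adopt that split, the rest of your outline goes through. Also note that $\varphi_{non}^{(0)}$ solves $Ray[\varphi_{non}^{(0)}]=F-\alpha^2(U_s-c)\varphi_{non}^{(0)}$, not $Ray[\varphi_{non}^{(0)}]=F$; the extra $\alpha^2$ term is harmless but should appear in your algebraic formula for $(U_s-c)\partial_Y^2\varphi_{non}^{(0)}$.
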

\begin{proof}
	By our assumption $0<|c|\ll 1$, there exists $Y_0>Y_c$ such that for any $Y\geq Y_0$,
	\begin{align*}
		|U_s(Y)-c|\geq C^{-1}>0.
	\end{align*}
		
Using \eqref{est: A} and \eqref{def: varphi^(0)_non}, we infer that for any $Y\geq Y_0$,
\begin{align*}
|e^{\theta Y}\varphi_{non}^{(0)}(Y)|\leq&C\|e^{\th Y} F\|_{L^\infty}\Big|\int_Y^{+\infty}\int_{Y'}^{+\infty} e^{\th(Y-Y'')}dY'' dY'\Big|\leq C\|e^{\th Y} F\|_{L^\infty}.
\end{align*}
For $0\leq Y\leq Y_0$, we decomposite $\varphi_{non}^{(0)}$ into two parts:
\begin{align*}
\varphi_{non}^{(0)}(Y)=&(U_s-c)\int_{Y_0}^{+\infty}\frac{A(Y')\int_{Y'}^{+\infty}F(Z)dZ}{(U_s-c)^2}dY'+(U_s-c)\int_Y^{Y_0}\frac{A(Y')\int_{Y'}^{+\infty}F(Z)dZ}{(U_s-c)^2}dY'.
\end{align*}
As above, for the first term, we have 
\begin{align}\label{est: varphi^0_non-1}
\Big|e^{\th Y}(U_s-c)\int_{Y_0}^{+\infty}\frac{A(Y')\int_{Y'}^{+\infty}F(Z)dZ}{(U_s-c)^2}dY'\Big|\leq C\|e^{\theta Y}F\|_{L^\infty}.
\end{align}
For the second term, we get by Lemma \ref{lem: integral} with $G(Y)=1$ that
\begin{align*}
\Big|e^{\th Y}(U_s-c)\int_Y^{Y_0}\frac{A(Y')\int_{Y'}^{+\infty}F(Z)dZ}{(U_s-c)^2}dY'\Big|\leq &C\|e^{\th Y} F\|_{L^\infty}\Big|(U_s-c)\int_{Y}^{Y_0} \f{1}{(U_s-c)^2} dY'\Big|\\
\leq& C|\log c_i|\|e^{\th Y} F\|_{L^\infty},
\end{align*}
which along with \eqref{est: varphi^0_non-1} shows that for $Y\geq0$,
	\begin{align}
		|e^{\theta Y}\varphi_{non}^{(0)}(Y)|\leq C|\log c_i|\|e^{\theta Y}F\|_{L^\infty}.
	\end{align}
	
Next we give the estimate of $\pa_Y\varphi_{non}^{(0)}$.	By the definition of $\varphi_{non}^{(0)}$ in \eqref{def: varphi^(0)_non}, we have 
	\begin{align*}
		\partial_Y\varphi_{non}^{(0)}(Y)=U'_s(Y)\int_Y^{+\infty}\frac{A(Y')\int_{Y'}^{+\infty}F(Z)dZ}{(U_s-c)^2}dY'-\frac{A(Y)\int_Y^{+\infty}F(Z)dZ}{(U_s(Y)-c)}.
	\end{align*}
Then for any $Y\geq Y_0$, we have 
	\begin{align*}
		|e^{\theta Y}\partial_Y\varphi_{non}^{(0)}(Y)|\leq C\|e^{\theta Y}F\|_{L^\infty}.
	\end{align*}
For $0\leq Y\leq Y_0$, notice that 
\begin{align}\nonumber%\label{eq: pa_Y phi_non^0}
	\begin{split}
		\partial_Y\varphi_{non}^{(0)}(Y)=&U'_s(Y)\int_Y^{Y_0}\frac{A(Y')\int_{Y'}^{+\infty}F(Z)dZ}{(U_s-c)^2}dY'-\frac{A(Y)\int_Y^{+\infty}F(Z)dZ}{(U_s(Y)-c)}\\
		&+U'_s(Y)\int_{Y_0}^{+\infty}\frac{A(Y')\int_{Y'}^{+\infty}F(Z)dZ}{(U_s-c)^2}dY'=I_1+I_2+I_3.
	\end{split}
\end{align}
For $I_3$, we have
\begin{align*}
|I_3|\leq&C\|e^{\theta Y}F\|_{L^\infty}.
\end{align*}
Let $G(Y')=A(Y')\int_{Y'}^{+\infty}F(Z)dZ$. We write
\begin{align*}
I_1=&U'_s(Y)\int_Y^{Y_0}\frac{G(Y')}{(U_s-c)^2}dY'\\
=&U'_s(Y)\Big(\int_Y^{Y_0}\frac{G(Y')}{(U_s-c)^2}dY'-\f{G(Y)}{U_s'(Y_c)(U_s(Y)-c)}\Big)+\f{U'_s(Y)G(Y)}{U_s'(Y_c)(U_s(Y)-c)},\\
I_2=&-\f{G(Y)}{U_s(Y)-c},
\end{align*}
which yield that
\begin{align*}
I_1+I_2=&U'_s(Y)\Bigg(\int_Y^{Y_0}\frac{G(Y')}{(U_s-c)^2}dY'-\f{G(Y)}{U_s'(Y_c)(U_s(Y)-c)}\Bigg)-\Bigg(1-\f{U'_s(Y)}{U_s'(Y_c)}\Bigg)\f{G(Y)}{U_s(Y)-c}.
\end{align*}
By \eqref{est: A}, we have
\begin{align*}
\|G\|_{L^\infty}\leq C\|e^{\theta Y}F\|_{L^\infty},\quad \|\pa_YG\|_{L^\infty}\leq C\|e^{\theta Y}F\|_{L^\infty}.
\end{align*}
Then we apply Lemma \ref{lem: integral} and the fact $\Big|(1-\f{U'_s(Y)}{U_s'(Y_c)})\f{G(Y)}{U_s(Y)-c}\Big|\leq C\|G\|_{L^\infty}$ to deduce that
\begin{align*}
|I_1+I_2|\leq C(\|G\|_{L^\infty}+\|\pa_YG\|_{L^\infty})|\log c_i|\leq C|\log c_i|\|e^{\theta Y}F\|_{L^\infty},
\end{align*}
which along with the estimate of $I_3$ shows that for $0\leq Y\leq Y_0$,
\begin{align*}
|\partial_Y\varphi_{non}^{(0)}(Y)|\leq C|\log c_i|\|e^{\theta Y}F\|_{L^\infty}.
\end{align*}
This proves that for any $Y\geq 0$,
    \begin{align*}
		|e^{\theta Y}\partial_Y\varphi_{non}^{(0)}(Y)|\leq C|\log c_i|\|e^{\theta Y}F\|_{L^\infty}.
	\end{align*}
Then by using the equation \eqref{eq:Ray-non-app-d}, we infer that for any $Y\geq 0$,
    \begin{align*}
    	|e^{\theta Y}(U_s-c)\partial_Y^2\varphi_{non}^{(0)}(Y)|\leq C|\log c_i|\|e^{\theta Y}F\|_{L^\infty}.
    \end{align*}
    
Summing up, we conclude 
\begin{align}
\|\varphi_{non}^{(0)}\|_{\tilde{\mathcal{Y}}_\theta}\leq C|\log c_i|\|e^{\theta Y}F\|_{L^\infty}.\nonumber
\end{align}

Using the formula \eqref{def: varphi^(j+1)_non} and the argument as above, we can show that for $j\geq 0$,
\begin{align*}
\|\varphi_{non}^{(j+1)}\|_{\tilde{\mathcal{Y}}_\theta}&\leq C|\al|^2|\log c_i|\|e^{\theta Y}\varphi_{non}^{(j)}\|_{L^\infty}\\
&\leq C|\al|^2|\log c_i| \|\varphi_{non}^{(j)}\|_{\tilde{\mathcal{Y}}_\theta}\leq C(|\al|^2|\log c_i|)^{j+1}\|\varphi_{non}^{(0)}\|_{\tilde{\mathcal{Y}}_\theta}.
\end{align*}
Since $(\alpha,c)\in\mathbb H_1$ so that $|\al|^2|\log c_i|\leq \f12$, we have
\begin{align}\nonumber%\label{est:||varphi_non||-Y_theta}
\|\varphi_{non}\|_{\tilde{\mathcal{Y}}_\theta}\leq &\sum_{j=0}^\infty\|\varphi_{non}^{(j)}\|_{\tilde{\mathcal{Y}}_\theta}\leq C\sum_{j=0}^\infty(|\al|^2|\log c_i|)^{j}\|\varphi_{non}^{(0)}\|_{\tilde{\mathcal{Y}}_\theta}\\
\nonumber
\leq& C\|\varphi_{non}^{(0)}\|_{\tilde{\mathcal{Y}}_\theta}\leq C|\log c_i|\|e^{\theta Y}F\|_{L^\infty},
\end{align}
which gives the first statement. 

In order to get the estimate for  $\pa_Y^k\varphi_{non}$, $k=3,4$, we take $\pa_Y$ and $\pa_Y^2$ on \eqref{eq:Ray-non} to have
\begin{align*}
A^{-1}(U_s-c)\pa_Y^3\varphi_{non}=&\pa_YF-\pa_Y((U_s-c)A^{-1})\pa_Y^2\varphi_{non}\\
&-\pa_Y\Big((U_s-c)\pa_Y(A^{-1})\pa_Y \varphi_{non}-\al^2(U_s-c)\varphi_{non}-\pa_Y(A^{-1}\pa_Y U_s)\varphi_{non}\Big),\\
A^{-1}(U_s-c)\pa_Y^4\varphi_{non}=&\pa_Y^2 F-2\pa_Y(A^{-1}(U_s-c))\pa_Y^3\varphi_{non}-\pa_Y^2((U_s-c)A^{-1})\pa_Y^2\varphi_{non}\\
&-\pa_Y^2\Big((U_s-c)\pa_Y(A^{-1})\pa_Y \varphi_{non}-\al^2(U_s-c)\varphi_{non}-\pa_Y(A^{-1}\pa_Y U_s)\varphi_{non}\Big).
\end{align*}
Then we can conclude our result by using the estimate of  $\|\varphi_{non}\|_{\tilde{\mathcal{Y}}_\theta}$, \eqref{est: A} and 
\begin{align}\label{est: |U_s-c|>e^1/3} 
|U_s-c|\geq c_i\geq c_0|\e|^\f13>0.
\end{align}

This completes the proof of the proposition.
\end{proof}

\subsection{Homogeneous Rayleigh equation}
In this subsection, we construct a decay solution to the homogeneous Rayleigh equation:
\begin{align}\label{eq:homo-ray}
	(U_s-c)\Lambda\varphi_{Ray}-\partial_Y(A^{-1}\partial_YU_s)\varphi_{Ray}=0.
\end{align}
Such solution $\varphi_{Ray}$ is the main part of the slow mode of the homogeneous Orr-Sommerfeld equation \eqref{eq:OS-F}.
\smallskip

 We define 
\begin{align}
	\varphi_{Ray}^{(0)}(Y)=2\beta e^{\beta Y}(U_s-c)\int_Y^{+\infty}\frac{A(Z)}{(U_s-c)^2e^{2\beta Z}}dZ,
\end{align}
where
\begin{align}\label{def: beta}
\beta=\al A_{\infty}^\f12=\beta_r+i\beta_i,\quad A_\infty=\lim_{Y\to +\infty} A(Y)=1-m^2(1-c)^2.
\end{align}
For $m\in(0,1),$ we have 
\begin{align}
\mathrm {Re} A_\infty>0,\quad \mathrm{Im} A_\infty>0,\nonumber
\end{align} 
and 
\begin{align}
\mathrm{Re} A_{\infty}^\f12\sim (1-m^2)^{\f12}>0,\quad \mathrm{Im} A_{\infty}^\f12\sim m^2 c_i>0,\nonumber
\end{align}
which along with the fact $\alpha=\alpha_r+\mathrm i\alpha_i$ with $\alpha_r\gg|\alpha_i|$ implies 
\begin{align}\label{est: beta}
0<C^{-1}|\al| \leq|\beta_r| \leq |\beta|\leq C|\al|.
\end{align}

By a direct calculation, we find that
\begin{align}\label{eq:ray-homo-app}
	\begin{split}
		(U_s-c)\Lambda\varphi_{Ray}^{(0)}-\partial_Y(A^{-1}\partial_YU_s)\varphi_{Ray}^{(0)}=&2\beta U_s' A^{-1}\varphi_{Ray}^{(0)}-\beta\frac{A'}{A^2}(U_s-c)\varphi_{Ray}^{(0)}\\
&+(A^{-1}\beta^2-\al^2)(U_s-c)\varphi_{Ray}^{(0)}.
	\end{split}
\end{align}
We next define $\varphi_R$ to be a solution to the following Rayleigh equation, which is constructed according to Proposition \ref{pro: phi_non}:
\begin{align}\nonumber%\label{eq: varphi_R}
	Ray[\varphi_R]=&-2\beta U_s' A^{-1}\varphi_{Ray}^{(0)}+\beta\frac{A'}{A^2}(U_s-c)\varphi_{Ray}^{(0)}-(A^{-1}\beta^2-\al^2)(U_s-c)\varphi_{Ray}^{(0)}.
\end{align}
Then $\varphi_{Ray}=\varphi_{Ray}^{(0)}+\varphi_{R}$ is a solution to \eqref{eq:homo-ray}.\smallskip

We first give the asymptotic expansions of the main part $\varphi_{Ray}^{(0)}(Y)$.

\begin{lemma}\label{lem: asymp-phi_0}
There exists $Y_0>0$ such that 
	\begin{itemize}
		\item For $Y\geq Y_0$,
   \begin{align*}
   	&\Big|\varphi_{Ray}^{(0)}(Y)-\frac{A_\infty}{(1-c)^2}(U_s-c)e^{-\beta Y}\Big|\leq C|\al| e^{-\eta_0 Y},\\
	&\Big|\pa_Y\varphi_{Ray}^{(0)}(Y)-\frac{A_\infty}{(1-c)^2}\partial_Y((U_s-c)e^{-\beta Y})\Big|\leq C|\al| e^{-\eta_0 Y}.
   \end{align*}
%   More precisely, we have
%   \begin{align*}
%   &\Big|\pa_Y\varphi_{Ray}^{(0)}(Y)-(1-m^2)U_s'e^{-\beta Y}-\beta e^{-\beta Y}(U_s-c)(1-m^2)+\f{2\beta e^{-\beta Y}A(Y)}{U_s-c}\Big|\\
%   &\leq C(\al+|c|)( e^{-\Re\beta Y}e^{-\eta_0 Y}+\al e^{-\Re\beta Y}).
%   \end{align*}

     \item For $0\leq Y\leq Y_0$,
      \begin{align*}
   	&\Big|\varphi_{Ray}^{(0)}(Y)-\frac{A_\infty}{(1-c)^2}(U_s-c)e^{-\beta Y}-\frac{2\beta e^{-\beta Y}}{U'_s(Y_c)}\Big|\leq C|\alpha||U_s(Y)-c||\log c_i|,\\
	&\Big|\pa_Y \varphi_{Ray}^{(0)}-\frac{A_\infty}{(1-c)^2}U_s'e^{-\beta Y}\Big|\leq C|\al| |\log c_i|.
   \end{align*}
     In particular, we have 
     \begin{align*}
   	&\varphi_{Ray}^{(0)}(0)=-(1-m^2)c+\frac{2\beta}{U'(Y_c)}+\mathcal O\Big((|\alpha|+|c|)|c||\log c_i|\Big),\\
	&\pa_Y\varphi_{Ray}^{(0)}(0)=U_s'(0)(1-m^2)+\mathcal O\Big((|\al|+|c|)|\log c_i|\Big).
   \end{align*}
   
   \item If  $c_i\geq c_0|\e|^\f13>0$, then we have
   \begin{align}\label{est: varphi^(0)_Ray,R}
	\big\|\varphi_{Ray}^{(0)}-\frac{A_\infty}{(1-c)^2}(U_s-c)e^{-\beta Y}\big\|_{\mathcal Y_{\eta_0}}\leq C|\alpha||\log c_i|.
\end{align}
%   Moreover if $0<c_i\ll\min\{\alpha,c_r\}$,
%      \begin{align*}
%    	&\mathrm{Re}(\varphi_{Ray}^{(0)}(0))=(m^2-1)c_r+\frac{2\alpha}{U_s''(Y_c)}+\mathcal O(\alpha|c||\log c_i|),\\
%    	&\mathrm{Im}(\varphi_{Ray}^{(0)}(0))=(m^2-1)c_i-\frac{2\alpha c_r U''_s(Y_c)}{U'(Y_c)^3}\arg(-c)+o(\alpha |c|).
%    \end{align*}
	\end{itemize}
\end{lemma}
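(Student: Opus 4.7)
Using the identity $\int_Y^\infty \frac{A_\infty}{(1-c)^2} e^{-2\beta Z}\,dZ = \frac{A_\infty e^{-2\beta Y}}{2\beta (1-c)^2}$, rewrite
\begin{align*}
E(Y) := \varphi_{Ray}^{(0)}(Y) - \frac{A_\infty(U_s - c)e^{-\beta Y}}{(1-c)^2} = 2\beta e^{\beta Y}(U_s-c)\int_Y^\infty e^{-2\beta Z}\Bigl[\frac{A(Z)}{(U_s-c)^2} - \frac{A_\infty}{(1-c)^2}\Bigr]\,dZ.
\end{align*}
For $Y \geq Y_0$ with $Y_0$ chosen so $|U_s - c| \geq C^{-1}$ on $[Y_0,\infty)$, the bracket is bounded by $C(|A - A_\infty| + |U_s - 1|) \leq C e^{-\eta_0 Z}$ via \eqref{eq:S-A}. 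Combined with $|2\beta e^{\beta Y}(U_s-c)| \leq C|\beta|$ and the comparison $|\beta| \leq C\beta_r$ of \eqref{est: beta}, this yields $|E(Y)| + |\pa_Y E(Y)| \leq C|\alpha| e^{-\eta_0 Y}$ (the derivative obtained by differentiating under the integral), proving both $Y \geq Y_0$ statements.

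For $0 \leq Y \leq Y_0$, the algebraic identities $A = 1 - m^2(U_s-c)^2$ and $A_\infty = 1 - m^2(1-c)^2$ collapse the bracket to $G(Z)/(U_s-c)^2$ with $G(Z) = e^{-2\beta Z}\bigl[1 - (U_s-c)^2/(1-c)^2\bigr]$ smooth and $\|G\|_{L^\infty} + \|\pa_Z G\|_{L^\infty} \leq C$. Applying Lemma \ref{lem: integral} to $\int_Y^{Y_0} G(Z)/(U_s-c)^2\,dZ$ (with the tail $\int_{Y_0}^\infty$ treated as above, contributing only $O(|\alpha| e^{-\eta_0 Y_0})$), together with $G(Y) = e^{-2\beta Y}\bigl[1 + O(|U_s-c|^2)\bigr]$, and multiplying by $2\beta e^{\beta Y}(U_s-c)$, extracts the singular contribution $\frac{2\beta e^{-\beta Y}}{U_s'(Y_c)}$ modulo an error of size $C|\alpha||U_s-c||\log c_i|$, giving the claimed pointwise expansion. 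Evaluating at $Y = 0$ with $U_s(0) = 0$ and expanding $\frac{A_\infty(-c)}{(1-c)^2} = -(1-m^2)c + O(c^2)$ produces the stated value of $\varphi_{Ray}^{(0)}(0)$. For the derivative, differentiate the defining integral to get
\begin{align*}
\pa_Y\varphi_{Ray}^{(0)}(Y) = \Bigl(\beta + \tfrac{U_s'(Y)}{U_s-c}\Bigr)\varphi_{Ray}^{(0)}(Y) - \frac{2\beta A(Y) e^{-\beta Y}}{U_s(Y)-c};
\end{align*}
substituting the just-proved expansion of $\varphi_{Ray}^{(0)}$, the two apparently singular contributions at $Y_c$ combine into $\frac{2\beta e^{-\beta Y}}{U_s-c}\bigl[\tfrac{U_s'(Y)}{U_s'(Y_c)} - A(Y)\bigr]$, which is $O(|\alpha|)$ since $U_s'(Y) - U_s'(Y_c) = O(|U_s-c|)$ and $1 - A(Y) = m^2(U_s-c)^2$. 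The non-singular pieces give $\frac{A_\infty U_s' e^{-\beta Y}}{(1-c)^2} + O(|\alpha||\log c_i|)$, yielding the claimed expansion for $\pa_Y\varphi_{Ray}^{(0)}$ and in particular its value at $0$.

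For the $\mathcal Y_{\eta_0}$ bound with $c_i \geq c_0|\varepsilon|^{1/3}$, observe that $E$ satisfies the inhomogeneous Rayleigh equation $Ray[E] = \mathcal F$ where $\mathcal F$ is the difference between the right-hand side of \eqref{eq:ray-homo-app} and $Ray\bigl[\frac{A_\infty(U_s-c)e^{-\beta Y}}{(1-c)^2}\bigr]$. A direct computation using $\beta^2 = \alpha^2 A_\infty$ shows each term in $\mathcal F$ carries at least one factor of $U_s-1$, $U_s'$, or $A-A_\infty$, all $O(e^{-\eta_0 Y})$ by \eqref{eq:S-A}, together with one factor of $\alpha$ or $\beta$, so $\|e^{\eta_0 Y}\pa_Y^j \mathcal F\|_{L^\infty} \leq C|\alpha|$ for $j=0,1,2$. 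Proposition \ref{pro: phi_non} then produces a solution $\tilde E$ with $\|\tilde E\|_{\mathcal Y_{\eta_0}} \leq C|\alpha||\log c_i|$; the difference $\tilde E - E$ solves the homogeneous Rayleigh equation with exponential decay at infinity, which combined with the pointwise estimates already established for $E$ forces $\tilde E - E \equiv 0$. \emph{Main obstacle.} The delicate bookkeeping in the inner region $0 \leq Y \leq Y_0$: a formally singular $1/(U_s-c)^2$ becomes $1/(U_s-c)$ after Lemma \ref{lem: integral}, and a further $1/(U_s-c)$ appearing in the derivative computation must be absorbed by the algebraic cancellation $\tfrac{U_s'(Y)}{U_s'(Y_c)} - A(Y) = O(|U_s-c|)$ --- this cancellation is precisely what generates the extra explicit term $\tfrac{2\beta e^{-\beta Y}}{U_s'(Y_c)}$ in the small-$Y$ asymptotic.
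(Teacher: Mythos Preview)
Your treatment of the first two items is essentially the paper's: write $E(Y)=\varphi_{Ray}^{(0)}-\frac{A_\infty}{(1-c)^2}(U_s-c)e^{-\beta Y}$ as an integral of the bracketed difference, split at $Y_0$, and invoke Lemma~\ref{lem: integral} inside. Collapsing the bracket algebraically to $G(Z)/(U_s-c)^2$ with $G(Z)=e^{-2\beta Z}\bigl[1-(U_s-c)^2/(1-c)^2\bigr]$ is a slight streamlining over the paper's $I_1+I_2+I_3$ decomposition, but the content is identical. The derivative computation via $\partial_Y\varphi_{Ray}^{(0)}=(\beta+U_s'/(U_s-c))\varphi_{Ray}^{(0)}-2\beta A e^{-\beta Y}/(U_s-c)$ and the cancellation $U_s'(Y)/U_s'(Y_c)-A(Y)=O(|U_s-c|)$ also matches.

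For the third item there is a genuine gap. The claim $\|e^{\eta_0 Y}\partial_Y^2\mathcal F\|_{L^\infty}\le C|\alpha|$ is false as stated: $\mathcal F$ contains the term $2\beta U_s'A^{-1}\varphi_{Ray}^{(0)}$ with no $(U_s-c)$ prefactor, and when two derivatives land on $\varphi_{Ray}^{(0)}$ you need a bound on $\partial_Y^2\varphi_{Ray}^{(0)}$. From the approximate Rayleigh equation \eqref{eq:ray-homo-app} one only obtains $(U_s-c)\partial_Y^2\varphi_{Ray}^{(0)}=O(1)$, hence $\partial_Y^2\varphi_{Ray}^{(0)}=O(c_i^{-1})$ and the best uniform bound is $\|e^{\eta_0 Y}\partial_Y^2\mathcal F\|_{L^\infty}=O(|\alpha|c_i^{-1})$. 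The paper closes this by a two-step bootstrap: first bound only $\mathcal F,\partial_Y\mathcal F$ (for which $\varphi_{Ray}^{(0)},\partial_Y\varphi_{Ray}^{(0)}\in L^\infty$ suffice) and apply Proposition~\ref{pro: phi_non} to get $\|E\|_{\tilde{\mathcal Y}_{\eta_0}}\le C|\alpha||\log c_i|$; then use the resulting bound on $(U_s-c)\partial_Y^2 E$ to control $\|e^{\eta_0 Y}(U_s-c)\partial_Y^2\mathcal F\|_{L^\infty}$, and finally invoke $c_i\ge c_0|\varepsilon|^{1/3}$ so that $|\varepsilon|^{2/3}\|\partial_Y^2\mathcal F\|\le C|\varepsilon|^{1/3}\|(U_s-c)\partial_Y^2\mathcal F\|$. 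Only this weighted quantity enters Proposition~\ref{pro: phi_non}, and the argument goes through. Your uniqueness claim $\tilde E=E$ is correct once you note that both decay like $e^{-\eta_0 Y}$ whereas any nonzero decaying homogeneous Rayleigh solution decays only like $e^{-\beta_r Y}$ with $\beta_r\sim|\alpha|\ll\eta_0$.
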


\begin{proof}
Recall that there exists $Y_0>Y_c$ such that for any $Y\geq Y_0$,
	\begin{align*}
		|U_s(Y)-c|\geq C^{-1}>0.
	\end{align*}
By the definitions of $\varphi_{Ray}^{(0)}$, $A(Y)$ and $A_\infty$, we notice that for any $Y\geq Y_0$,
\begin{align*}
	&\frac{A(Y)}{(U_s-c)^2}-\frac{A_\infty}{(1-c)^2}\\
	&=\frac{A(Y)}{(U_s-c)^2}-\frac{A_\infty}{(U_s-c)^2}+\frac{A_\infty}{(U_s-c)^2}-\frac{A_\infty}{(1-c)^2}\\
	&=\frac{m^2((1-c)^2-(U_s-c)^2)}{(U_s-c)^2}+A_\infty\frac{((1-c)^2-(U_s-c)^2)}{(U_s-c)^2(1-c)^2},
\end{align*}
   which implies that for any $Y\geq Y_0$,
   \begin{align}\label{est: difference 1}
   \Big|\f{A(Y)}{(U_s-c)^2}-\frac{A_\infty}{(1-c)^2}\Big|\leq Ce^{-\eta_0 Y}.
   \end{align}
Then we infer from \eqref{est: beta} that
\begin{align*}
&\left|\varphi_{Ray}^{(0)}(Y)-2\beta (U_s-c)e^{\beta Y}\frac{A_\infty}{(1-c)^2}\int_Y^{+\infty}e^{-2\beta Z}dZ\right|\\
&=\left|2\beta (U_s-c)e^{\beta Y}\int_Y^{+\infty}\Big(\f{A(Z)}{(U_s-c)^2}-\frac{A_\infty}{(1-c)^2}\Big)e^{-2\beta Z}dZ\right|\\
&\leq C|\beta|e^{-\eta_0Y}\leq C|\al| e^{-\eta_0Y}.
\end{align*}
 A direct calculation gives 
   \begin{align*}
   	2\beta (U_s-c)e^{\beta Y}\frac{A_\infty}{(1-c)^2}\int_Y^{+\infty} e^{-2\beta Z}dZ=(U_s-c)e^{-\beta Y}\frac{A_\infty}{(1-c)^2}.
   \end{align*}
   Thus, we infer that for any $Y\geq Y_0$,
   \begin{align}\nonumber%\label{est: phi_0-1}
  \Big|\varphi_{Ray}^{(0)}(Y)-(U_s-c)e^{-\beta Y}\frac{A_\infty}{(1-c)^2}\Big|\leq C|\al| e^{-\eta_0 Y}.
   \end{align}

   Now we turn to the estimates for the case of $0\leq Y\leq Y_0$. For this, we first decompose $\varphi_{Ray}^{(0)}$ as follows
   \begin{align*}
   	\varphi_{Ray}^{(0)}(Y)=I_1(Y)+I_2(Y)+I_3(Y),
   \end{align*}
   where 
   \begin{align*}
   	&I_1(Y)=2\beta (U_s-c)e^{\beta Y}\frac{A_\infty}{(1-c)^2}\int_{Y_0}^{+\infty}e^{-2\beta Z}dZ,\\
   	&I_2(Y)=2\beta(U_s-c)e^{\beta Y}\int_{Y_0}^{+\infty}\left(\frac{A(Z)}{(U_s-c)^2}-\frac{A_\infty}{(1-c)^2}\right)e^{-2\beta Z}dZ,\\
   	&I_3(Y)=2\beta(U_s-c)e^{\beta Y}\int_{Y}^{Y_0}\frac{A(Z)}{(U_s-c)^2e^{2\beta Z}}dZ.
   \end{align*}
   By a similar argument as above, we can deduce that for any $Y\leq Y_0$,
   \begin{align*}
   &I_1(Y)= (U_s-c)\frac{A_\infty}{(1-c)^2}e^{\beta Y-2\beta Y_0}=(U_s-c)\frac{A_\infty}{(1-c)^2}e^{-\beta Y}+\mathcal O(|\al||U_s-c|),\\
& |I_2(Y)|\leq C|\beta||U_s-c|e^{-\eta_0 Y}\leq C|\al||U_s-c|.
   \end{align*}
   We write $I_3(Y)$ as follows
   \begin{align*}
   	I_3(Y)=2\beta(U_s-c)e^{\beta Y}\int_{Y}^{Y_0}\frac{1}{(U_s-c)^2e^{2\beta Z}}dZ-2\beta m^2(U_s-c)e^{\beta Y}\int_Y^{Y_0}e^{-2\beta Z}dZ.
   \end{align*}
   For the second term of $I_3(Y)$,  we notice that for any $Y\leq Y_0$,
   \begin{align*}
   	\left|2\beta m^2(U_s-c)e^{\beta Y}\int_Y^{Y_0}e^{-2\beta Z}dZ\right|\leq C|\al(U_s-c)|.
   \end{align*}
For the first term in $I_3(Y)$, we get by Lemma \ref{lem: integral} with $G(Y)=e^{-2\beta Y}$ and $\|G\|_{L^\infty}\leq C$, $\|G'\|_{L^\infty}\leq C|\al|$ that 
\begin{align*}
2\beta(U_s-c)e^{\beta Y}\int_{Y}^{Y_0}\frac{1}{(U_s-c)^2e^{2\beta Z}}dZ=&2\beta(U_s-c)e^{\beta Y}\Big(\f{e^{-2\beta Y}}{U_s'(Y_c)(U_s-c)}+\mathcal{O}(|\log c_i|)\Big)\\
=&\f{2\beta e^{-\beta Y}}{U_s'(Y_c)}+\mathcal{O}(|\al( U_s-c)||\log c_i|).
\end{align*}
This shows that  for any $0\leq Y\leq Y_0$,
   \begin{align*}
   	\Big|\varphi_{Ray}^{(0)}(Y)-\frac{A_\infty}{(1-c)^2}(U_s-c)e^{-\beta Y}-\frac{2\beta e^{-\beta Y}}{U'_s(Y_c)}\Big|\leq C|\al||U_s(Y)-c||\log c_i|.
   \end{align*}
   In particular, by the fact $A_\infty=1-m^2+\mathcal{O}(|c|)$, we have 
   \begin{align*}
   	\varphi_{Ray}^{(0)}(0)=-(1-m^2)c+\frac{2\beta}{U'(Y_c)}+\mathcal O\Big((|\alpha|+|c|)|c||\log c_i|\Big).
   \end{align*}

Next we give the estimate of $\pa_Y\varphi_{Ray}^{(0)}.$ 
For $Y\geq Y_0$,  we have
\begin{align*}
	&\partial_Y\varphi_{Ray}^{(0)}(Y)-\partial_Y\left((U_s-c)e^{-\beta Y}\frac{A_\infty}{(1-c)^2}\right)\\
	&=\partial_Y\Big(2\beta e^{\beta Y}(U_s-c)\int_Y^{+\infty}\Big[\frac{A(Z)}{(U_s-c)^2}-\frac{A_\infty}{(1-c)^2} \Big]e^{-2\beta Z} \Big)dZ,
\end{align*}
which along with \eqref{est: difference 1} implies that 
\begin{align*}
	\left|\partial_Y\varphi_{Ray}^{(0)}(Y)-\partial_Y\Big((U_s-c)e^{-\beta Y}\frac{A_\infty}{(1-c)^2}\Big)\right|\leq C|\alpha|e^{-\eta_0 Y}.
\end{align*}
For $Y\leq Y_0$, we  write
\begin{align*}
\pa_Y\varphi_{Ray}^{(0)}=&\beta\varphi_{Ray}^{(0)}-2\f{\beta e^{-\beta Y}}{U_s-c}A(Y)+2\beta e^{\beta Y} U_s'\int_Y^{+\infty} \f{A(Z)}{(U_s-c)^2e^{2\beta Z}}dZ\\
=&\beta\varphi_{Ray}^{(0)}-2\f{\beta e^{-\beta Y}}{U_s-c}A(Y)+II.
\end{align*}
Using the fact that
\begin{align*}
\Big|\varphi_{Ray}^{(0)}(Y)-\frac{A_\infty}{(1-c)^2}(U_s-c)e^{-\beta Y}-\frac{2\beta e^{-\beta Y}}{U'_s(Y_c)}\Big|\leq C|\alpha||U_s(Y)-c||\log c_i|,
\end{align*}
we have
\begin{align}\label{est: al phi_0}
|\beta \varphi_{Ray}^{(0)}|\leq&C|\al| |U_s-c|+C|\al|^2+C|\al| |U_s-c||\log c_i|\\
\nonumber
\leq& C|\al| |U_s-c||\log c_i|+C|\al|^2.
\end{align}
We decompose $II$ into the following parts:
\begin{align*}
II=&2\beta e^{\beta Y} U_s'\int_Y^{+\infty} \frac{A_\infty}{(1-c)^2}e^{-2\beta Z}dZ\\
&+2\beta e^{\beta Y} U_s'\int_{Y_0}^{+\infty} \Bigg(\f{A(Z)}{(U_s-c)^2}-\frac{A_\infty}{(1-c)^2}\Bigg)e^{-2\beta Z}dZ\\
&-2\beta e^{\beta Y} U_s'\int_Y^{Y_0}\frac{A_\infty}{(1-c)^2}e^{-2\beta Z}dZ-2\beta e^{\beta Y} U_s'\int_Y^{Y_0} m^2e^{-2\beta Z} dZ\\
&+2\beta e^{\beta Y} U_s'\int_Y^{Y_0} \f{1}{(U_s-c)^2}e^{-2\beta Z}dZ\\
=&U_s'e^{-\beta Y}\frac{A_\infty}{(1-c)^2}+II_2+II_3+II_4+II_5.
\end{align*}
By \eqref{est: difference 1}, we have
\begin{align*}
|II_2|\leq C|\al|.
\end{align*}
It is easy to see that
\begin{align*}
|II_3|+|II_4|\leq C|\al|.
\end{align*}
So, we focus on $II_5.$ By Lemma \ref{lem: integral} with $G(Y)=e^{-2\beta Y}$, we have
\begin{align*}
II_5=&2\beta e^{\beta Y} U_s'\Big(\f{e^{-2\beta Y}}{U_s'(Y_c)(U_s-c)}+\mathcal{O}(|\log c_i|)\Big)=\f{2\beta e^{-\beta Y}U_s'(Y)}{U_s'(Y_c)(U_s-c)}+\mathcal{O}(|\al||\log c_i|).
\end{align*}
Then we have
\begin{align*}
II_5-2\f{\beta e^{-\beta Y}}{U_s-c}A(Y)=&\f{2\beta e^{-\beta Y}U_s'(Y)}{U_s'(Y_c)(U_s-c)}+\mathcal{O}(|\al||\log c_i|)-2\f{\beta e^{-\beta Y}}{U_s-c}A(Y)\\
=&\f{2\beta e^{-\beta Y}}{U_s-c}\Big(\f{U_s'(Y)}{U_s'(Y_c)}-1\Big)+\mathcal{O}(|\al||\log c_i|)\\
=&\mathcal{O}(|\al|)+\mathcal{O}(|\al||\log c_i|)=\mathcal{O}(|\al||\log c_i|).
\end{align*}
Thus, we arrive at
\begin{align*}
\Big|II-U_s'e^{-\beta Y}\frac{A_\infty}{(1-c)^2}-\f{2\beta e^{-\beta Y}}{U_s-c}A(Y)\Big|\leq C|\al| |\log c_i|,
\end{align*}
which along with \eqref{est: al phi_0} shows that for $Y\leq Y_0$ 
\begin{align*}
\Big|\pa_Y \varphi_{Ray}^{(0)}-U_s'e^{-\beta Y}\frac{A_\infty}{(1-c)^2}\Big|\leq C|\al| |\log c_i|.
\end{align*}
In particular, we have 
\begin{align*}
\pa_Y\varphi_{Ray}^{(0)}(0)=U_s'(0)(1-m^2)+\mathcal O\Big((|\al|+|c|) |\log c_i|\Big).
\end{align*}

It remains to prove  \eqref{est: varphi^(0)_Ray,R}. We denote 
\begin{align*}
\varphi_{Ray, R}^{(0)}=\varphi_{Ray}^{(0)}-\varphi_{Ray,0}^{(0)},\quad \varphi_{Ray,0}^{(0)}=\frac{A_\infty}{(1-c)^2}(U_s-c)e^{-\beta Y}.
\end{align*}
A direct calculation gives 
\begin{align*}
Ray[\varphi_{Ray, 0}^{(0)}]=-2\beta U_s'A^{-2}\varphi_{Ray, 0}^{(0)}+(A^{-1}\beta^2-\al^2)(U_s-c)\varphi_{Ray,0}^{(0)},
\end{align*}
which along with \eqref{eq:ray-homo-app} gives
\begin{align*}
Ray[\varphi_{Ray, R}^{(0)}]=&2\beta U_s' A^{-1}\varphi_{Ray}^{(0)}-\beta\frac{A'}{A^2}(U_s-c)\varphi_{Ray}^{(0)}+2\beta U_s'A^{-2}\varphi_{Ray, 0}^{(0)}\\
&+(A^{-1}\beta^2-\al^2)(U_s-c)(\varphi_{Ray}^{(0)}-\varphi_{Ray,0}^{(0)}):=F_{\varphi_{Ray, R}^{(0)}}.
\end{align*}
According to the above results, we know that
\begin{align*}
|\varphi_{Ray}^{(0)}|,~|\pa_Y\varphi_{Ray}^{(0)}| \leq C, \quad |\varphi_{Ray,0}^{(0)}|,~|\pa_Y\varphi_{Ray,0}^{(0)}|, ~|\pa_Y^2\varphi_{Ray,0}^{(0)}| \leq C,
\end{align*}
which along with the facts that $|U_s'|,~|A'|\leq Ce^{-\eta_0 Y}$ and $|A^{-1}\beta^2-\al^2|\leq C|\al|^2e^{-\eta_0 Y}$, show that
\begin{align*}
\|e^{\eta_0 Y}F_{\varphi_{Ray, R}^{(0)}}\|_{L^\infty}\leq C|\al|,\quad \|e^{\eta_0 Y}\pa_YF_{\varphi_{Ray, R}^{(0)}}\|_{L^\infty}\leq C|\al|.
\end{align*}
Then we get by Proposition \ref{pro: phi_non} that 
\begin{align*}
\|\varphi_{Ray, R}^{(0)}\|_{\tilde{\mathcal Y}_{\eta_0}}\leq C|\al||\log c_i|.
\end{align*}
Especially, we have
\begin{align*}
\|e^{\eta_0 Y}(U_s-c)\pa_Y^2\varphi_{Ray, R}^{(0)}\|_{L^\infty}\leq C|\al||\log c_i|,
\end{align*}
which implies 
\begin{align*}
\|e^{\eta_0 Y}(U_s-c)\pa_Y^2F_{\varphi_{Ray, R}^{(0)}}\|_{L^\infty}\leq C|\al||\log c_i|.
\end{align*}
If $c_i\geq c_0|\e|^\f13>0$, we apply Proposition \ref{pro: phi_non} again  to conclude that
\begin{align*}
|\e|^\f13|e^{\eta_0 Y}(U_s-c)\partial_Y^3\varphi_{Ray, R}^{(0)}(Y)|\leq& C|\al||\log c_i|,\\
|\e|^\f23|e^{\eta_0 Y}(U_s-c)\partial_Y^4\varphi_{Ray, R}^{(0)}(Y)|\leq& C|\al||\log c_i|,
\end{align*}
which give \eqref{est: varphi^(0)_Ray,R}. 
\end{proof}

Recall that 
	\begin{align}\label{eq: varphi_R}
Ray[\varphi_{Ray}^{(0)}]=&2\beta U_s' A^{-1}\varphi_{Ray}^{(0)}-\beta\frac{A'}{A^2}(U_s-c)\varphi_{Ray}^{(0)}+(A^{-1}\beta^2-\al^2)(U_s-c)\varphi_{Ray}^{(0)}.
\end{align}
Since $\varphi_{R}$ is constructed in Proposition \ref{pro: phi_non}, we can decompose it into $\varphi_{R}(Y)=\varphi_1(Y)+\varphi_2(Y)+\varphi_3(Y)+\varphi_{4}(Y)$, where
\begin{align*}
	\varphi_1(Y)=&-\alpha^2(U_s-c)\int_Y^{+\infty}\frac{A(Y')}{(U_s-c)^2}\int_{Y'}^\infty(A^{-1}A_\infty-1)(U_s-c)\varphi_{Ray}^{(0)}dY''dY',\\
	\varphi_2(Y)=&-2\beta(U_s-c)\int_Y^{+\infty}\frac{A(Y')}{(U_s-c)^2}\int_{Y'}^\infty U_s'A^{-1}\varphi_{Ray}^{(0)}dY''dY',\\
\varphi_3(Y)=&\beta(U_s-c)\int_Y^{+\infty}\frac{A(Y')}{(U_s-c)^2}\int_{Y'}^\infty \frac{A'}{A^2}(U_s-c)\varphi_{Ray}^{(0)}dY''dY',
\end{align*}
and $\varphi_4(Y)$ is the solution to 
\begin{align*}
	Ray[\varphi_4]=\alpha^2(U_s-c)(\varphi_1+\varphi_2+\varphi_3).
\end{align*}

\begin{lemma}\label{lem: varphi_R}
It holds that
\begin{itemize}
\item For $Y\geq Y_0$,
\begin{align*}
&|\varphi_R(Y)|\leq C|\al| e^{-\eta_0 Y},\quad |\pa_Y\varphi_R(Y)|\leq C|\al| e^{-\eta_0 Y}.
\end{align*}

\item For $0\leq Y\leq Y_0$,
\begin{align*}
&\varphi_R(Y)=-\f{\beta e^{-\beta Y}}{U_s'(Y_c)}+\mathcal O\Big(|\al|(|\al|+|U_s-c|)|\log c_i|\Big),\\
&\pa_Y\varphi_R(Y)=\mathcal{O}(|\al| |\log c_i|).
\end{align*}
In particular, we have
\begin{align*}
&\varphi_R(0)=-\f{\beta}{U_s'(Y_c)}+\mathcal{O}\Big((|\al|^2+|c|^2)|\log c_i|\Big),\\
&\pa_Y\varphi_R(0)=\mathcal{O}(|\al| |\log c_i|).
\end{align*}

\item If $c_i\geq c_0|\e|^\f13>0$, then we have
\begin{align}\label{est: varphi_R-Y-1}
\|\varphi_R\|_{\mathcal Y_{\eta_0}}\leq C|\alpha||\log c_i|.
\end{align}
\end{itemize}

\end{lemma}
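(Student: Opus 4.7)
\emph{Strategy.} My plan is to estimate each of $\varphi_1, \varphi_2, \varphi_3, \varphi_4$ separately, combining Proposition~\ref{pro: phi_non} with the asymptotic expansions of $\varphi_{Ray}^{(0)}$ in Lemma~\ref{lem: asymp-phi_0}. After writing $A^{-1}\beta^{2}-\alpha^{2}=\alpha^{2}A^{-1}(A_\infty-A)$, each of the three explicit source terms in \eqref{eq: varphi_R} carries either a factor $\beta U_s'$, $\beta A'$, or $\alpha^{2}(A_\infty-A)$, all of which decay like $e^{-\eta_0 Y}$ by \eqref{eq:S-A}; hence $\|e^{\eta_0 Y}F\|_{L^\infty}\leq C|\alpha|$ for the full source driving $\varphi_R$. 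Proposition~\ref{pro: phi_non} then yields at once the coarse bound $\|\varphi_R\|_{\tilde{\mathcal Y}_{\eta_0}}\leq C|\alpha||\log c_i|$. The third bullet \eqref{est: varphi_R-Y-1} (under $c_i\geq c_0|\e|^{1/3}$) follows from the second half of Proposition~\ref{pro: phi_non} once derivative bounds on each source are supplied by differentiating the statements of Lemma~\ref{lem: asymp-phi_0}.

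\emph{Pointwise bounds.} For the outer region $Y\geq Y_0$ the lower bound $|U_s-c|\geq C^{-1}$ renders the $1/(U_s-c)^{2}$ factors harmless, so the exponential decay of $U_s'$, $A'$ and $A^{-1}A_\infty-1$ propagates through each nested integral, producing $|\varphi_R(Y)|,|\partial_Y\varphi_R(Y)|\leq C|\alpha|e^{-\eta_0 Y}$. For the inner region $0\leq Y\leq Y_0$ the contributions $\varphi_1,\varphi_3,\varphi_4$ are of size $\mathcal O(|\alpha|(|\alpha|+|U_s-c|)|\log c_i|)$ thanks to their extra factors of $\alpha^{2}$, $\beta(U_s-c)$, and $\alpha^{2}(U_s-c)$ respectively, and are absorbed into the error. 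The main term therefore arises from $\varphi_2$: setting $G(Y'):=\int_{Y'}^{\infty}U_s'A^{-1}\varphi_{Ray}^{(0)}\,dY''$, which is smooth and bounded, and applying Lemma~\ref{lem: integral} to the outer $1/(U_s-c)^{2}$ integral, one obtains
\begin{align*}
\varphi_2(Y)=-\frac{2\beta\,A(Y)\,G(Y)}{U_s'(Y_c)}+\mathcal O(|\alpha||U_s-c||\log c_i|).
\end{align*}
Inserting the near-origin expansion of $\varphi_{Ray}^{(0)}$ from Lemma~\ref{lem: asymp-phi_0} into $G$ and computing the resulting explicit integral produces $A(Y)G(Y)=\tfrac{1}{2}e^{-\beta Y}+\mathcal O(|\alpha|+|c|)$, yielding the claimed leading term $-\beta e^{-\beta Y}/U_s'(Y_c)$. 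The estimate for $\partial_Y\varphi_R$ then follows from the coarse $\tilde{\mathcal Y}_{\eta_0}$ bound already obtained, and setting $Y=0$ (with $A(0)=1+\mathcal O(|c|^{2})$) gives the boundary values.

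\emph{Main obstacle.} The delicate step is the inner-region identification: to recover the exact coefficient $-\beta/U_s'(Y_c)$ (as opposed to, say, $-2\beta/U_s'(Y_c)$), one must track carefully how both pieces of the near-origin expansion of $\varphi_{Ray}^{(0)}$, namely the $(U_s-c)e^{-\beta Z}$ and $\beta e^{-\beta Z}/U_s'(Y_c)$ contributions, interact inside the nested integrals for $\varphi_2$, apply Lemma~\ref{lem: integral} to the precise logarithmic order required, and verify that every remainder actually fits in the $\mathcal O(|\alpha|(|\alpha|+|U_s-c|)|\log c_i|)$ error. All other aspects of the proof are routine applications of Proposition~\ref{pro: phi_non} and Lemma~\ref{lem: asymp-phi_0}.
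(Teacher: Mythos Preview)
Your overall strategy is sound for the coarse bounds and for the third bullet, but there is a genuine gap in your inner-region analysis: the claim that $\varphi_3$ is of order $\mathcal O(|\alpha|(|\alpha|+|U_s-c|)|\log c_i|)$ is false, and consequently your asserted identity $A(Y)G(Y)=\tfrac{1}{2}e^{-\beta Y}+\mathcal O(|\alpha|+|c|)$ is false as well for general Mach number $m\in(0,1)$.

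Concretely, the inner integral of $\varphi_3$ is $H(Y')=\int_{Y'}^{\infty}\tfrac{A'}{A^{2}}(U_s-c)\varphi_{Ray}^{(0)}\,dY''$. The factor $(U_s-c)$ here is $(U_s(Y'')-c)$, which is \emph{not} small away from $Y_c$; the integrand is $\mathcal O(e^{-\eta_0 Y''})$ but $H(Y')=\mathcal O(1)$ for $Y'\leq Y_0$. Applying Lemma~\ref{lem: integral} to the outer integral then gives $\varphi_3(Y)=\beta\,A(Y)H(Y)/U_s'(Y_c)+\mathcal O(|\alpha||U_s-c||\log c_i|)$, which is $\mathcal O(|\alpha|)$ --- the \emph{same} order as the leading term. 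Likewise, your $A(0)G(0)$ in the limit $\beta,c\to 0$ evaluates to $-\tfrac{1-m^{2}}{2m^{2}}\ln(1-m^{2})$ (by the substitution $u=U_s$), which equals $\tfrac12$ only when $m\to 0$. So treating $\varphi_2$ alone and discarding $\varphi_3$ gives the wrong coefficient whenever $m\neq 0$.

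What the paper does, and what you must do, is combine $\varphi_2$ and $\varphi_3$ before applying Lemma~\ref{lem: integral}. After replacing $\varphi_{Ray}^{(0)}$ by its leading part $\tfrac{A_\infty}{(1-c)^{2}}(U_s-c)e^{-\beta Y}$, integrate the $\varphi_2$ inner integral by parts against $\partial_{Y''}(U_s-c)^{2}=2U_s'(U_s-c)$; the resulting $A'/A^{2}$ boundary-of-parts term cancels exactly against the $\varphi_3$ inner integral. This cancellation eliminates the $m$-dependence and is what produces the universal coefficient $-\beta/U_s'(Y_c)$. The remaining terms after cancellation are $\varphi_{2,1}^{2}=(U_s-c)e^{-\beta Y}\tfrac{A_\infty}{(1-c)^{2}}$ and a $\beta^{2}$-weighted piece $\varphi_{2,1}^{1}$; a second application of Lemma~\ref{lem: integral} to $\varphi_{2,1}^{1}$ then yields $-(U_s-c)e^{-\beta Y}\tfrac{A_\infty}{(1-c)^{2}}-\tfrac{\beta e^{-\beta Y}}{U_s'(Y_c)}$ plus admissible errors, so that $\varphi_{2,1}+\varphi_{3,1}=-\tfrac{\beta e^{-\beta Y}}{U_s'(Y_c)}+\mathcal O(|\alpha|(|\alpha|+|U_s-c|)|\log c_i|)$. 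Your treatment of $\varphi_1,\varphi_4$ and of the outer region is fine.
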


\begin{proof}
By Proposition 3.2 and Lemma 3.3, we have 
\begin{align}\label{eq:ray-varphi4}
	\|\varphi_4\|_{\tilde{\mathcal Y}_{\eta_0}}\leq C|\alpha|^2|\log c_i|\|\varphi_1+\varphi_2+\varphi_3\|_{L^\infty_{\eta_0}}.
\end{align}
Hence, to obtain the estimates of $\varphi_R$ and $\partial_Y\varphi_R$, it enough to show the control of $\varphi_i$ and $\partial_Y\varphi_i$ for $i=1,2,3$.

We first estimate $\varphi_1(Y)$. Using the fact that
\begin{align}
\f{A_{\infty}}{A}-1=\f{A_\infty-A}{A}=m^2(U_s-1)(U_s+1-2c),
\end{align}
we get by \eqref{eq:S-A} and \eqref{est: A}  that
\begin{align*}
\Big|\f{A_{\infty}}{A}-1\Big|\leq Ce^{-\eta_0 Y}.
\end{align*}
From the estimate of $\varphi_{non}^{(0)}$  in the proof of Proposition \ref{pro: phi_non} and Lemma \ref{lem: asymp-phi_0}, we deduce  that for any $Y\geq Y_0$,
\begin{align}\label{est: phi_1-1}
|\varphi_1(Y)|\leq&C|\al|^2e^{-\eta_0 Y}\|e^{\eta_0 Y}(A^{-1}A_\infty-1)(U_s-c)\varphi_{Ray}^{(0)}\|_{L^\infty}
\leq C|\al|^2e^{-\eta_0 Y},
\end{align}
and for $0\leq Y\leq Y_0$,
\begin{align}\label{est: phi_1-2}
|\varphi_1(Y)|\leq&C|\al|^2|\log c_i |\|e^{\eta_0 Y}(A^{-1}A_\infty-1)(U_s-c)\varphi_{Ray}^{(0)}\|_{L^\infty}\leq C|\al|^2 |\log c_i|.
\end{align}

 We next estimate $\varphi_2(Y)$ and $\varphi_3(Y)$. For $Y\geq Y_0$, similar to the estimate of $\varphi_1(Y)$ along with the fact 
\begin{align*}
|A'|\leq CU_s'\leq C e^{-\eta_0 Y},
\end{align*}
we infer that
\begin{align*}
|\varphi_2(Y)|+|\varphi_3(Y)|\leq C|\al|  e^{-\eta_0 Y}.
\end{align*}
This along with \eqref{est: phi_1-1} implies that for $Y\geq Y_0$,
\begin{align}\label{eq:varphi-4-est}
|\varphi_1(Y)+\varphi_2(Y)+\varphi_3(Y)|\leq C|\al|e^{-\eta_0 Y}.
\end{align}

For $0\leq Y\leq Y_0$, we divide $\varphi_2(Y)$ and $\varphi_3(Y)$ as follows
\begin{align*}
\varphi_2(Y)=&-2\beta \f{A_\infty}{(1-c)^2}(U_s-c)\int_Y^{+\infty}\frac{A(Y')}{(U_s-c)^2}\int_{Y'}^\infty U_s'A^{-1}e^{-\beta Y''}(U_s-c)dY''dY'\\
&-2\beta(U_s-c)\int_Y^{+\infty}\frac{A(Y')}{(U_s-c)^2}\int_{Y'}^\infty U_s'A^{-1}W_1(Y'')dY''dY'\\
=&\varphi_{2,1}(Y)+\varphi_{2,2}(Y),
\end{align*}
and
\begin{align*}
\varphi_3(Y)=&\beta \f{A_\infty}{(1-c)^2}(U_s-c)\int_Y^{+\infty}\frac{A(Y')}{(U_s-c)^2}\int_{Y'}^\infty\frac{A'}{A^2}(U_s-c)^2e^{-\beta Y''}dY''dY'\\
&+\beta(U_s-c)\int_Y^{+\infty}\frac{A(Y')}{(U_s-c)^2}\int_{Y'}^\infty\frac{A'}{A^2}(U_s-c)W_1(Y'')dY''dY'\\
=&\varphi_{3,1}(Y)+\varphi_{3,2}(Y),
\end{align*}
where 
\begin{align}\nonumber%\label{def: W_1}
W_1(Y)=\varphi_{Ray}^{(0)}(Y)-\f{A_\infty}{(1-c)^2}(U_s-c)e^{-\beta Y}.
\end{align}
By Lemma \ref{lem: asymp-phi_0}, we have
\begin{align}
|W_1|\leq& C|\al| e^{-\eta_0 Y},\quad Y\geq Y_0,\label{est: W_1-1}\\
|W_1|\leq& C|\al|  |\log c_i|,\quad 0\leq Y\leq Y_0.\label{est: W_1-2}
\end{align}
As in the estimate of $\varphi_1$, by using \eqref{est: W_1-1}-\eqref{est: W_1-2},  \eqref{est: A} and the fact 
\begin{align*}
|A'|\leq CU_s'\leq C e^{-\eta_0 Y},
\end{align*}
we infer that
\begin{align}
|\varphi_{2,2}(Y)|\leq&C|\al|^2|\log c_i|, \label{est: phi_22}\\
|\varphi_{3,2}(Y)|\leq&C|\al|^2|\log c_i|. \label{est: phi_32}
\end{align}

We get by integration by parts that
\begin{align*}
2\int_{Y'}^\infty U_s'A^{-1}e^{-\beta Y''}(U_s-c)dY''=&\int_{Y'}^\infty A^{-1}e^{-\beta Y''}\pa_{Y''}(U_s-c)^2dY''\\
=&\beta\int_{Y'}^\infty A^{-1}e^{-\beta Y''}(U_s-c)^2dY''+\int_{Y'}^\infty \f{A'}{A^2}e^{-\beta Y''}(U_s-c)^2dY''\\
&-A^{-1}e^{-\beta Y'}(U_s(Y')-c)^2.
\end{align*}
As a result, we obtain
\begin{align*}
\varphi_{2,1}(Y)+\varphi_{3,1}(Y)=&-\beta^2 \f{A_\infty}{(1-c)^2}(U_s-c)\int_Y^{+\infty}\frac{A(Y')}{(U_s-c)^2}\int_{Y'}^\infty A^{-1}e^{-\beta Y''}(U_s-c)^2dY''dY'\\
&+\beta \f{A_\infty}{(1-c)^2}(U_s-c)\int_Y^{+\infty}e^{-\beta Y'} dY'=\varphi_{2,1}^1(Y)+\varphi_{2,1}^2(Y).
\end{align*}
It is easy to see that
\begin{align}\label{est: phi_21^2}
\varphi_{2,1}^2(Y)=&(U_s-c)e^{-\beta Y}\f{A_\infty}{(1-c)^2}.
\end{align}

For $\varphi_{2,1}^1(Y)$, we decompose it as follows
\begin{align*}
\varphi_{2,1}^1(Y)=&-\beta^2\f{A_\infty}{(1-c)^2}(U_s-c)\int_{Y_0}^{+\infty}\frac{A(Y')}{(U_s-c)^2}\int_{Y'}^\infty A^{-1}(U_s-c)^2e^{-\beta Y''}dY''dY'\\
&-\beta^2 \f{A_\infty}{(1-c)^2}(U_s-c)\int_Y^{Y_0}\frac{A(Y')}{(U_s-c)^2}\int_{Y'}^\infty A^{-1}(U_s-c)^2 e^{-\beta Y''}dY''dY'\\
=&\varphi_{2,1}^{11}(Y)+\varphi_{2,1}^{12}(Y).
\end{align*}
Using the fact \eqref{est: A} and a similar process for \eqref{est: difference 1} that for $Y''\geq Y_0$,
\begin{align*}
\Big|A^{-1}(U_s-c)^2-\f{(1-c)^2}{A_\infty}\Big|\leq Ce^{-\eta_0 Y''},
\end{align*}
we have
\begin{align*}
\Big|\int_{Y'}^\infty \Big(A^{-1}(U_s-c)^2-\f{(1-c)^2}{A_\infty}\Big) e^{-\beta Y''}dY''\Big|\leq&C\int_{Y'}^\infty e^{-\eta_0 Y''}dY''
\leq Ce^{-\eta_0 Y'},
\end{align*}
and
\begin{align*}
&\Big|\beta^2\f{A_\infty}{(1-c)^2}(U_s-c)\int_{Y_0}^{+\infty}\frac{A(Y)}{(U_s-c)^2}\int_{Y'}^\infty \Big(A^{-1}(U_s-c)^2-\f{(1-c)^2}{A_\infty}\Big) e^{-\beta Y''}dY''dY'\Big|\\
&\leq C|\alpha|^2|U_s-c| \int_{Y_0}^{+\infty}\frac{A(Y')}{(U_s-c)^2}e^{-\eta_0 Y'}dY'\leq C|\al|^2.
\end{align*}
On the other hand, we have
\begin{align*}
\int_{Y}^\infty\f{(1-c)^2}{A_\infty} e^{-\beta Y'}dY'=\beta^{-1}\f{(1-c)^2}{A_\infty}e^{-\beta Y}.
\end{align*}
Thus,  we conclude that for $Y\leq Y_0$,
\begin{align}\label{est: phi_21-11}
\varphi_{2,1}^{11}(Y)=&-\beta(U_s-c)\int_{Y_0}^{+\infty}\frac{A(Y')e^{-\beta Y'}}{(U_s-c)^2}dY'+\mathcal{O}(|\al|^2)\\
\nonumber
=&-\beta\f{A_\infty}{(1-c)^2}(U_s-c)\int_{Y_0}^{+\infty}e^{-\beta Y'}dY'\\
&+\beta(U_s-c)\int_{Y_0}^{+\infty}\Big(\frac{A(Y')}{(U_s-c)^2}-\f{A_\infty}{(1-c)^2}\Big)e^{-\beta Y'}dY'
+\mathcal{O}(|\al|^2)\nonumber\\
\nonumber
=&-(U_s-c)e^{-\beta Y}\f{A_\infty}{(1-c)^2}+\mathcal{O}\Big(|\al|(|\al|+|U_s-c|)\Big),
\end{align}
where we used  \eqref{est: difference 1} in the last step.

Noticing that 
\begin{align*}
\beta\int_{Y'}^\infty A^{-1}(U_s-c)^2 e^{-\beta Y''}dY''=\f{e^{-\beta Y'}(1-c)^2}{A_\infty}+\mathcal O(|\al|)=\f{(1-c)^2}{A_\infty}+\mathcal O(|\al|),
\end{align*}
we infer that
\begin{align*}
\varphi_{2,1}^{12}(Y)=&-\beta \f{A_\infty}{(1-c)^2}(U_s-c)\int_Y^{Y_0}\frac{A(Y)}{(U_s-c)^2}\Big(\f{(1-c)^2}{A_\infty}+\mathcal O(|\al|)\Big)dY\\
=&-\beta(U_s-c)\int_Y^{Y_0}\frac{A(Y')}{(U_s-c)^2} dY'+\mathcal O(|\al|^2|\log c_i|)\\
=&-\beta(U_s-c)\int_Y^{Y_0}\frac{1}{(U_s-c)^2} dY'+\mathcal O\Big(|\al|(|\al|+|U_s-c|)|\log c_i|\Big),
\end{align*}
where we used Lemma \ref{lem: integral}  with $G(Y)=A(Y)$ so that
\begin{align}\nonumber%\label{est: integral-c}
\Big|(U_s-c)\int_Y^{Y_0}\frac{A(Y)}{(U_s-c)^2}dY\Big|\leq C|\log c_i|
\end{align}
in the second step and $A(Y)=1-m^2(U_s-c)^2$ in the last step.
By Lemma \ref{lem: integral} again, we have
\begin{align*}
\beta(U_s-c)\int_Y^{Y_0}\frac{1}{(U_s-c)^2} dY=\f{\beta e^{-\beta Y}}{U_s'(Y_c)}+\mathcal O(|\al||U_s-c||\log c_i|).
\end{align*}
Thus, we obtain that for  $0\leq Y\leq Y_0$,
\begin{align}\label{est: phi_21-12}
\varphi_{2,1}^{12}(Y)=-\f{\beta e^{-\beta Y}}{U_s'(Y_c)}+\mathcal O\Big(|\al|(|\al|+|U_s-c|)|\log c_i|\Big).
\end{align}

Summing up \eqref{est: phi_21-11} and \eqref{est: phi_21-12}, we conclude that for $0\leq Y\leq Y_0$,
\begin{align}\label{est: phi_21^1}
\varphi_{2,1}^1(Y)=-(U_s-c)e^{-\beta Y}\f{A_\infty}{(1-c)^2}-\f{\beta e^{-\beta Y}}{U_s'(Y_c)}+\mathcal O\Big(|\al|(|\al|+|U_s-c|)|\log c_i|\Big).
\end{align}
Then we infer from \eqref{est: phi_21^2} and \eqref{est: phi_21^1}  that
\begin{align}\nonumber%\label{est: phi_21}
\varphi_{2,1}(Y)=&-\f{\beta e^{-\beta Y}}{U_s'(Y_c)}+\mathcal O\Big(|\al|(|\al|+|U_s-c|)|\log c_i|\Big),
\end{align}
which along with \eqref{est: phi_22}-\eqref{est: phi_32} gives
\begin{align*}
\varphi_2(Y)+\varphi_3(Y)=&-\f{\beta e^{-\beta Y}}{U_s'(Y_c)}+\mathcal O\Big(|\al|(|\al|+|U_s-c|)|\log c_i|\Big).
\end{align*}
This along with \eqref{eq:varphi-4-est} shows that for $Y\leq Y_0$,
\begin{align*}
\varphi_1(Y)+\varphi_2(Y)+\varphi_3(Y)=&-\f{\beta e^{-\beta Y}}{U_s'(Y_c)}+\mathcal O\Big(|\al|(|\al|+|U_s-c|)|\log c_i|\Big),
\end{align*}
which along with \eqref{eq:ray-varphi4} and \eqref{eq:varphi-4-est} implies
\begin{align}\label{eq:est-phi4-norm}
	\|\varphi_4\|_{L^\infty_{\eta_0}}+\|\partial_Y\varphi_4\|_{L^\infty_{\eta_0}}\leq C|\alpha|^2|\log c_i|\|\varphi_1+\varphi_2+\varphi_3\|_{L^\infty_{\eta_0}}\leq C|\alpha|^3|\log c_i|.
\end{align}
Therefore, by the definition of $\varphi_R$ and the above estimates, we infer that for $Y\leq Y_0$
\begin{align*}
	\varphi_R(Y)=-\f{\beta e^{-\beta Y}}{U_s'(Y_c)}+\mathcal O\Big(|\al|(|\al|+|U_s-c|)|\log c_i|\Big).
\end{align*}

In the following, we compute $\pa_Y\varphi_R(Y)$. We first give the estimate of $\pa_Y\varphi_1(Y)$. A direct calculation gives
\begin{align*}
\pa_Y\varphi_1(Y)=&-\alpha^2U_s'\int_Y^{+\infty}\frac{A(Y')}{(U_s-c)^2}\int_{Y'}^\infty(A^{-1}A_\infty-1)(U_s-c)\varphi_{Ray}^{(0)}dZdY'\\
&+\alpha^2\f{A(Y)}{U_s-c}\int_{Y}^\infty(A^{-1}A_\infty-1)(U_s-c)\varphi_{Ray}^{(0)}dY'.
\end{align*}
 As in the estimate of $\varphi_1$ and using the fact $|A^{-1}A_\infty-1|\leq C|U_s-1|\leq C e^{-\eta_0 Y}$, it is easy to see that for $Y\geq Y_0$,\begin{align*}
|\pa_Y\varphi_1(Y)|\leq& C|\al|^2 e^{-\eta_0 Y}.
\end{align*}
For $0\leq Y\leq Y_0$, let $G(Y)=A(Y)\int_{Y}^\infty (A^{-1}A_\infty-1)(U_s-c)\varphi_{Ray}^{(0)}dY'$, which holds that 
\begin{align*}
|G(Y)|\leq C,\quad |G'(Y)|\leq C.
\end{align*}
Then we  get by  Lemma \ref{lem: integral}  that
\begin{align*}
\pa_Y\varphi_1(Y)=&-\alpha^2U_s'\int_Y^{+\infty}\frac{G(Y')}{(U_s-c)^2}dY'+\al^2\f{G(Y)}{U_s-c}\\
=&-\al^2U_s'\Big(\f{G(Y)}{U_s'(Y_c)(U_s-c)}+\mathcal{O}(|\log c_i|)\Big)+\al^2\f{G(Y)}{U_s-c}\\
=&\al^2\f{G(Y)}{U_s-c}\Big(1-\f{U_s'(Y)}{U_s'(Y_c)}\Big)+\mathcal{O}(|\al|^2|\log c_i|)\\
=&\mathcal{O}(|\al|^2|\log c_i|),
\end{align*}
where we used $\Big|(U_s-c)^{-1}\Big(1-\f{U_s'(0)}{U_s'(Y_c)}\Big)\Big|\leq C$ in the last step.

%	\begin{align*}
%|\int_{Y}^\infty (A^{-1}(U_s-c)^4 -\f{1}{1-m^2})e^{-\al Y'}dY'|
%\leq&C(e^{-\eta_0 Y}+\al^{-1}c)e^{-\al Y},
%\end{align*}
%and
%\begin{align*}
%&|\alpha^2m^2(1-m^2) U_s'(0)\int_{Y_1}^{+\infty}\frac{A(Y)}{(U_s-c)^2}\int_{Y}^\infty (A^{-1}(U_s-c)^4 -\f{1}{1-m^2})e^{-\al Y'}dY' dY|\\
%&\leq C\al^2\int_{Y_1}^{+\infty}\frac{A(Y)}{(U_s-c)^2}(e^{-\eta_0 Y}+\al^{-1}c)e^{-\al Y}dY\leq C\al,
%\end{align*}
%Due to
%\begin{align*}
%\int_{Y}^\infty \f{1}{1-m^2} e^{-\al Y'}dY'=\f{\al^{-1}}{1-m^2}e^{-\al Y},
%\end{align*}
%we combine above estimates to have
%\begin{align}\label{est: pa_y phi_11^1}
%\pa_Y \varphi_{1,1}^1=&-\alpha^2m^2(1-m^2) U_s'(0)\int_{Y_1}^{+\infty}\frac{A(Y)}{(U_s-c)^2}\al^{-1}\f{e^{-\al Y}}{1-m^2} dY+\mathcal{O}(\al)\\
%\nonumber
%=&-\alpha m^2 U_s'(0)\int_{Y_1}^{+\infty}\frac{A(Y)}{(U_s-c)^2}e^{-\al Y} dY+\mathcal{O}(\al)\\
%\nonumber
%=&-\alpha m^2 U_s'(0)\int_{Y_1}^{+\infty}(1-m^2)e^{-\al Y} dY\\
%\nonumber
%&-\alpha m^2 U_s'(0)\int_{Y_1}^{+\infty}\Big(\frac{A(Y)}{(U_s-c)^2}-(1-m^2)\Big)e^{-\al Y} dY+\mathcal{O}(\al)\\
%\nonumber
%=&- m^2(1-m^2) U_s'(0)e^{-\al Y_1}+\mathcal{O}(\al)\\
%\nonumber
%=&- m^2(1-m^2) U_s'(0)+\mathcal{O}(\al).
%\end{align}

Next we give the estimates of $\pa_Y\varphi_2(Y), \pa_Y\varphi_3(Y).$ 
Thanks to $|A'|\leq C|U_s'|\leq  Ce^{-\eta_0 Y}$ and \eqref{est: A}, we use the same argument in $\pa_Y \varphi_1(Y)$ to infer that for any $Y\geq Y_0$,
\begin{align*}
|\pa_Y\varphi_2(Y)|\leq& C|\al|  e^{-\eta_0 Y},\\
|\pa_Y\varphi_3(Y)|\leq& C|\al | e^{-\eta_0 Y},
\end{align*}
and for $ Y\leq Y_0$,
\begin{align*}
|\pa_Y\varphi_2(Y)|\leq& C|\al| |\log c_i| ,\\
|\pa_Y\varphi_3(Y)|\leq& C|\al| |\log c_i|.
\end{align*}

Summing up the above estimates and applying \eqref{eq:est-phi4-norm} , we conclude that for any $Y\geq Y_0$,
\begin{align}\nonumber%\label{est: pa_Y phi_R}
|\pa_Y \varphi_R(Y)|\leq C|\al| e^{-\eta_0 Y},
\end{align}
and for $ Y\leq Y_0$,
\begin{align}
|\pa_Y\varphi_R(Y)|\leq& C|\al| |\log c_i|.\nonumber
\end{align}

It remains to prove \eqref{est: varphi_R-Y-1}.
Recall the equation of $\varphi_R$ in \eqref{eq: varphi_R}:
\begin{align*}
Ray[\varphi_R]=F_{\varphi_R},
\end{align*}
where 
\begin{align*}
F_{\varphi_R}=&-2\beta U_s' A^{-1}\varphi_{Ray}^{(0)}+\beta\frac{A'}{A^2}(U_s-c)\varphi_{Ray}^{(0)}-(A^{-1}\beta^2-\al^2)(U_s-c)\varphi_{Ray}^{(0)}.
\end{align*}
By Lemma \ref{lem: asymp-phi_0}, we have
\begin{align*}
\|\varphi_{Ray}^{(0)}\|_{\mathcal Y_{\eta_0}}\leq C,
\end{align*}
which implies 
\begin{align*}
\|e^{\eta_0 Y}F_{\varphi_R}\|_{L^\infty}+\|e^{\eta_0 Y}\pa_YF_{\varphi_R}\|_{L^\infty}+\|e^{\eta_0 Y}(U_s-c)\pa_Y^2F_{\varphi_R}\|_{L^\infty}\leq&C|\al|.
\end{align*}
Then it follows from Proposition \ref{pro: phi_non} that
\begin{align*}
\|\varphi_R\|_{\mathcal Y_{\eta_0}}\leq C|\alpha||\log c_i|.
\end{align*}
\end{proof}

With Lemma \ref{lem: asymp-phi_0} and Lemma \ref{lem: varphi_R} in hand, we directly conclude that 
\begin{proposition}\label{pro: varphi_Ray(0)}
Under the same assumptions as in Lemma \ref{lem: varphi_R},  there exists a solution $\varphi_{Ray}\in \tilde{\mathcal Y}_{\beta_r}$ to \eqref{eq:homo-ray}, which holds that
\begin{align*}
&\varphi_{Ray}(0)=-(1-m^2)c+\f{\beta}{U_s'(Y_c)}+\mathcal{O}((|\al|^2+|c|^2) |\log c_i|),\\
&\pa_Y\varphi_{Ray}(0)=(1-m^2)U_s'(0)+\mathcal{O}(|\al|| \log c_i|).
\end{align*}
In particular, we have
\begin{align*}
\f{\varphi_{Ray}(0)}{\pa_Y\varphi_{Ray}(0)}=-\f{c}{U_s'(0)}+\f{\beta}{(1-m^2)(U_s'(0))^2}+\mathcal{O}((|\al|^2+|c|^2) |\log c_i|).
\end{align*}
In addition, if $c_i\geq C_0|\varepsilon|^\f13$, we have
   \begin{align*}
	\|\varphi_{Ray}-\frac{A_\infty}{(1-c)^2}(U_s-c)e^{-\beta Y}\|_{\mathcal Y_{\eta_0}}\leq C|\alpha||\log c_i|.
\end{align*}

\end{proposition}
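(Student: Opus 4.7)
The plan is to combine the asymptotic expansions from Lemma \ref{lem: asymp-phi_0} for $\varphi_{Ray}^{(0)}$ with the estimates from Lemma \ref{lem: varphi_R} for $\varphi_R$, using the decomposition $\varphi_{Ray}=\varphi_{Ray}^{(0)}+\varphi_R$ that was set up precisely so that it satisfies \eqref{eq:homo-ray}. Membership in $\tilde{\mathcal Y}_{\beta_r}$ follows at once, since the leading profile $\varphi_{Ray}^{(0)}$ decays like $(U_s-c)e^{-\beta Y}$ with $\beta_r\sim|\alpha|\ll\eta_0$, and the remainder $\varphi_R$ already lies in the smaller space $\tilde{\mathcal Y}_{\eta_0}\subset\tilde{\mathcal Y}_{\beta_r}$.

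First I would evaluate at $Y=0$. Adding the expansion $\varphi_{Ray}^{(0)}(0)=-(1-m^2)c+\frac{2\beta}{U_s'(Y_c)}+\mathcal O((|\alpha|+|c|)|c||\log c_i|)$ from Lemma \ref{lem: asymp-phi_0} to $\varphi_R(0)=-\frac{\beta}{U_s'(Y_c)}+\mathcal O((|\alpha|^2+|c|^2)|\log c_i|)$ from Lemma \ref{lem: varphi_R} produces the cancellation $\frac{2\beta}{U_s'(Y_c)}-\frac{\beta}{U_s'(Y_c)}=\frac{\beta}{U_s'(Y_c)}$, and the error bounds collapse using $(|\alpha|+|c|)|c|\leq|\alpha|^2+2|c|^2$ to yield the first identity. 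Similarly, summing $\pa_Y\varphi_{Ray}^{(0)}(0)=(1-m^2)U_s'(0)+\mathcal O((|\alpha|+|c|)|\log c_i|)$ with $\pa_Y\varphi_R(0)=\mathcal O(|\alpha||\log c_i|)$ gives the claim for $\pa_Y\varphi_{Ray}(0)$.

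For the ratio, I would Taylor-expand. Since $|Y_c|\lesssim|c|$ and $U_s'$ is smooth, $\frac{1}{U_s'(Y_c)}=\frac{1}{U_s'(0)}+\mathcal O(|c|)$, so $\frac{\beta}{U_s'(Y_c)}=\frac{\beta}{U_s'(0)}+\mathcal O(|\alpha||c|)$. Then using the geometric series $\frac{1}{1+x}=1+\mathcal O(|x|)$ on the denominator, the leading term reads $-\frac{c}{U_s'(0)}+\frac{\beta}{(1-m^2)(U_s'(0))^2}$, while the cross-product of the leading numerator (of order $|\alpha|+|c|$) with the relative error of the denominator (of order $(|\alpha|+|c|)|\log c_i|$) is absorbed into $\mathcal O((|\alpha|^2+|c|^2)|\log c_i|)$. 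Finally, under the stronger hypothesis $c_i\geq C_0|\varepsilon|^{1/3}$, adding the $\mathcal Y_{\eta_0}$ bound for $\varphi_{Ray}^{(0)}-\frac{A_\infty}{(1-c)^2}(U_s-c)e^{-\beta Y}$ from \eqref{est: varphi^(0)_Ray,R} to the bound $\|\varphi_R\|_{\mathcal Y_{\eta_0}}\leq C|\alpha||\log c_i|$ from \eqref{est: varphi_R-Y-1} closes the last estimate. There is no substantial obstacle here: the proposition is essentially bookkeeping once the two lemmas are in place, and the only subtle point to track is the $\beta/U_s'(Y_c)$ cancellation between the leading Rayleigh profile and the Rayleigh correction $\varphi_R$, which is what produces the coefficient $1$ (rather than $2$) in front of $\beta/U_s'(Y_c)$ in $\varphi_{Ray}(0)$.
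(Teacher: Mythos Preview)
Your proposal is correct and follows exactly the paper's approach: the paper states immediately before the proposition that ``With Lemma \ref{lem: asymp-phi_0} and Lemma \ref{lem: varphi_R} in hand, we directly conclude that \ldots'' and gives no further argument. Your write-up simply fills in the bookkeeping (the $\beta/U_s'(Y_c)$ cancellation, the Taylor expansion for the ratio, and the addition of \eqref{est: varphi^(0)_Ray,R} and \eqref{est: varphi_R-Y-1}) that the paper leaves implicit.
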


\section{The Airy type equation}
This section is devoted to constructing the solution to the Airy type equation:
\begin{align}\label{eq:Airy-eq}
\left\{
\begin{aligned}
&\varepsilon(\partial_Y^2-\alpha^2)w-(U_s-c)w =F,\quad Y>0,\\
&(\partial_Y(A(Y)^{-1}\partial_Y)-\alpha^2)\psi=w,\quad Y>0,\\
&\lim_{Y\to\infty}\psi(Y)=0.
\end{aligned}
\right.
\end{align}
Here $\e=\f{\sqrt \nu}{\mathrm i\al}$, $c=c_r+\mathrm i c_i\in \mathbb{C}$, $\al=\al_r+\mathrm i\al_i=|\al|e^{-3\th_0 \mathrm i}\in \mathbb{C}$ with $|\al_i|\ll |\al_r|$ and $\th_0\in(-\f{\pi}{100}, \f{\pi}{100})$. 

The related results in this section will be applied to perform the Airy-Airy-Rayleigh iteration and the construction of fast mode of the homogeneous Orr-Sommerfeld equation \eqref{eq:OS-F} in section 5. Moreover, they are also used to construct the tangential velocity $u_{st}$ of the quasi-compressible system \eqref{eq: Stokes} in section 7.

\subsection{Langer transform and Airy function}
Since we construct a solution to \eqref{eq:Airy-eq} for $Y\geq 0$, we introduce the following modified Langer transform instead of linear approximation $Y-Y_c$. In details, we define
\begin{align}\label{def: eta_out^La}
	\eta_{out}^{La}(Y)=\left\{
	\begin{array}{ll}
		\left(\f32\int_{Y_c}^Y \left(\frac{U_s(Z)-c_r}{U_s'(Y_c)}\right)^\f12dZ \right)^\f23,\quad Y\geq Y_c\\
		-\left(\f32\int^{Y_c}_Y \left(\frac{c_r-U_s(Z)}{U_s'(Y_c)}\right)^\f12dZ \right)^\f23,\quad Y\leq Y_c,
	\end{array}
	\right.
\end{align}
and $\eta_{in}^{La}(Y)=Y-Y_c$. It is easy to verify that
\begin{align}\label{eq: eta_out^La}
U_s'(Y_c)\eta_{out}^{La}(\pa_Y\eta_{out}^{La})^2=U_s-c_r.
\end{align}

Now we introduce the modified Langer transformation
\begin{align}\label{eq:Langer-mod}
	\eta^{La}(Y)=\eta_r^{La}(Y)+\mathrm i\eta_i^{La},
\end{align}
where $\eta_i^{La}=-U_s'(Y_c)^{-1}c_i$ and 
\begin{align}\label{eq:Langer_r-mod}
	\eta_r^{La}(Y)=\chi\Big(\frac{Y-Y_c}{\kappa^{-1} M}\Big)\eta_{in}^{La}(Y)+\Big(1-\chi\Big(\frac{Y-Y_c}{\kappa^{-1} M}\Big)\Big)\eta_{out}^{La}(Y)
\end{align}
with $\chi(\cdot)$ a smooth function in $\mathbb R_+$ satisfying $\chi(Y)\equiv1$ on $[0,1]$ and $\chi(Y)=0$ on $[2,+\infty)$, and $\kappa=|\varepsilon|^{-\f13}U_s'(Y_c)^\f13$. 

We introduce the modified Airy function
\begin{align*}
	&A_1(Y)=-\mathrm i e^{-2\th_0 \mathrm i}|\varepsilon|^{-\f23}U_s'(Y_c)^{-\f13} Ai(e^{\mathrm i(\frac{\pi}{6}-\th_0)}\kappa\eta^{La}(Y)),\\
	&A_2(Y)=2\pi  Ai(e^{\mathrm i(\frac{5\pi}{6}-\th_0)}\kappa\eta^{La}(Y)).
\end{align*}
We define 
\begin{align*}
	&A_1(1,Y)=-\int_Y^{+\infty}A_1(Z)dZ,\qquad A_1(2,Y)=-\int_Y^{+\infty}A_1(1,Z)dZ,\\
	&A_2(1,Y)=\int_0^{Y}A_2(Z)dZ,\qquad A_2(2,Y)=\int_0^{Y}A_2(1,Z)dZ.
\end{align*}
By the properties of Airy function and the above definitions, we can derive that for $j=1,2$,
\begin{align}\label{eq:Airy-Err-app}
	\varepsilon(\partial_Y^2-\alpha^2)A_j(Y)-(U_s-c)A_j(Y)=Err_1(Y)A_j+Err_2(Y)\partial_YA_j,
\end{align}
where
\begin{align*}
	Err_1(Y)=U_s'(Y_c)\eta^{La}(\partial_Y\eta^{La})^2-\varepsilon\alpha^2-(U_s-c),\quad Err_2(Y)=\frac{\varepsilon\partial_Y^2\eta^{La}(Y)}{\partial_Y\eta^{La}(Y)}.
\end{align*}
We shall see that $Err_1$ and $Err_2$ are small errors actually.
Moreover, by the property of Airy function, we find that for any $z\in\mathbb C$,
\begin{align*}
	Ai(z)(Ai(e^{\mathrm i\frac{2\pi}{3}}z))'-Ai'(z)Ai(e^{\mathrm i\frac{2\pi}{3}}z)=\frac{1}{2\pi}e^{-\mathrm i\frac{\pi}{6}},
\end{align*}
which implies 
\begin{align}\label{eq:Airy-wron}
	A_1(Y)\partial_Y A_2(Y)-\partial_Y A_1(Y)A_2(Y)=\varepsilon^{-1}\partial_Y\eta^{La}(Y).
\end{align}

For given source term $e^{\vartheta Y}F\in L^\infty$, we define
\begin{align}\label{eq:airy-w-app}
	w_{app}(Y)=&A_1(Y)\int_0^YA_2(Z)\partial_Y\eta^{La}(Z)^{-1} F(Z)dZ\\
	&+A_2(Y)\int_Y^{+\infty}A_1(Z)\partial_Y\eta^{La}(Z)^{-1}F(Z)dZ,\nonumber
\end{align} 
and 
\begin{align}\label{eq:airy-psi-app}
	\psi_{app}(Y)=\int_Y^{+\infty}A(Y')\int_{Y'}^{+\infty}w_{app}(Z)dZ.
\end{align}
Then we find that $w_{app}$ is a solution to the following equations
\begin{align}\label{eq:Airy-app-Fs}
	\begin{split}
		&\varepsilon(\partial_Y^2-\alpha^2)w_{app}-(U_s-c)w_{app}=F+Err_1w_{app}+Err_2\partial_Yw_{app},\\
		&\pa_Y(A^{-1}\pa_Y)\psi_{app}(Y)=w_{app}(Y),\quad\quad\lim_{Y\to\infty}\psi_{app}(Y)=\lim_{Y\to\infty}w_{app}(Y)=0,
	\end{split}
\end{align}
where $A(Y)=1-m^2(U_s-c)^2$. We will show that \eqref{eq:Airy-app-Fs} is an appropriate approximation of \eqref{eq:Airy-eq}.

In what follows, we decompose $\mathbb R_+\cup\{0\}=\mathcal N^-\cup\mathcal N\cup\mathcal N^+,$ where
\begin{align}
	&\mathcal N^+=\{Y\geq Y_c:|\kappa\eta^{La}(Y)|\geq 3M\},\quad\mathcal N=\{Y:|\kappa\eta^{La}(Y)|\leq 3M\}\label{def: N^+},\\
	&\mathcal N^-=\{0\leq Y\leq Y_c:|\kappa\eta^{La}(Y)|\geq 3M\}\label{def: N^-}.
\end{align}

 \subsection{Estimates of the Airy function}
 
 Since we introduce the modified  Langer transformation involved in the construction of $w_{app}$, we need to obtain some useful estimates about $\eta^{La}(Y)$ and the modified Airy function. 
\begin{lemma}\label{lem:est-eta}
	Let $|\alpha|,|\varepsilon|,|c|\ll1$ and $c_r>0$. Let $0<L\leq 1$ be a small number in Lemma \ref{lem:classical-Langer}. Then $\eta_r^{La}(Y)\in C^{\infty}(\mathbb R_+)$ satisfies that following properties: 
	
    \begin{enumerate}
    	\item For any $|Y-Y_c|\leq L$, 
    	      \begin{align*}
	           |\eta_r^{La}(Y)-(Y-Y_c)|\leq C|Y-Y_c|^2,
              \end{align*}
        	  and for any $|Y-Y_c|\geq L$,
        	  \begin{align*}
	           C^{-1}(1+|Y-Y_c|)^\f23\leq |\eta_r^{La}(Y)|\leq C(1+|Y-Y_c|)^\f23.
              \end{align*}
    	\item There exits $0<C_1\leq C_2<+\infty$ such that for any $Y\geq 0$,
    	      \begin{align*}
    	      	C_1(1+|Y-Y_c|)^{-\f13}\leq |\partial_Y \eta_{r}^{La}(Y)|\leq C_2(1+|Y-Y_c|)^{-\f13}.
    	      \end{align*}
    	      Moreover, for any $|Y-Y_c|\leq L$,
    	      \begin{align*}
    	      	|\partial_Y \eta_{r}^{La}(Y)-1|\leq C|Y-Y_c|.
    	      \end{align*}
    	\item For any $Y\geq 0$,
              \begin{align*}
	           |\partial_Y^2\eta_{r}^{La}(Y)|\leq C(1+|Y-Y_c|)^{-\f43}.
              \end{align*}
    	\end{enumerate}
    	
	\end{lemma}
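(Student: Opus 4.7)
The plan is to establish the three properties of $\eta_r^{La}$ by separately analyzing the inner piece $\eta_{in}^{La}(Y)=Y-Y_c$ (where everything is trivial: $\partial_Y\eta_{in}^{La}\equiv 1$, $\partial_Y^2\eta_{in}^{La}\equiv 0$) and the outer piece $\eta_{out}^{La}$, and then gluing them through the cutoff $\chi(\cdot/(\kappa^{-1}M))$. The key observation driving the proof is that the transition takes place in a window of size $\kappa^{-1}M=|\varepsilon|^{1/3}U_s'(Y_c)^{-1/3}M$ which becomes much smaller than the Taylor-expansion radius $L$ once $|\varepsilon|\ll 1$, so all gluing happens in the regime where inner and outer expressions agree to quadratic order.

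For the outer piece, I would exploit the defining identity \eqref{eq: eta_out^La}, namely $U_s'(Y_c)\eta_{out}^{La}(\partial_Y\eta_{out}^{La})^2=U_s-c_r$. Near $Y_c$, Taylor expanding $U_s(Z)-c_r=U_s'(Y_c)(Z-Y_c)+O((Z-Y_c)^2)$ inside the integral in \eqref{def: eta_out^La} gives
\[
\tfrac{3}{2}\int_{Y_c}^Y\Big(\tfrac{U_s(Z)-c_r}{U_s'(Y_c)}\Big)^{1/2}dZ=(Y-Y_c)^{3/2}\bigl(1+O(|Y-Y_c|)\bigr),
\]
so $\eta_{out}^{La}(Y)=(Y-Y_c)+O(|Y-Y_c|^2)$ on $|Y-Y_c|\leq L$. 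For $|Y-Y_c|\geq L$, the structure assumption \eqref{eq:S-A} together with $|c|\ll 1$ guarantees that the integrand is bounded above and below by positive constants (after $c_r$ is chosen small enough so that $Y_c$ is close to $0$), giving the desired sharp two-sided bound $C^{-1}(1+|Y-Y_c|)^{2/3}\leq |\eta_{out}^{La}|\leq C(1+|Y-Y_c|)^{2/3}$. The pointwise bound $(\partial_Y\eta_{out}^{La})^2=(U_s-c_r)/(U_s'(Y_c)\eta_{out}^{La})$ then delivers statement $(2)$ for $\eta_{out}^{La}$, and differentiating the defining identity once more yields $(\partial_Y\eta_{out}^{La})^3+2\eta_{out}^{La}\partial_Y\eta_{out}^{La}\partial_Y^2\eta_{out}^{La}=U_s'/U_s'(Y_c)$, which isolates $\partial_Y^2\eta_{out}^{La}$ and, after plugging the previous two estimates, gives $|\partial_Y^2\eta_{out}^{La}|\leq C(1+|Y-Y_c|)^{-4/3}$.

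To glue via \eqref{eq:Langer_r-mod}, note that outside the support of $\chi'(\cdot)$ we have $\eta_r^{La}$ equal to either $\eta_{in}^{La}$ or $\eta_{out}^{La}$ and the estimates are already established. Inside the transition window $|Y-Y_c|\in[\kappa^{-1}M,2\kappa^{-1}M]$, both pieces satisfy $|\eta_{in}^{La}-(Y-Y_c)|+|\eta_{out}^{La}-(Y-Y_c)|\leq C|Y-Y_c|^2\leq C(\kappa^{-1}M)^2$, so their difference is $O((\kappa^{-1}M)^2)$. When one differentiates the convex combination, the term coming from $\chi'$ contributes
\[
\chi'\bigl(\tfrac{Y-Y_c}{\kappa^{-1}M}\bigr)\kappa M^{-1}(\eta_{in}^{La}-\eta_{out}^{La})=O(\kappa^{-1}M),
\]
which is comparable to $|Y-Y_c|$ in this window, so the bound $|\partial_Y\eta_r^{La}-1|\leq C|Y-Y_c|$ is preserved. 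The $\chi''$ term similarly contributes $\kappa^2 M^{-2}\cdot O((\kappa^{-1}M)^2)=O(1)$, which matches the bound $C(1+|Y-Y_c|)^{-4/3}\sim C$ in the transition zone.

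I expect the main obstacle to be precisely this matching at the transition: verifying that all the cross terms generated by derivatives of $\chi$ remain compatible with the sharp pointwise bounds, and, before that, justifying the Taylor expansion of $\eta_{out}^{La}$ on a fixed window $|Y-Y_c|\leq L$ uniformly in the parameters (using smallness of $|c|$ so that $Y_c$ stays close to $0$ and $U_s'$ does not vanish on $[Y_c-L,Y_c+L]$, thanks to $U_s'(0)>0$ and the structure assumption \eqref{eq:S-A}). Once these two estimates are in place, properties $(1)$--$(3)$ follow by simple case distinction on whether $|Y-Y_c|\leq\kappa^{-1}M$, is in the transition window, or exceeds $2\kappa^{-1}M$.
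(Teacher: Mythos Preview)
Your proposal is correct and follows essentially the same approach as the paper: establish the estimates for $\eta_{out}^{La}$ (which the paper simply cites from Lemma~\ref{lem:classical-Langer}, whereas you re-derive them inline via the Taylor expansion and the defining identity \eqref{eq: eta_out^La}), and then glue the inner and outer pieces through the cutoff by a case analysis on $|Y-Y_c|\leq\kappa^{-1}M$, the transition window, and $|Y-Y_c|\geq 2\kappa^{-1}M$, controlling the $\chi'$ and $\chi''$ cross terms exactly as you describe. The matching of orders in the transition zone is identical to the paper's computation.
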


\begin{remark}
From the above Lemma, we know that the real part of the modified Langer transformation $\eta_r^{La}(Y)$ behaves like $Y-Y_c$ if $|Y-Y_c|\leq L$. Such region is quite larger than the thickness of sublayer $|\varepsilon|^{\f13}$. It means that the Langer transformation is nearly linear in the sublayer. The imaginary part of $\eta^{La}(Y)$ is a constant, which implies that $\partial_Y\eta^{La}(Y)\sim1$ in the sublayer. 
\end{remark}

\begin{proof}
We first show the results about the bounds of $\eta^{La}_r(Y)$. According to Lemma \ref{lem:classical-Langer}, we know that for any $0<|Y-Y_c|\leq L$ ($c_r\ll  L\leq 1$ is a small constant in Lemma \ref{lem:classical-Langer}),
\begin{align}\label{eq:airy-eta-out}
	|\eta_{out}^{La}(Y)-(Y-Y_c)|\leq C|Y-Y_c|^2.
\end{align}

By the definition \eqref{eq:Langer-mod}, we know that for any $|Y-Y_c|\leq \kappa^{-1} M$,
\begin{align*}
	\eta_r^{La}(Y)=Y-Y_c,
\end{align*}
and for any $|Y-Y_c|\in[\kappa^{-1} M,2\kappa^{-1} M]$,
\begin{align*}
	\eta_r^{La}=&\chi\Big(\frac{Y-Y_c}{\kappa^{-1} M}\Big)(Y-Y_c)+\Big(1-\chi\Big(\frac{Y-Y_c}{\kappa^{-1} M}\Big)\Big)\eta_{out}^{La}(Y)\\
	=&(Y-Y_c)+\Big(1-\chi\Big(\frac{Y-Y_c}{\kappa^{-1} M}\Big)\Big)(\eta_{out}^{La}(Y)-(Y-Y_c)),
\end{align*}
and for any $|Y-Y_c|\geq 2\kappa^{-1} M$,
\begin{align*}
	\eta_r^{La}(Y)=\eta_{out}^{La}(Y)=(Y-Y_c)+(\eta_{out}^{La}(Y)-(Y-Y_c)).
\end{align*}
Since $\kappa^{-1} M\ll 1$, we obtain that for any $|Y-Y_c|\leq L$,
\begin{align*}
	|\eta_r^{La}(Y)-(Y-Y_c)|\leq C|Y-Y_c|^2.
\end{align*}

Again by Lemma \ref{lem:classical-Langer}, we know that for any $|Y-Y_c|\geq L$,
\begin{align*}
C^{-1}(1+|Y-Y_c|)^\f23\leq	|\eta_{out}^{La}(Y)|\leq C(1+|Y-Y_c|)^\f23,
\end{align*}
from which and the definition \eqref{eq:Langer_r-mod}, we infer that for any $|Y-Y_c|\geq L$,
\begin{align*}
C^{-1}(1+|Y-Y_c|)^\f23	\leq|\eta_r^{La}(Y)|\leq C(1+|Y-Y_c|)^\f23.
\end{align*}	
This proves the first statement of the lemma. 

Now we turn to show the results for $\partial_Y\eta_r^{La}$. By the definition of $\partial_Y\eta_r^{La}(Y)$, we know that 
\begin{align*}
	\partial_Y\eta^{La}_r(Y)=&\chi\Big(\frac{Y-Y_c}{\kappa^{-1} M}\Big)+\Big(1-\chi\Big(\frac{Y-Y_c}{\kappa^{-1} M}\Big)\Big)\partial_Y\eta_{out}^{La}(Y)\\
	&+(\kappa^{-1} M)^{-1}\chi'\Big(\frac{Y-Y_c}{\kappa^{-1} M}\Big)(Y-Y_c-\eta_{out}^{La}(Y))\\
	=&1+\Big(1-\chi\Big(\frac{Y-Y_c}{\kappa^{-1} M}\Big)\Big)(\partial_Y\eta_{out}^{La}(Y)-1)\\
	&+(\kappa^{-1} M)^{-1}\chi'\Big(\frac{Y-Y_c}{\kappa^{-1} M}\Big)(Y-Y_c-\eta_{out}^{La}(Y)),
\end{align*}
which gives
\begin{align*}
	|\partial_Y\eta_r^{La}(Y)-1|\leq C|\partial_Y\eta_{out}^{La}(Y)-1|+C(\kappa^{-1}M)^{-1}\Big|\chi'\Big(\frac{Y-Y_c}{\kappa^{-1} M}\Big)\Big||\eta_{out}^{La}(Y)-(Y-Y_c)|.
\end{align*}
By Lemma \ref{lem:classical-Langer}, we have
\begin{align}\label{eq:airy-eta-out1}
	\begin{split}
	&|\partial_Y \eta_{out}^{La}(Y)-1|\leq C|Y-Y_c|,\qquad |Y-Y_c|\leq L,
	\end{split}
\end{align}
which along with  \eqref{eq:airy-eta-out}  show that for any $|Y-Y_c|\leq 2\kappa^{-1} M$,
\begin{align*}
	|\partial_Y\eta_r^{La}(Y)-1|\leq C|Y-Y_c|.
\end{align*}

For any $|Y-Y_c|\geq L$, we know that 
\begin{align*}
	C_2(1+|Y-Y_c|)^{-\f23}\geq (\pa_Y\eta_{out}^{La})^2(Y)=\frac{U_s(Y)-c_r}{U_s'(Y_c)\eta_{out}^{La}(Y)}\geq C_1(1+|Y-Y_c|)^{-\f23}.
\end{align*} 
And for any $|Y-Y_c|\geq 2\kappa^{-1}M$, $\partial_Y\eta_r^{La}(Y)=\partial_Y\eta_{out}^{La}(Y)$. Thus, we deduce that 
\begin{align*}
		&|\partial_Y\eta_{r}^{La}(Y)-1|\leq C|Y-Y_c|,\quad |Y-Y_c|\leq L,\\
	&C_1(1+|Y-Y_c|)^{-\f13}\leq |\partial_Y\eta_{r}^{La}(Y)|\leq C_2(1+|Y-Y_c|)^{-\f13},\quad|Y-Y_c|\geq L.
\end{align*}
This finishes the proof second statement of the lemma. 

Next we show the results for $\partial_Y^2\eta_r^{La}$. We notice that 
\begin{align*}
	\partial_Y^2\eta_r^{La}(Y)=&(\kappa^{-1} M)^{-2}\chi''\Big(\frac{Y-Y_c}{\kappa^{-1} M} \Big)\big( (Y-Y_c)-\eta_{out}^{La}(Y) \big)\\
	&+2(\kappa^{-1} M)^{-1}\chi'\Big(\frac{Y-Y_c}{\kappa^{-1} M} \Big)( 1-\partial_Y \eta_{out}^{La}(Y) )\\
	&+\Big(1-\chi\Big(\frac{Y-Y_c}{\kappa^{-1} M} \Big)\Big)\partial_Y^2\eta_{out}^{La}(Y).
\end{align*}
By Lemma \ref{lem:classical-Langer}, we know that for any $|Y-Y_c|>0$,
\begin{align*}
	|\partial_Y^2\eta_{out}^{La}(Y)|\leq C(1+|Y-Y_c|)^{-\f43}.
\end{align*}
Therefore, for $|Y-Y_c|\leq 2\kappa^{-1}M$, we get by \eqref{eq:airy-eta-out} and \eqref{eq:airy-eta-out1} that
\begin{align*}
	|\partial_Y^2\eta_r^{La}(Y)|\leq C|\varepsilon|^{-\f23}|Y-Y_c|^2+C|\varepsilon|^{-\f13}|Y-Y_c|+C\leq C.
\end{align*}
For $|Y-Y_c|>2\kappa^{-1}M$, we know that 
\begin{align*}
	|\partial_Y^2\eta_r^{La}(Y)|=|\partial_Y^2\eta_{out}^{La}(Y)|\leq C(1+|Y-Y_c|)^{-\f43}.
\end{align*}
Therefore, we obtain that for any $Y\geq 0$,
\begin{align*}
	|\partial_Y^2\eta_{r}^{La}(Y)|\leq C(1+|Y-Y_c|)^{-\f43}.
\end{align*}

\end{proof}

Based on the above lemma, we show the following estimates about the errors $Err_1(Y)$ and $Err_2(Y)$.

\begin{lemma}\label{lem:err1-err2}
	Let $|\alpha|,|\varepsilon|,|c|\ll1$ and $c_r>0$, $|c_i|\leq C|\varepsilon|^\f13$. Let  $0<L\leq 1$ be a small number in Lemma \ref{lem:classical-Langer}. Then it holds that
\begin{align*}
	&|Err_1(Y)|\leq C|\eta_r^{La}(Y)|(|c_i|+|\eta_r^{La}(Y)|)+C|\e||\al|^2,\qquad |Y-Y_c|\leq 2\kappa^{-1} M\\
		&|Err_1(Y)|\leq C|c_i||\eta_r^{La}(Y)|+C|\e||\al|^2,\qquad 2\kappa^{-1} M\leq |Y-Y_c|\leq L,\\
		&|Err_1(Y)|\leq C|c_i|,\qquad|Y-Y_c|\geq L,
\end{align*}
and 
\begin{align*}
	|Err_2(Y)|\leq C|\varepsilon|.
\end{align*}
\end{lemma}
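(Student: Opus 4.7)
The plan is to split
\[
Err_1(Y) = \bigl[U_s'(Y_c)\eta_r^{La}(\partial_Y\eta_r^{La})^2-(U_s-c_r)\bigr] + \mathrm i c_i\bigl[1-(\partial_Y\eta_r^{La})^2\bigr] - \varepsilon\alpha^2,
\]
which uses $\eta^{La}=\eta_r^{La}+\mathrm i\eta_i^{La}$ with $\eta_i^{La}=-U_s'(Y_c)^{-1}c_i$, so that $\partial_Y\eta^{La}=\partial_Y\eta_r^{La}\in\mathbb R$. The key observation is that on the outer set where $\eta_r^{La}\equiv\eta_{out}^{La}$ the first bracket vanishes identically by the defining identity \eqref{eq: eta_out^La}. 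This reduces $Err_1$ outside the transition layer to the much simpler imaginary term plus the benign $\varepsilon\alpha^2$ contribution, so the heart of the work is the two brackets.

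For the first bracket, I will treat the three regions separately. For $|Y-Y_c|\le \kappa^{-1}M$ one has $\eta_r^{La}=Y-Y_c$ and $\partial_Y\eta_r^{La}=1$, so the bracket equals $U_s'(Y_c)(Y-Y_c)-(U_s-c_r)$, which is $O(|Y-Y_c|^2)=O(|\eta_r^{La}|^2)$ by Taylor's theorem applied to $U_s$ at $Y_c$. For $\kappa^{-1}M\le|Y-Y_c|\le 2\kappa^{-1}M$ (the transition region), $\eta_r^{La}$ is a convex combination of $Y-Y_c$ and $\eta_{out}^{La}(Y)$; using Lemma's bounds $|\eta_{out}^{La}(Y)-(Y-Y_c)|\le C|Y-Y_c|^2$, $|\partial_Y\eta_{out}^{La}-1|\le C|Y-Y_c|$, together with $|Y-Y_c|\le C|\varepsilon|^{1/3}\ll 1$, a direct Taylor expansion of the product $\eta_r^{La}(\partial_Y\eta_r^{La})^2$ produces the same $O(|\eta_r^{La}|^2)$ bound (the cutoff derivative contributes $\kappa M^{-1}\cdot|Y-Y_c|^2\lesssim |Y-Y_c|$, which stays $\le C|\eta_r^{La}|$). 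For $|Y-Y_c|\ge 2\kappa^{-1}M$ the first bracket is zero.

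For the second bracket $\mathrm i c_i[1-(\partial_Y\eta_r^{La})^2]$, in the inner region $|Y-Y_c|\le \kappa^{-1}M$ it vanishes because $\partial_Y\eta_r^{La}=1$; for $\kappa^{-1}M\le |Y-Y_c|\le L$ the Lemma gives $|\partial_Y\eta_r^{La}-1|\le C|Y-Y_c|$ which converts to $|1-(\partial_Y\eta_r^{La})^2|\le C|\eta_r^{La}|$; and for $|Y-Y_c|\ge L$ the Lemma's two-sided bound on $\partial_Y\eta_r^{La}$ gives $|1-(\partial_Y\eta_r^{La})^2|\le C$. Combining with the first bracket estimate and the harmless $\varepsilon\alpha^2$ term yields the three claimed bounds for $|Err_1|$.

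The estimate for $Err_2=\varepsilon\,\partial_Y^2\eta_r^{La}/\partial_Y\eta_r^{La}$ is then immediate: the Lemma gives the upper bound $|\partial_Y^2\eta_r^{La}|\le C(1+|Y-Y_c|)^{-4/3}$ and the lower bound $|\partial_Y\eta_r^{La}|\ge C(1+|Y-Y_c|)^{-1/3}$, so the quotient is bounded by $C(1+|Y-Y_c|)^{-1}\le C$ uniformly. The only step requiring genuine care is the transition region analysis for $Err_1$, since neither the exact identity $U_s'(Y_c)\eta_{out}^{La}(\partial_Y\eta_{out}^{La})^2=U_s-c_r$ nor the purely linear inner form is available; but the region has width only $O(\kappa^{-1}M)=O(|\varepsilon|^{1/3}M)$, so Taylor's theorem combined with the quantitative estimates of Lemma on $\eta_{out}^{La}-(Y-Y_c)$ keeps the error at the quadratic order $|\eta_r^{La}|^2$.
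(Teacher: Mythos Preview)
Your proposal is correct and follows essentially the same approach as the paper. Your decomposition
\[
Err_1=\bigl[U_s'(Y_c)\eta_r^{La}(\partial_Y\eta_r^{La})^2-(U_s-c_r)\bigr]+\mathrm i c_i\bigl[1-(\partial_Y\eta_r^{La})^2\bigr]-\varepsilon\alpha^2
\]
is algebraically identical to the paper's (the paper just writes the first bracket with the cutoff displayed, as $\chi[\,U_s'(Y_c)(Y-Y_c)(\partial_Y\eta^{La})^2-(U_s-c_r)\,]+(1-\chi)[\,U_s'(Y_c)\eta_{out}^{La}(\partial_Y\eta^{La})^2-(U_s-c_r)\,]$), and your case analysis and treatment of $Err_2$ match the paper's use of Lemma~\ref{lem:est-eta} and the identity~\eqref{eq: eta_out^La}; you merely supply more explicit detail in the transition layer than the paper's one-line appeal to those results.
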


\begin{proof}
	 We first show the estimates of $Err_1(Y)$. Recall that 
	\begin{align*}
		Err_1(Y)=U_s'(Y_c)\eta^{La}(\partial_Y\eta^{La})^2-\varepsilon\alpha^2-(U_s-c).
	\end{align*}
	 By the definition of $\eta^{La}(Y)$, we have
	\begin{align*}
		\eta^{La}(Y)=\chi\Big(\frac{Y-Y_c}{\kappa^{-1} M}\Big)\eta_{in}^{La}(Y)+\Big(1-\chi\Big(\frac{Y-Y_c}{\kappa^{-1} M}\Big)\Big)\eta_{out}^{La}(Y)-\mathrm i U_s'(Y_c)^{-1}c_i,
	\end{align*}
	which gives
	\begin{align*}
		Err_1(Y)=&\chi\Big(\frac{Y-Y_c}{\kappa^{-1} M}\Big)\left( U_s'(Y_c)(Y-Y_c)(\pa_Y\eta^{La})^2- (U_s-c_r)\right)\\
		&+\Big(1-\chi\Big(\frac{Y-Y_c}{\kappa^{-1} M}\Big)\Big)(U_s'(Y_c)\eta_{out}^{La}(Y)(\partial_Y\eta^{La})^2-(U_s-c_r))\\
		&+\mathrm i(1-(\partial_Y\eta^{La})^2)c_i-\e \al^2.
	\end{align*}
	Then by Lemma \ref{lem:est-eta} and \eqref{eq: eta_out^La}, we obtain that for $|Y-Y_c|\leq 2\kappa^{-1} M$,
	\begin{align}\label{eq:eta-Err1}
		|Err_1(Y)|\leq C|Y-Y_c|^2+C|c_i||Y-Y_c|+C|\e||\al|^2.
	\end{align}
	
	For $|Y-Y_c|\geq 2\kappa^{-1} M$, we know that 
	\begin{align*}
		\eta^{La}(Y)=\eta_{out}^{La}(Y)-\mathrm i U_s'(Y_s)^{-1}c_i,
	\end{align*}
	which along with the fact $(\partial_Y\eta_{out}^{La}(Y))^2U_s'(Y_c)\eta_{out}^{La}(Y)=(U_s-c_r)$ implies that 
	\begin{align*}
		Err_1(Y)=&U_s'(Y_c)\eta_{out}^{La}(Y)\partial_Y\eta^{La}_{out}(Y)^2-(U_s-c_r)+ic_i(1-\partial_Y\eta^{La}(Y)^2)-\e \al^2\\
		=&\mathrm  i c_i(1-\partial_Y\eta^{La}(Y)^2)-\e \al^2.
	\end{align*}
From Lemma \ref{lem:est-eta}, we deduce that for any $Y\in\{Y\geq 0:|Y-Y_c|\geq 2\kappa^{-1} M,|Y-Y_c|\leq L\}$,
	\begin{align*}
		|Err_1(Y)|\leq C|c_i||Y-Y_c|+C|\e||\al|^2,
	\end{align*}
	and for any $Y\in\{Y\geq 0: |Y-Y_c|\ge L\}$,
	\begin{align*}
		|Err_1(Y)|\leq C|c_i|+C|\e||\al|^2\leq C|c_i|,
	\end{align*}
	which along with \eqref{eq:eta-Err1} show that 
	\begin{align*}
	&|Err_1(Y)|\leq C|\eta_r^{La}(Y)|(|c_i|+|\eta_r^{La}(Y)|)+C|\e||\al|^2,\qquad |Y-Y_c|\leq 2\kappa^{-1} M\\
		&|Err_1(Y)|\leq C|c_i||\eta_r^{La}(Y)|+C|\e||\al|^2,\qquad 2\kappa^{-1} M\leq |Y-Y_c|\leq L,\\
		&|Err_1(Y)|\leq C|c_i|,\qquad|Y-Y_c|\geq L.
\end{align*}

Next we show the estimates of $Err_2(Y)$. Recall that $Err_2(Y)=\frac{\varepsilon\partial_Y^2\eta^{La}(Y)}{\partial_Y\eta^{La}(Y)}$. By Lemma \ref{lem:est-eta}, we know that for any $|Y-Y_c|\leq L$,
\begin{align*}
	\left|\frac{\varepsilon\partial_Y^2\eta^{La}(Y)}{\partial_Y\eta^{La}(Y)}\right|\leq C|\varepsilon|.
\end{align*}
which implies that for any $|Y-Y_c|\leq L$, $|Err_2(Y)|\leq C|\varepsilon|.$ For $|Y-Y_c|\geq L$, by Lemma \ref{lem:est-eta} again, we have
\begin{align*}
	|\partial_Y\eta^{La}(Y)|\geq C(1+|Y-Y_c|)^{-\f13}\text{ and }|\partial_Y^2\eta^{La}(Y)|\leq C(1+|Y-Y_c|)^{-\f43},
\end{align*}
which implies 
\begin{align*}
	|Err_2(Y)|\leq C|\varepsilon|.
\end{align*}

\end{proof}

Finally, we show the estimates of the Green function of the approximate Airy equation \eqref{eq:Airy-app-Fs}.
\begin{lemma}\label{lem:A1A2}
	Let $0<\delta_0\ll1\leq M$ be the constants in Lemma \ref{lem:Airy-p1}.  Suppose $|\kappa\eta_i^{La}|<\delta_0$  and $|\alpha|,|\varepsilon|, |c|\ll1$. Then for any $0\leq Z\leq Y$, there exists $\gamma_0>0$ such that for $k=0,1,2$ and $j=0,1,2$,
\begin{align*}
		|\partial_Y^k&A_1(Y)A_2(j,Z)|\\
		\leq& C|\varepsilon|^{-\frac{2+k-j}{3}}\mathcal M(k,Y)\mathcal M(-j,Z)e^{-\gamma_0|\varepsilon|^{-\f13}|\eta^{La}(Y)-\eta^{La}(Z)|(|\kappa\eta^{La}(Y)|^\f12+|\kappa\eta^{La}(Z)|^\f12)},
	\end{align*}
	and 
	\begin{align*}
		|\partial_Y^k&A_2(Z)A_1(j,Y)|\\
		\leq& C|\varepsilon|^{-\frac{2+k-j}{3}}\mathcal M(k,Z)\mathcal M(-j,Y) e^{-\gamma_0|\varepsilon|^{-\f13}|\eta^{La}(Y)-\eta^{La}(Z)|(|\kappa\eta^{La}(Y)|^\f12+|\kappa\eta^{La}(Z)|^\f12)},
	\end{align*}
	where $\mathcal M(l;Y)=|\partial_Y\eta^{La}(Y)|^l|\kappa\eta^{La}(Y)|^{-\f14+\f l2}$ for $Y\in \mathcal{N}^{+}\cup \mathcal{N}^{-}$ and $\mathcal M(l;Y)=M^{-\f14+\f l2}$ for $Y\in \mathcal{N}$.
\end{lemma}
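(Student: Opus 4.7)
\textbf{Proof plan for Lemma \ref{lem:A1A2}.} The plan is to reduce every quantity on the left to an explicit bound on the (shifted) Airy function $Ai(\zeta)$ and its iterated antiderivatives evaluated at $\zeta = e^{i(\pi/6-\theta_0)}\kappa\eta^{La}(Y)$ or $\zeta = e^{i(5\pi/6-\theta_0)}\kappa\eta^{La}(Y)$, then combine these pointwise estimates using Lemma \ref{lem:est-eta} on $\eta^{La}$. The phase rotations $e^{i(\pi/6-\theta_0)}$ and $e^{i(5\pi/6-\theta_0)}$ are exactly chosen so that for $\eta_r^{La}(Y) \geq 0$ the argument of $Ai$ for $A_1$ lies in the sector of decay at infinity, while for $A_2$ it lies in a sector where $Ai$ grows (so the relevant decay comes from integrating from $0$). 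I would split the analysis into two regimes: the outer region where $|\kappa\eta^{La}(Y)| \geq M$, in which I use the classical asymptotics $Ai(\zeta) \sim \tfrac{1}{2\sqrt\pi}\zeta^{-1/4}e^{-\frac{2}{3}\zeta^{3/2}}$ and $Ai'(\zeta) \sim -\tfrac{1}{2\sqrt\pi}\zeta^{1/4}e^{-\frac{2}{3}\zeta^{3/2}}$ valid in $|\arg\zeta| < \pi$; and the inner region $|\kappa\eta^{La}(Y)|\le 3M$, where $Ai$ and $Ai'$ are uniformly bounded analytic functions so the factor $\mathcal M(l;Y)\sim M^{-1/4+l/2}$ is trivial.

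In the outer region, applying the Airy asymptotics yields, up to harmless constants,
\begin{align*}
|A_1(Y)| &\le C|\varepsilon|^{-2/3}|\kappa\eta^{La}(Y)|^{-1/4} e^{-\frac{2}{3}\mathrm{Re}\,(e^{i(\pi/6-\theta_0)}\kappa\eta^{La}(Y))^{3/2}}, \\
|A_2(Z)| &\le C|\kappa\eta^{La}(Z)|^{-1/4} e^{-\frac{2}{3}\mathrm{Re}\,(e^{i(5\pi/6-\theta_0)}\kappa\eta^{La}(Z))^{3/2}},
\end{align*}
which already gives the base cases $k=j=0$ provided I can re-express the sum of these two real parts of $3/2$-powers in the prescribed compound form. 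The key algebraic fact is that for $\zeta_1,\zeta_2$ lying in the correct sectors one has $\mathrm{Re}\,\zeta_1^{3/2}+\mathrm{Re}\,\zeta_2^{3/2} \ge \gamma_0 |\zeta_1-\zeta_2|(|\zeta_1|^{1/2}+|\zeta_2|^{1/2})$ for some $\gamma_0>0$; this comes from factoring $\zeta_1^{3/2}-\zeta_2^{3/2}=(\zeta_1-\zeta_2)\int_0^1 \tfrac{3}{2}(t\zeta_1+(1-t)\zeta_2)^{1/2}dt$ together with the sign of the phase on each sector. Translating $\zeta_j = e^{i\phi_j}\kappa\eta^{La}(Y_j)$ back and using Lemma \ref{lem:est-eta} to control $|\eta^{La}(Y)-\eta^{La}(Z)|$ versus $|Y-Z|$, the prescribed exponential factor $e^{-\gamma_0|\varepsilon|^{-1/3}|\eta^{La}(Y)-\eta^{La}(Z)|(|\kappa\eta^{La}(Y)|^{1/2}+|\kappa\eta^{La}(Z)|^{1/2})}$ emerges after rewriting $\kappa = |\varepsilon|^{-1/3}U_s'(Y_c)^{1/3}$.

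For derivatives $\partial_Y^k$ I use $\partial_Y = \partial_Y\eta^{La}\cdot\partial_{\eta^{La}}$, the identity $Ai''(\zeta)=\zeta Ai(\zeta)$, and Lemma \ref{lem:est-eta} to distribute the factors $\partial_Y\eta^{La}$ and $\kappa$; each application of $\partial_Y$ produces exactly the weight $(\partial_Y\eta^{La})^{1}|\kappa\eta^{La}|^{1/2}$ encoded in $\mathcal M$. For the iterated antiderivatives $A_1(j,Y)$, $A_2(j,Z)$ I integrate by parts using the Airy ODE: since $A_1(Y)=|\varepsilon|^{-2/3}U_s'(Y_c)^{-1/3}\times$(Airy at $e^{i(\pi/6-\theta_0)}\kappa\eta^{La}(Y)$), we have schematically $A_1=\frac{\partial_Y(\text{something})}{\kappa\eta^{La}(Y)\cdot\partial_Y\eta^{La}(Y)}$ up to lower-order corrections, so $\int_Y^\infty A_1$ picks up a factor $|\varepsilon|^{1/3}|\partial_Y\eta^{La}|^{-1}|\kappa\eta^{La}|^{-1/2}$ at the endpoint, which is exactly the shift $j\mapsto j+1$ in $\mathcal M(-j,\cdot)$. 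The tail integrals have the required exponential decay because $A_1$ itself does; I need to verify only that the Airy exponential is monotone enough in $Y$ (which it is, since we stay in the sector of decay) to absorb into the boundary term.

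The main obstacle, and where most of the care goes, is the passage through the transition region $|\kappa\eta^{La}|\sim M$ between the inner and outer regimes when $Y$ and $Z$ lie on opposite sides of the turning point $Y_c$. There the compound phase $\mathrm{Re}\,\zeta_1^{3/2}+\mathrm{Re}\,\zeta_2^{3/2}$ needs the sector lemma above in its sharpest form, and one must also check that the modified Langer transform $\eta^{La}$ (which is only piecewise defined via $\chi$) does not spoil the monotonicity of the Airy phase along the path of integration. This uses crucially that the small imaginary shift $\eta_i^{La}=-U_s'(Y_c)^{-1}c_i$ combined with $|\kappa\eta_i^{La}|<\delta_0$ from Lemma \ref{lem:Airy-p1} keeps the arguments of both $Ai$'s uniformly inside their respective decay sectors, so that the constant $\gamma_0$ obtained is positive and uniform in all small parameters.
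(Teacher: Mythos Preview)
Your plan is essentially the paper's proof: the paper also splits into inner ($\mathcal N$) versus outer ($\mathcal N^\pm$) regions, uses the Airy asymptotics \eqref{eq:airy-decay} in the outer region, packages your ``sector lemma'' phase inequality as the standalone Lemma~\ref{lem:Airy-green-decay} (proved by case analysis on the signs of $\mathrm{Re}(Y+Z_0)$ and $\mathrm{Re}(Z+Z_0)$ rather than your interpolation-integral factoring), and packages the antiderivative asymptotics as Lemma~\ref{lem:airy-langer-asy}. The paper then organizes the main argument into four explicit cases according to whether each of $Y,Z$ lies in $\mathcal N$ or in $\mathcal N^+\cup\mathcal N^-$, which handles the mixed/transition situations you correctly flag as the delicate point.
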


\begin{proof}
   By the definition of $A_1(Y)$ and $A_2(Y)$,  we notice that for any $Y\geq 0,$ and $k=0,1$,
     \begin{align*}
     	\partial_Y^kA_1(Y)=&-\mathrm i e^{-2\th_0\mathrm i}|\varepsilon|^{-\f23}U_s'(Y_c)^{-\f13}\left(e^{\mathrm i(\frac{\pi}{6}-\th_0)} \kappa\partial_Y\eta^{La}(Y)\right)^k\partial_Y^kAi\left(e^{\mathrm i(\frac{\pi}{6}-\th_0)} \kappa\eta^{La}(Y)\right),\\
     	\partial_Y^kA_2(Y)=&2\pi \left(e^{\mathrm i(\frac{5\pi}{6}-\th_0)} \kappa\partial_Y\eta^{La}(Y)\right)^k\partial_Y^kAi\left(e^{\mathrm i(\frac{5\pi}{6}-\th_0)} \kappa\eta^{La}(Y)\right),
     \end{align*}
     and 
     \begin{align*}
     	\partial_Y^2A_1(Y)=&-\mathrm i e^{-2\th_0\mathrm i}|\varepsilon|^{-\f23}U_s'(Y_c)^{-\f13}\left(e^{\mathrm i(\frac{\pi}{6}-\th_0)} \kappa\partial_Y\eta^{La}(Y)\right)^2\partial_Y^2Ai\left(e^{\mathrm i(\frac{\pi}{6}-\th_0)} \kappa\eta^{La}(Y)\right)\\
     	&+e^{-\mathrm i(\frac{\pi}{3}+3\th_0)}|\varepsilon|^{-1}\partial_Y^2\eta^{La}(Y)\partial_YAi\left(e^{\mathrm i(\frac{\pi}{6}-\th_0)} \kappa\eta^{La}(Y)\right),\\
     	\partial_Y^2A_1(Y)=&2\pi\left(e^{\mathrm i(\frac{5\pi}{6}-\th_0)} \kappa\partial_Y\eta^{La}(Y)\right)^2\partial_Y^2Ai\left(e^{\mathrm i(\frac{5\pi}{6}-\th_0)} \kappa\eta^{La}(Y)\right)\\
     	&+2\pi e^{\mathrm i(\frac{5\pi}{6}-\th_0)}\kappa\partial_Y^2\eta^{La}(Y)\partial_YAi\left(e^{\mathrm i(\frac{5\pi}{6}-\th_0)} \kappa\eta^{La}(Y)\right).
     \end{align*}
%     We recall that  $\mathbb R_+\cup\{0\}=\mathcal N^-\cup\mathcal N\cup\mathcal N^+,$ where
%\begin{align*}
%	&\mathcal N^+:=\{Y\geq Y_c:|\kappa\eta(Y)|\geq 3M\},\quad\mathcal N:=\{Y:|\kappa\eta(Y)|\leq 3M\}\\
%	&\mathcal N^-:=\{0\leq Y\leq Y_c:|\kappa\eta(Y)|\geq 3M\}.
%\end{align*}

     \no\textbf{Case 1. $Y,Z\in\mathcal N$ and $0\leq Z\leq Y$.}\smallskip
     
      By Lemma \ref{lem:airy-langer-asy} and the properties of Airy functions,  for any $0\leq Z\leq Y$ and for $k=0,1$ and $j=0,1,2$, we have
	\begin{align*}
	&\left|\partial_Y^{k}A_1(Y)A_2(j,Z) \right|+\left|\partial_Y^{k}A_2(Z)A_1(j,Y) \right|\leq C|\varepsilon|^{\frac{j-2-k}{3}}|\partial_Y\eta^{La}(Y)|^k\leq C|\varepsilon|^{\frac{j-2-k}{3}},
	\end{align*}
and
 \begin{align*}
    	&\Big|\partial_Y^{2}A_1(Y)A_2(j,Z)\Big|+\Big|\partial_Y^{2}A_2(Z)A_1(j,Y)\Big|\\
    	&\leq C|\varepsilon|^{\frac{j-4}{3}}|\partial_Y\eta^{La}(Y)|^2+C|\varepsilon|^{\frac{j-3}{3}}|\partial_Y^2\eta^{La}(Y)|\leq C|\varepsilon|^{\frac{j-4}{3}}.
    \end{align*}
 where we used the  fact  for any $Y\in\mathcal N$, $\partial_Y\eta^{La}\sim1$.  Hence, for any $k=0,1,2$ and $j=0,1,2$,
    \begin{align}\nonumber
    	\left|\partial_Y^{k}A_1(Y)A_2(j,Z) \right|+\left|\partial_Y^{k}A_2(Z)A_1(j,Y) \right|\leq C|\varepsilon|^{\frac{j-2-k}{3}}.
    \end{align}
    
	\no\textbf{Case 2. $Y,Z\in\mathcal N^+\cup\mathcal N^-$ and $0\leq Z\leq Y$.} \smallskip

According to \eqref{eq:airy-decay}, Lemma \ref{lem:Airy-green-decay} and Lemma \ref{lem:airy-langer-asy}, there exists $\gamma_1>0$ such that  for $k=0,1$ and $j=0,1,2$,
	\begin{align*}
		\Big|\partial_Y^{k}A_1(Y)A_2(j,Z)\Big|\leq& C|\varepsilon|^{\frac{j-2-k}{3}}\frac{|\partial_Y\eta^{La}(Y)|^k|\kappa\eta^{La}(Y)|^{-\f14+\f k2}}{|\partial_Z\eta^{La}(Z)|^j|\kappa\eta^{La}(Z)|^{\f14+\f j2}}\Big|e^{-\f23((e^{\mathrm i(\frac{\pi}{6}-\th_0)}\kappa\eta^{La}(Y))^\f32-(e^{\mathrm i(\frac{5\pi}{6}-\th_0)}\kappa\eta^{La}(Z))^\f32)}\Big|	\\
		\leq&C|\varepsilon|^{\frac{j-2-k}{3}}\frac{|\partial_Y\eta^{La}(Y)|^k|\kappa\eta^{La}(Y)|^{-\f14+\f k2}}{|\partial_Z\eta^{La}(Z)|^j|\kappa\eta^{La}(Z)|^{\f14+\f j2}}e^{-\gamma_1|\varepsilon|^{-\f13}|\eta^{La}(Y)-\eta^{La}(Z)|(|\kappa\eta^{La}(Y)|^\f12+|\kappa\eta^{La}(Z)|^\f12)}.
    \end{align*}
	Similarly, we can  obtain that for any $Y,Z\in\mathcal N^+\cup\mathcal N^-$ and $Z\leq Y$,
		\begin{align*}
		&\Big|\partial_Y^{k}A_2(Z)A_1(j,Y)\Big|\\
		&\leq C|\varepsilon|^{\frac{j-2-k}{3}}\frac{|\partial_Z\eta^{La}(Z)|^k|\kappa\eta^{La}(Z)|^{-\f14+\f k2}}{|\partial_Y\eta^{La}(Y)|^j|\kappa\eta^{La}(Y)|^{\f14+\f k2}}e^{-\gamma_1|\varepsilon|^{-\f13}|\eta^{La}(Y)-\eta^{La}(Z)|(|\kappa\eta^{La}(Y)|^\f12+|\kappa\eta^{La}(Z)|^\f12)}.
	\end{align*}
For $k=2$ and $j=0,1,2$, we have
    \begin{align}\label{eq:airy-langer-green1}
    	\begin{split}
    				\Big|\partial_Y^2&A_1(Y)A_2(j,Z)\Big|\\
    				\leq&C|\varepsilon|^{\frac{j-4}{3}}\frac{|\partial_Y\eta^{La}(Y)|^2|\kappa\eta^{La}(Y)|^{\f34}}{|\partial_Z\eta^{La}(Z)|^j|\kappa\eta^{La}(Z)|^{\f14+\f j2}}\Big|e^{-\f23((e^{\mathrm i(\frac{\pi}{6}-\th_0)}\kappa\eta^{La}(Y))^\f32-(e^{\mathrm i(\frac{5\pi}{6}-\th_0)}\kappa\eta^{La}(Z))^\f32)}\Big|\\
		&+C|\varepsilon|^{\frac{j-3}{3}}	\frac{|\partial_Y^2\eta^{La}(Y)||\kappa\eta^{La}(Y)|^{\f14}}{|\partial_Y\eta^{La}(Z)|^j|\kappa\eta^{La}(Z)|^{\f14+\f j2}}\Big|e^{-\f23((e^{\mathrm i(\frac{\pi}{6}-\th_0)}\kappa\eta^{La}(Y))^\f32-(e^{\mathrm i(\frac{5\pi}{6}-\th_0)}\kappa\eta^{La}(Z))^\f32)}\Big|\\
		\leq&C|\varepsilon|^{\frac{j-4}{3}}\frac{|\partial_Y\eta^{La}(Y)|^2|\kappa\eta^{La}(Y)|^{\f34}}{|\partial_Z\eta^{La}(Z)|^j|\kappa\eta^{La}(Z)|^{\f14+\f j2}}e^{-\gamma_0|\varepsilon|^{-\f13}|\eta^{La}(Y)-\eta^{La}(Z)|(|\kappa\eta^{La}(Y)|^\f12+|\kappa\eta^{La}(Z)|^\f12)}\\
		&+C|\varepsilon|^{\frac{j-3}{3}}	\frac{|\partial_Y^2\eta^{La}(Y)||\kappa\eta^{La}(Y)|^{\f14}}{|\partial_Z\eta^{La}(Z)|^j|\kappa\eta^{La}(Z)|^{\f14+\f j2}}e^{-\gamma_0|\varepsilon|^{-\f13}|\eta^{La}(Y)-\eta^{La}(Z)|(|\kappa\eta^{La}(Y)|^\f12+|\kappa\eta^{La}(Z)|^\f12)}.
    	\end{split}
    \end{align}
 
By Lemma \ref{lem:est-eta}, we know that for any $Y,Z\in\mathcal N^+\cup\mathcal N^-$,
	\begin{align*}
		|\partial_Y^2\eta^{La}(Y)|\leq C(1+|Y-Y_c|)^{-\f43}\leq C|\partial_Y\eta^{La}(Y)|,\quad |\kappa\eta^{La}(Y)|^\f14\leq |\kappa\eta^{La}(Y)|^\f34,
	\end{align*}
	which along with \eqref{eq:airy-langer-green1} implies 
	\begin{align*}
		\Big|\partial_Y^2&A_1(Y)A_2(j,Z)\Big|\\
		\leq&C|\varepsilon|^{\frac{j-4}{3}}\frac{|\partial_Y\eta^{La}(Y)|^2|\kappa\eta(Y)|^{\f34}}{|\partial_Z\eta^{La}(Z)|^j|\kappa\eta(Z)|^{\f14+\f j2}}e^{-\gamma_1|\varepsilon|^{-\f13}|\eta^{La}(Y)-\eta^{La}(Z)|(|\kappa\eta^{La}(Y)|^\f12+|\kappa\eta^{La}(Z)|^\f12)}.
	\end{align*}
	Similarly, we can show that for any $Y,Z\in\mathcal N^+\cup\mathcal N^-$ and $0\leq Z\leq Y$,
	\begin{align*}
		&\Big|\partial_Y^2 A_2(Z)A_1(j,Y)\Big|\\
		&\leq C|\varepsilon|^{\frac{j-4}{3}}\frac{|\partial_Z\eta^{La}(Z)|^2|\kappa\eta^{La}(Z)|^{\f34}}{|\partial_Y\eta^{La}(Y)|^j|\kappa\eta^{La}(Y)|^{\f14+\f j2}}e^{-\gamma_1|\varepsilon|^{-\f13}|\eta^{La}(Y)-\eta^{La}(Z)|(|\kappa\eta^{La}(Y)|^\f12+|\kappa\eta^{La}(Z)|^\f12)}.
	\end{align*}
	
Summing up, we conclude that for $k, j=0,1,2$ and $Y,Z\in\mathcal N^+\cup\mathcal N^-$ with $0\leq Z\leq Y$,
	\begin{align*}
		&\Big|\partial_Y^{k}A_1(Y)A_2(j,Z)\Big|\\
		&\leq C|\varepsilon|^{\frac{j-2-k}{3}}\frac{|\partial_Y\eta^{La}(Y)|^k|\kappa\eta^{La}(Y)|^{-\f14+\f k2}}{|\partial_Z\eta^{La}(Z)|^j|\kappa\eta^{La}(Z)|^{\f14+\f j2}}e^{-\gamma_1|\varepsilon|^{-\f13}|\eta^{La}(Y)-\eta^{La}(Z)|(|\kappa\eta^{La}(Y)|^\f12+|\kappa\eta^{La}(Z)|^\f12)},
	\end{align*}
    and
		\begin{align*}
		&\Big|\partial_Y^{k}A_2(Z)A_1(j,Y)\Big|\\
		&\leq C|\varepsilon|^{\frac{j-2-k}{3}}\frac{|\partial_Z\eta^{La}(Z)|^k|\kappa\eta^{La}(Z)|^{-\f14+\f k2}}{|\partial_Y\eta(Y)|^j|\kappa\eta^{La}(Y)|^{\f14+\f j2}}e^{-\gamma_1|\varepsilon|^{-\f13}|\eta^{La}(Y)-\eta^{La}(Z)|(|\kappa\eta^{La}(Y)|^\f12+|\kappa\eta^{La}(Z)|^\f12)}.
	\end{align*}
	
	\no\textbf{Case 3. $Y\in\mathcal N^+$ and $Z\in\mathcal N$.}\smallskip
	
By Lemma \ref{lem:airy-langer-asy}, \ref{lem:Airy-green-decay} and \eqref{eq:airy-decay} again, for $k=0,1,2$ and $j=0,1,2$, we have
	\begin{align*}
		|\partial_Y^kA_1(Y)A_2(j,Z)|\leq C|\varepsilon|^{\frac{j-2-k}{3}}|\partial_Y\eta^{La}(Y)|^k|\kappa\eta^{La}(Y)|^{-\f14+\f k2}|e^{-\f23(e^{\mathrm i(\frac{\pi}{6}-\th_0)}\kappa\eta^{La}(Y))^\f32}|.
	\end{align*}
	Since $Y\in \mathcal N^+$ and $|\kappa\eta_i^{La}|<\delta_0$, there exists $\gamma_2>0$ such that 
	\begin{align*}
		|e^{-\f23(e^{\mathrm i(\frac{\pi}{6}-\theta_0)}\kappa\eta^{La}(Y))^\f32}|\leq& Ce^{-\gamma_2|\kappa\eta^{La}(Y)|^\f32}.
	\end{align*}
This shows that for $k=0,1,2$ and $j=0,1,2$,
	\begin{align*}
		|\partial_Y^kA_1(Y)A_2(j,Z)|\leq& C|\varepsilon|^{\frac{j-2-k}{3}}|\partial_Y\eta^{La}(Y)|^k|\kappa\eta^{La}(Y)|^{-\f14+\f k2}e^{-\gamma_2|\kappa\eta^{La}(Y)|^\f32}\\
		\leq&C|\varepsilon|^{\frac{j-2-k}{3}}e^{-\gamma_3|\kappa\eta^{La}(Y)|^\f32}\leq C|\varepsilon|^{\frac{j-2-k}{3}}e^{-\gamma_3(|\kappa\eta^{La}(Y)|^\f32-|\kappa\eta^{La}(Z)|^\f32)}\\
		\leq&C|\varepsilon|^{\frac{j-2-k}{3}} e^{-\gamma_4|\varepsilon|^{-\f13}|\eta^{La}(Y)-\eta^{La}(Z)|(|\kappa\eta^{La}(Y)|^\f12+|\kappa\eta^{La}(Z)|^\f12)}.
	\end{align*}
    By a similar argument, for $k,j=0,1,2$, we have
    \begin{align*}
    	|\partial_Y^kA_2(Z)A_1(j,Y)|\leq C|\varepsilon|^{\frac{j-2-k}{3}} e^{-\gamma_4|\varepsilon|^{-\f13}|\eta^{La}(Y)-\eta^{La}(Z)|(|\kappa\eta^{La}(Y)|^\f12+|\kappa\eta^{La}(Z)|^\f12)}.
    \end{align*}	
    
	\no\textbf{Case 4. $Y\in\mathcal N$ and $Z\in\mathcal N^-$. } \smallskip

By Lemma \ref{lem:airy-langer-asy}, \ref{lem:Airy-green-decay} and \eqref{eq:airy-decay},  for $k=0,1,2$ and $j=0,1,2$, we have
	\begin{align*}
		\Big|\partial_Y^{k}A_1(Y)A_2(j,Z)\Big|\leq& C|\varepsilon|^{\frac{j-2-k}{3}}|\partial_Y\eta^{La}(Z)|^{-j}|\kappa\eta^{La}(Z)|^{-\f14-\f j2}e^{-\gamma_2|\kappa\eta^{La}(Z)|^\f32}.
	\end{align*}
On the other hand, by Lemma \ref{lem:est-eta},  for any $Z\in\mathcal{N}^-$,
	\begin{align*}
	|\partial_Y\eta^{La}(Z)|^{-1}\leq (1+|Z-Y_c|)^{\f13}\leq C|\eta^{La}(Z)|^{\f12}.
	\end{align*}
This shows that for any $Y\in\mathcal N$ and $Z\in\mathcal N^-$,
	\begin{align*}
		\Big|\partial_Y^{k}A_1(Y)A_2(j,Z)\Big|\leq& C|\varepsilon|^{\frac{j-2-k}{3}}e^{-\gamma_2|\kappa\eta^{La}(Z)|^\f32}\\
		\leq& C|\varepsilon|^{\frac{j-2-k}{3}} e^{-\gamma_4|\varepsilon|^{-\f13}|\eta^{La}(Y)-\eta^{La}(Z)|(|\kappa\eta^{La}(Y)|^\f12+|\kappa\eta^{La}(Z)|^\f12)}.
	\end{align*}
	Similarly, for $k,j=0,1,2$, we have
	\begin{align*}
		\Big|\partial_Y^{k}A_2(Z)A_1(j,Y)\Big|\leq C|\varepsilon|^{\frac{j-2-k}{3}} e^{-\gamma_4|\varepsilon|^{-\f13}|\eta^{La}(Y)-\eta^{La}(Z)|(|\kappa\eta^{La}(Y)|^\f12+|\kappa\eta^{La}(Z)|^\f12)}.
	\end{align*}
	
We conclude our lemma by summing up the above four cases.
	\end{proof}
	
\subsection{Approximate solution to the Airy equation} 
Let $w_{app}$ and $\psi_{app}$ be defined as \eqref{eq:airy-w-app} and \eqref{eq:airy-psi-app}. Then $w_{app}$ and $\psi_{app}$ satisfy
\begin{align}\label{eq:ariy-app-goodf}
	\begin{split}
		&\varepsilon(\partial_Y^2-\alpha^2)w_{app}-(U_s-c)w_{app}=F+Err_1w_{app}+Err_2\partial_Yw_{app},\\
		&\partial_Y(A(Y)^{-1}\pa_Y)\psi_{app}(Y)=w_{app}(Y),\quad\quad\lim_{Y\to\infty}\psi_{app}(Y)=\lim_{Y\to\infty}w_{app}(Y)=0.
	\end{split}
\end{align}

Now we show the estimates of $w_{app}$ with the source term $F\sim(U_s-c)^{-1}$ in the sublayer. 

\begin{proposition}\label{prop:airy-gf2}
		Let $\delta_0>0$ be a constant in Lemma \ref{lem:Airy-p1} and  $ c_i\geq c_0 |\e|^\f12$. Suppose $-\delta_0<\mathrm{Im}(\kappa\eta^{La})<\delta_0$, $(\alpha,c)\in\mathbb H_1$ and  $e^{\vartheta Y}(U_s-c)F\in L^{\infty}$. Then it holds  that
\begin{enumerate}
\item For $0<\vartheta\leq 3\eta_0$,
\begin{align*}
&|e^{\vartheta Y}(U_s-c)^2w_{app}(Y)|+|\e|^\f23|e^{\vartheta Y}(U_s-c)\pa_Yw_{app}(Y)|\\
&\qquad\qquad+|\e||e^{\vartheta Y}(U_s-c)\pa_Y^2w_{app}(Y)|
\leq C|\log c_i|\|e^{\vartheta Y}(U_s- c)F\|_{L^\infty}.
\end{align*}
Moreover, for $0<c_i\leq C|\varepsilon|^\f13$, we have
\begin{align*}
 |e^{\vartheta Y}(U_s-c)Err_1(Y)w_{app}(Y)|+&|e^{\vartheta Y}(U_s-c)Err_2(Y)\partial_Yw_{app}(Y)|\\
 \leq& C|\varepsilon|^\f13 |\log c_i| \|e^{\vartheta Y}(U_s- c)F\|_{L^\infty}.
\end{align*}

\item For $1\lesssim\vartheta\leq 3\eta_0$,
\begin{align*}
&|e^{\vartheta Y}(U_s-c)\partial_Y\psi_{app}(Y)|\leq C(1+|c||\varepsilon|^{-\f13}) |\log c_i|\|e^{\vartheta Y}(U_s-c)F\|_{L^\infty}   ,\\
&|e^{\vartheta Y}\psi_{app}(Y)|\leq C(1+|c||\varepsilon|^{-\f13}) |\log c_i|^2\|e^{\vartheta Y}(U_s-c)F\|_{L^\infty}.
\end{align*}	
\end{enumerate}
\end{proposition}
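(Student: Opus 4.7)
My strategy is to substitute the explicit representation \eqref{eq:airy-w-app} for $w_{app}$ into each desired inequality and reduce it to a weighted integral estimate governed by the Green-function bounds of Lemma \ref{lem:A1A2}; part (2) then follows by integrating the resulting bound on $w_{app}$ twice through the representation \eqref{eq:airy-psi-app} of $\psi_{app}$. For the first bound, I insert the inequality $|F(Z)| \le e^{-\vartheta Z} |U_s(Z)-c|^{-1} \|e^{\vartheta Y}(U_s-c)F\|_{L^\infty}$ and apply Lemma \ref{lem:A1A2} with $k=j=0$ to pointwise control $A_1(Y) A_2(Z)$ and $A_2(Y) A_1(Z)$. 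The Gaussian-type Green factor concentrates the $Z$ integration on a window of scale $|\varepsilon|^{1/3}$ around $Y$, so the $(U_s(Z)-c)^{-1}$ singularity is encountered only near $Y_c$; changing variables to $\eta^{La}$ and invoking the near-critical-layer estimate of Lemma \ref{lem: integral} converts this into an integral of the form $\int (|\eta_r^{La}|+c_i)^{-1} d\eta_r^{La}$, which contributes exactly the factor $|\log c_i|$. The prefactors $|\varepsilon|^{-2/3} \mathcal{M}(0,\cdot) \mathcal{M}(0,\cdot) (\partial_Y \eta^{La})^{-1}$ from Lemma \ref{lem:A1A2} balance cleanly with $|U_s-c| \sim |\kappa \eta^{La}| |\varepsilon|^{2/3}$ to produce precisely the $(U_s-c)^{-2}$ weight on the right. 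The derivative estimates follow by the same scheme with $k=1,2$; each derivative contributes the gain $|\varepsilon|^{1/3} |\kappa \eta^{La}(Y)|^{1/2}$ through $\mathcal M(k,Y)$, which after multiplication by $(U_s-c)$ is absorbed by the stated prefactors $|\varepsilon|^{2/3}$ and $|\varepsilon|$.

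For the refined error bound under $c_i \le C|\varepsilon|^{1/3}$, I apply Lemma \ref{lem:err1-err2}: in the inner region $\mathcal N$ one has $|\eta_r^{La}| \lesssim |\varepsilon|^{1/3}$ and $c_i \lesssim |\varepsilon|^{1/3}$, so $|Err_1| \lesssim |\varepsilon|^{2/3}$; outside $\mathcal N$ the larger bound $|Err_1| \lesssim |c_i||\eta_r^{La}| + |\varepsilon||\alpha|^2$ is controlled against the exponential Green decay in the same spirit as Case 2 of Lemma \ref{lem:A1A2}. Multiplying the control of $(U_s-c)^2 w_{app}$ by $(U_s-c)^{-1} Err_1$ gives the factor $|\varepsilon|^{2/3} \cdot |\varepsilon|^{-1/3} = |\varepsilon|^{1/3}$, while for $Err_2 \partial_Y w_{app}$ the universal bound $|Err_2| \lesssim |\varepsilon|$ combines with the $|\varepsilon|^{2/3}$ prefactor on $(U_s-c)\partial_Y w_{app}$ to yield the same $|\varepsilon|^{1/3}$ gain.

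For part (2), the uniform bound \eqref{est: A} on $A$ reduces $\partial_Y \psi_{app}$ to $\int_Y^\infty A w_{app}$ and $\psi_{app}$ to one further integration. Since $(U_s-c)^2 w_{app}$ already carries the factor $|\log c_i|$, each integration of $(U_s-c)^{-2}$ against the $e^{\vartheta Y}$ weight produces at most another $|\log c_i|$, accounting for the $|\log c_i|^2$ appearing in the bound on $\psi_{app}$; the hypothesis $\vartheta \gtrsim 1$ ensures $e^{\vartheta Y}$ dominates at infinity. The extra factor $(1+|c||\varepsilon|^{-1/3})$ arises from a neighborhood $|Z-Y_c| \lesssim |c|$ of the critical layer, where $(U_s-c)^{-2}$ is too singular to integrate directly; changing variable to $\eta^{La}$ accumulates a mass proportional to $|c||\varepsilon|^{-1/3}$ times the bulk contribution. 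The main obstacle will be the bookkeeping required to extract exactly one power of $|\log c_i|$ in part (1), handling the four geometric cases ($Y$ and $Z$ separately lying in $\mathcal N$ or $\mathcal N^\pm$) from Lemma \ref{lem:A1A2} without double-counting the contribution near $Y_c$; this requires splitting the $Z$-integral into a critical-layer piece, where the exponential Green decay is discarded in favor of the $|U_s-c|^{-1}$ cancellation via Lemma \ref{lem: integral}, and a regular piece, where the exponential decay alone is used to sum the $\eta^{La}$ tails.
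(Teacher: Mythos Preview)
Your outline for part (1) is essentially what the paper does: split according to where $Y$ lies (far from $Y_c$, in the intermediate zone, or in $\mathcal N$), and for each case decompose the $Z$-integral into a near-diagonal piece $\mathcal N_{near}(Y)$, a far piece $\mathcal N_{far}(Y)$, and the critical-layer piece $\mathcal N$, then apply Lemma \ref{lem:A1A2}. One small correction: the $|\log c_i|$ factor does not come from Lemma \ref{lem: integral} (which concerns $\int G/(U_s-c)^2$), but from the elementary bound $\int_{\mathcal N}|U_s(Z)-c|^{-1}dZ\le C|\log c_i|$ applied on the critical-layer piece.

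Part (2), however, has a genuine gap. Your plan is to integrate the pointwise bound on $(U_s-c)^2 w_{app}$, i.e.\ to bound $(U_s-c)\partial_Y\psi_{app}$ by $|U_s(Y)-c|\int_Y^\infty |U_s(Z)-c|^{-2}e^{-\vartheta Z}dZ$ times $|\log c_i|$. But $\int |U_s(Z)-c|^{-2}dZ$ diverges like $c_i^{-1}$ near $Z=Y_c$, and for $Y$ away from $Y_c$ the prefactor $|U_s(Y)-c|$ is $O(1)$, so this route yields a bound of order $c_i^{-1}|\log c_i|$, not the stated $(1+|c||\varepsilon|^{-1/3})|\log c_i|$. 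With only $c_i\ge c_0|\varepsilon|^{1/2}$ this is far too weak. Your proposed mechanism for the factor $(1+|c||\varepsilon|^{-1/3})$ does not account for this loss.

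The paper circumvents this by \emph{not} integrating the bound on $w_{app}$. Instead it inserts the representation \eqref{eq:airy-w-app} into $-\int_Y^\infty w_{app}$ and integrates by parts in $Z$, obtaining
\[
A^{-1}\partial_Y\psi_{app}(Y)=A_1(1,Y)\!\int_0^Y\! A_2 F\,(\partial_Z\eta^{La})^{-1}+A_2(1,Y)\!\int_Y^\infty\! A_1 F\,(\partial_Z\eta^{La})^{-1}+\int_Y^\infty\!\big[A_1(1,\cdot)A_2-A_2(1,\cdot)A_1\big]F\,(\partial_Z\eta^{La})^{-1}.
\]
The first two terms (called $H_1$) are then estimated via Lemma \ref{lem:A1A2} with $j=1$, which gives an extra factor $|\varepsilon|^{1/3}|\kappa\eta^{La}|^{-1/2}$ in the Green bound and thereby avoids the $(U_s-c)^{-2}$ singularity. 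The third term $H_2$ is a local (non-oscillatory) integral of size $|\varepsilon|^{-1/3}|\kappa\eta^{La}(Z)|^{-1}$ against $F$; it is here, when $Y\in\mathcal N^-$ so that $|U_s(Y)-c|\sim|c|$, that the factor $|c||\varepsilon|^{-1/3}$ actually appears, from $|U_s(Y)-c|$ multiplying the $|\varepsilon|^{-1/3}$ contribution of $H_2$ over $\mathcal N$. This integration-by-parts step is the missing idea in your proposal.
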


Before presenting the proof, we introduce some notations and estimates used frequently. For any $Y\geq 0$, we define
\begin{align*}
	&\mathcal N_{near}(Y)=\big\{Z\in\mathcal N^-\cup\mathcal N^+ :|\kappa(\eta^{La}(Z)-\eta^{La}(Y))|\leq 1\big\},\\
	&\mathcal N_{far}(Y)=\big\{Z\in\mathcal N^-\cup\mathcal N^+ :|\kappa(\eta^{La}(Z)-\eta^{La}(Y))|>1\big\},
\end{align*}
where the definitions of $\mathcal N^+$ and $\mathcal N^-$ are given in \eqref{def: N^+}-\eqref{def: N^-}. 

For $Z, Y\geq 0$, we have
\begin{align*}
	e^{\vartheta|Y-Z|}\leq Ce^{C|\eta_r^{La}(Y)^\f32-\eta_r^{La}(Z)^\f32|}\leq& Ce^{C|\eta_r^{La}(Y)-\eta_r^{La}(Z)|(|\eta_r^{La}(Y)|^\f12+|\eta_r^{La}(Z)|^\f12)}\\
	\leq& Ce^{C|\varepsilon|^{\f12} |\varepsilon|^{-\f13}|\eta^{La}(Y)-\eta^{La}(Z)|(|\kappa\eta^{La}(Y)|^\f12+|\kappa\eta^{La}(Z)|^\f12)},
\end{align*}
which implies 
\begin{align}\label{eq:ariy-app-gf-decay1}
	\begin{split}
		&e^{-\gamma_0|\varepsilon|^{-\f13}|\eta^{La}(Y)-\eta^{La}(Z)|(|\kappa\eta^{La}(Y)|^\f12+|\kappa\eta^{La}(Z)|^\f12)}e^{\vartheta|Y-Z|}\\
		&\quad\leq e^{-C|\varepsilon|^{-\f13}|\eta^{La}(Y)-\eta^{La}(Z)|(|\kappa\eta^{La}(Y)|^\f12+|\kappa\eta^{La}(Z)|^\f12)(1+|\varepsilon|^\f12)}\\
		&\quad\leq e^{-C|\varepsilon|^{-\f13}|\eta^{La}(Y)-\eta^{La}(Z)|(|\kappa\eta^{La}(Y)|^\f12+|\kappa\eta^{La}(Z)|^\f12)}.
	\end{split}
\end{align}

The proof of Proposition \ref{prop:airy-gf2} is rather long and split into the following three subsections.

\subsubsection{Estimates of $w_{app}(Y)$} We consider three cases.\medskip

\no\textbf{Case 1. $|Y-Y_c|\geq L$. } In this case, we first notice that 
\begin{align}\label{eq:airy-app-f-sim}
	|w_{app}(Y)|\sim|(U_s-c)^2w_{app}(Y)|.
\end{align}
Thus, it is enough to show the control of $w_{app}(Y)$. By the definition of $w_{app}$ and Lemma \ref{lem:A1A2}, for any $|Y-Y_c|\geq L$, we have
	\begin{align}\label{eq:airy-app-f-Y1}
		\begin{split}
		|w_{app}(Y)|\leq& C|\varepsilon|^{-\f23}\int_{Z\in\mathcal N_{near}(Y)}|\kappa\eta^{La}(Y)|^{-\f14}|\kappa\eta^{La}(Z)|^{-\f14}|\partial_Z\eta^{La}(Z)|^{-1} \\
		&\qquad\times e^{-\gamma_0|\varepsilon|^{-\f13}|\eta^{La}(Y)-\eta^{La}(Z)|(|\kappa\eta^{La}(Y)|^\f12+|\kappa\eta^{La}(Z)|^\f12)}|F(Z)|dZ\\
		&+C|\varepsilon|^{-\f23}\int_{Z\in\mathcal N_{far}(Y)}|\kappa\eta^{La}(Y)|^{-\f14}|\kappa\eta^{La}(Z)|^{-\f14}|\partial_Z\eta^{La}(Z)|^{-1} \\
		&\qquad\times e^{-\gamma_0|\varepsilon|^{-\f13}|\eta^{La}(Y)-\eta^{La}(Z)|(|\kappa\eta^{La}(Y)|^\f12+|\kappa\eta^{La}(Z)|^\f12)}|F(Z)|dZ\\
		&+C|\varepsilon|^{-\f23}\int_{Z\in\mathcal N}|\kappa\eta^{La}(Y)|^{-\f14}e^{-\gamma_0|\varepsilon|^{-\f13}|\eta^{La}(Y)-\eta^{La}(Z)|(|\kappa\eta^{La}(Y)|^\f12+|\kappa\eta^{La}(Z)|^\f12)}|F(Z)|dZ\\
		=&|\e|^{-\f23}(I_1+I_2+I_3).
		\end{split}
	\end{align}
	
For $Z\in\mathcal N_{near}(Y)$ and  $|Y-Y_c|\geq L$, we get by  Lemma \ref{lem:est-eta} that
\begin{align*}
|\kappa\eta^{La}(Y)|^{-\f14}|\kappa\eta^{La}(Z)|^{-\f14}|\partial_Z\eta^{La}(Z)|^{-1}\leq&C|\kappa\eta^{La}(Z)|^{-\f12}|\eta^{La}(Z)|^\f12\leq C\kappa^{-\f12}\leq C|\e|^\f16,
\end{align*}
and
\begin{align}\label{est: exp-near}
e^{-\gamma_0|\varepsilon|^{-\f13}|\eta^{La}(Y)-\eta^{La}(Z)|(|\kappa\eta^{La}(Y)|^\f12+|\kappa\eta^{La}(Z)|^\f12)}\leq e^{-C|\varepsilon|^{-\f12}|\eta^{La}(Y)-\eta^{La}(Z)|},
\end{align}
which along with \eqref{eq:ariy-app-gf-decay1}  show that 
\begin{align*}
|I_1|\leq&C|\varepsilon|^{\f16}\int_{Z\in\mathcal N_{near}(Y)}e^{-\gamma_0|\varepsilon|^{-\f13}|\eta^{La}(Y)-\eta^{La}(Z)|(|\kappa\eta^{La}(Y)|^\f12+|\kappa\eta^{La}(Z)|^\f12)}|F(Z)|dZ\\
\leq& C|\e|^\f16 e^{-\vartheta Y}\|e^{\vartheta Z}(U_s-c)F\|_{L^\infty}\int_{Z\in\mathcal N_{near}(Y)}e^{-C|\varepsilon|^{-\f12}|\eta^{La}(Y)-\eta^{La}(Z)|} dZ\\
\leq&C|\e|^\f23 e^{-\vartheta Y}\|e^{\vartheta Y}(U_s-c)F\|_{L^\infty},
\end{align*}
where in the last step we used the fact that 
\begin{align*}
\int_{Z\in\mathcal N_{near}(Y)}e^{-C|\varepsilon|^{-\f12}|\eta^{La}(Y)-\eta^{La}(Z)|} dZ\leq C|\e|^\f12.
\end{align*}

For $Z\in\mathcal N_{far}(Y)$,  we get by  Lemma \ref{lem:est-eta} that
\begin{align*}
|\kappa\eta^{La}(Y)|^{-\f14}|\kappa\eta^{La}(Z)|^{-\f14}|\partial_Z\eta^{La}(Z)|^{-1}\leq&C\kappa^{-\f12}|\eta^{La}(Z)|^\f14\leq C\kappa^{-\f34}|\kappa \eta^{La}(Z)|^{\f14},
\end{align*}
and 
\begin{align}\label{est: exp-far}
e^{-C|\varepsilon|^{-\f13}|\eta^{La}(Y)-\eta^{La}(Z)|(|\kappa\eta^{La}(Y)|^\f12+|\kappa\eta^{La}(Z)|^\f12)}\leq e^{-C(|\kappa\eta^{La}(Y)|^\f12+|\kappa\eta^{La}(Z)|^\f12)},
\end{align}
from which, we infer that 
\begin{align*}
|I_2|\leq&Ce^{-\vartheta Y}|\varepsilon|^{\frac{1}{4}}\int_{Z\in\mathcal N_{far}(Y)}|\kappa\eta^{La}(Z)|^\f14e^{-C|\varepsilon|^{-\f13}|\eta^{La}(Y)-\eta^{La}(Z)|(|\kappa\eta^{La}(Y)|^\f12+|\kappa\eta^{La}(Z)|^\f12)}|e^{\vartheta Z}F(Z)|dZ\\
\leq&Ce^{-\vartheta Y}|\varepsilon|^{\frac{1}{4}}\int_{Z\in\mathcal N_{far}(Y)}|\kappa\eta^{La}(Z)|^\f14e^{-C(|\kappa\eta^{La}(Y)|^\f12+|\kappa\eta^{La}(Z)|^\f12)}|U_s(Z)-c|^{-1}|e^{\vartheta Z}(U_s(Z)-c)F(Z)|dZ\\
\leq&Ce^{-\vartheta Y}|\varepsilon|^{\frac{1}{4}}\int_{Z\in\mathcal N_{far}(Y)}|\kappa\eta^{La}(Z)|^\f14e^{-C(|\e|^{-\f16}+|\kappa\eta^{La}(Z)|^\f12)}|\eta^{La}(Z)|^{-1}|e^{\vartheta Z}(U_s(Z)-c)F(Z)|dZ\\
\leq& C|\e|^{N}e^{-\vartheta Y}\|e^{\vartheta Y}(U_s-c)F\|_{L^\infty}
\end{align*}
for any $N>0$.

For $Z\in\mathcal N$ and any $|Y-Y_c|\geq L$, we get by  Lemma \ref{lem:est-eta} that
\begin{align*}
|\kappa\eta^{La}(Y)|^{-\f14}\leq& C|\kappa|^{-\f14}\leq C|\e|^{\f1{12}},
\end{align*}
and 
\begin{align}\label{est: exp-N}
e^{-C|\varepsilon|^{-\f13}|\eta^{La}(Y)-\eta^{La}(Z)|(|\kappa\eta^{La}(Y)|^\f12+|\kappa\eta^{La}(Z)|^\f12)}\leq&e^{-C |\e|^{-\f13}|\kappa|^{\f12}}\leq e^{-C|\e|^{-\f12}},
\end{align}
from which, we infer that
\begin{align*}
|I_3|
\leq&C|\varepsilon|^{\frac{1}{12}}e^{-\vartheta Y}e^{-C|\varepsilon|^{-\f12}}\|(U_s- c)F\|_{L^\infty(\mathcal N)}\int_{Z\in\mathcal N}|U_s- c|^{-1}dZ\\
\leq& C|\varepsilon|^{\frac{1}{12}}e^{-C|\varepsilon|^{-\f12}}|\log|c_i|\|(u- c)F\|_{L^\infty(\mathcal N)}\\
\leq&C|\e|^{N}e^{-\vartheta Y}\|e^{\vartheta Y}(U_s-c)F\|_{L^\infty},
\end{align*}
for any $N> 0$. Therefore, we conclude that for $|Y-Y_c|\geq L$,
	\begin{align*}
		|e^{\vartheta Y}w_{app}(Y)|\leq C \|e^{\vartheta Y}(U_s-c)F\|_{L^\infty},
	\end{align*}
	which along with \eqref{eq:airy-app-f-sim} implies that for any $|Y-Y_c|\geq L$,
	\begin{align}\label{eq:airy-app-f-c1}
		|e^{\vartheta Y}(U_s-c)^2w_{app}(Y)|\leq C \|e^{\vartheta Y}(U_s-c)F\|_{L^\infty}.
	\end{align}
	
	\no\textbf{Case 2. $|\kappa\eta^{La}(Y)|\geq 3M+1$ and $|Y-Y_c|\leq L$.} We denote $\mathcal N_1=\{Y\geq 0:|\kappa\eta^{La}(Y)|\geq 3M+1, |Y-Y_c|\leq L\}$. Notice that $\mathcal N_1$ is a subset of $\mathcal N^+\cup\mathcal N^-$ and $\mathcal N\subset\mathcal N_{far}(Y)$ for any $Y\in\mathcal N_1$.	
	By Lemma \ref{lem:est-eta}, we know that for any $|Y-Y_c|\leq L$, $|\eta^{La}(Y)|\sim|U_s-c|$.
	 Again by Lemma \ref{lem:A1A2},  we deduce that for any $Y\in\mathcal N_1$,
	 \begin{align}\label{eq:airy-app-f-Y2}
	 	\begin{split}
	 	&|(U_s- c)^2w_{app}(Y)|\\
	 	&\leq C|\kappa\eta^{La}(Y)|^2 \int_{Z\in\mathcal N_{near}(Y)}|\kappa\eta^{La}(Y)|^{-\f14}|\kappa\eta^{La}(Z)|^{-\f14}  |\partial_Z\eta^{La}(Z)|^{-1}\\
		&\qquad\times e^{-\gamma_0|\varepsilon|^{-\f13}|\eta^{La}(Y)-\eta^{La}(Z)|(|\kappa\eta^{La}(Y)|^\f12+|\kappa\eta^{La}(Z)|^\f12)}|F(Z)|dZ\\
		&\quad+C|\kappa\eta^{La}(Y)|^2  \int_{Z\in\mathcal N_{far}(Y)}|\kappa\eta^{La}(Y)|^{-\f14}|\kappa\eta^{La}(Z)|^{-\f14} |\partial_Z\eta^{La}(Z)|^{-1}\\
		&\qquad\times e^{-\gamma_0|\varepsilon|^{-\f13}|\eta^{La}(Y)-\eta^{La}(Z)|(|\kappa\eta^{La}(Y)|^\f12+|\kappa\eta^{La}(Z)|^\f12)}|F(Z)|dZ\\
		&\quad+C|\kappa\eta^{La}(Y)|^2 \int_{Z\in\mathcal N}|\kappa\eta^{La}(Y)|^{-\f14}|\partial_Z\eta^{La}(Z)|^{-1}\\
		&\qquad\times e^{-\gamma_0|\varepsilon|^{-\f13}|\eta^{La}(Y)-\eta^{La}(Z)|(|\kappa\eta^{La}(Y)|^\f12+|\kappa\eta^{La}(Z)|^\f12)}|F(Z)|dZ\\
		&=I_4+I_5+I_6.
	 	\end{split}
	 \end{align}
	 
For any $Y\in\mathcal N_1$ and $Z\in\mathcal N_{near}(Y)$, we get by  Lemma \ref{lem:est-eta} that
\begin{align*}
|\kappa\eta^{La}(Y)|^2 |\kappa\eta^{La}(Y)|^{-\f14}|\kappa\eta^{La}(Z)|^{-\f14}  |\partial_Z\eta^{La}(Z)|^{-1}|U_s(Z)-c|^{-1}\leq& C|\kappa\eta^{La}(Y)|^{\f32}|\eta^{La}(Z)|^{-1}\\
\leq& C|\e|^{-\f13}|\kappa\eta^{La}(Y)|^{\f12},
\end{align*}
and 
\begin{align*}
%%%%%%%%%%%
\int_{Z\in\mathcal N_{near}(Y)}|\kappa\eta^{La}(Y)|^{\f12}e^{-C|\varepsilon|^{-\f13}|\eta^{La}(Y)-\eta(Z)||\kappa\eta^{La}(Y)|^{\f12}}dZ\leq C|\e|^\f13,
\end{align*}
from which  and  \eqref{eq:ariy-app-gf-decay1}, we infer that 
\begin{align}\label{eq:airy-app-f-Y2-1}
\begin{split}
|I_4|\leq& C|\varepsilon|^{-\f13} e^{-\vartheta Y}\|e^{\vartheta Y}(U_s-c)F\|_{L^\infty}\\
&\qquad\times\int_{Z\in\mathcal N_{near}(Y)}|\kappa\eta^{La}(Y)|^{\f12}e^{-C|\varepsilon|^{-\f13}|\eta^{La}(Y)-\eta(Z)||\kappa\eta^{La}(Y)|^{\f12}}dZ\\
\leq& Ce^{-\vartheta Y}\|e^{\vartheta Y}(U_s-c)F\|_{L^\infty}.
\end{split}
\end{align}
 For any $Y\in\mathcal N_1$ and $Z\in \mathcal{N}_{far}(Y)$, we have
\begin{align*}
&|\kappa\eta^{La}(Y)|^2 |\kappa\eta^{La}(Y)|^{-\f14}|\kappa\eta^{La}(Z)|^{-\f14}  |\partial_Z\eta^{La}(Z)|^{-1}|U_s(Z)-c|^{-1}\\
&\qquad\leq C|\varepsilon|^{-\f13}|\kappa\eta^{La}(Z)|^\f74(1+|\eta^{La}(Z)|^\f12).
\end{align*}
Then we get by \eqref{eq:ariy-app-gf-decay1} and \eqref{est: exp-far}  that
	\begin{align}\label{eq:airy-app-f-Y2-2}
		\begin{split}
		|I_5|\leq& C|\varepsilon|^{-\f13} e^{-\vartheta Y}\|e^{\vartheta Y}(U_s- c)F\|_{L^\infty}|\kappa\eta^{La}(Y)|^{\f7 4}e^{-C|\kappa\eta^{La}(Y)|^\f12}\\
		&\qquad\times\int_{Z\in\mathcal N_{far}(Y)}(1+|\eta^{La}(Z)|^\f12)e^{-C|\kappa\eta^{La}(Z)|^\f12}dZ\\
		\leq& C e^{-\vartheta Y}\|e^{\vartheta Y}(U_s- c)F\|_{L^\infty}.
		\end{split}
	\end{align}
	
For any $Y\in\mathcal N_1$ and $Z\in \mathcal{N}$, $|\varepsilon|^{-\f13}|\eta^{La}(Y)-\eta^{La}(Z)|\geq 1$. Then we infer that
\begin{align*}
|I_6|
\leq& Ce^{-\vartheta Y}|\kappa\eta^{La}(Y)|^{\f74}e^{-C|\kappa\eta^{La}(Y)|^\f12}\|(U_s- c)F\|_{L^\infty(\mathcal N)}\int_{Z\in\mathcal N}|U_s(Z)- c|^{-1}dZ\\
\leq& Ce^{-\vartheta Y}\|e^{\vartheta Y}(U_s- c)F\|_{L^\infty}|\log c_i|,
\end{align*}
	which along with \eqref{eq:airy-app-f-Y2}, \eqref{eq:airy-app-f-Y2-1} and \eqref{eq:airy-app-f-Y2-2} deduces that for any $Y\in\mathcal N_1$,		\begin{align}\label{eq:airy-app-f-c2}
\begin{split}
|e^{\vartheta Y}(U_s-& c)^2w_{app}(Y)|\leq C|\log c_i|\|e^{\vartheta Y}(U_s- c)F\|_{L^\infty}.
		\end{split}
	\end{align}
	
	\no\textbf{Case 3. $|\kappa\eta^{La}(Y)|\leq 3M+1$.} We denote $\mathcal N_2=\{Y:|\kappa\eta^{La}(Y)|\leq 3M+1\}$. By Lemma \ref{lem:A1A2}, we have that for any $Y\in\mathcal N_2$ ,
	\begin{align}\label{eq:airy-app-f-Y3}
		\begin{split}
		|(U_s-& c)^2w_{app}(Y)|\\
		\leq& C\int_{Z\in\mathcal N_{far}(Y)}
		|\kappa\eta^{La}(Z)|^{-\f14}|\partial_Z\eta^{La}(Z)|^{-1} \\
		&\qquad\times e^{-\gamma_0|\varepsilon|^{-\f13}|\eta^{La}(Y)-\eta^{La}(Z)|(|\kappa\eta^{La}(Y)|^\f12+|\kappa\eta^{La}(Z)|^\f12)}|F(Z)|dZ\\
		&+C\int_{Z\in\mathcal N\cup\mathcal N_{near}(Y)}|F(Z)|dZ=I_7+I_8.
		\end{split}
	\end{align}
For any $Y\in\mathcal N_2$ and $Z\in \mathcal{N}_{far}(Y)$, we have
\begin{align*}
|\kappa\eta^{La}(Z)|^{-\f14}|\partial_Z\eta^{La}(Z)|^{-1} |U_s(Z)-c|^{-1}\leq&{C|\varepsilon|^{-\f13} |\kappa\eta^{La}(Z)|^{-\f14}(1+|\eta^{La}(Z)|^\f12)\leq C|\e|^{-\f13},}
\end{align*}
which along with \eqref{eq:ariy-app-gf-decay1} and \eqref{est: exp-far} gives 
\begin{align}\label{eq:airy-app-f-Y3-1}
\begin{split}
|I_7|\leq& C|\varepsilon|^{-\f13}e^{-\vartheta Y}\|e^{\vartheta Y}(U_s- c)F\|_{L^\infty}\int_{Z\in\mathcal N_{far}(Y)}(1+|\eta^{La}(Z)|^\f12)e^{-C|\kappa\eta^{La}(Z)|^\f12}dZ\\
\leq& Ce^{-\vartheta Y}\|e^{\vartheta Y}(U_s- c)F\|_{L^\infty}.
\end{split}
\end{align}

	For any $Y\in\mathcal N_2$ and $Z\in\mathcal N\cup\mathcal N_{near}(Y)$, it is easy to see that
\begin{align}\nonumber%\label{eq:airy-app-f-Y3-2}
|I_8|\leq&Ce^{-\vartheta Y}\|e^{\vartheta Y}(U_s- c)F\|_{L^\infty}|\log c_i|,
\end{align}
	which along with \eqref{eq:airy-app-f-Y3} and \eqref{eq:airy-app-f-Y3-1} implies that for any $Y\in\mathcal N_2$ 
\begin{align}\label{eq:airy-app-f-4}
\begin{split}
|e^{\vartheta Y}(U_s-& c)^2w_{app}(Y)|
\leq C|\log c_i|\|e^{\vartheta Y}(U_s- c)F\|_{L^\infty}.
\end{split}
\end{align}

Summing up the above three cases, we conclude the following estimate 
\begin{align}\label{eq:airy-app-w_app-1}
|e^{\vartheta Y}(U_s-c)^2w_{app}(Y)|
\leq C|\log c_i|\|e^{\vartheta Y}(U_s- c)F\|_{L^\infty}.
\end{align}

\subsubsection{Estimates of $\partial_Y w_{app}(Y)$ and  $\partial_Y^2 w_{app}$}
We consider three cases.\medskip

    \no\textbf{Case 1. $|Y-Y_c|\geq L$. } 
    Again in this case, it is enough to show the control of $\partial_Yw_{app}(Y)$. By Lemma \ref{lem:A1A2}, we have
    \begin{align}\label{eq:airy-app-f-dw1}
    	\begin{split}
    	|\partial_Y w_{app}(Y)|\leq& C|\varepsilon|^{-1}\Bigg(\int_{Z\in\mathcal N_{near}(Y)}+\int_{Z\in\mathcal N_{far}(Y)}\Bigg)
	|\kappa\eta^{La}(Y)|^{\f14}|\partial_Y\eta^{La}(Y) ||\kappa\eta^{La}(Z)|^{-\f14}|\partial_Z\eta^{La}(Z)|^{-1} \\
		&\qquad\times e^{-\gamma_0|\varepsilon|^{-\f13}|\eta^{La}(Y)-\eta^{La}(Z)|(|\kappa\eta^{La}(Y)|^\f12+|\kappa\eta^{La}(Z)|^\f12)}|F(Z)|dZ\\
				&+C|\varepsilon|^{-1}\int_{Z\in\mathcal N}|\kappa\eta^{La}(Y)|^{\f14}|\partial_Y\eta^{La}(Y)| |\partial_Z\eta^{La}(Z)|^{-1}\\
				&\qquad\times e^{-\gamma_0|\varepsilon|^{-\f13}|\eta^{La}(Y)-\eta^{La}(Z)|(|\kappa\eta^{La}(Y)|^\f12+|\kappa\eta^{La}(Z)|^\f12)}|F(Z)|dZ\\
				=&|\e|^{-1}(II_1+II_2+II_3).
    	\end{split}
    \end{align}
    
For any $|Y-Y_c|\geq L$ and $Z\in \mathcal{N}_{near}(Y)$, we get by  Lemma \ref{lem:est-eta} that
\begin{align*}
|\kappa\eta^{La}(Y)|^{\f14}|\partial_Y\eta^{La}(Y) ||\kappa\eta^{La}(Z)|^{-\f14}|\partial_Z\eta^{La}(Z)|^{-1}|U_s(Z)-c|^{-1}\leq C,
\end{align*}
and for $Z\in \mathcal{N}_{far}(Y)$, we have
\begin{align*}
&|\kappa\eta^{La}(Y)|^{\f14}|\partial_Y\eta^{La}(Y) | |\kappa\eta^{La}(Z)|^{-\f14}|\partial_Z\eta^{La}(Z)|^{-1}
|U_s(Z)-c|^{-1}\\
&\quad\leq C|\kappa\eta^{La}(Y)|^{\f14}|\eta^{La}(Y)|^{-\f12}|\varepsilon|^{-\f13}(1+|\eta^{La}(Z)|^\f12)\\
&\quad\leq C|\varepsilon|^{-\f{5}{12}}(1+|\eta^{La}(Z)|^\f12),
\end{align*}
and for $Z\in \mathcal{N}$, we have 
\begin{align*}
&|\kappa\eta^{La}(Y)|^{\f14}|\partial_Y\eta^{La}(Y) |\partial_Z\eta^{La}(Z)|^{-1}|e^{-C|\varepsilon|^{-\f13} |\eta^{La}(Y)-\eta^{La}(Z)|(|\kappa\eta^{La}(Y)|^\f12+|\kappa\eta^{La}(Z)|^\f12)}\\
&\quad\leq C|\kappa\eta^{La}(Y)|^{\f14} e^{-C|\varepsilon|^{-\f13}(|\kappa\eta^{La}(Y)|^\f12+|\kappa\eta^{La}(Z)|^\f12)}\leq Ce^{-|\e|^{-\f13}}.
\end{align*}
Then we apply \eqref{eq:ariy-app-gf-decay1}, \eqref{est: exp-near}, \eqref{est: exp-far} to obtain
    \begin{align}\label{eq:airy-app-f-dw1-1}
    	\begin{split}
    	|II_1|\leq& Ce^{-\vartheta Y} \|e^{\vartheta Y}(U_s-c)F\|_{L^\infty}\int_{Z\in\mathcal N_{near}(Y)}e^{-C|\varepsilon|^{-\f12}|\eta^{La}(Y)-\eta^{La}(Z)||\eta^{La}(Y)|^\f12}dZ\\
		\leq& C|\varepsilon|^\f12 e^{-\vartheta Y} \|e^{\vartheta Y}(U_s-c)F\|_{L^\infty},
    	\end{split}
    \end{align}
    and
    \begin{align}\label{eq:airy-app-f-dw1-2}
    	\begin{split}
    		|II_2|\leq& C|\e|^{-\f{5}{12}} e^{-\vartheta Y} \|e^{\vartheta Y}(U_s-c)F\|_{L^\infty}\int_{Z\in\mathcal N_{far}(Y)}(1+|\eta^{La}(Z)|^\f12)e^{-C(|\kappa\eta^{La}(Y)|^\f12+|\kappa\eta^{La}(Z)|^\f12)} dZ\\
		\leq& C|\varepsilon|^{N}e^{-\vartheta Y} \|e^{\vartheta Y}(U_s-c)F\|_{L^\infty},
    	\end{split}
    \end{align}
 and
 	\begin{align*}
		|II_3|
		\leq&Ce^{-\vartheta Y} \|e^{\vartheta Y}(U_s-c)F\|_{L^\infty}e^{-C|\varepsilon|^\f13}\int_{\mathcal N}|U_s(Z)- c|^{-1}dZ\\
		\leq&C|\e|^{N}e^{-\vartheta Y} \|e^{\vartheta Y}(U_s-c)F\|_{L^\infty}|\log c_i|,
	\end{align*}
	for any $N>0$,
	which along with \eqref{eq:airy-app-f-dw1} and \eqref{eq:airy-app-f-dw1-1}  deduce that for any $|Y-Y_c|\geq L$,
	\begin{align}\label{eq:airy-app-f-dwe-1}
		|e^{\vartheta Y}(U_s-c)\partial_Yw_{app}(Y)|\leq C|\varepsilon|^{-\f23} \|e^{\vartheta Y}(U_s-c)F\|_{L^\infty}|\log c_i|.
	\end{align}
	
	\no\textbf{Case 2. $|\kappa\eta^{La}(Y)|\geq 3M+1$ and $|Y-Y_c|\leq L$.}  By Lemma \ref{lem:A1A2}, for any $Y\in\mathcal N_1$,
	\begin{align}\label{eq:airy-app-f-dw2}
		\begin{split}
		&|(U_s- c)\partial_Y w_{app}(Y)|\\
		&\leq C|\varepsilon|^{-\frac{2}{3}}|\kappa\eta^{La}(Y)|\Bigg(\int_{Z\in\mathcal N_{near}(Y)}+\int_{Z\in\mathcal N_{far}(Y)}\Bigg)|\kappa\eta^{La}(Y)|^{\f14}|\partial_Y\eta^{La}(Y)|\kappa\eta^{La}(Z)|^{-\f14} |\partial_Z\eta^{La}(Z)|^{-1}\\
		&\qquad\times e^{-\gamma_0|\varepsilon|^{-\f13}|\eta^{La}(Y)-\eta^{La}(Z)|(|\kappa\eta^{La}(Y)|^\f12+|\kappa\eta^{La}(Z)|^\f12)}|F(Z)|dZ\\
		&\qquad+C|\varepsilon|^{-\frac{2}{3}}|\kappa\eta^{La}(Y)|\int_{Z\in\mathcal N}|\kappa\eta^{La}(Y)|^{\f14}|\partial_Y\eta^{La}(Y)||\partial_Z\eta^{La}(Z)|^{-1} \\
		&\qquad\times e^{-\gamma_0|\varepsilon|^{-\f13}|\eta^{La}(Y)-\eta^{La}(Z)|(|\kappa\eta^{La}(Y)|^\f12+|\kappa\eta^{La}(Z)|^\f12)}|F(Z)|dZ=|\e|^{-\f23}(II_4+II_5+II_6).
		\end{split}
	\end{align}
	
For $Y\in \mathcal{N}_1$ and $Z\in \mathcal{N}_{near}(Y)$, we have
\begin{align*}
|\kappa\eta^{La}(Y)||\kappa\eta^{La}(Y)|^{\f14}|\partial_Y\eta^{La}(Y)|\kappa\eta^{La}(Z)|^{-\f14} |\partial_Z\eta^{La}(Z)|^{-1}|U_s(Z)-c|^{-1}\leq C|\e|^{-\f13},
\end{align*}
and for $Y\in \mathcal{N}_1$ and $Z\in \mathcal{N}_{far}(Y)$, 
\begin{align*}
&|\kappa\eta^{La}(Y)||\kappa\eta^{La}(Y)|^{\f14}|\partial_Y\eta^{La}(Y)||\kappa\eta^{La}(Z)|^{-\f14} |\partial_Z\eta^{La}(Z)|^{-1}|U_s(Z)-c|^{-1}\\
&\leq C|\e|^{-\f13}|\kappa\eta^{La}(Y)|^{\f54}(1+|\eta^{La}(Z)|^\f12),
\end{align*} 
and for $Y\in \mathcal{N}_1$ and $Z\in \mathcal{N}$, 
\begin{align*}
&|\kappa\eta^{La}(Y)||\kappa\eta^{La}(Y)|^{\f14}|\partial_Y\eta^{La}(Y)| |\partial_Z\eta^{La}(Z)|^{-1}\leq C|\kappa\eta^{La}(Y)|^{\f54},\\
&|\varepsilon|^{-\f13}|\eta^{La}(Y)-\eta^{La}(Z)|\geq 1.
\end{align*}
Then we apply \eqref{eq:ariy-app-gf-decay1}, \eqref{est: exp-far} and a similar process as in $I_4-I_6$ to obtain
	\begin{align}\label{eq:airy-app-f-dw2-1}
		\begin{split}
		|II_4|\leq&C|\e|^{-\f13}e^{-\vartheta Y} \|e^{\vartheta Y}(U_s-c)F\|_{L^\infty}\int_{Z\in\mathcal N_{near}(Y)}e^{-C|\varepsilon|^{-\f13}|\eta^{La}(Y)-\eta^{La}(Z)|}dZ\\
			\leq& Ce^{-\vartheta Y} \|e^{\vartheta Y}(U_s-c)F\|_{L^\infty},
		\end{split}
	\end{align}
	and
	\begin{align}\label{eq:airy-app-f-dw2-2}
		\begin{split}
			|II_5|\leq& C|\varepsilon|^{-\f13}e^{-\vartheta Y} \|e^{\vartheta Y}(U_s-c)F\|_{L^\infty}|\kappa\eta^{La}(Y)|^\f54\\
		&\qquad\times\int_{Z\in\mathcal N_{far}(Y)}(1+|\eta^{La}(Z)|^\f12)e^{-C(|\kappa\eta^{La}(Y)|^\f12+|\kappa\eta^{La}(Z)|^\f12)}dZ\\
			\leq& Ce^{-\vartheta Y} \|e^{\vartheta Y}(U_s-c)F\|_{L^\infty},
		\end{split}
	\end{align} 
	and
		\begin{align*}
		|II_6|\leq& Ce^{-\vartheta Y} \|e^{\vartheta Y}(U_s-c)F\|_{L^\infty}|\kappa\eta^{La}(Y)|^\f54e^{-C|\kappa\eta^{La}(Y)|^\f12}\int_{Z\in\mathcal N}|U_s(Z)- c|^{-1}dZ\\
		\leq &C e^{-\vartheta Y} \|e^{\vartheta Y}(U_s-c)F\|_{L^\infty}|\log c_i|,
	\end{align*}
    which along with \eqref{eq:airy-app-f-dw2} deduce that for any $Y\in \mathcal N_1$,
    \begin{align}\label{eq:airy-app-f-dwe-2}
		|e^{\vartheta Y}(U_s- c)\partial_Yw_{app}(Y)|\leq C|\varepsilon|^{-\frac{2}{3}} \|e^{\vartheta Y}(U_s-c)F\|_{L^\infty}.
	\end{align}
	
	\no\textbf{Case 3. $|\kappa\eta^{La}(Y)|\leq 3M+1$.} By Lemma \ref{lem:A1A2}, we have 
	\begin{align*}
		|(U_s- c)\partial_Y w_{app}(Y)|\leq& C|\varepsilon|^{-\frac{2}{3}}\int_{Z\in\mathcal N_{far}(Y)}|\kappa\eta^{La}(Z)|^{-\f14}|\partial_Z\eta^{La}(Z)|^{-1} \\
		&\times e^{-\gamma_0|\varepsilon|^{-\f13}|\eta^{La}(Y)-\eta^{La}(Z)|(|\kappa\eta(Y)|^\f12+|\kappa\eta^{La}(Z)|^\f12)}|F(Z)|dZ\\
		&+C|\varepsilon|^{-\frac{2}{3}}\int_{Z\in\mathcal N\cup\mathcal N_{near}(Y)}|F(Z)|dZ.\\
		\leq&C|\varepsilon|^{-\frac{2}{3}}|\log c_i| e^{-\vartheta Y} \|e^{\vartheta Y}(U_s-c)F\|_{L^\infty},
	\end{align*}
	which along with \eqref{eq:airy-app-f-dwe-1} and \eqref{eq:airy-app-f-dwe-2} deduces the following estimate 
	\begin{align}\label{eq:airy-app-w_app-2}
		|e^{\vartheta Y}(U_s- c)\partial_Yw_{app}(Y)|\leq C|\varepsilon|^{-\frac{2}{3}} |\log c_i|\|e^{\vartheta Y}(U_s-c)F\|_{L^\infty}.
	\end{align}
   
Using the equation of $w_{app}$ in \eqref{eq:ariy-app-goodf} and along with estimates \eqref{eq:airy-app-w_app-1} and \eqref{eq:airy-app-w_app-2}, it is easy to see that
\begin{align*}
|\e||e^{\vartheta Y}(U_s- c)\pa_Y^2w_{app}|\leq C|\log c_i|\|e^{\vartheta Y}(U_s-c)F\|_{L^\infty}.
\end{align*}

	By Lemma \ref{lem:err1-err2}, \eqref{eq:airy-app-f-c1}, \eqref{eq:airy-app-f-c2} and \eqref{eq:airy-app-f-4}, and using the fact $|U_s-c|\geq c_i\geq c_0|\e|^\f12$ and $c_i\leq C|\varepsilon|^\f13$, we obtain
    \begin{align*}
    	|e^{\vartheta Y}(U_s- c)Err_1(Y)w_{app}(Y)|\leq C|\e|^\f13|\log c_i|\|e^{\vartheta Y}(U_s- c)F\|_{L^\infty}.
    \end{align*}	
Moreover, by Lemma \ref{lem:err1-err2}, we obtain 
	\begin{align*}
		|e^{\vartheta Y}(U_s- c)Err_2(Y)\partial_Yw_{app}(Y)|\leq C|\varepsilon|^{\f13}|\log c_i|\|e^{\vartheta Y}(U_s-c)F\|_{L^\infty}.
	\end{align*}

\subsubsection{Estimates of $\partial_Y\psi_{app}(Y)$ and $\psi_{app}(Y)$}

Here we restrict that $1\lesssim\vartheta\leq 3\eta_0$.  By the definition of $\psi_{app}(Y)$, we know that 
\begin{align*}
A^{-1}\partial_Y\psi_{app}(Y)=-\int_Y^{+\infty}w_{app}(Z)dZ,
\end{align*}
from which, we get by the formula of $w_{app}$ and integration by parts  that 
\begin{align}\nonumber
	\begin{split}
		A^{-1}\partial_Y\psi_{app}(Y)=&A_1(1,Y)\int_0^YA_2(Z)(\partial_Z\eta^{La}(Z))^{-1}F(Z)dZ\\
		&+A_2(1,Y)\int_Y^{+\infty}A_1(Z)(\partial_Z\eta^{La}(Z))^{-1}F(Z)dZ\\
	&+\int_Y^{+\infty}[A_1(1,Z)A_2(Z)-A_2(1,Z)A_1(Z)](\partial_Z\eta^{La}(Z))^{-1}F(Z)dZ.
	\end{split}
\end{align}
Therefore, we get by \eqref{est: A} that
\begin{align}\label{eq:airy-app-gf1-ppsi1}
	\begin{split}
		|(U_s(Y)- c)\partial_Y\psi_{app}(Y)|\leq &\Bigg(\left|(U_s- c) A_1(1,Y)\int_0^YA_2(Z)(\partial_Z\eta^{La}(Z))^{-1}F(Z)dZ\right|\\
		&+\left|(U_s- c) A_2(1,Y)\int_Y^{+\infty}A_1(Z)(\partial_Z\eta^{La}(Z))^{-1}F(Z)dZ\right|\Bigg)\\
		&+\left|(U_s- c) \int_Y^{+\infty}[A_1(1,Z)A_2(Z)-A_2(1,Z)A_1(Z)](\partial_Z\eta^{La}(Z))^{-1}F(Z)dZ\right|\\
		=&H_1+H_2.
	\end{split}
\end{align}

\no\textbf{Case 1. $|Y-Y_c|\geq L$}.  As in the same argument in $I_1-I_3$, we have
\begin{align}\label{eq:airy-app-gf1-ppsi2}
\begin{split}
	H_1\leq& C|\varepsilon|^{-\f13} e^{-\vartheta Y}\|e^{\vartheta Y}(U_s-c)F\|_{L^\infty}\Bigg(\int_{Z\in\mathcal N_{near}(Y)}+\int_{Z\in\mathcal N_{far}(Y)}\Bigg)|\kappa\eta(Y)|^{-\f34}|\kappa\eta^{La}(Z)|^{-\f14}\\
		&\qquad\times|\partial_Y\eta^{La}(Y)|^{-1} ||\partial_Z\eta^{La}(Z)|^{-1}|U_s(Z)-c|^{-1} e^{-C|\varepsilon|^{-\f13}|\eta^{La}(Y)-\eta^{La}(Z)|(|\kappa\eta^{La}(Y)|^\f12+|\kappa\eta^{La}(Z)|^\f12)}dZ\\
		&+ C|\varepsilon|^{-\f13} e^{-\vartheta Y}\|e^{\vartheta Y}(U_s- c)F\|_{L^\infty}\\
		&\qquad\times\int_{Z\in\mathcal N}|\kappa\eta^{La}(Y)|^{-\f34} e^{-C|\varepsilon|^{-\f13}|\eta^{La}(Y)-\eta^{La}(Z)|(|\kappa\eta^{La}(Y)|^\f12+|\kappa\eta^{La}(Z)|^\f12)}|U_s- c|^{-1}dZ\\
		\leq& C|\varepsilon|^{\f13} e^{-\vartheta Y}\|e^{\vartheta Y}(U_s-c)F\|_{L^\infty},
		\end{split}
	\end{align}
where we used  \eqref{est: exp-near}, \eqref{est: exp-far} and
\begin{align*}
|\kappa\eta^{La}(Y)|^{-\f34}|\kappa\eta^{La}(Z)|^{-\f14}|\partial_Y\eta^{La}(Y)|^{-1} ||\partial_Z\eta^{La}(Z)|^{-1}|U_s(Z)-c|^{-1}\leq& C|\e|^\f13,\quad Z\in \mathcal{N}_{near}(Y),
\end{align*}
and 
\begin{align*}
&|\kappa\eta^{La}(Y)|^{-\f34}|\kappa\eta^{La}(Z)|^{-\f14}|\partial_Y\eta^{La}(Y)|^{-1} ||\partial_Z\eta^{La}(Z)|^{-1}|U_s(Z)-c|^{-1}	\\
&\leq C|\varepsilon|^{\f14-\f13}|\eta^{La}(Y)|^{-\f34}|\kappa\eta^{La}(Z)|^{-\f14}|\eta^{La}(Y)|^{\f12}(1+|\eta^{La}(Z)|^\f12)\leq C|\kappa\eta^{La}(Y)|^\f14(1+|\eta^{La}(Z)|^\f12),\quad Z\in\mathcal N_{far}(Y),
\end{align*}
and
	\begin{align*}
		&\int_{Z\in\mathcal N}|\kappa\eta^{La}(Y)|^{-\f34}e^{-C|\varepsilon|^{-\f13}|\eta^{La}(Y)-\eta^{La}(Z)|(|\kappa\eta^{La}(Y)|^\f12+|\kappa\eta^{La}(Z)|^\f12)}|U_s- c|^{-1}dZ\\
		&\quad\leq Ce^{-|\varepsilon|^{-\f13}|\kappa\eta^{La}(Y)|^\f12}\int_{Z\in\mathcal N}|U_s- c|^{-1}dZ\leq C|\varepsilon|^N.
	\end{align*}
For  $|Y-Y_c|\geq L$, we have
\begin{align*}
	H_2\leq& C|\varepsilon|^{-\f13}\int_Y^{+\infty}|\kappa\eta^{La}(Z)|^{-1}|\partial_Z\eta^{La}(Z)|^{-2}|F(Z)|dZ\\
	\leq& Ce^{-\vartheta Y}\|e^{\vartheta Y}F\|_{L^\infty(\{|Y-Y_c|\geq L\})}\int_Y^{+\infty}e^{-\vartheta(Z-Y)}dZ\leq Ce^{-\vartheta Y}\|e^{\vartheta Y}(U_s-c)F\|_{L^\infty},
\end{align*}
which along with \eqref{eq:airy-app-gf1-ppsi1} and \eqref{eq:airy-app-gf1-ppsi2} implies that for $|Y-Y_c|\geq L$,
\begin{align*}
	|e^{\vartheta Y}(U_s(Y)- c)\partial_Y\psi_{app}(Y)|\leq C\|e^{\vartheta Y}(U_s- c)F\|_{L^\infty}.
\end{align*}

\no\textbf{Case 2. $|\kappa\eta^{La}(Y)|\geq 3M+1$ and $|Y-Y_c|\leq L$.} In this case,  using the fact $|U_s-c|\leq C|Y-Y_c|\leq C|\e|^\f13|\kappa \eta^{La}(Y)|$, we get
\begin{align}\label{eq:airy-app-gf1-ppsi3}
\begin{split}
	H_1\leq& Ce^{-\vartheta Y}\|e^{\vartheta Y}(U_s- c)F\|_{L^\infty}\\
		&\qquad\times\Bigg(\int_{Z\in\mathcal N_{near}(Y)}+\int_{Z\in\mathcal N_{far}(Y)}\Bigg)|\kappa\eta^{La}(Y)|^\f14|\kappa\eta^{La}(Z)|^{-\f14}|\partial_Z\eta^{La}(Z)|^{-1}|U_s(Z)- c|^{-1}\\
		&\qquad\times e^{-C|\varepsilon|^{-\f13}|\eta^{La}(Y)-\eta^{La}(Z)|(|\kappa\eta^{La}(Y)|^\f12+|\kappa\eta^{La}(Z)|^\f12)}dZ\\
		&\qquad+Ce^{-\vartheta Y}\|e^{\vartheta Y}(U_s- c)F\|_{L^\infty}\\
		&\qquad\times\int_{Z\in\mathcal N}|\kappa\eta^{La}(Y)|^\f14e^{-C|\varepsilon|^{-\f13}|\eta^{La}(Y)-\eta^{La}(Z)|(|\kappa\eta^{La}(Y)|^\f12+|\kappa\eta^{La}(Z)|^\f12)}|U_s(Z)- c|^{-1}dZ\\
		\leq &C|\log c_i| e^{-\vartheta Y}\|e^{\vartheta Y}(U_s- c)F\|_{L^\infty},
		\end{split}
\end{align}
where we used the facts that
\begin{align*}
|\kappa\eta^{La}(Y)|^\f14|\kappa\eta^{La}(Z)|^{-\f14}|\partial_Z\eta^{La}(Z)|^{-1}|U_s(Z)- c|^{-1}\leq& C\kappa|\kappa \eta^{La}(Z)|^{-1}\leq C|\e|^{-\f13}, \quad Z\in \mathcal{N}_{near}(Y),
\end{align*}
and 
\begin{align*}
	&|\kappa\eta^{La}(Y)|^\f14|\kappa\eta^{La}(Z)|^{-\f14}|\partial_Z\eta^{La}(Z)|^{-1}|U_s(Z)- c|^{-1}\\
	&\qquad\leq C|\varepsilon|^{-\f13}|\kappa\eta^{La}(Y)|^\f14(1+|\eta^{La}(Z)|^\f12),\quad Z\in\mathcal N_{far}(Y),
\end{align*}
and $|\varepsilon|^{-\f13}|\eta^{La}(Y)-\eta^{La}(Z)|\geq 1$ so that 
\begin{align*}
	&\int_{Z\in\mathcal N}|\kappa\eta^{La}(Y)|^\f14e^{-C|\varepsilon|^{-\f13}|\eta^{La}(Y)-\eta^{La}(Z)|(|\kappa\eta^{La}(Y)|^\f12+|\kappa\eta^{La}(Z)|^\f12)}|U_s(Z)- c|^{-1}dZ\\
	&\qquad\leq C|\kappa\eta^{La}(Y)|^\f14e^{-C|\kappa\eta^{La}(Y)|^\f12}\int_{\mathcal N}|U_s(Z)- c|^{-1}dZ\leq C|\log c_i|.
\end{align*}

Notice that for any $Y\in\{Y:|\kappa\eta^{La}(Y)|\geq 3M+1, Y\geq Y_c\}\cup\{Y:|Y-Y_c|\leq L\}$,
\begin{align*}
	H_2\leq& C\int_Y^{+\infty}|\kappa\eta^{La}(Z)|^{-1}|\partial_Z\eta^{La}(Z)|^{-2}|F(Z)|dZ
	\leq Ce^{-\vartheta Y}\|e^{\vartheta Y}(U_s-c)F\|_{L^\infty},
\end{align*}
and for any  $Y\in\{Y:|\kappa\eta^{La}(Y)|\geq 3M+1, Y\leq Y_c\}\cup\{Y:|Y-Y_c|\leq L\}$,
\begin{align*}
	H_2\leq& C\left|(U_s-c) \int_{Z\in\mathcal N^+}[A_1(1,Z)A_2(Z)-A_2(1,Z)A_1(Z)]\partial_Z\eta^{La}(Z)^{-1}F(Z)dZ\right|\\
	&\qquad+C\left|(U_s- c) \int_{Z\in\mathcal N}[A_1(1,Z)A_2(Z)-A_2(1,Z)A_1(Z)]\partial_Z\eta^{La}(Z)^{-1}F(Z)dZ\right|\\
	&\qquad+C\left|(U_s-c) \int_{Y}^{Y_-}[A_1(1,Z)A_2(Z)-A_2(1,Z)A_1(Z)]\partial_Z\eta^{La}(Z)^{-1}F(Z)dZ\right|\\
	\leq& C\|e^{\vartheta Y}(U_s-c)F\|_{L^\infty}\left(1+|c||\varepsilon|^{-\f13}\int_{Z\in\mathcal N}|U_s(Z)- c|^{-1}dZ\right)\\
	&\qquad+C| c||\e|^{-\f13}\int_Y^{Y_-}|\kappa\eta^{La}(Z)|^{-1}|\partial_Z\eta^{La}(Z)|^{-2}|F(Z)|dZ\\
	\leq& C(1+| c||\varepsilon|^{-\f13})|\log c_i| e^{-\vartheta Y}\|e^{\vartheta Y}(U_s-c)F\|_{L^\infty}.
\end{align*}
Thus, we conclude that for any $Y\in\big\{Y:|\kappa\eta^{La}(Y)|\geq 3M+1\}\cup\{Y:|Y-Y_c|\leq L\big\}$,
\begin{align*}
	H_2\leq& C(1+| c||\varepsilon|^{-\f13})|\log c_i| \|e^{\vartheta Y}(U_s-c)F\|_{L^\infty},
\end{align*}
which along with \eqref{eq:airy-app-gf1-ppsi1} and \eqref{eq:airy-app-gf1-ppsi3} shows that for any $Y\in\{Y:|\kappa\eta(Y)|\geq 3M+1\}\cup\{Y:|Y-Y_c|\leq L\}$,
\begin{align*}
		|e^{\vartheta Y}(U_s(Y)- c)\partial_Y\psi_{app}(Y)|\leq C(1+| c||\varepsilon|^{-\f13})|\log c_i|\|e^{\vartheta Y}(U_s- c)F\|_{L^\infty}.
\end{align*}

\no\textbf{Case 3. $Y\in\{Y:|\kappa\eta^{La}(Y)|\leq 3M+1\}$.} A similar argument as in Case 2 shows that 
\begin{align}\label{eq:airy-app-gf1-ppsi4}
\begin{split}
		H_1\leq& Ce^{-\vartheta Y}\|e^{\vartheta Y}(U_s- c)F\|_{L^\infty}\int_{Z\in\{Z:|\kappa\eta^{La}(Z)|\geq 3M+2\}}|\kappa\eta^{La}(Z)|^{-\f14}|\partial_Z\eta^{La}(Z)|^{-1}\\
		&\qquad\times e^{-C|\varepsilon|^{-\f13}|\eta^{La}(Y)-\eta^{La}(Z)||\kappa\eta^{La}(Z)|^\f12}|U_s(Z)- c|^{-1}dZ\\
		&+Ce^{-\vartheta Y}\|e^{\vartheta Y}(U_s- c)F\|_{L^\infty}\int_{Z\in\mathcal N}|U_s- c|^{-1}dZ\\
		\leq&C|\log c_i|e^{-\vartheta Y}\|e^{\vartheta Y}(U_s- c)F\|_{L^\infty}.
		\end{split}
\end{align}
Notice that 
\begin{align*}
	H_2\leq& \left|(U_s- c) \int_{Y_+}^{+\infty}[A_1(1,Z)A_2(Z)-A_2(1,Z)A_1(Z)]\partial_Z\eta(Z)^{-1}F(Z)dZ\right|\\
	&\qquad+\left|(U_s- c) \int_{\mathcal N}[A_1(1,Z)A_2(Z)-A_2(1,Z)A_1(Z)]\partial_Z\eta(Z)^{-1}F(Z)dZ\right|\\
	\leq& Ce^{-\vartheta Y}\|e^{\vartheta Y}(U_s- c)F\|_{L^\infty}+C\|e^{\vartheta Y}(U_s- c)F\|_{L^\infty}\int_{\mathcal N}|U_s(Z)-c|^{-1}dZ\\
	\leq& C|\log  c_i| e^{-\vartheta Y}\|e^{\vartheta Y}(U_s- c)F\|_{L^\infty},
\end{align*}
which along with \eqref{eq:airy-app-gf1-ppsi1} and \eqref{eq:airy-app-gf1-ppsi4} implies that for any $Y\in\{Y:|\kappa\eta(Y)|\leq M+1\}$,
\begin{align*}
		|e^{\vartheta Y}(U_s(Y)- c)\partial_Y\psi_{app}(Y)|\leq C|\log c_i|e^{-\vartheta Y}\|e^{\vartheta Y}(U_s-c)F\|_{L^\infty}.
\end{align*}

Summing up, we conclude that for any $Y\geq 0$,
\begin{align}\label{eq:airy-app-gh1-ppsie}
	|e^{\vartheta Y}(U_s(Y)-c)\partial_Y\psi_{app}(Y)|\leq C(1+| c||\varepsilon|^{-\f13})|\log c_i|\|e^{\vartheta Y}(U_s- c)F\|_{L^\infty}.
\end{align}
By \eqref{eq:airy-app-gh1-ppsie} and $\psi_{app}=-\int_Y^{+\infty}\partial_Y\psi_{app}(Z)dZ$, we can obtain that for any $Y\geq 0$,
\begin{align*}
	|e^{\vartheta Y}\psi_{app}(Y)|\leq C(1+|c||\varepsilon|^{-\f13})|\log c_i|^2\|e^{\vartheta Y}(U_s- c)F\|_{L^\infty}.
\end{align*}

\medskip

In a similar way as in Proposition \ref{prop:airy-gf2},  we can show that 

\begin{proposition}\label{prop:airy-gf1}
		Let $\delta_0>0$ be a constant in Lemma \ref{lem:Airy-p1} and  $ c_i\geq c_0 |\e|^\f12$. Suppose $-\delta_0<\mathrm{Im}(\kappa\eta^{La})<\delta_0$, $(\alpha,c)\in\mathbb H_1$ and $e^{\vartheta Y}F\in L^{\infty}$. Then it holds that
\begin{enumerate}
\item  For $0<\vartheta\leq 3\eta_0$,
\begin{align*}
&|e^{\vartheta Y}(U_s-c)w_{app}(Y)|+|\e|^\f23|e^{\vartheta Y}\pa_Yw_{app}(Y)|+|\e||e^{\vartheta Y}\pa_Y^2w_{app}(Y)|
\leq C\|e^{\vartheta Y}F\|_{L^\infty}.
\end{align*}
Moreover, for $0<c_i\leq C|\varepsilon|^\f13$, we have
\begin{align*}
 |e^{\vartheta Y}Err_1(Y)w_{app}(Y)|+|e^{\vartheta Y}Err_2(Y)\partial_Yw_{app}(Y)|\leq C|\varepsilon|^\f13 \|e^{\vartheta Y}F\|_{L^\infty}.
\end{align*}	
\item For $1\lesssim\vartheta\leq 3\eta_0$,
\begin{align*}
&|e^{\vartheta Y}(U_s-c)\partial_Y\psi_{app}(Y)|+|e^{\vartheta Y}\psi_{app}(Y)|\leq C |\log c_i|\|e^{\vartheta Y}F\|_{L^\infty} .
\end{align*}

\end{enumerate}
\end{proposition}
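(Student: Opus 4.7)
The plan is to mirror the three-case analysis carried out for Proposition \ref{prop:airy-gf2}, exploiting the fact that the hypothesis here is $\|e^{\vartheta Y}F\|_{L^\infty}<\infty$ rather than $\|e^{\vartheta Y}(U_s-c)F\|_{L^\infty}<\infty$. The extra factor of $|U_s(Z)-c|^{-1}\sim|\varepsilon|^{-1/3}|\kappa\eta^{La}(Z)|^{-1}$ in every integrand that appeared in the previous argument is now absent, so we gain one power of $|U_s-c|$ in the output and in particular eliminate the logarithmic loss from integrating $|U_s-c|^{-1}$ over $\mathcal N$. Concretely, inserting the Green function bound of Lemma \ref{lem:A1A2} (with $k=0$) into the representation \eqref{eq:airy-w-app} and splitting the $Z$-integration into $\mathcal N_{near}(Y)$, $\mathcal N_{far}(Y)$, and $\mathcal N$, I will estimate $|(U_s-c)w_{app}(Y)|$ in each of the three regions $|Y-Y_c|\geq L$, $\{|\kappa\eta^{La}(Y)|\geq 3M+1,\,|Y-Y_c|\leq L\}$, and $|\kappa\eta^{La}(Y)|\leq 3M+1$, using the bounds on $|\partial_Y\eta^{La}|$ and $|\eta^{La}|$ from Lemma \ref{lem:est-eta} together with the decay estimate \eqref{eq:ariy-app-gf-decay1} to absorb the weight $e^{\vartheta(Y-Z)}$.

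For $\partial_Y w_{app}$, I would differentiate the formula \eqref{eq:airy-w-app} (the boundary contribution at $Z=Y$ cancels by the Wronskian identity \eqref{eq:Airy-wron}), obtaining an analogous integral with $\partial_Y A_1$ and $\partial_Y A_2$ in place of $A_1, A_2$. Applying Lemma \ref{lem:A1A2} with $k=1$ and repeating the three-case analysis produces the bound with the sharp prefactor $|\varepsilon|^{-2/3}$. Rather than differentiating twice, I extract $|\varepsilon||e^{\vartheta Y}\partial_Y^2 w_{app}|$ directly from the approximate equation in \eqref{eq:ariy-app-goodf}, writing
\begin{align*}
\varepsilon\partial_Y^2 w_{app}=\varepsilon\alpha^2 w_{app}+(U_s-c)w_{app}+F+Err_1 w_{app}+Err_2\partial_Y w_{app}
\end{align*}
and invoking the previously established bounds on $w_{app}$ and $\partial_Y w_{app}$ together with Lemma \ref{lem:err1-err2}. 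The same lemma, combined with the bound $|U_s-c|\sim|\varepsilon|^{1/3}|\kappa\eta^{La}|$ near the critical layer (and $|Err_1|\lesssim|\eta_r^{La}|^2+|c_i||\eta_r^{La}|+|\varepsilon||\alpha|^2$, $|Err_2|\lesssim|\varepsilon|$) yields the claimed $|\varepsilon|^{1/3}$ gain on the error terms under $0<c_i\leq C|\varepsilon|^{1/3}$.

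For the second statement, I will use the primitive relation $A^{-1}\partial_Y\psi_{app}(Y)=-\int_Y^{+\infty}w_{app}(Z)\,dZ$ and integrate by parts exactly as in \eqref{eq:airy-app-gf1-ppsi1} to rewrite $\partial_Y\psi_{app}$ in terms of $A_1(1,Y), A_2(1,Y)$ and the Green-type quantity $A_1(1,Z)A_2(Z)-A_2(1,Z)A_1(Z)$. Applying Lemma \ref{lem:A1A2} with $j=1$ and the same regional decomposition yields the pointwise estimate on $(U_s-c)\partial_Y\psi_{app}$; the only loss that survives is the integration of $|U_s(Z)-c|^{-1}$ over $\mathcal N$, giving the $|\log c_i|$ factor (note the $(1+|c||\varepsilon|^{-1/3})$ factor of Proposition \ref{prop:airy-gf2} is absent here precisely because the $F$-hypothesis is one power of $(U_s-c)$ stronger). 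Finally, the pointwise bound on $\psi_{app}$ itself follows from $\psi_{app}(Y)=-\int_Y^{+\infty}\partial_Y\psi_{app}(Z)\,dZ$ together with the exponential weight $\vartheta\gtrsim 1$ that makes the $Z$-integral converge with an $O(1)$ constant.

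The main obstacle is purely organizational: in each of the three spatial regions for $Y$ and each of the three subregions for $Z$, one must verify that the product of the $|\kappa\eta^{La}|^{\pm 1/4}$ weights, the $|\partial_Y\eta^{La}|^{\pm 1}$ factors, and the prefactors arising from expressing $|U_s(Z)-c|$ in terms of $|\kappa\eta^{La}(Z)|$ combines with the exponential decay $e^{-\gamma_0|\varepsilon|^{-1/3}|\eta^{La}(Y)-\eta^{La}(Z)|(|\kappa\eta^{La}(Y)|^{1/2}+|\kappa\eta^{La}(Z)|^{1/2})}$ to yield the asserted power of $|\varepsilon|$ without any logarithmic loss. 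No new analytical input beyond what was developed for Proposition \ref{prop:airy-gf2} is required; the improvement is automatic from the stronger hypothesis on $F$.
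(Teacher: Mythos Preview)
Your outline is correct and matches the paper's approach exactly (the paper itself gives no proof beyond ``In a similar way as in Proposition \ref{prop:airy-gf2}''). One point in your accounting of part (2) needs correction, however, because as written it yields one logarithm too many for $\psi_{app}$.

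Under the hypothesis $e^{\vartheta Y}F\in L^\infty$, the integrands in $H_1$ and $H_2$ of \eqref{eq:airy-app-gf1-ppsi1} no longer carry a $|U_s(Z)-c|^{-1}$ coming from $F$, and the three-case analysis in fact gives
\[
|e^{\vartheta Y}(U_s-c)\partial_Y\psi_{app}(Y)|\leq C\,\|e^{\vartheta Y}F\|_{L^\infty}
\]
with \emph{no} logarithm (the only residual contribution of the form $|c||\log|\varepsilon||$, from the $\mathcal N^-$ part of $H_2$, is absorbed by the standing hypothesis $(|\alpha|+|c|)|\log c_i|\ll1$ in $\mathbb H_1$). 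The single $|\log c_i|$ in the stated conclusion enters only at the last step, when you recover $\psi_{app}$ from $\partial_Y\psi_{app}$: writing $\psi_{app}(Y)=-\int_Y^{+\infty}\partial_Y\psi_{app}(Z)\,dZ$ and using the weighted bound forces a division by $U_s(Z)-c$, and it is $\int_0^\infty|U_s(Z)-c|^{-1}e^{-\vartheta Z}\,dZ\leq C|\log c_i|$ that supplies the logarithm. Your claim that this $Z$-integral converges with an $O(1)$ constant is not right; combined with the $|\log c_i|$ you already placed on $(U_s-c)\partial_Y\psi_{app}$, your argument as stated would give $|\psi_{app}|\leq C|\log c_i|^2\|F\|_\vartheta$ rather than $C|\log c_i|\|F\|_\vartheta$.
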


\subsection{Non-homogeneous Airy equation}
In this subsection, we construct a solution to the non-homogeneous Airy equation:
\begin{align}\label{eq:airy-equation}
	\begin{split}
		&\varepsilon(\partial_Y^2-\alpha^2)w-(U_s-c)w=F,\quad\La\psi(Y)=w(Y),\\
		&\lim_{Y\to\infty}\psi(Y)=\lim_{Y\to\infty} w(Y)=0.
	\end{split}
\end{align}
We introduce the notation
\begin{align*}
Airy[\cdot]=\varepsilon(\partial_Y^2-\alpha^2)-(U_s-c),\quad \La=\partial_Y(A^{-1}\partial_Y)-\alpha^2.
\end{align*}

\begin{proposition}\label{pro: Airy-1}
Let $\delta_0>0$ be a constant in Lemma \ref{lem:Airy-p1} and  $c_0|\e|^\f12\leq c_i\leq C|\e|^\f13$. Suppose $-\delta_0<\mathrm{Im}(\kappa\eta^{La})<\delta_0$ and $(\alpha,c)\in\mathbb H_1$. Then it holds that
\begin{itemize}
\item  Take $\th>0$. For $F$ such that  $e^{\vartheta Y}F\in L^{\infty}$, there exists a solution $(w,\psi)$ to \eqref{eq:airy-equation} satisfying 
\begin{enumerate}
\item  for $0<\vartheta\leq 3\eta_0$,
\begin{align*}
&|\varepsilon|\|e^{\vartheta Y}\partial^2_Y w\|_{L^\infty}+	|\varepsilon|^{\f23}\|e^{\vartheta Y}\partial_Y w\|_{L^\infty}+\|e^{\vartheta Y}(U_s- c)w\|_{L^\infty}\leq C \|e^{\vartheta Y}F\|_{L^\infty}.
\end{align*}
\item for $1\lesssim\vartheta\leq 3\eta_0$,
\begin{align*}
&\|e^{\vartheta Y}\psi\|_{L^\infty}+\|(U_s- c)e^{\vartheta Y}\partial_Y\psi\|_{L^\infty}\leq C|\log c_i|\|e^{\vartheta Y}F\|_{L^\infty}.
\end{align*}
\end{enumerate}

\item Take $\th>0$. For $F$ such that  $e^{\vartheta Y}(U_s- c)F\in L^{\infty}$, there exists a solution $(w,\psi)$ to \eqref{eq:airy-equation} satisfying 
\begin{enumerate}
\item  for $0<\vartheta\leq3\eta_0$:
\begin{align*}
&|\varepsilon|\|e^{\vartheta Y}(U_s-c)\partial_Y^2 w\|_{L^\infty}+|\varepsilon|^\f23\|e^{\vartheta Y}(U_s-c)\partial_Yw\|_{L^\infty}+\|e^{\vartheta Y}(U_s-c)^2w\|_{L^\infty}\\
&\quad\leq C|\log c_i|\|e^{\vartheta Y}(U_s- c)F\|_{L^\infty}.
\end{align*}
\item for $1\lesssim\vartheta\leq 3\eta_0$,
\begin{align*}
&\|e^{\vartheta Y}\psi\|_{L^\infty}+\|e^{\vartheta Y}(U_s-c)\partial_Y\psi\|_{L^\infty}\leq C(1+|c||\varepsilon|^{-\f13})|\log c_i| ^2\|e^{\vartheta Y}(U_s-c)F\|_{L^\infty}.
\end{align*}
\end{enumerate}\end{itemize}
\end{proposition}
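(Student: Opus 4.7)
The plan is to construct $(w,\psi)$ by a Neumann-type iteration built on the approximate solution $(w_{app},\psi_{app})$ of \eqref{eq:airy-w-app}--\eqref{eq:airy-psi-app}. Recall from \eqref{eq:ariy-app-goodf} that $w_{app}[F]$ satisfies
\begin{align*}
Airy[w_{app}[F]]=F+Err_1\, w_{app}[F]+Err_2\,\partial_Y w_{app}[F],
\end{align*}
so $w_{app}[F]$ differs from an exact solution by the residual $R[F]:=Err_1 w_{app}[F]+Err_2\partial_Y w_{app}[F]$. The closing lines of Propositions \ref{prop:airy-gf2} and \ref{prop:airy-gf1} supply the crucial smallness bounds
\begin{align*}
\|e^{\vartheta Y}(U_s-c)R[F]\|_{L^\infty}&\leq C|\varepsilon|^{\f13}|\log c_i|\,\|e^{\vartheta Y}(U_s-c)F\|_{L^\infty},\\
\|e^{\vartheta Y}R[F]\|_{L^\infty}&\leq C|\varepsilon|^{\f13}\,\|e^{\vartheta Y}F\|_{L^\infty},
\end{align*}
and the hypothesis $c_i\geq c_0|\varepsilon|^{\f12}$ gives $|\log c_i|\leq C|\log|\varepsilon||$, so the contraction factor $|\varepsilon|^{\f13}|\log c_i|\ll 1$.

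Setting $F^{(0)}=F$ and $F^{(n+1)}=-Err_1 w^{(n)}-Err_2\partial_Y w^{(n)}$ with $w^{(n)}:=w_{app}[F^{(n)}]$, the partial sums $W_N=\sum_{n=0}^N w^{(n)}$ satisfy $Airy[W_N]=F-F^{(N+1)}$, and the smallness above shows that $w=\sum_{n\geq 0}w^{(n)}$ converges geometrically in the weighted $L^\infty$ norm appropriate to each case. Summing the pointwise estimates for $w_{app}$, $\partial_Y w_{app}$ and $\partial_Y^2 w_{app}$ from Propositions \ref{prop:airy-gf2} and \ref{prop:airy-gf1} term by term then yields the asserted bounds on $w$, $\partial_Y w$ and $\partial_Y^2 w$ in both items of the proposition.

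To recover $\psi$, observe that the approximate potential $\psi_{app}$ defined in \eqref{eq:airy-psi-app} only satisfies $\partial_Y(A^{-1}\partial_Y)\psi_{app}=w_{app}$, so $\Lambda\psi_{app}=w_{app}-\alpha^2\psi_{app}$. Starting from $\psi_{app}[w]$ built from the full $w$ and running a second Neumann iteration that absorbs $-\alpha^2\psi^{(k)}$ into the source, one gets a series whose ratio is bounded by $C|\alpha|^2|\log c_i|^2$ via the $\psi_{app}$-bounds of Propositions \ref{prop:airy-gf2} and \ref{prop:airy-gf1}. Since $(\alpha,c)\in\mathbb H_1$ enforces $(|\alpha|+|c|)|\log c_i|\ll 1$, this series converges and delivers the advertised estimates on $\psi$ and $(U_s-c)\partial_Y\psi$, including the extra factor $(1+|c||\varepsilon|^{-\f13})$ in item (2) coming from the $(U_s-c)F$-type $\psi_{app}$-bound.

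The main obstacle lies in the smallness of the residual $R[F]$: the error $Err_1$ itself is not uniformly small in the sublayer, since Lemma \ref{lem:err1-err2} gives $|Err_1|\lesssim |\eta_r^{La}|^2$ there, and naive pointwise estimates against $w_{app}$ lose. The whole point of the delicate Green-function bounds in Lemma \ref{lem:A1A2} and the case-by-case splittings into $\mathcal N$, $\mathcal N^\pm$ and $\mathcal N_{near}/\mathcal N_{far}$ inside Propositions \ref{prop:airy-gf2} and \ref{prop:airy-gf1} is precisely to show that, once paired with $w_{app}$ in the weights dictated by the hypotheses, one recovers a uniform gain of $|\varepsilon|^{\f13}$. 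The two-sided restriction $c_0|\varepsilon|^{\f12}\leq c_i\leq C|\varepsilon|^{\f13}$ is essential: the upper bound keeps $|\mathrm{Im}(\kappa\eta^{La})|<\delta_0$ so the Airy asymptotics remain valid, while the lower bound controls $|\log c_i|\lesssim|\log|\varepsilon||$ and makes the geometric factor $|\varepsilon|^{\f13}|\log c_i|$ strictly less than one, closing the iteration.
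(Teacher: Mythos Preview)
Your proposal is correct and follows essentially the same strategy as the paper: a Neumann iteration on the residual $R[F]=Err_1\,w_{app}[F]+Err_2\,\partial_Y w_{app}[F]$ to build $w$, then a second iteration to upgrade $\partial_Y(A^{-1}\partial_Y)\psi=w$ to $\Lambda\psi=w$. The only organizational differences are that the paper starts the $\psi$-construction from $\psi_{app}$ built out of $w^{(0)}=w_{app}$ (so that the first correction $\psi^{(1)}$ carries both the discrepancy $w_{app}-w$ and the term $-\alpha^2\psi_{app}$), whereas you start from $\psi_{app}[w]$; and the paper's ratio for the $\psi$-iteration is simply $C|\alpha|^2$ coming from the elementary double integral $\psi^{(j)}=-\alpha^2\int_Y^\infty A\int_{Y'}^\infty\psi^{(j-1)}$, not the full ``$\psi_{app}$-bounds'' you cite. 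Both variants close under the hypothesis $(\alpha,c)\in\mathbb H_1$.
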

\begin{remark}
The  results in Proposition \ref{pro: Airy-1} are also valid when we replace $w$ by $\pa_Y^2 \psi$ according to the relation $\La \psi=\om.$
\end{remark}

\begin{proof}
Since the proof for the above two kinds of source terms  $F$ is similar, we only show the harder case, i.e., $F\sim(U_s- c)^{-1}$.
We shall construct the solution $(w,\psi)$ to \eqref{eq:airy-equation} via the iteration. 

For given source term $F$ with $e^{\vartheta Y}(U_s- c)F\in L^{\infty}$, we define $w^{(0)}(Y)=w_{app}(Y)$, where $w_{app}(Y)\in W^{2,\infty}(\mathbb{R}_+)$ is the solution to 
\begin{align*}
\varepsilon(\partial_Y^2-\alpha^2)w_{app}-(U_s-c)w_{app}=F+Err_1w_{app}+Err_2\partial_Yw_{app}
\end{align*}
constructed in Proposition \ref{prop:airy-gf2}. Then for any $j\in\mathbb{N}_+$, we define that $w^{(j)}(Y)$ is the solution constructed in Proposition \ref{prop:airy-gf2} with the source term $-Err_1w^{(j-1)}(Y)-Err_2\partial_Y w^{(j-1)}(Y)$. 
According to Proposition \ref{prop:airy-gf2}, we obtain 
\begin{align*}
&\|e^{\vartheta Y}(U_s-c)^2w^{(j)}\|_{L^\infty}+|\varepsilon|^\f23\|e^{\vartheta Y}(U_s-c)\partial_Yw^{(j)}\|_{L^\infty}+|\varepsilon|\|e^{\vartheta Y}(U_s-c)\partial_Y^2 w^{(j)}\|_{L^\infty}\\
&\leq C|\log c_i|(\|e^{\vartheta Y}(U_s-c)Err_1w^{(j-1)}\|_{L^\infty}+\|e^{\vartheta Y}(U_s-c)Err_2\partial_Y w^{(j-1)}\|_{L^\infty})\\
&\leq
\left\{
\begin{aligned}
&C|\varepsilon|^\f13|\log c_i| \|e^{\vartheta Y}(U_s-c)F\|_{L^\infty},\quad j=1,\\
&C|\varepsilon|^\f13|\log c_i|(\|e^{\vartheta Y}(U_s-c)Err_1w^{(j-2)}\|_{L^\infty}+\|e^{\vartheta Y}(U_s-c)Err_2\partial_Y w^{(j-2)}\|_{L^\infty}),\quad j\geq2.
\end{aligned}
\right.
\end{align*}
Therefore, for any $j\geq 1$, we take $|\e|$ small enough so that $|\varepsilon|^\f13|\log c_i|\leq \f12$ to obtain
\begin{align*}
&\|e^{\vartheta Y}(U_s-c)^2w^{(j)}\|_{L^\infty}+|\varepsilon|^\f23\|e^{\vartheta Y}(U_s-c)\partial_Yw^{(j)}\|_{L^\infty}+|\varepsilon|\|e^{\vartheta Y}(U_s-c)\partial_Y^2 w^{(j)}\|_{L^\infty}\\
&\quad\leq C\left(|\varepsilon|^\f13|\log c_i|\right)^j \|e^{\vartheta Y}(U_s-c)F\|_{L^\infty}\leq C2^{-j} \|e^{\vartheta Y}(U_s-c)F\|_{L^\infty}.
\end{align*}
Now we define
\begin{align*}
w(Y)=\sum_{j=0}^\infty w^{(j)}(Y).
\end{align*}
From the above results, we infer that 
\begin{align}\label{eq:airy-gf2-we}
	\begin{split}
		&\|e^{\vartheta Y}(U_s-c)^2w\|_{L^\infty}+|\varepsilon|^\f23\|e^{\vartheta Y}(U_s-c)\partial_Yw\|_{L^\infty}+|\varepsilon|\|e^{\vartheta Y}(U_s-c)\partial_Y^2 w\|_{L^\infty}\\
\quad&\leq C|\log c_i|\|e^{\vartheta Y}(U_s-c)F\|_{L^\infty}+ C\|e^{\vartheta Y}(U_s-c)F\|_{L^\infty}\\
\quad&\leq C|\log c_i|\|e^{\vartheta Y}(U_s-c)F\|_{L^\infty}.
	\end{split}
\end{align}

Now we turn to construct $\psi(Y)$. Here we restrict $1\lesssim\vartheta\leq 3\eta_0$. We first recall that 
\begin{align*}
	\psi_{app}(Y)=\int_{Y}^{+\infty}A(Y')\int_{Y'}^{+\infty}w_{app}(Z)dZdY'.
\end{align*}
By Proposition \ref{prop:airy-gf2}, we know that
\begin{align}\label{eq:airy-gf2-se}
	&\|e^{\vartheta Y}\psi_{app}\|_{L^\infty}+\|e^{\vartheta Y}(U_s-c)\partial_Y\psi_{app}\|_{L^\infty}\\
	&\qquad\leq C(1+|c||\varepsilon|^{-\f13})|\log c_i|^2\|e^{\vartheta Y}(U_s-c)F\|_{L^\infty}.\nonumber
\end{align}
Moreover, we  have 
\begin{align*}
	\La\psi_{app}(Y)=w(Y)+(w_{app}(Y)-w(Y))-\alpha^2\psi_{app}(Y).
\end{align*}
Hence, to construct $\psi(Y)$, we need to shrink the error: $(w_{app}(Y)-w(Y))-\alpha^2\psi_{app}(Y)$. For this purpose, we define 
\begin{align*}
	\psi^{(1)}(Y)=\int_Y^{+\infty}A(Y')\int_{Y'}^{+\infty}(w_{app}(Z)-w(Z))-\alpha^2\psi_{app}(Z)dZdY',
\end{align*}
from which, we directly have
\begin{align*}
	\partial_Y\psi^{(1)}(Y)=-A(Y)\int_{Y}^{+\infty}(w_{app}(Z)-w(Z))-\alpha^2\psi_{app}(Z)dZ.
\end{align*}
Therefore, by the proof of \eqref{eq:airy-gf2-we} and \eqref{eq:airy-gf2-se}, we obtain 
\begin{align*}
	\|e^{\vartheta Y}\partial_Y\psi^{(1)}\|_{L^\infty}\leq C(|\varepsilon|^\f13+|\alpha|)|\log c_i|^2\|e^{\vartheta Y}(U_s- c)F\|_{L^\infty},
\end{align*}
which implies  
\begin{align*}
	\|e^{\vartheta Y}\psi^{(1)}\|_{L^\infty}+\|e^{\vartheta Y}\partial_Y\psi^{(1)}\|_{L^\infty}\leq C(|\varepsilon|^\f13+|\alpha|)|\log c_i|^2\|e^{\vartheta Y}(U_s- c)F\|_{L^\infty}.
\end{align*}
Now for any $j\geq 2$, we define 
\begin{align*}
	\psi^{(j)}(Y)=-\alpha^2\int_Y^{+\infty}A(Y')\int_{Y'}^{+\infty}\psi^{(j-1)}(Z)dZdY'.
\end{align*}
Then we obtain that for any $j\geq 2$,
\begin{align*}
	\|e^{\vartheta Y}\psi^{(j)}\|_{L^\infty}+\|e^{\vartheta Y}\partial_Y\psi^{(j)}\|_{L^\infty}&\leq C|\alpha|^2\|e^{\vartheta Y}\psi^{(j-1)}\|_{L^\infty}\\
	&\leq C|\alpha|^{2(j-1)}(|\varepsilon|^\f13+|\alpha|)|\log c_i|^2\|e^{\vartheta Y}(U_s- c)F\|_{L^\infty}.
\end{align*}
Then we define 
\begin{align*}
	\psi(Y)=\psi_{app}(Y)+\sum_{j=1}^\infty\psi^{(j)}.
\end{align*}
From the above arguments, we infer that $\La\psi(Y)=w(Y)$ and 
\begin{align*}
\|e^{\vartheta Y}\psi\|_{L^\infty}+\|e^{\vartheta Y}(U_s- c)\partial_Y\psi\|_{L^\infty}\leq C(1+|c||\varepsilon|^{-\f13})|\log c_i|^2\|e^{\vartheta Y}(U_s- c)F\|_{L^\infty},
\end{align*}
and 
\begin{align*}
&\|e^{\vartheta Y}(\psi-\psi_{app})\|_{L^\infty}+\|e^{\vartheta Y}(U_s- c)\partial_Y(\psi-\psi_{app})\|_{L^\infty}\\
&\quad\leq C(|\varepsilon|^\f13+|\alpha|)|\log c_i|^2\|e^{\vartheta Y}F\|_{L^\infty}.
\end{align*}

The proof is completed.
\end{proof}

\subsection{Homogeneous Airy equation}
In this subsection, we construct a non-trivial solution to the homogeneous Airy equation:
\begin{align}\label{eq:airy-homo}
	\begin{split}
			&\varepsilon(\partial_Y^2-\alpha^2)w_a-(U_s-c)w_a=0,\quad  \La\psi_a(Y)=w_a(Y),\\
		&\lim_{Y\to\infty}\psi_a(Y)=\lim_{Y\to\infty}w_a(Y)=0.
	\end{split}
\end{align}

 We denote 
	\begin{align*}
&\tilde{\mathcal A}(1,Y)=-A(Y)\int_Y^{+\infty}Ai(e^{\mathrm i(\frac{\pi}{6}-\th_0)}\kappa\eta^{La}(Z))dZ,\\
&\tilde{\mathcal A}(2,Y)=-\int_Y^{+\infty}\tilde{\mathcal A}(1,Z)dZ.
	\end{align*}
Then we define
	\begin{align}\label{def: (psi_a^0, w_a^0)}
		w_a^{(0)}(Y)=\frac{Ai(e^{\mathrm i(\frac{\pi}{6}-\th_0)}\kappa\eta^{La}(Y))}{Ai(e^{\mathrm i(\frac{\pi}{6}-\th_0)}\kappa\eta^{La}(0))},\quad \psi_a^{(0)}(Y)=\frac{\tilde{\mathcal A}(2,Y)}{Ai(e^{\mathrm i(\frac{\pi}{6}-\th_0)}\kappa\eta^{La}(0))},
	\end{align}
	so that $\pa_Y(A^{-1}\pa_Y)\psi_a^{(0)}=w_a^{(0)}$. By \eqref{eq:Airy-Err-app}, we know that 
	\begin{align*}
		Airy[w_a^{(0)}(Y)]=Err_1(Y)w_a^{(0)}(Y)+Err_2(Y)\partial_Y w^{(0)}_a(Y).
	\end{align*}
To eliminate the above errors, we define $w_{a,err}(Y)$ to be a solution constructed in Proposition  \ref{pro: Airy-1} with source term $-Err_1(Y)w_a^{(0)}(Y)-Err_2(Y)\partial_Y w^{(0)}_a(Y)$.

	\begin{proposition}\label{pro: (psi_a, w_a)}
		Let $\delta_0>0$ be a constant in Lemma \ref{lem:Airy-p1} and  $c_0|\e|^\f12\leq c_i\leq C|\e|^\f13$. 
		Suppose  that$-\delta_0<\mathrm{Im}(\kappa\eta^{La})<\delta_0$ and $(\alpha,c)\in\mathbb H_1$. Then there exists a solution $\psi_a(Y)\in W^{4,+\infty}$ to \eqref{eq:airy-homo} such that for any $0<\vartheta\leq 3\eta_0$,
	   	\begin{align*}
&\|e^{\vartheta Y}(U_s-c)^2(w_a-w_a^{(0)})\|_{L^\infty}+|\varepsilon|^\f23\|e^{\vartheta Y}(U_s-c)\partial_Y(w_a-w_a^{(0)})\|_{L^\infty}\\
&\quad+|\varepsilon|\|e^{\vartheta Y}(U_s-c)\partial_Y^2 (w_a-w_a^{(0)})\|_{L^\infty}\leq C|\log c_i|(|\e|+|c|^2|\varepsilon|^\f13+|c||\varepsilon|^\f23),
\end{align*}
and   for $1\lesssim\vartheta\leq 3\eta_0$,
\begin{align*}
	&\|e^{\vartheta Y}(\psi_a-\psi_{a}^{(0)})\|_{L^\infty}+\|e^{\vartheta Y}(U_s-c)\partial_Y(\psi_a-\psi_a^{(0)})\|_{L^\infty}\leq C|\log c_i|^2(|\e|+|c|^2|\varepsilon|^\f13+|c||\varepsilon|^\f23).
\end{align*}	
Moreover,  it holds that
\begin{align*}
	\frac{\psi_a(0)}{\partial_Y\psi_a(0)}	=&\frac{\tilde{\mathcal A}(2,0)}{\tilde{\mathcal A}(1,0)}+\mathcal O(|\log c_i|^2 (|\e|^{\f23}+|c|^2+|c||\varepsilon|^\f13)).
\end{align*}
\end{proposition}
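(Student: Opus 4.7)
The plan is to write $w_a = w_a^{(0)} + w_{a,err}$ and $\psi_a = \psi_a^{(0)} + \psi_{a,err}$, where $(w_{a,err}, \psi_{a,err})$ is the pair produced by the second part of Proposition~\ref{pro: Airy-1} applied with source
\begin{align*}
F_a := -Err_1 \, w_a^{(0)} - Err_2 \, \partial_Y w_a^{(0)}.
\end{align*}
By \eqref{eq:Airy-Err-app} we have $Airy[w_a^{(0)}] = Err_1\, w_a^{(0)} + Err_2\, \partial_Y w_a^{(0)}$, so this choice of forcing makes the corrected pair exactly solve the homogeneous Airy system \eqref{eq:airy-homo}; decay at infinity is inherited from the Airy asymptotics together with the output of Proposition~\ref{pro: Airy-1}.

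The quantitative heart of the argument is to establish
\begin{align*}
\|e^{\vartheta Y}(U_s - c) F_a\|_{L^\infty} \leq C\bigl(|\e| + |c|^2 |\e|^{1/3} + |c| |\e|^{2/3}\bigr)
\end{align*}
by combining the three-zone estimates of Lemma~\ref{lem:err1-err2} with pointwise information on $w_a^{(0)}$. The normalizer $Ai(e^{\mathrm i(\pi/6-\th_0)}\kappa \eta^{La}(0))$ stays bounded away from zero on $\mathbb H_1$, so $|w_a^{(0)}| \lesssim 1$ uniformly with exponential decay in $|\kappa \eta^{La}|^{3/2}$ outside the sublayer, while $|\partial_Y w_a^{(0)}| \lesssim |\e|^{-1/3}$ there with the same decay outside. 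In the sublayer $|Y - Y_c| \leq 2\kappa^{-1} M \sim |\e|^{1/3}$ one has $|U_s - c| \lesssim |\e|^{1/3}$ and $|Err_1| \lesssim |\eta_r^{La}|(|c_i| + |\eta_r^{La}|) + |\e||\al|^2$; multiplying through produces contributions of order $|\e|$, $|c||\e|^{2/3}$, and $|c|^2 |\e|^{1/3}$, respectively. The $Err_2 \partial_Y w_a^{(0)}$ term contributes at most $|\e|$ thanks to $|Err_2| \lesssim |\e|$, and the outer regions $|Y - Y_c| \geq 2\kappa^{-1} M$ are absorbed by the exponential tail of $w_a^{(0)}$ and $\partial_Y w_a^{(0)}$, which tames even the weaker outer estimate $|Err_1| \leq C|c_i|$. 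Plugging the forcing bound into the second part of Proposition~\ref{pro: Airy-1}, and noting $1 + |c||\e|^{-1/3} \lesssim 1$ on $\mathbb H_1$, delivers the asserted norms on $w_a - w_a^{(0)}$ with a factor $|\log c_i|$ and on $\psi_a - \psi_a^{(0)}$ with $|\log c_i|^2$.

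For the ratio identity, using $\partial_Y \tilde{\mathcal A}(2, Y) = \tilde{\mathcal A}(1, Y)$ one has
\begin{align*}
\psi_a^{(0)}(0) = \frac{\tilde{\mathcal A}(2,0)}{Ai(e^{\mathrm i(\pi/6-\th_0)}\kappa \eta^{La}(0))}, \qquad \partial_Y \psi_a^{(0)}(0) = \frac{\tilde{\mathcal A}(1,0)}{Ai(e^{\mathrm i(\pi/6-\th_0)}\kappa \eta^{La}(0))},
\end{align*}
so the normalizer cancels and the leading-order ratio equals $\tilde{\mathcal A}(2,0)/\tilde{\mathcal A}(1,0)$. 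Since $|U_s(0) - c| = |c|$, evaluating the norms just obtained at $Y = 0$ gives $|\psi_{a,err}(0)| + |c||\partial_Y \psi_{a,err}(0)| \lesssim |\log c_i|^2(|\e| + |c|^2 |\e|^{1/3} + |c||\e|^{2/3})$. Combined with the uniform lower bound $|\tilde{\mathcal A}(1,0)| \gtrsim |\e|^{1/3}$ (from Airy asymptotics after the change of variable $t = \kappa \eta^{La}(Z)$) and the first-order expansion $(a + \delta a)/(b + \delta b) = a/b + \mathcal O(|\delta a|/|b| + |a||\delta b|/|b|^2)$, the asserted $\mathcal O\!\bigl(|\log c_i|^2(|\e|^{2/3} + |c|^2 + |c||\e|^{1/3})\bigr)$ follows, the $|\e| \to |\e|^{2/3}$ improvement reflecting the division by the $|\e|^{1/3}$-size denominator.

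The main obstacle is the sharp three-zone bookkeeping for $\|e^{\vartheta Y}(U_s-c)F_a\|_{L^\infty}$: one must check that the Airy tail of $w_a^{(0)}$ exactly compensates the weaker outer-region estimate $|Err_1| \leq C|c_i|$ (otherwise a spurious $|c_i|$ term would survive), and that the sublayer contributions assemble to precisely the three power combinations in the conclusion without loss. A secondary technical point is the uniform lower bound on the normalizer $Ai(e^{\mathrm i(\pi/6-\th_0)}\kappa\eta^{La}(0))$ and on $|\tilde{\mathcal A}(1,0)|$, both of which follow from the same complex-argument Airy asymptotics underlying Lemmas~\ref{lem:A1A2} and \ref{lem:err1-err2}.
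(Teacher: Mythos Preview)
Your overall strategy matches the paper's: correct $w_a^{(0)}$ by solving the non-homogeneous Airy problem with source $F_a=-Err_1 w_a^{(0)}-Err_2\partial_Y w_a^{(0)}$, then read off the ratio at $Y=0$. Two points deserve attention.

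\textbf{The $\alpha^2\psi_a^{(0)}$ correction.} Applying Proposition~\ref{pro: Airy-1} to $F_a$ produces a $\psi_{a,err}$ with $\Lambda\psi_{a,err}=w_{a,err}$. But $\psi_a^{(0)}$ only satisfies $\partial_Y(A^{-1}\partial_Y)\psi_a^{(0)}=w_a^{(0)}$, i.e.\ $\Lambda\psi_a^{(0)}=w_a^{(0)}-\alpha^2\psi_a^{(0)}$, so your $\psi_a=\psi_a^{(0)}+\psi_{a,err}$ misses the equation $\Lambda\psi_a=w_a$ by $-\alpha^2\psi_a^{(0)}$. The paper writes the error system as $\Lambda\psi_{a,err}=w_{a,err}+\alpha^2\psi_a^{(0)}$ and absorbs the extra term via the same iteration used inside the proof of Proposition~\ref{pro: Airy-1}; the contribution is harmless since $\|\alpha^2\psi_a^{(0)}\|_{L^\infty_\vartheta}\le C|\alpha|^2|\e|^{2/3}$, but it must be accounted for.

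\textbf{The source estimate near the boundary.} Your zone analysis locates the ``worst'' contribution in the sublayer $|Y-Y_c|\le 2\kappa^{-1}M$ and then claims the outer region $|Y-Y_c|\ge 2\kappa^{-1}M$ is killed by the exponential tail of $w_a^{(0)}$. This overlooks the region $Y\in[0,|\e|^{1/3}]$: since $w_a^{(0)}$ is \emph{normalized} to equal $1$ at $Y=0$, it has no exponential smallness there, even when $0\in\mathcal N^-$ lies outside the sublayer. In that strip one has $|U_s-c|\sim|c|$, $|\eta_r^{La}|\sim|c|$, and $|w_a^{(0)}|\sim 1$, $|\partial_Y w_a^{(0)}|\lesssim|\e|^{-1/3}$, which yields exactly the $|c|^2|\e|^{1/3}$ and $|c||\e|^{2/3}$ contributions in the statement. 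The paper isolates this boundary strip explicitly, and elsewhere uses the decay $|\partial_Y^k w_a^{(0)}(Y)|\le C|\e|^{-k/3}e^{-C|\kappa\eta^{La}(0)|^{1/2}}$ (a Green-function estimate from Lemma~\ref{lem:A1A2}, not merely ``normalizer bounded away from zero'') to control $(\mathcal N^-\cup\mathcal N)\cap\{|Y|\ge|\e|^{1/3}\}$. Your sublayer computation, with $|U_s-c|\lesssim|\e|^{1/3}$ and $|\eta_r^{La}|\lesssim|\e|^{1/3}$, only produces the $|\e|$ term. The boundary-value analysis and the final ratio expansion are then carried out as you describe, using the sizes $|\psi_a^{(0)}(0)|\sim|\e||c|^{-1}$ and $|\partial_Y\psi_a^{(0)}(0)|\sim|\e|^{1/2}|c|^{-1/2}$ to check that the error terms are subdominant.
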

\begin{proof}
	By the definition of $\tilde{\mathcal A}(2, Y)$ and the properties of Airy function, we obtain 
	\begin{align*}
		\partial_Y(A^{-1}\pa_Y)\psi_a^{(0)}(Y)=w_a^{(0)}(Y),
	\end{align*}
	and 
	\begin{align*}
		\varepsilon(\partial_Y^2-\alpha^2)w_a^{(0)}-(U_s-c)w_a^{(0)}=Err_1w_a^{(0)}+Err_2\partial_Yw_a^{(0)}.
	\end{align*}
To construct a solution to \eqref{eq:airy-homo}, we need to construct a solution $(w_{a,err},\psi_{a,err})=(w_a-w_a^{(0)},\psi_a-\psi_a^{(0)})$ to the following equation:
	\begin{align}\label{eq:airy-homo-err}
		\begin{split}
			&\varepsilon(\partial_Y^2-\alpha^2)w_{a,err}-(U_s-c)w_{a,err}=-Err_1w_a^{(0)}-Err_2\partial_Yw_a^{(0)},\\
			&\La\psi_{a,err}(Y)=w_{a,err}(Y)+\alpha^2\psi_{a}^{(0)}(Y) ,\\
		&\lim_{Y\to\infty}\psi_{a,err}(Y)=\lim_{Y\to\infty}w_{a,err}(Y)=0.
		\end{split}
	\end{align}

Notice that for $k=0,1$,
\begin{align*}
	&\left|\partial_Y^kw_a^{(0)}(Y)\right|\sim |\varepsilon|^{-\frac{k}{3}}|\partial_Y^kAi(e^{\mathrm i(\frac{\pi}{6}-\th_0)}\kappa\eta^{La}(Y))Ai(e^{\mathrm i(\frac{5\pi}{6}-\th_0)}\kappa\eta^{La}(0))|,\\
	&\left|\partial_Y^k\psi_a^{(0)}(Y)\right|\sim |\varepsilon|^{\frac{2-k}{3}}|\tilde{\mathcal A}(2-k,Y)Ai(e^{\mathrm i\frac{5\pi}{6}}\kappa\eta^{La}(0))|.
\end{align*}
Due to $0\in\mathcal N^-$,  we get by Lemma \ref{lem:A1A2} that for any $k=0,1$,
	\begin{align*}
		\left|\partial_Y^kw_a^{(0)}(Y)\right|\leq& C|\varepsilon|^{-\frac{k}{3}}|\kappa\eta^{La}(Y)|^{-\f14+\f k2}|\kappa\eta^{La}(0)|^{\f14}|\pa_Y\eta^{La}(Y)|^{k}\\
		&\qquad \times e^{-C|\varepsilon|^{-\f13}|\eta^{La}(Y)-\eta^{La}(0)|(|\kappa\eta^{La}(Y)|^\f12+|\kappa\eta^{La}(0)|^\f12)},\\
		\left|\partial_Y^k\psi_a^{(0)}(Y)\right|\leq& C|\varepsilon|^{\frac{2-k}{3}}|\kappa\eta^{La}(Y)|^{-\f54+\f k2}|\kappa\eta^{La}(0)|^{\f14}|\pa_Y\eta^{La}(Y)|^{-(2-k)}\\
		&\qquad\times e^{-C|\varepsilon|^{-\f13}|\eta^{La}(Y)-\eta^{La}(0)|(|\kappa\eta^{La}(Y)|^\f12+|\kappa\eta^{La}(0)|^\f12)}.
	\end{align*}
	Moreover,  by Lemma \ref{lem:pri-Airy-decay} and \ref{lem:airy-0} , we have
	\begin{align*}
		&|\psi_a^{(0)}(0)|\sim|\varepsilon|^\f23|\kappa\eta^{La}(0)|^{-1}\sim|\varepsilon||c|^{-1},\\
		&|\partial_Y\psi_a^{(0)}(0)|\sim |\varepsilon|^\f13|\kappa\eta^{La}(0)|^{-\f12}\sim|\varepsilon|^\f12|c|^{-\f12}.
	\end{align*}
	
 With the above estimates in hand, we can show the estimates of the solution to \eqref{eq:airy-homo-err}. Let us first show the control of $Err_1 w_a^{(0)}(Y)$ and $Err_2\partial_Yw_a^{(0)}(Y)$.  For any $Y-Y_c\geq L$, we first notice that
    \begin{align*}
    	\left|\partial_Y^kw_a^{(0)}(Y)\right|\leq C|\varepsilon|^{-\frac{k}{3}}e^{-C|\varepsilon|^{-\f12}Y}\leq Ce^{-L|\varepsilon|^{-\f12}}e^{-\vartheta Y},\quad k=0,1,
    \end{align*}
    which along with Lemma \ref{lem:err1-err2} and $c_i\leq C|\e|^\f13$  implies that for any $Y-Y_c\geq L$,
    \begin{align}\label{eq:airy-homo-source1}
    	\begin{split}
    		&|(U_s- c)Err_1w_a^{(0)}(Y)|\leq C|\varepsilon|^\f13|w_a^{(0)}(Y)|\leq C|\varepsilon|^\f13 e^{-L|\varepsilon|^{-\f12}}e^{-\vartheta Y},\\
    	&|(U_s- c)Err_2\partial_Y w_a^{(0)}(Y)|\leq C|\varepsilon|e^{-L|\varepsilon|^{-\f12}}e^{-\vartheta Y}.
    	\end{split}
    \end{align}
    For any $Y\in\mathcal N^+\cap\{Y:|Y-Y_c|\leq L\}$, we notice that 
    \begin{align*}
    	\left|\partial_Y^kw_a^{(0)}(Y)\right|\leq C|\varepsilon|^{-\frac{k}{3}}e^{-C|\kappa\eta^{La}(Y)|^\f12},\quad k=0,1.
    \end{align*}
    Then by Lemma \ref{lem:err1-err2}, for any $Y\in\mathcal N^+\cap\{Y:|Y-Y_c|\leq L\}$, we have
    \begin{align}\label{eq:airy-homo-source2}
    	\begin{split}
    		|(U_s- c)Err_1w_a^{(0)}(Y)|\leq& C|\varepsilon|^\f13|\eta^{La}(Y)|^2|w_a^{(0)}(Y)|\leq C|\varepsilon||\kappa\eta^{La}(Y)|^2e^{-C|\kappa\eta^{La}(Y)|^\f12}\\
    	\leq&C|\varepsilon|e^{-\vartheta Y},
    	\end{split}
    \end{align}
    and 
    \begin{align}\label{eq:airy-homo-source3}
    	\begin{split}
    		|(U_s- c)Err_2\partial_Y w_a^{(0)}(Y)|\leq C|\varepsilon||\kappa\eta^{La}(Y)|e^{-C|\kappa\eta^{La}(Y)|^\f12}\leq C|\varepsilon|e^{-\vartheta Y}.
    	\end{split}
    \end{align}
    For any $Y\in\mathcal N^-\cup\mathcal N$, $|U_s- c|\sim|\eta^{La}(Y)|\leq |\eta^{La}(0)|$. For any $Y\in(\mathcal N^-\cup\mathcal N)\cap\{Y:|Y|\geq |\varepsilon|^\f13\}$, we have that for $k=0,1$
    \begin{align*}
    	|\partial_Y^k w_a^{(0)}(Y)|\leq C|\varepsilon|^{-\f k3}e^{-C|\kappa\eta^{La}(0)|^\f12}.
    \end{align*}
    Therefore, we obtain
    \begin{align}\label{eq:airy-homo-source4}
    		|(U_s- c)Err_1w_a^{(0)}(Y)|\leq& C|\varepsilon|^\f13|\eta^{La}(0)|^2|w_a^{(0)}(Y)|\\
		\leq& C|\varepsilon||\kappa\eta^{La}(0)|^2e^{-C|\kappa\eta^{La}(0)|^\f12}\leq C|\varepsilon|,\nonumber
    \end{align}
    and 
    \begin{align}\label{eq:airy-homo-source5}
    	|(U_s- c)Err_2\partial_Y w_a^{(0)}(Y)|\leq C|\varepsilon|^\f23|\eta^{La}(0)|e^{-C|\kappa\eta^{La}(0)|^\f12}\leq C|\varepsilon|.
    \end{align}
   
 Summing up  \eqref{eq:airy-homo-source1}, \eqref{eq:airy-homo-source2}, \eqref{eq:airy-homo-source3}, \eqref{eq:airy-homo-source4} and \eqref{eq:airy-homo-source5}, we conclude that for any $|Y|\geq |\varepsilon|^\f13$,
    \begin{align}\nonumber%\label{eq:airy-homo-errYL}
    	\begin{split}
    		&|(U_s- c)Err_1w_a^{(0)}(Y)|\leq C|\varepsilon| e^{-\vartheta Y},\\
    	&|(U_s- c)Err_2\partial_Y w_a^{(0)}(Y)|\leq C|\varepsilon|e^{-\vartheta Y}.
    	\end{split}
    \end{align}
    For $Y\in[0,|\varepsilon|^\f13]$, we have 
    \begin{align*}
    	&|(U_s- c)Err_1w_a^{(0)}(Y)|\leq C|\varepsilon|^\f13|c|^2 e^{-\vartheta Y},\\
    	&|(U_s- c)Err_2\partial_Y w_a^{(0)}(Y)|\leq C|\varepsilon|^\f23|c|e^{-\vartheta Y}.
    \end{align*}
    As a result, we obtain 
    \begin{align*}
    	&\|e^{\vartheta Y}(U_s-c)Err_1w_a^{(0)}\|_{L^\infty}\leq C|\varepsilon|^\f13(|\varepsilon|^\f23+|c|^2),\\
    	&\|e^{\vartheta Y}(U_s-c)Err_2\partial_Y w_a^{(0)}\|_{L^\infty}\leq C|\varepsilon|^\f23(|\varepsilon|^\f13+|c|).
    \end{align*}
    Then it follows from Proposition \ref{pro: Airy-1} that 
   	\begin{align*}
&\|e^{\vartheta Y}(U_s-c)^2w_{a,err}\|_{L^\infty}+|\varepsilon|^\f23\|e^{\vartheta Y}(U_s-c)\partial_Yw_{a,err}\|_{L^\infty}+|\varepsilon|\|e^{\vartheta Y}(U_s-c)\partial_Y^2 w_{a,err}\|_{L^\infty}\\
&\quad\leq C|\log c_i|(|\e|+|c|^2|\varepsilon|^\f13+|c||\varepsilon|^\f23),
\end{align*}
and
\begin{align*}
\|e^{\vartheta Y}(U_s-c)\pa_Y\psi_{a,err}\|_{L^\infty}+\|e^{\vartheta Y}\psi_{a,err}\|_{L^\infty}\leq C|\log c_i|^2(|\e|+|c|^2|\varepsilon|^\f13+|c||\varepsilon|^\f23).
\end{align*}
In particular, we have 
\begin{align*}
&|\psi_a(0)-\psi_a^{(0)}(0)|\leq C|\log c_i|^2(|\e|+|c|^2|\varepsilon|^\f13+|c||\varepsilon|^\f23)\ll \psi_a^{(0)}(0),\\
&|\pa_Y\psi_a(0)-\pa_Y\psi_a^{(0)}(0)|\leq C|c|^{-1}|\log c_i|^2(|\e|+|c|^2|\varepsilon|^\f13+|c||\varepsilon|^\f23)\ll \pa_Y\psi_a^{(0)}(0).
\end{align*}
Thus, we arrive at
\begin{align*}
\frac{\psi_a(0)}{\partial_Y\psi_a(0)}	=&\frac{\tilde{\mathcal A}(2,0)}{\tilde{\mathcal A}(1,0)}+\mathcal O(|\log c_i|^2 (|\e|^{\f23}+|c|^2+|c||\varepsilon|^\f13)).
\end{align*}

This completes the proof of the proposition.
\end{proof}

\section{The Orr-Sommerfeld equation} 

To solve the quasi-incompressible system \eqref{eq:LCNS-OS}, we solve the following Orr-Sommerfeld(OS) type equation with general source term $F$:
\begin{align}\label{eq: OS-F}
\varepsilon\Lambda(\partial_Y^2-\alpha^2)\phi-(U_s-c)\Lambda\phi+\partial_Y(A^{-1}\partial_Y U_s)\phi=F.
\end{align}
We introduce the operators
\begin{align*}
OS[\cdot]=&\varepsilon\Lambda(\partial_Y^2-\alpha^2)-(U_s-c)\Lambda+\partial_Y(A^{-1}\partial_Y U_s),\\
Ray[\cdot]=&(U_s-c)\Lambda-\partial_Y(A^{-1}\partial_Y U_s),\\
Airy[\cdot]=&\e(\pa_Y^2-\al^2)-(U_s-c).
\end{align*}

We develop an Airy-Airy-Rayleigh iteration to constructe the solution to the Orr-Sommerfeld equation \eqref{eq: OS-F}. More precisely,  we start our iteration with 
\begin{align*}
\phi^{(1)}=\psi^{(1,0)}+\psi^{(1,1)}+\varphi^{(1)},
\end{align*}
where $\psi^{1,0},\psi^{(1,1)}$ and $\varphi^{(1)}$ solve the following equations respectively,
\begin{align*}
	&Airy[\Lambda\psi^{(1,0)}]=F,\\
	&Airy[\Lambda\psi^{(1,1)}]=-\varepsilon[\Lambda,\partial_Y^2]\psi^{(1,0)}-\partial_Y(A^{-1}\partial_YU_s)\psi^{(1,0)},\\
	&Ray[\varphi^{(1)}]=\varepsilon[\Lambda,\partial_Y^2]\psi^{(1,1)}+\partial_Y(A^{-1}\partial_YU_s)\psi^{(1,1)}.
\end{align*}
Then we obtain 
\begin{align*}
	OS[\phi^{(1)}]=OS[\psi^{(1,0)}]+OS[\psi^{(1,1)}]+OS[\varphi^{(1)}]=\varepsilon\Lambda(\partial_Y^2-\alpha^2)\varphi^{(1)}+F.
\end{align*}
Now we define the iteration scheme inductively. For any $j\geq 1$, 
$$
\phi^{(j+1)}=\psi^{(j+1,0)}+\psi^{(j+1,1)}+\varphi^{(j+1)},
$$
 where $\psi^{(j+1,0)},\psi^{(j+1,1)}$ and $\varphi^{(j+1)}$ solve
\begin{align}\nonumber%\label{eq:OS-AAR}
	\begin{split}
	&Airy[\Lambda\psi^{(j+1,0)}]=-\varepsilon\Lambda(\partial_Y^2-\alpha^2)\varphi^{(j)},\\
	& Airy[\Lambda\psi^{(j+1,1)}]=-\varepsilon[\Lambda,\partial_Y^2]\psi^{(j+1,0)}-\partial_Y(A^{-1}\partial_YU_s)\psi^{(j+1,0)}\\
	&Ray[\varphi^{(j+1)}]=\varepsilon[\Lambda,\partial_Y^2]\psi^{(j+1,1)}+\partial_Y(A^{-1}\partial_YU_s)\psi^{(j+1,1)}.
	\end{split}
\end{align}
Then we have 
\begin{align*}
	OS\Big[\sum_{j=1}^N\phi^{(j)}\Big]=\varepsilon\Lambda(\partial_Y^2-\alpha^2)\varphi^{(N)}+F.
\end{align*}
Formally, $\phi=\sum_{j=1}^{\infty}\phi^{(j)}$ satisfies \eqref{eq: OS-F}.
\begin{remark}
Compared with the Airy-Rayleigh iteration for the construction of the solution to the classical Orr-Sommerfeld equation in the incompressible case, the Airy-Airy-Rayleigh iteration is the key to avoid the loss of derivatives caused by the commutator $[\Lambda,\partial_Y^2]$. 
\end{remark}

We introduce the following functional spaces
\begin{align}
%\|f\|_{\mathcal{X}}=&\|\mathcal{W}\pa_Y^4 f\|_{L^\infty_{\eta}}+|\e|^\f23\|(U_s-c)^2\pa_Y^3 f\|_{L^\infty_{\eta}}+\|(U_s-c)^2\pa_Y^2 f\|_{L^\infty_{\eta}}\label{def: norm-X_eta}\\
%\nonumber
%&+\|(U_s-c)\pa_Y f\|_{L^\infty_{\eta}}+\| f\|_{L^\infty_{\eta}},\\
\|f\|_{\mathcal{X}}=&|\e|\|(U_s-c)\pa_Y^4 f\|_{L^\infty_{\eta}}+|\e|^\f23\|(U_s-c)\pa_Y^3 f\|_{L^\infty_{\eta}}+\|(U_s-c)^2\pa_Y^2 f\|_{L^\infty_{\eta}}\label{def: norm-X_eta}\\
\nonumber
&+\|(U_s-c)\pa_Y f\|_{L^\infty_{\eta}}+\| f\|_{L^\infty_{\eta}},\\
%\|f\|_{\mathcal{X}_2}=&|\e|\|\pa_Y^4 f\|_{L^\infty_{\eta}}+|\e|^\f23\|\pa_Y^3 f\|_{L^\infty_{\eta}}+\|(U_s-c)\pa_Y^2 f\|_{L^\infty_{\eta}}\label{def: norm-X_eta-2}\\
%\nonumber
%&+\|(U_s-c)\pa_Y f\|_{L^\infty_{\eta}}+\| f\|_{L^\infty_{\eta}},\\
\|f\|_{\mathcal{Y}}=&|\e|^\f23\|(U_s-c)\pa_Y^4 f\|_{L^\infty_{\eta}}+|\e|^\f13\|(U_s-c)\pa_Y^3 f\|_{L^\infty_{\eta}}+\|(U_s-c)\pa_Y^2 f\|_{L^\infty_{\eta}}\label{def: norm-Y_eta}\\
\nonumber
&+\|\pa_Y f\|_{L^\infty_{\eta}}+\| f\|_{L^\infty_{\eta}},
\end{align}
where
\begin{align}
\|f\|_{L^\infty_\eta}=\|e^{\eta Y} f\|_{L^\infty},\nonumber
\end{align}
for a fixed constant $\eta\in (0, \eta_0)$. We also denote 
\begin{align*}
\|f\|_{W^{s,\infty}_\eta}=\sum_{k=0}^s\|\pa_Y^k f\|_{L^\infty_\eta}.
\end{align*}
It is easy to see that
\begin{align}\label{relation: norm}
\|f\|_{\mathcal{X}} \leq C\|f\|_{\mathcal{Y}}.
%,\quad \|f\|_{\mathcal{X}} \leq C\|f\|_{\mathcal{X}_1}\leq C \|f\|_{\mathcal{X}_2}.
\end{align}

In this section, we focus on the following regime: $(\alpha,c)\in \mathbb H$ with 
\begin{align}\label{def:Hset}
	\mathbb H=\mathbb H_1\cap\big\{(\alpha,c)\in\mathbb C^2:|\alpha|\sim|c|\sim|\varepsilon|^\f13, c_0|\varepsilon|^\f13\leq c_i\leq c_1|\varepsilon|^\f13 \text{ with }0<c_0<c_1\ll1\big\}.
\end{align}
For any $(\alpha,c)\in\mathbb H$,  we have
\begin{align}\label{est: e^f13-(U_s-c)}
|\e|^\f13\leq \f{|\e|^\f13}{c_i}|U_s-c|\leq C|U_s-c|,\quad Y\geq 0,
\end{align} 

\subsection{Non-homogeneous OS equation}

Let us first show some key estimates related to the Airy-Airy-Rayleigh iteration. For convenience, we define
\begin{align}
&Airy[\La \psi]=F_{\psi},\label{eq-Airy: psi}\\
&Airy[\La \widetilde{\psi}]=-\e[\La, \pa_Y^2]\psi-\pa_Y(A^{-1}\pa_Y U_s)\psi=F_{\widetilde{\psi}},\label{eq-Airy: tilde psi}\\
&Ray[\varphi]=\e[\La, \pa_Y^2]\widetilde{\psi}+\pa_Y(A^{-1}\pa_Y U_s)\widetilde{\psi}=F_{\varphi}.\label{eq-Rayleigh: varphi}
\end{align}
Then it holds that
\begin{align*}
OS[\psi+\widetilde{\psi}+\varphi]=F_{\psi}+\e\La (\pa_Y^2-\al^2)\varphi.
\end{align*}

\begin{lemma}\label{lem-iteration: (psi, tilde psi, varphi)}
For any $(U_s-c)F_{\psi}\in L^{\infty}_\eta$, there exists a solution $(\psi, \widetilde{\psi}, \varphi)\in\mathcal X^2\times\mathcal Y$ to \eqref{eq-Airy: psi}-\eqref{eq-Rayleigh: varphi}  satisfying 
\begin{align*}
\|\psi\|_{\mathcal{X}}\leq&C|\log c_i|^2\|(U_s-c)F_{\psi}\|_{L^\infty_\eta},\\
\|\widetilde{\psi}\|_{\mathcal{X}}\leq& C|\log c_i|^4\|(U_s-c)F_{\psi}\|_{L^\infty_\eta}\\
\|\varphi\|_{\mathcal{Y}}\leq& C|\log c_i|^4\|(U_s-c)F_{\psi}\|_{L^\infty_\eta}.
\end{align*}

\end{lemma}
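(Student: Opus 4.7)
The plan is to construct $\psi$, $\widetilde\psi$, $\varphi$ sequentially, invoking Proposition \ref{pro: Airy-1} for the two Airy-type equations and Proposition \ref{pro: phi_non} for the Rayleigh-type equation. The bookkeeping exploits two basic facts: the coefficients $(A^{-1})'$, $(A^{-1})''$, $(A^{-1})'''$ entering the identity
$$
[\Lambda,\pa_Y^2]f = -(A^{-1})'''\pa_Y f - 3(A^{-1})''\pa_Y^2 f - 2(A^{-1})'\pa_Y^3 f,
$$
together with $\pa_Y(A^{-1}\pa_Y U_s)$, all decay like $e^{-\eta_0 Y}$ by \eqref{eq:S-A}; and in the regime $(\alpha,c)\in\mathbb H$ we have $|U_s-c|\gtrsim c_i\sim |\varepsilon|^{1/3}$ and $|\alpha|\sim|\varepsilon|^{1/3}$, so that weighted bounds on low derivatives can be converted into pointwise bounds on higher derivatives with prescribed $|\varepsilon|$-powers that match the weights built into the solvers.

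For Step 1, I would apply Proposition \ref{pro: Airy-1} to $Airy[\Lambda\psi]=F_\psi$ in the form with source satisfying $(U_s-c)F_\psi\in L^\infty_\eta$. This produces bounds on $w:=\Lambda\psi$ of the form $\|(U_s-c)^2 w\|_{L^\infty_\eta}+|\varepsilon|^{2/3}\|(U_s-c)\pa_Y w\|_{L^\infty_\eta}+|\varepsilon|\|(U_s-c)\pa_Y^2 w\|_{L^\infty_\eta}\leq C|\log c_i|\|(U_s-c)F_\psi\|_{L^\infty_\eta}$, together with $\|\psi\|_{L^\infty_\eta}+\|(U_s-c)\pa_Y\psi\|_{L^\infty_\eta}\leq C|\log c_i|^2\|(U_s-c)F_\psi\|_{L^\infty_\eta}$. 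The remaining components $\pa_Y^k\psi$ ($k=2,3,4$) of the $\mathcal X$-norm are read off algebraically from $\Lambda\psi=w$ and its derivatives, which express $\pa_Y^{k+2}\psi$ in terms of $\pa_Y^k w$ plus lower-order data; the $\alpha^2\psi$-contributions are subdominant because $|\alpha|^2\sim|\varepsilon|^{2/3}$. This gives $\|\psi\|_{\mathcal X}\leq C|\log c_i|^2\|(U_s-c)F_\psi\|_{L^\infty_\eta}$. Step 2 then repeats the construction for $\widetilde\psi$: the commutator expansion and the exponential decay of its coefficients yield $\|(U_s-c)F_{\widetilde\psi}\|_{L^\infty_\eta}\leq C\|\psi\|_{\mathcal X}$, with the worst contribution $|\varepsilon|\|(U_s-c)\pa_Y^3\psi\|_{L^\infty_\eta}\leq|\varepsilon|^{1/3}\|\psi\|_{\mathcal X}$ safely absorbed. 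A second application of Proposition \ref{pro: Airy-1} then delivers $\|\widetilde\psi\|_{\mathcal X}\leq C|\log c_i|^4\|(U_s-c)F_\psi\|_{L^\infty_\eta}$.

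Step 3 is the main obstacle. Proposition \ref{pro: phi_non} in its full $\mathcal Y_\eta$-form demands control not only on $e^{\eta Y}F_\varphi$ but also on $|\varepsilon|^{1/3}e^{\eta Y}\pa_Y F_\varphi$ and $|\varepsilon|^{2/3}e^{\eta Y}\pa_Y^2 F_\varphi$ in $L^\infty$; direct differentiation of $F_\varphi=\varepsilon[\Lambda,\pa_Y^2]\widetilde\psi+\pa_Y(A^{-1}\pa_Y U_s)\widetilde\psi$ would require pointwise control on $\pa_Y^4\widetilde\psi$ and $\pa_Y^5\widetilde\psi$, the latter lying beyond the $\mathcal X$-norm. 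The remedy is to use the Airy equation $Airy[\widetilde w]=F_{\widetilde\psi}$ with $\widetilde w=\Lambda\widetilde\psi$: differentiating it yields
\begin{align*}
\varepsilon\pa_Y^2\widetilde w &= (U_s-c)\widetilde w+\varepsilon\alpha^2\widetilde w+F_{\widetilde\psi},\\
\varepsilon\pa_Y^3\widetilde w &= U_s'\widetilde w+(U_s-c)\pa_Y\widetilde w+\varepsilon\alpha^2\pa_Y\widetilde w+\pa_Y F_{\widetilde\psi},
\end{align*}
which together with the identities $\pa_Y^{k+2}\widetilde\psi=A\pa_Y^k\widetilde w+(\mathrm{lower})$ for $k=2,3$ express $\pa_Y^4\widetilde\psi$ and $\pa_Y^5\widetilde\psi$ in terms of $\widetilde w,\pa_Y\widetilde w,\pa_Y^2\widetilde w$ (controlled by Proposition \ref{pro: Airy-1} applied to $\widetilde w$) and of $F_{\widetilde\psi},\pa_Y F_{\widetilde\psi}$ (controlled by the same commutator-plus-decay reasoning from Step 2, now applied to $\psi$ and $\pa_Y\psi$). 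Tracking the $|\varepsilon|$-powers shows that the three source norms required by Proposition \ref{pro: phi_non} are bounded by $C|\log c_i|^3\|(U_s-c)F_\psi\|_{L^\infty_\eta}$, and Proposition \ref{pro: phi_non} then yields $\|\varphi\|_{\mathcal Y}\leq C|\log c_i|^4\|(U_s-c)F_\psi\|_{L^\infty_\eta}$.
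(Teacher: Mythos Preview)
Your proposal is correct and follows essentially the same route as the paper: apply Proposition~\ref{pro: Airy-1} twice for $\psi$ and $\widetilde\psi$, bound $(U_s-c)F_{\widetilde\psi}$ via the commutator expansion using that $(A^{-1})'\sim (U_s-c)U_s'$ carries an extra factor of $(U_s-c)$, and for $\varphi$ recover the needed $\pa_Y^5\widetilde\psi$ by differentiating the Airy equation for $\widetilde w=\Lambda\widetilde\psi$, which reduces matters to $\pa_Y F_{\widetilde\psi}$ and hence back to $\|\psi\|_{\mathcal X}$. The paper organizes the final step slightly differently (working directly with $\e(U_s-c)\pa_Y^5\widetilde\psi$ rather than $\e\pa_Y^3\widetilde w$), but the mechanism and the bookkeeping of $|\e|$-powers are identical.
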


\begin{proof}
According to Proposition \ref{pro: Airy-1}, we have
\begin{align}
\|\psi\|_{\mathcal{X}}\leq&C|\log c_i|^2\|(U_s-c)F_{\psi}\|_{L^\infty_\eta},\nonumber\\%\label{est: psi-X}
\|\widetilde{\psi}\|_{\mathcal{X}}\leq&C|\log c_i|^2\|(U_s-c)F_{\widetilde{\psi}}\|_{L^\infty_\eta}.\label{est: tilde psi_X-1}
\end{align}
By the definition of $F_{\widetilde{\psi}}$ in \eqref{eq-Airy: tilde psi} and 
\begin{align}\label{eq: commutator-1}
[\La ,\pa_Y^2]f=&-2\pa_Y(A^{-1})\pa_Y^3 f-3\pa_Y^2(A^{-1})\pa_Y^2f-\pa_Y^3(A^{-1})\pa_Y f,
\end{align}
we obtain 
\begin{align}\label{est: (U_s-c)F_tilde psi}
\|(U_s-c)F_{\widetilde{\psi}}\|_{L^\infty_\eta}\leq&\|\e(U_s-c)[\La, \pa_Y^2]\psi\|_{L^\infty_\eta}+\|(U_s-c)\pa_Y(A^{-1}\pa_Y U_s)\psi\|_{L^\infty_\eta}\\
\nonumber
\leq&C|\e|\|(U_s-c)\pa_Y(A^{-1})\pa_Y^3\psi\|_{L^\infty_\eta}+C|\e|\|(U_s-c)\pa_Y^2(A^{-1})\pa_Y^2\psi\|_{L^\infty_\eta}\\
\nonumber
&+C|\e|\|(U_s-c)\pa_Y^3(A^{-1})\pa_Y\psi\|_{L^\infty_\eta}+C\|(U_s-c)\psi\|_{L^\infty_\eta}\\
\nonumber
\leq&C|\e|\|(U_s-c)^2\pa_Y^3\psi\|_{L^\infty_\eta}+C|\e|\|(U_s-c)\pa_Y^2\psi\|_{L^\infty_\eta}\\
\nonumber
&+C|\e|\|(U_s-c)\pa_Y\psi\|_{L^\infty_\eta}+C\|\psi\|_{L^\infty_\eta}\\
\nonumber
\leq&C\|\psi\|_{\mathcal{X}},
\end{align}
from which and \eqref{est: tilde psi_X-1}, we infer that 
\begin{align}\label{est: tilde psi-X}
\|\widetilde{\psi}\|_{\mathcal{X}}\leq&C|\log c_i|^2\|\psi\|_{\mathcal{X}}\leq C|\log c_i|^4\|(U_s-c)F_{\psi}\|_{L^\infty_\eta}.
\end{align}

Next we show the existence and estimates of $\varphi$. By Proposition \ref{pro: phi_non} and a similar argument  as in \eqref{est: (U_s-c)F_tilde psi}, we have
\begin{align*}
&\|\varphi\|_{L^\infty_\eta}+\|\pa_Y\varphi\|_{L^\infty_\eta}+\|(U_s-c)\pa_Y^2\varphi\|_{L^\infty_\eta}\leq C|\log c_i|\|F_{\varphi}\|_{L^\infty_\eta}\\
&\leq C|\log c_i||\e|\Big(\|(U_s-c)\pa_Y^3\widetilde{\psi}\|_{L^\infty_\eta}+\|\pa_Y^2\widetilde{\psi}\|_{L^\infty_\eta}+\|\pa_Y\widetilde{\psi}\|_{L^\infty_\eta}\Big)+C|\log c_i|\|\widetilde{\psi}\|_{L^\infty_\eta}\\
&\leq C|\e||\log c_i|\Big(|\e|^{-\f23}\|\widetilde{\psi}\|_{\mathcal{X}}+c_i^{-2}\|\widetilde{\psi}\|_{\mathcal{X}}+c_i^{-1}\|\widetilde{\psi}\|_{\mathcal{X}}\Big)+C|\log c_i|\|\widetilde{\psi}\|_{\mathcal{X}}\\
&\leq C|\log c_i|\|\widetilde{\psi}\|_{\mathcal{X}}
\leq C|\log c_i|^3\|\psi\|_{\mathcal{X}},
\end{align*}
where we used \eqref{est: e^f13-(U_s-c)} and  \eqref{est: tilde psi-X}. 

To estimate $\pa_Y^3\varphi$, we  first take $\pa_Y$ to the commutator $[\La, \pa_Y^2]f$ and get by \eqref{eq: commutator-1} that
\begin{align*}
\pa_Y[\La, \pa_Y^2]f=&-2\pa_Y(A^{-1})\pa_Y^4 f-5\pa_Y^2(A^{-1})\pa_Y^3f-4\pa_Y^3(A^{-1})\pa_Y^2 f-\pa_Y^4(A^{-1})\pa_Yf,
\end{align*}
which gives
\begin{align*}
&|\e|^\f13\|\pa_Y F_{\varphi}\|_{L^\infty_\eta}\\
&\leq |\e|^\f13\|\e\pa_Y[\La, \pa_Y^2]\widetilde{\psi}\|_{L^\infty_\eta}+|\e|^\f13\|\pa_Y\big(\pa_Y(A^{-1}\pa_Y U_s)\widetilde{\psi}\big)\|_{L^\infty_\eta}\\
&\leq C|\e|^\f13\|\e(U_s-c)\pa_Y^4\widetilde{\psi}\|_{L^\infty_\eta}+C|\e|^\f43\|(\pa_Y^3, \pa_Y^2, \pa_Y)\widetilde{\psi}\|_{L^\infty_\eta}+C|\e|^\f13\|(\pa_Y\widetilde{\psi},\widetilde{\psi})\|_{L^\infty_\eta}\\
&\leq C|\e|^\f13\|\e(U_s-c)\pa_Y^4\widetilde{\psi}\|_{L^\infty_\eta}+C|\e|^\f43(|\e|^{-\f23}|c_i|^{-1}+|c_i|^{-2}+|c_i|^{-1})\|\widetilde{\psi}\|_{\mathcal{X}}+C\|\widetilde{\psi}\|_{\mathcal{X}}\\
&\leq C|\e|^\f13\|\e(U_s-c)\pa_Y^4\widetilde{\psi}\|_{L^\infty_\eta}+C\|\widetilde{\psi}\|_{\mathcal{X}}\leq C|\e|^\f13\|\widetilde{\psi}\|_{\mathcal{X}}+C\|\widetilde{\psi}\|_{\mathcal{X}}\leq C\|\widetilde{\psi}\|_{\mathcal{X}}\leq C|\log c_i|^2\|\psi\|_{\mathcal{X}}.
\end{align*}
Then by Proposition \ref{pro: phi_non}, we obtain 
\begin{align}\nonumber%\label{est: varphi^1-2}
|\e|^\f13\|(U_s-c)\pa_Y^3\varphi\|_{L^\infty_\eta}\leq& C|\e|^\f13\|\pa_Y F_{\varphi}\|_{L^\infty_\eta}+C|\log c_i|\|F_{\varphi}\|_{L^\infty_\eta}
\leq C|\log c_i|^3\|\psi\|_{\mathcal{X}}.
\end{align}

We turn to  the estimate of $\pa_Y^4\varphi$. Taking $\pa_Y^2$ to the commutator $[\La, \pa_Y^2]f$, we get by \eqref{eq: commutator-1} that
\begin{align*}
\pa_Y^2[\La, \pa_Y^2]f=&-2\pa_Y(A^{-1})\pa_Y^5 f-7\pa_Y^2(A^{-1})\pa_Y^4f-9\pa_Y^3(A^{-1})\pa_Y^3 f\\
&-5\pa_Y^4(A^{-1})\pa_Y^2 f-\pa_Y^5(A^{-1})\pa_Yf,
\end{align*}
which along with \eqref{est: e^f13-(U_s-c)} implies that 
\begin{align*}
&|\e|^\f23\|\pa_Y^2 F_{\varphi}\|_{L^\infty_\eta}\\
&\leq C|\e|^\f23\|\e\pa_Y^2[\La, \pa_Y^2]\widetilde{\psi}\|_{L^\infty_\eta}+C|\e|^\f23\|\pa_Y^2\big(\pa_Y(A^{-1}\pa_Y U_s)\widetilde{\psi}\big)\|_{L^\infty_\eta}\\
&\leq C|\e|^\f23\|\e(U_s-c)\pa_Y^5 \widetilde{\psi}\|_{L^\infty_\eta}+C|\e|^\f23\|\e(\pa_Y^4,\pa_Y^3,\pa_Y^2,\pa_Y)\widetilde{\psi}\|_{L^\infty_\eta}+C\|\widetilde{\psi}\|_{\mathcal{X}}\\
&\leq C|\e|^\f23\|\e(U_s-c)\pa_Y^5 \widetilde{\psi}\|_{L^\infty_\eta}+C|\e|^\f23\|\e\pa_Y^4\widetilde{\psi}\|_{L^\infty_\eta}\\
&\quad+C|\e|^\f53(|\e|^{-\f23}c_i^{-1}+c_i^{-2}+c_i^{-1})\|\widetilde{\psi}\|_{\mathcal{X}}+C\|\widetilde{\psi}\|_{\mathcal{X}}\\
&\leq C|\e|^\f23\|\e(U_s-c)\pa_Y^5 \widetilde{\psi}\|_{L^\infty_\eta}+C|\e|^\f13\|\e(U_s-c)\pa_Y^4\widetilde{\psi}\|_{L^\infty_\eta}+C\|\widetilde{\psi}\|_{\mathcal{X}}\\
&\leq C|\e|^\f23\|\e(U_s-c)\pa_Y^5 \widetilde{\psi}\|_{L^\infty_\eta}+C|\log c_i|^2\|\psi\|_{\mathcal{X}}.
\end{align*}
In order to estimate $\pa_Y^5\widetilde{\psi}$, we show the control of   $|\e|^\f23(U_s-c)\pa_YF_{\widetilde{\psi}}$  firstly. A direct calculation gives 
\begin{align*}
\pa_YF_{\widetilde{\psi}}=&-\pa_Y\big(\e[\La, \pa_Y^2] \psi+\pa_Y(A^{-1}\pa_Y U_s)\psi\big)\\
=&2\e\pa_Y(A^{-1})\pa_Y^4 \psi+5\e\pa_Y^2(A^{-1})\pa_Y^3\psi+4\e\pa_Y^3(A^{-1})\pa_Y^2 \psi
+\e\pa_Y^4(A^{-1})\pa_Y\psi\\
&-\pa_Y(A^{-1}\pa_Y U_s)\pa_Y\psi-\pa_Y^2(A^{-1}\pa_Y U_s)\psi,
\end{align*}
which implies 
\begin{align*}
&|\e|^\f23\|(U_s-c)\pa_YF_{\widetilde{\psi}}\|_{L^\infty_\eta}\\
&\leq C|\e|^\f23\|\e(U_s-c)^2\pa_Y^4\psi\|_{L^\infty_\eta}+C|\e|^\f53\|(U_s-c)(\pa_Y^3, \pa_Y^2, \pa_Y)\psi\|_{L^\infty_\eta}+C|\e|^\f23\|\psi\|_{\mathcal{X}}\\
&\leq C|\e|^\f23 \|\psi\|_{\mathcal{X}}+C|\e|^\f53(|\e|^{-\f23}+|c_i|^{-1}+1)\|\psi\|_{\mathcal{X}}+C|\e|^\f23\|\psi\|_{\mathcal{X}}\\
&\leq C|\e|^\f23\|\psi\|_{\mathcal{X}}\leq C|\e|^\f23|\log c_i|^2\|(U_s-c)F_{\psi}\|_{L^\infty_\eta}.
\end{align*}
Therefore, using the equation of $\widetilde{\psi}$ in \eqref{eq-Airy: tilde psi},  the above estimate along with \eqref{est: tilde psi-X} gives
\begin{align*}
&|\e|^\f23\|\e(U_s-c)\pa_Y^5 \widetilde{\psi}\|_{L^\infty_\eta}\\
&\leq C|\e|^\f23\|(U_s-c)\pa_YF_{\widetilde{\psi}}\|_{L^\infty_\eta}+C|\e|^\f23\|\e(U_s-c)^2\pa_Y^4\widetilde{\psi}\|_{L^\infty_\eta}\\
&\quad+C|\e|^\f23\|\e(U_s-c)(\pa_Y^3,\pa_Y^2,\pa_Y)\widetilde{\psi}\|_{L^\infty_\eta}+C|\e|^\f23\|(U_s-c)^2\pa_Y\La\widetilde{\psi}\|_{L^\infty_\eta}\\
&\quad+C|\e|^\f23\|(U_s-c)\La\widetilde{\psi}\|_{L^\infty_\eta}\\
&\leq C|\e|^\f23\|(U_s-c)\pa_YF_{\widetilde{\psi}}\|_{L^\infty_\eta}+C\|\widetilde{\psi}\|_{\mathcal{X}}\\
&\leq C|\e|^\f23|\log c_i|^2\|(U_s-c)F_{\psi}\|_{L^\infty_\eta}+C|\log c_i|^2\|\psi\|_{\mathcal{X}}.
\end{align*}
Finally, we apply Proposition \ref{pro: phi_non} to obtain
\begin{align*}
|\e|^\f23\|(U_s-c)\pa_Y^4\varphi\|_{L^\infty_\eta}\leq&C|\e|^\f23\|\pa_Y^2 F_{\varphi}\|_{L^\infty_\eta}+C|\e|^\f13\|\pa_Y F_{\varphi}\|_{L^\infty_\eta}+C|\log c_i|\|F_{\varphi}\|_{L^\infty_\eta}\\
\leq&C|\e|^\f23\|\e(U_s-c)\pa_Y^5 \widetilde{\psi}\|_{L^\infty_\eta}+C\|\widetilde{\psi}\|_{\mathcal{X}}+C|\log c_i|^2\|\psi\|_{\mathcal{X}}\\
\leq&C|\e|^\f23|\log c_i|^2\|(U_s-c)F_{\psi}\|_{L^\infty_\eta}+C|\log c_i|^2\|\psi\|_{\mathcal{X}}.
\end{align*}
This shows that
\begin{align}\nonumber%\label{est: varphi-Y}
\|\varphi\|_{\mathcal{Y}}\leq&C|\e|^\f23 |\log c_i|^2\|(U_s-c)F_{\psi}\|_{L^\infty_\eta}+C|\log c_i|^2\|\psi\|_{\mathcal{X}}\\
\nonumber
\leq&C|\log c_i|^4\|(U_s-c)F_{\psi}\|_{L^\infty_\eta}.
\end{align}

\end{proof}

Now we are in a position to construct the solution to the Orr-Sommerfeld equation \eqref{eq: OS-F}.

 \begin{proposition}\label{lem: OS-nonhomo}
For any $F$ with  $(U_s-c)F\in L^\infty_\eta$, there exists a solution  $\phi\in\mathcal X$  to \eqref{eq: OS-F} satisfying 
 \begin{align*}
 \|\phi\|_{\mathcal{X}}\leq C|\log c_i|^4 \|(U_s-c)F\|_{L^\infty_\eta}.
 \end{align*}

 \end{proposition}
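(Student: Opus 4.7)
The plan is to realize $\phi$ as the sum $\phi=\sum_{j\geq 1}\phi^{(j)}$, where each iterate $\phi^{(j)}=\psi^{(j,0)}+\psi^{(j,1)}+\varphi^{(j)}$ is produced by the Airy-Airy-Rayleigh triple exactly as set up at the beginning of this section, with $F_\psi^{(1)}:=F$ and $F_\psi^{(j+1)}:=-\varepsilon\Lambda(\partial_Y^2-\alpha^2)\varphi^{(j)}$ for $j\geq 1$. By construction one has the telescoping identity
\[
OS\Big[\sum_{j=1}^{N}\phi^{(j)}\Big]=F+\varepsilon\Lambda(\partial_Y^2-\alpha^2)\varphi^{(N)},
\]
so the proposition reduces to two things: (i) every step can be solved under the hypothesis $(U_s-c)F_\psi^{(j)}\in L^\infty_\eta$ with the $\mathcal X$/$\mathcal Y$ estimates of Lemma \ref{lem-iteration: (psi, tilde psi, varphi)}; and (ii) the residual $\varepsilon\Lambda(\partial_Y^2-\alpha^2)\varphi^{(N)}$ tends to zero in the relevant norm.

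The core of the argument is a contraction estimate showing that the source of step $j+1$ is much smaller than the source of step $j$. Writing out $\Lambda(\partial_Y^2-\alpha^2)\varphi=A^{-1}\partial_Y^4\varphi+\partial_Y(A^{-1})\partial_Y^3\varphi-\alpha^2 A^{-1}\partial_Y^2\varphi-\alpha^2\partial_Y^2\varphi+\alpha^4\varphi-\alpha^2\partial_Y(A^{-1})\partial_Y\varphi$ and multiplying by $(U_s-c)$, the dominant term is $\varepsilon A^{-1}(U_s-c)\partial_Y^4\varphi$; by the very definition \eqref{def: norm-Y_eta} of $\|\cdot\|_{\mathcal{Y}}$ we have
\[
\|\varepsilon(U_s-c)\partial_Y^4\varphi\|_{L^\infty_\eta}\leq |\varepsilon|^{\frac13}\|\varphi\|_{\mathcal{Y}},
\]
while the remaining terms, using $|\partial_Y^k(A^{-1})|\lesssim 1$, $|\alpha|\sim|\varepsilon|^{\f13}$ on $\mathbb H$, and \eqref{est: e^f13-(U_s-c)} to convert $\|\cdot\|_{L^\infty_\eta}$ bounds into $(U_s-c)$-weighted ones, are bounded by $|\varepsilon|^{\f23}\|\varphi\|_{\mathcal{Y}}$. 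Combined with the final estimate in Lemma \ref{lem-iteration: (psi, tilde psi, varphi)}, this yields
\[
\|(U_s-c)F_\psi^{(j+1)}\|_{L^\infty_\eta}\leq C|\varepsilon|^{\f13}\|\varphi^{(j)}\|_{\mathcal Y}\leq C|\varepsilon|^{\f13}|\log c_i|^4\,\|(U_s-c)F_\psi^{(j)}\|_{L^\infty_\eta}.
\]
Choosing $|\varepsilon|$ small enough that $C|\varepsilon|^{\f13}|\log c_i|^4\leq \tfrac12$ (possible on $\mathbb H$ since $|c_i|\sim|\varepsilon|^{\f13}$), the sources decay geometrically.

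With this in hand, Lemma \ref{lem-iteration: (psi, tilde psi, varphi)} together with \eqref{relation: norm} gives $\|\phi^{(j)}\|_{\mathcal X}\leq C|\log c_i|^4\|(U_s-c)F_\psi^{(j)}\|_{L^\infty_\eta}\leq 2^{-(j-1)}\cdot C|\log c_i|^4\|(U_s-c)F\|_{L^\infty_\eta}$, so the series $\phi=\sum_j\phi^{(j)}$ converges absolutely in $\mathcal X$ to a function satisfying
\[
\|\phi\|_{\mathcal X}\leq C|\log c_i|^4\|(U_s-c)F\|_{L^\infty_\eta}.
\]
The residual $\varepsilon\Lambda(\partial_Y^2-\alpha^2)\varphi^{(N)}$ is controlled by the same geometric factor (it is exactly $-F_\psi^{(N+1)}$) and therefore vanishes as $N\to\infty$, so $OS[\phi]=F$ in the distributional sense, and the regularity furnished by $\phi\in\mathcal X$ upgrades this to a classical identity. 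The main obstacle is the contraction estimate above: all other ingredients are packaged in Lemma \ref{lem-iteration: (psi, tilde psi, varphi)}, but verifying that the fourth-derivative term in $\Lambda(\partial_Y^2-\alpha^2)\varphi$ can be absorbed by the $|\varepsilon|^{\f23}\|(U_s-c)\partial_Y^4\varphi\|_{L^\infty_\eta}$ component of the $\mathcal Y$-norm is precisely what the extra Airy step was introduced to achieve, and without it the commutator $[\Lambda,\partial_Y^2]$ would cost a full derivative and break convergence.
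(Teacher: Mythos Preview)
Your proposal is correct and follows essentially the same route as the paper: you set up the Airy--Airy--Rayleigh iteration with $F_\psi^{(1)}=F$ and $F_\psi^{(j+1)}=-\varepsilon\Lambda(\partial_Y^2-\alpha^2)\varphi^{(j)}$, derive the key contraction $\|(U_s-c)F_\psi^{(j+1)}\|_{L^\infty_\eta}\le C|\varepsilon|^{1/3}\|\varphi^{(j)}\|_{\mathcal Y}$ from the $\mathcal Y$-norm control of $(U_s-c)\partial_Y^4\varphi$, and close with Lemma~\ref{lem-iteration: (psi, tilde psi, varphi)} plus geometric summation. The paper phrases the contraction at the level of $\|\varphi^{(j)}\|_{\mathcal Y}$ rather than the source norms, and leaves the vanishing of the residual implicit, but the substance is identical.
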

 
\begin{proof}
We use Airy-Airy-rayleigh iteration introduced at the beginning of this section.  For $j=1$, let $(\psi^{(1,0)}, \psi^{(1,1)}, \varphi^{(1)})$ be the solution of the system \eqref{eq-Airy: psi}-\eqref{eq-Rayleigh: varphi} with 
\begin{align}
F_{\psi}=F_{\psi^{(1,0)}}=F.\nonumber
\end{align}
For $j\ge 2$, let $(\psi^{(j,0)}, \psi^{(j,1)}, \varphi^{(j)})$ be the solution of the system \eqref{eq-Airy: psi}-\eqref{eq-Rayleigh: varphi} with 
\begin{align}
F_{\psi}=F_{\psi^{(j,0)}}=-\e\La (\pa_Y^2-\al^2)\varphi^{(j-1)}.\nonumber
\end{align}
Then we define
\begin{align}
\phi^{(j)}=\psi^{(j,0)}+\psi^{(j,1)}+\varphi^{(j)},\quad j\geq 1.\nonumber
\end{align}

It follows from \eqref{est: e^f13-(U_s-c)} that for $j\geq 2$,
\begin{align}%\label{est: (U_s-c)F_psi^j,0}
\|(U_s-c)F_{\psi^{(j,0)}}\|_{L^\infty_\eta}\leq&C|\e|\|(U_s-c)\La (\pa_Y^2-\al^2)\varphi^{(j-1)}\|_{L^\infty_\eta}\nonumber\\
\nonumber
\leq&C|\e|\|(U_s-c)(\pa_Y^4,\pa_Y^3)\varphi^{(j-1)} \|_{L^\infty_\eta}\\
\nonumber
&+C|\e|\al^2\|(U_s-c)(\pa_Y^2,\pa_Y,\al^2)\varphi^{(j-1)} \|_{L^\infty_\eta}\\
\nonumber
\leq& C|\e|^\f13\|\varphi^{(j-1)}\|_{\mathcal{Y}},
\end{align}
from which and Lemma \ref{lem-iteration: (psi, tilde psi, varphi)}, we infer that
\begin{align*}
&\|\psi^{(j,0)}\|_{\mathcal{X}}\leq C|\e|^\f13|\log c_i|^2 \|\varphi^{(j-1)}\|_{\mathcal{Y}},\\
&\|\psi^{(j,1)}\|_{\mathcal{X}}\leq C|\e|^\f13|\log c_i|^4 \|\varphi^{(j-1)}\|_{\mathcal{Y}},\\
&\|\varphi^{(j)}\|_{\mathcal{Y}}\leq C|\e|^\f13|\log c_i|^4 \|\varphi^{(j-1)}\|_{\mathcal{Y}}.
\end{align*}
These estimates  along with \eqref{relation: norm}  show that  for $j\geq 2$,
 \begin{align*}
\|\phi^{(j)}\|_{\mathcal{X}}\leq &\|\psi^{(j,0)}\|_{\mathcal{X}}+\|\psi^{(j,1)}\|_{\mathcal{X}}+\|\varphi^{(j)}\|_{\mathcal{X}}\\
\leq&C|\e|^\f13|\log c_i|^4\|\varphi^{(j-1)}\|_{\mathcal{Y}}\\
\leq&C(|\e|^\f13|\log c_i|^4)^{j-1}\|\varphi^{(1)}\|_{\mathcal{Y}}.
\end{align*}

 By the smallness assumptions on $\al,~|c|,~|\e|$, which guarantee  $|\e|^\f13|\log c_i|^4\leq \f12$, we obtain
\begin{align}\nonumber%\label{est: tilde phi_s-X}
\sum_{j=2}^\infty\|\phi^{(j)}\|_{\mathcal{X}}
\leq&\sum_{j=2}^\infty(|\e|^\f13|\log c_i|^4)^{j-1}\|\varphi^{(1)}\|_{\mathcal{Y}}
\leq C|\e|^\f13|\log c_i|^4\|\varphi^{(1)}\|_{\mathcal{Y}}.
\end{align}
By Lemma \ref{lem-iteration: (psi, tilde psi, varphi)} again, we have
\begin{align*}
&\|\psi^{(1,0)}\|_{\mathcal{X}}\leq C|\log c_i|^2 \|(U_s-c)F\|_{L^\infty_\eta},\\
&\|\psi^{(1,1)}\|_{\mathcal{X}}\leq C|\log c_i|^4\|(U_s-c)F\|_{L^\infty_\eta},\\
&\|\varphi^{(1)}\|_{\mathcal{Y}}\leq C|\log c_i|^4 \|(U_s-c)F\|_{L^\infty_\eta},
\end{align*}
which yield that
\begin{align*}
\|\phi^{(1)}\|_{\mathcal{X}}\leq&\|\psi^{(1,0)}\|_{\mathcal{X}}+\|\psi^{(1,1)}\|_{\mathcal{X}}+C\|\varphi^{(1)}\|_{\mathcal{Y}}\leq C|\log c_i|^4 \|(U_s-c)F\|_{L^\infty_\eta}.
\end{align*}
Then $\phi=\sum_{j=1}^{+\infty}\phi^{(j)}$ is the solution to \eqref{eq: OS-F} with 
\begin{align*}
\|\phi\|_{\mathcal{X}}\leq& \|\phi^{(1)}\|_{\mathcal{X}}+\sum_{j=2}^\infty\|\phi^{(j)}\|_{\mathcal{X}}\\
\leq&C|\log c_i|^4 \|(U_s-c)F\|_{L^\infty_\eta}+C|\e|^\f13|\log c_i|^4\|\varphi^{(1)}\|_{\mathcal{Y}}\\
\leq& C|\log c_i|^4 \|(U_s-c)F\|_{L^\infty_\eta}.
\end{align*}
\end{proof}

\subsection{Homogenous OS equation}

We first construct the slow mode $\phi_s$ to the homogeneous OS equation $OS[\phi]=0$ around the solution $\varphi_{Ray}$ constructed in Proposition \ref{pro: varphi_Ray(0)} to the homogeneous Rayleigh equation. From $\phi_s$ we can construct a slow solution to the homogeneous quasi-incompressible system \eqref{eq:LCNS-OS}.
According to Proposition \ref{pro: varphi_Ray(0)}, we know that 
\begin{align*}
\varphi_{Ray}=\Big(\frac{1-m^2(1-c)^2}{(1-c)^2}\Big)(U_s-c)e^{-\beta Y}+\varphi_{Ray,R}
\end{align*}
with 
\begin{align}\label{est: varphi_R-Y}
\|\varphi_{Ray,R}\|_{\mathcal{Y}}\leq C|\al| |\log c_i|.
\end{align}
Putting $\varphi_{Ray}$ into the operator $OS[\cdot]$, we obtain 
\begin{align*}
OS[\varphi_{Ray}]=&\e\La (\pa_Y^2-\al^2)\varphi_{Ray}=\e\La (\pa_Y^2-\al^2)\varphi_{Ray,0}+\e\La (\pa_Y^2-\al^2)\varphi_{Ray,R},
\end{align*}
where
\begin{align*}
	\varphi_{Ray,0}=\Big(\frac{1-m^2(1-c)^2}{(1-c)^2}\Big)(U_s-c)e^{-\beta Y}.
\end{align*}

\begin{lemma}\label{lem: OS-nonhomo-s}
There exists a solution $\phi_s\in W^{4,\infty}$ to $OS[\phi]=0$ satisfying  
 \begin{align*}
 \phi_s=&\varphi_{Ray}+\widetilde{\phi}_s,
  \end{align*}
where
 \begin{align*}
\|\widetilde{\phi}_s\|_{\mathcal{X}}\leq& C|\e|^\f13(|\e|^\f23+|\al|)|\log c_i|^5.
 \end{align*}

 \end{lemma}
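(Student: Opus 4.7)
The plan is to use $\varphi_{Ray}$ as the leading approximation and correct it via the non-homogeneous solver from Proposition~\ref{lem: OS-nonhomo}. Since $Ray[\varphi_{Ray}]=0$, we have
\begin{align*}
OS[\varphi_{Ray}]=\varepsilon\Lambda(\partial_Y^2-\alpha^2)\varphi_{Ray}-Ray[\varphi_{Ray}]=\varepsilon\Lambda(\partial_Y^2-\alpha^2)\varphi_{Ray}=:-F.
\end{align*}
If we apply Proposition~\ref{lem: OS-nonhomo} to produce $\widetilde{\phi}_s\in\mathcal X$ with $OS[\widetilde{\phi}_s]=F$, then $\phi_s:=\varphi_{Ray}+\widetilde{\phi}_s$ solves $OS[\phi_s]=0$, and the required estimate will follow from
\begin{align*}
\|\widetilde{\phi}_s\|_{\mathcal X}\leq C|\log c_i|^4\|(U_s-c)F\|_{L^\infty_\eta}.
\end{align*}
So everything reduces to estimating $\|(U_s-c)\varepsilon\Lambda(\partial_Y^2-\alpha^2)\varphi_{Ray}\|_{L^\infty_\eta}$ by $C|\varepsilon|^{1/3}(|\varepsilon|^{2/3}+|\alpha|)|\log c_i|$.

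To carry this out, I would split $\varphi_{Ray}=\varphi_{Ray,0}+\varphi_{Ray,R}$ as stated in the excerpt. For the remainder $\varphi_{Ray,R}$, Proposition~\ref{pro: varphi_Ray(0)} supplies the $\mathcal Y_{\eta_0}$ bound $\|\varphi_{Ray,R}\|_{\mathcal Y_{\eta_0}}\leq C|\alpha||\log c_i|$ (valid since $(\alpha,c)\in\mathbb H$ forces $c_i\geq c_0|\varepsilon|^{1/3}$). Expanding $\Lambda(\partial_Y^2-\alpha^2)$ as a fourth-order operator with coefficients in $A^{-1}$ and its derivatives (all bounded and of size $O(1)$ thanks to \eqref{est: A}), the top-order term $\varepsilon A^{-1}(U_s-c)\partial_Y^4\varphi_{Ray,R}$ is bounded by $|\varepsilon|\cdot|\varepsilon|^{-2/3}\|\varphi_{Ray,R}\|_{\mathcal Y}\leq C|\varepsilon|^{1/3}|\alpha||\log c_i|$, while the lower-order terms are smaller by factors of $|\varepsilon|^{1/3}$ or $|\alpha|^2$. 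This handles the contribution of $\varphi_{Ray,R}$.

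For $\varphi_{Ray,0}=\frac{A_\infty}{(1-c)^2}(U_s-c)e^{-\beta Y}$ I would compute $\Lambda(\partial_Y^2-\alpha^2)\varphi_{Ray,0}$ term by term using Leibniz. Every derivative either hits the exponential (producing a factor $\beta$ with $|\beta|\sim|\alpha|$) or a $U_s-c$/$A^{-1}$ factor (producing some $U_s^{(k)}$ or $A^{(k)}$ with $k\geq 1$, hence bounded by $Ce^{-\eta_0 Y}$). Grouping terms, the output is a sum of:
\begin{align*}
&\textrm{(i)}\ \text{pure $\beta$-power terms }\beta^{j}(U_s-c)e^{-\beta Y}\ (j\leq 4)\text{ whose }L^\infty_\eta\text{ norm after multiplying by $\varepsilon(U_s-c)$ is }\leq C|\varepsilon||\alpha|^j,\\
&\textrm{(ii)}\ \text{mixed terms with at least one }U_s^{(k)}\text{ or }A^{(k)}\ (k\geq 1),\text{ giving the decay }e^{-\eta_0 Y}\text{ and a bound }\leq C|\varepsilon|.
\end{align*}
Summing, $\|(U_s-c)\varepsilon\Lambda(\partial_Y^2-\alpha^2)\varphi_{Ray,0}\|_{L^\infty_\eta}\leq C|\varepsilon|(1+|\alpha|^2)\leq C|\varepsilon|$, which is $\leq C|\varepsilon|^{1/3}\cdot|\varepsilon|^{2/3}$. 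Combining with the $\varphi_{Ray,R}$ estimate yields $\|(U_s-c)F\|_{L^\infty_\eta}\leq C|\varepsilon|^{1/3}(|\varepsilon|^{2/3}+|\alpha|)|\log c_i|$, and then Proposition~\ref{lem: OS-nonhomo} finishes the proof.

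The main obstacle I anticipate is the bookkeeping for the $\varphi_{Ray,0}$ part: one must expand the fourth-order operator $\Lambda(\partial_Y^2-\alpha^2)$ and verify, for each of the resulting terms, that the factor $U_s-c$ sitting in front is enough to absorb the $\eta$-weight against either $e^{-\eta_0 Y}$ (from a differentiated $U_s$) or against $e^{-\beta_r Y}$ (for $\beta$-power terms, provided $\eta<\beta_r$ which is built into the choice of the weight in $\mathcal X$). Once the decomposition is written carefully, the bounds are routine; the only care required is that neither $(1-c)^{-1}$ nor $A^{-1}$ blows up, which is ensured by $|c|\ll 1$ and \eqref{est: A}.
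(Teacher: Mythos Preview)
Your overall strategy---apply Proposition~\ref{lem: OS-nonhomo} directly to $F=-\varepsilon\Lambda(\partial_Y^2-\alpha^2)\varphi_{Ray}$---is correct and is in fact a streamlined version of the paper's argument (the paper unwinds one Airy--Airy--Rayleigh step by hand before invoking Proposition~\ref{lem: OS-nonhomo}, but this gains nothing for the present lemma). The treatment of the $\varphi_{Ray,R}$ contribution is also fine.

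There is, however, a genuine gap in your handling of the $\varphi_{Ray,0}$ part. You claim that the pure $\beta$-power terms $\beta^j(U_s-c)e^{-\beta Y}$ are bounded in $L^\infty_\eta$ ``provided $\eta<\beta_r$, which is built into the choice of the weight in $\mathcal X$.'' This is false: the weight $\eta$ in the definition of $\mathcal X$ is a fixed constant in $(0,\eta_0)$, while $\beta_r\sim|\alpha|\sim|\varepsilon|^{1/3}\ll1$ in the regime $(\alpha,c)\in\mathbb H$. Hence $\eta>\beta_r$, and a bare term like $\beta^4(U_s-c)e^{-\beta Y}$ is \emph{unbounded} in $L^\infty_\eta$.

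The correct reason the estimate $\|(U_s-c)\varepsilon\Lambda(\partial_Y^2-\alpha^2)\varphi_{Ray,0}\|_{L^\infty_\eta}\leq C|\varepsilon|$ nevertheless holds is an algebraic cancellation that you missed. Collecting the terms where all four derivatives hit $e^{-\beta Y}$, one gets exactly
\[
(A^{-1}\beta^2-\alpha^2)(\beta^2-\alpha^2)(U_s-c)e^{-\beta Y}.
\]
Now $\beta^2=\alpha^2 A_\infty$ by definition, so $A^{-1}\beta^2-\alpha^2=\alpha^2 A^{-1}(A_\infty-A)$, and $A_\infty-A=m^2(U_s-1)(U_s+1-2c)$ decays like $e^{-\eta_0 Y}$ by the structural assumption \eqref{eq:S-A}. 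Thus the would-be non-decaying term in fact carries the decay $e^{-\eta_0 Y}$, and since $\eta<\eta_0$ the $L^\infty_\eta$-norm is finite, with the stated $C|\varepsilon|$ bound. Once you insert this observation, your argument goes through and matches the paper's conclusion.
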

\begin{proof}

Let $(\psi^{(1,0)}, \psi^{(1,0)}, \varphi^{(1)})$ be the solution of the system \eqref{eq-Airy: psi}-\eqref{eq-Rayleigh: varphi} with 
\begin{align}
F_{\psi}=F_{\psi^{(1,0)}}=-\e\La (\pa_Y^2-\al^2)\varphi_{Ray}.\nonumber
\end{align}
By  \eqref{est: e^f13-(U_s-c)} and \eqref{est: varphi_R-Y}, we have
\begin{align*}
\|(U_s-c)F_{\psi^{(1,0)}}\|_{L^\infty_\eta}\leq&\|(U_s-c)\e\La (\pa_Y^2-\al^2)\varphi_{Ray,0}\|_{L^\infty_\eta}+\|(U_s-c)\e\La (\pa_Y^2-\al^2)\varphi_{Ray,R}\|_{L^\infty_\eta}\\
\leq& C|\e|+C|\e|\|(U_s-c)(\pa_Y^4,\pa_Y^3,\pa_Y^2, \al^2)\varphi_{Ray,R}\|_{L^\infty_\eta}\\
\leq&C|\e|+C|\e|^\f13\|\varphi_{Ray,R}\|_{\mathcal{Y}}\\
\leq&C|\e|+C|\e|^\f13|\al| |\log c_i|\leq C|\e|^\f13(|\e|^\f23+|\al|)|\log c_i|.
\end{align*}
Then we get by Lemma \ref{lem-iteration: (psi, tilde psi, varphi)}  that
\begin{align*}
&\|\psi^{(1,0)}\|_{\mathcal{X}}\leq C|\e|^\f13(|\e|^\f23+|\al|)|\log c_i|^3,\\
&\|\psi^{(1,1)}\|_{\mathcal{X}}\leq C|\e|^\f13(|\e|^\f23+|\al|)|\log c_i|^5,\\
&\|\varphi^{(1)}\|_{\mathcal{Y}}\leq C|\e|^\f13(|\e|^\f23+|\al|)|\log c_i|^5.
\end{align*}
Let $\phi^{(1)}=\psi^{(1,0)}+\psi^{(1,1)}+\varphi^{(1)}$, which holds(by \eqref{est: e^f13-(U_s-c)})
\begin{align*}
\|\phi^{(1)}\|_{\mathcal{X}}\leq &\|\psi^{(1,0)}\|_{\mathcal{X}}+\|\psi^{(1,1)}\|_{\mathcal{X}}+C\|\varphi^{(1)}\|_{\mathcal{Y}}\\
\leq&C|\e|^\f13(|\e|^\f23+|\al|)|\log c_i|^5,
\end{align*}
 and 
\begin{align*}
OS[\varphi_{Ray}+\phi^{(1)}]=\e\La (\pa_Y^2-\al^2)\varphi^{(1)}.
\end{align*}
 
Let $\phi^{(2)}$ be the solution of $OS[\phi]=F_{\psi}$ with 
\begin{align}
F_{\psi}=F_{\psi^{(2,0)}}=-\e\La (\pa_Y^2-\al^2)\varphi^{(1)}.\nonumber
\end{align}
By Proposition \ref{lem: OS-nonhomo}, we get 
 \begin{align*}
\|\phi^{(2)}\|_{\mathcal{X}}\leq&C|\e|^\f13|\log c_i|^4\|\varphi^{(1)}\|_{\mathcal{Y}}.
\end{align*}

We define $\widetilde{\phi}_s=\phi^{(1)}+\phi^{(2)}$. By the smallness assumptions on $\al,~|c|,~|\e|$, which guarantee $|\e|^\f13|\log c_i|^4\leq \f12$, we obtain
\begin{align}\label{est: tilde phi_s-X}
\|\widetilde{\phi}_s\|_{\mathcal{X}}
\leq&\|\phi^{(1)}\|_{\mathcal{X}}+\|\phi^{(2)}\|_{\mathcal{X}}\\
\nonumber
\leq&\|\phi^{(1)}\|_{\mathcal{X}}+C|\e|^\f13|\log c_i|^4\|\varphi^{(1)}\|_{\mathcal{Y}}\\
\nonumber
\leq&C|\e|^\f13(|\e|^\f23+|\al|)|\log c_i|^5.
\end{align}
Then $\phi_s=\varphi_{Ray}+\widetilde{\phi}_s$ is a solution to $OS[\phi]=0$.
\end{proof}

Next we construct the fast mode $\phi_f$ to the homogeneous OS equation, which is a perturbation of the solution $\psi_a$ to the homogeneous Airy equation constructed in Proposition  \ref{pro: (psi_a, w_a)}.  From $\phi_f$ we can construct a fast solution to the homogeneous quasi-incompressible system \eqref{eq:LCNS-OS}
By Proposition  \ref{pro: (psi_a, w_a)},  the main part of $\psi_a$ is given by $\psi_a^{(0)}$, which is defined by the Airy functions as \eqref{def: (psi_a^0, w_a^0)}. 

According to the properties of Airy function(see also the proof of Proposition \ref{pro: (psi_a, w_a)}),  we have the following estimates:
due to $0\in \mathcal{N}^{-}$, it holds that for any $Y\geq 0$
\begin{align}\label{est: pa_Y^k psi_a^(0)-2}
|\pa_Y^k\psi_a^{(0)}(Y)|\leq&C|\e|^{\f 23-\f{k}{3}} e^{-C|\e|^{-\f13}|\eta^{La}(Y)-\eta^{La}(0)|(|\kappa \eta^{La}(Y)|^\f12+|\kappa \eta^{La}(0)|^\f12)},\quad k=0,1,2,3,4,
\end{align}
which along with \eqref{eq:ariy-app-gf-decay1} implies that for any $s\geq 0$, 
\begin{align}\label{est: (U_s-c)^s pa_Y^k psi_a^(0)}
|e^{\eta Y}(U_s-c)^s\pa_Y^k\psi_a^{(0)}|\leq C|\e|^{\f 23-\f k3+\f s3},\quad Y\geq 0.
\end{align}

We know that
\begin{align*}
OS[\psi_a]=\e[\La, \pa_Y^2]\psi_a+\pa_Y(A^{-1}\pa_Y U_s)\psi_a.
\end{align*}
So, we will construct $\widetilde{\phi}_f$ satisfying
\begin{align*}
OS[\widetilde{\phi}_f]=-\e[\La, \pa_Y^2]\psi_a-\pa_Y(A^{-1}\pa_Y U_s)\psi_a.
\end{align*}

\begin{lemma}\label{lem: OS-nonhomo-f}
There exists a solution $\phi_f\in W^{4,\infty}$ to $OS[\phi]=0$ satisfying  
 \begin{align*}
 \phi_f=&\psi_a+\widetilde{\phi}_f,
  \end{align*}
where
 \begin{align}\nonumber%\label{est: tilde phi_f-X}
\|\widetilde{\phi}_f\|_{\mathcal{X}}\leq C|\e|^\f23|\log c_i|,
\end{align}
and
\begin{align*}
&|\widetilde{\phi}_f(0)|\leq C|\e|^{\f23}(|\e|^\f13+|c|+\al)|\log c_i|^3\ll |\psi_a(0)|,\\
&|\pa_Y\widetilde{\phi}_f(0)|\leq C|\e|^{\f23}|\log c_i|^3\ll |\pa_Y\psi_a(0)|.
\end{align*}

 \end{lemma}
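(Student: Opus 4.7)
The plan is to construct $\widetilde{\phi}_f$ as the particular solution to $OS[\widetilde{\phi}_f]=-OS[\psi_a]$ produced by the Airy--Airy--Rayleigh machinery, and then verify that at $Y=0$ it is subdominant to $\psi_a$.

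\smallskip

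\textbf{Step 1.} Using $Airy[\Lambda\psi_a]=0$ from Proposition~\ref{pro: (psi_a, w_a)}, I would first rewrite
\begin{align*}
OS[\psi_a]&=\varepsilon\Lambda(\partial_Y^2-\alpha^2)\psi_a-\varepsilon(\partial_Y^2-\alpha^2)\Lambda\psi_a+\partial_Y(A^{-1}\partial_YU_s)\psi_a\\
&=\varepsilon[\Lambda,\partial_Y^2]\psi_a+\partial_Y(A^{-1}\partial_YU_s)\psi_a.
\end{align*}
Setting $F:=-OS[\psi_a]$ and invoking Proposition~\ref{lem: OS-nonhomo} delivers $\widetilde{\phi}_f\in\mathcal X$ with $\|\widetilde{\phi}_f\|_{\mathcal X}\leq C|\log c_i|^4\|(U_s-c)F\|_{L^\infty_\eta}$, and by construction $\phi_f:=\psi_a+\widetilde{\phi}_f$ solves $OS[\phi_f]=0$ and decays at infinity.

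\smallskip

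\textbf{Step 2.} I would then estimate $\|(U_s-c)F\|_{L^\infty_\eta}$ by combining two structural ingredients: the commutator expansion~\eqref{eq: commutator-1} presents $[\Lambda,\partial_Y^2]\psi_a$ as a sum of $\partial_Y^j(A^{-1})\partial_Y^{4-j}\psi_a$, and the identity $\partial_YA=-2m^2(U_s-c)U_s'$ supplies $\partial_YA^{-1}$ with an extra $(U_s-c)$ factor (with exponential decay). Invoking the pointwise bounds~\eqref{est: pa_Y^k psi_a^(0)-2}--\eqref{est: (U_s-c)^s pa_Y^k psi_a^(0)} for $\psi_a^{(0)}$ together with the remainder control from Proposition~\ref{pro: (psi_a, w_a)}, every term in $(U_s-c)\varepsilon[\Lambda,\partial_Y^2]\psi_a$ takes the shape $\varepsilon(U_s-c)^s\partial_Y^k\psi_a$ with $s\geq1$ and is bounded by $C|\varepsilon|^{4/3}$, while the lower-order piece $(U_s-c)\partial_Y(A^{-1}\partial_YU_s)\psi_a$ is controlled by $C\|(U_s-c)\psi_a\|_{L^\infty_\eta}\leq C|\varepsilon|$. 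Consequently $\|(U_s-c)F\|_{L^\infty_\eta}\leq C|\varepsilon|$ and $\|\widetilde{\phi}_f\|_{\mathcal X}\leq C|\varepsilon||\log c_i|^4$; since $|\varepsilon|^{1/3}|\log c_i|^3\ll 1$ in $\mathbb H$, this is stronger than the stated $C|\varepsilon|^{2/3}|\log c_i|$.

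\smallskip

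\textbf{Step 3.} For the pointwise comparisons at $Y=0$ I would read off $|\widetilde{\phi}_f(0)|\leq\|\widetilde{\phi}_f\|_{L^\infty_\eta}$ and $|\partial_Y\widetilde{\phi}_f(0)|\leq|c|^{-1}\|(U_s-c)\partial_Y\widetilde{\phi}_f\|_{L^\infty_\eta}$ from the $\mathcal X$-norm, and compare with $|\psi_a(0)|\sim|\varepsilon||c|^{-1}$ and $|\partial_Y\psi_a(0)|\sim|\varepsilon|^{1/2}|c|^{-1/2}$ from Proposition~\ref{pro: (psi_a, w_a)}. Since $|c|\sim|\varepsilon|^{1/3}$ on $\mathbb H$, both ratios $|\widetilde{\phi}_f(0)|/|\psi_a(0)|$ and $|\partial_Y\widetilde{\phi}_f(0)|/|\partial_Y\psi_a(0)|$ are of order $|\varepsilon|^{1/3}|\log c_i|^4=o(1)$, yielding the dominance required by the lemma.

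\smallskip

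The main obstacle is extracting the refined factor $(|\varepsilon|^{1/3}+|c|+\alpha)$ and saving one logarithmic power to match exactly the stated $|\log c_i|^3$ boundary bounds. A blunt application of Proposition~\ref{lem: OS-nonhomo} bundles all small parameters into $|\varepsilon|$ and spends four logs. To recover the precise form, I would unwind one AAR cycle as in Lemma~\ref{lem: OS-nonhomo-s}: decompose $\widetilde{\phi}_f=\phi^{(1)}+\phi^{(2)}$ where $\phi^{(1)}=\psi^{(1,0)}+\psi^{(1,1)}+\varphi^{(1)}$ is produced by Lemma~\ref{lem-iteration: (psi, tilde psi, varphi)} with $F_\psi=F$, and $\phi^{(2)}$ absorbs the residual $-\varepsilon\Lambda(\partial_Y^2-\alpha^2)\varphi^{(1)}$ through Proposition~\ref{lem: OS-nonhomo}. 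The piece $\phi^{(2)}$ carries an extra $|\varepsilon|^{1/3}$ and is negligible at $Y=0$, whereas $\phi^{(1)}(0)$ and $\partial_Y\phi^{(1)}(0)$ can be evaluated directly via the Airy Green's function estimates of Lemma~\ref{lem:A1A2}, saving one logarithm and exposing the dependence $(|\varepsilon|^{1/3}+|c|+\alpha)$ inherited from the explicit structure of the source $F$.
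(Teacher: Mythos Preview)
Your Steps 1--3 are correct and yield the dominance conclusions $|\widetilde{\phi}_f(0)|\ll|\psi_a(0)|$ and $|\partial_Y\widetilde{\phi}_f(0)|\ll|\partial_Y\psi_a(0)|$; the bound $\|(U_s-c)F\|_{L^\infty_\eta}\leq C|\varepsilon|$ is right, and the direct application of Proposition~\ref{lem: OS-nonhomo} gives $\|\widetilde{\phi}_f\|_{\mathcal X}\leq C|\varepsilon||\log c_i|^4$, which indeed implies the stated $\mathcal X$-bound. So the heart of the lemma is secured.

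Where your route diverges from the paper is in the refinement you sketch at the end. The paper does \emph{not} unwind a standard AAR cycle in the pattern of Lemma~\ref{lem-iteration: (psi, tilde psi, varphi)}. Instead it splits the source $F=-\varepsilon[\Lambda,\partial_Y^2]\psi_a-\partial_Y(A^{-1}\partial_YU_s)\psi_a$ and routes the two pieces to different operators: the commutator alone goes to Airy (producing $\psi_f^{(0)}$ with $\|\psi_f^{(0)}\|_{\mathcal X}\leq C|\varepsilon|^{4/3}|\log c_i|^2$), while the lower-order term $\partial_Y(A^{-1}\partial_YU_s)\psi_a$ is sent \emph{directly to Rayleigh}, further decomposed as $\varphi_f^{(0)}=\varphi_f^{(0,0)}+\varphi_f^{(0,1)}+\varphi_f^{(0,2)}$. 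The crucial piece $\varphi_f^{(0,0)}$ has source $\partial_Y(A^{-1}\partial_YU_s)\psi_a^{(0)}$, and its boundary value is computed from the explicit Rayleigh integral formula~\eqref{def: varphi^(0)_non} together with Lemma~\ref{lem: integral} and the pointwise Airy decay~\eqref{est: pa_Y^k psi_a^(0)-2}; this is what produces the sharp factor $|\varepsilon|^{2/3}(|\varepsilon|^{1/3}+|c|)|\log c_i|$ with only one logarithm. Your proposal to evaluate $\phi^{(1)}(0)$ ``via the Airy Green's function estimates of Lemma~\ref{lem:A1A2}'' points to the wrong tool: the saving comes from the Rayleigh side, because routing $\partial_Y(A^{-1}\partial_YU_s)\psi_a$ through Airy first would destroy the explicit integral structure that Lemma~\ref{lem: integral} exploits.

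In short, your argument proves the lemma up to one extra logarithm; the paper's custom Airy/Rayleigh split (rather than a full AAR cycle) and the direct Rayleigh evaluation of $\varphi_f^{(0,0)}(0)$ are what recover the stated form $|\varepsilon|^{2/3}(|\varepsilon|^{1/3}+|c|+\alpha)|\log c_i|^3$.
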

 
\begin{proof}
We define $\psi_f^{(0)}$ to be the solution to the following Airy equation:
\begin{align}\label{eq: psi_f^(0)}
Airy[\La \psi_f^{(0)}]=-\e[\La, \pa_Y^2]\psi_a=F_{\psi_f^{(0)}}.
\end{align}
Using  Proposition \ref{pro: (psi_a, w_a)} and  \eqref{est: (U_s-c)^s pa_Y^k psi_a^(0)}, we  obtain
\begin{align*}
\|(U_s-c)F_{\psi_f^{(0)}}\|_{L^\infty_\eta}\leq& C|\e|\big(\|(U_s-c)^2\pa_Y^3\psi_a^{(0)}\|_{L^\infty_\eta}+\|(U_s-c)(\pa_Y^2,\pa_Y)\psi_a^{(0)}\|_{L^\infty_\eta})\\
&+C|\e|(\|(U_s-c)^2\pa_Y^3(\psi_a-\psi_a^{(0)})\|_{L^\infty_\eta}+\|(U_s-c)(\pa_Y^2,\pa_Y)(\psi_a-\psi_a^{(0)})\|_{L^\infty_\eta}\big)\\
\leq&C|\e|^\f43.
\end{align*}
Then by Proposition \ref{pro: Airy-1} (2), we get
\begin{align}\label{est: psi_f^(0)-X}
\|\psi_f^{(0)}\|_{\mathcal{X}}\leq C|\log c_i|^2\|(U_s-c)F_{\psi_f^{(0)}}\|_{L^\infty_\eta}\leq C|\e|^\f43 |\log c_i|^2.
\end{align}
 
 In the following, we define $\varphi_f^{(0)}=\varphi_f^{(0,0)}+\varphi_f^{(0,1)}+\varphi_f^{(0,2)}$, where $\varphi_f^{(0,k)},~k=0,1,2$ is the solution to the following Rayleigh equation respectively:
 \begin{align*}
 &Ray[\varphi_f^{(0,0)}]+\al^2(U_s-c)\varphi_f^{(0,0)}=\pa_Y(A^{-1}\pa_Y U_s)\psi_a^{(0)}=F_{\varphi_f^{(0,0)}},\\
 &Ray[\varphi_f^{(0,1)}]=\al^2(U_s-c)\varphi_f^{(0,0)}=F_{\varphi_f^{(0,1)}},\\
 &{Ray[\varphi_f^{(0,2)}]=\pa_Y(A^{-1}\pa_Y U_s)(\psi_a-\psi_a^{(0)})+\e[\La, \pa_Y^2]\psi_f^{(0)}+\partial_Y(A^{-1}\partial_YU_s)\psi_f^{(0)} =F_{\varphi_f^{(0,2)}}.}
 \end{align*}

According to \eqref{est: (U_s-c)^s pa_Y^k psi_a^(0)} and the fact $|\pa_Y(A^{-1}\pa_Y U_s)|\leq Ce^{-\eta_0 Y}$, it holds that
\begin{align*}
&\|F_{\varphi_f^{(0,0)}}\|_{L^\infty_\eta}\leq C\|\psi_a^{(0)}\|_{L^\infty_\eta}\leq C|\e|^\f23,\\
&\|\pa_YF_{\varphi_f^{(0,0)}}\|_{L^\infty_\eta}\leq C\|\pa_Y\psi_a^{(0)}\|_{L^\infty_\eta}+C\|\psi_a^{(0)}\|_{L^\infty_\eta}\leq C|\e|^\f13,\\
&\|\pa_Y^2F_{\varphi_f^{(0,0)}}\|_{L^\infty_\eta}\leq C\|\pa_Y^2\psi_a^{(0)}\|_{L^\infty_\eta}+C\|\pa_Y\psi_a^{(0)}\|_{L^\infty_\eta}+C\|\psi_a^{(0)}\|_{L^\infty_\eta}\leq C.
\end{align*}
Since  the estimates in Proposition \ref{pro: phi_non} are also valid for $\al=0$ in the definition of $Ray[\cdot ]$ , we obtain
\begin{align}\label{est: varphi^(0,0)-Y}
\|\varphi_f^{(0,0)}\|_{\mathcal{Y}}\leq&C|\log c_i|\|F_{\varphi_f^{(0,0)}}\|_{L^\infty_\eta}+C|\e|^\f13\|\pa_YF_{\varphi_f^{(0,0)}}\|_{L^\infty_\eta}+C|\e|^\f23\|\pa_Y^2F_{\varphi_f^{(0,0)}}\|_{L^\infty_\eta}\\
\leq& C|\e|^\f23|\log c_i|.\nonumber
\end{align}

Next we give a more precise estimate for $\varphi_f^{(0,0)}(0)$, which has the formula: 
\begin{align*}
\varphi_f^{(0,0)}(0)=&-c\int_0^{+\infty}\f{A(Y)}{(U_s-c)^2}\int_{Y}^{+\infty} \pa_Y(A^{-1}\pa_Y U_s)\psi_a^{(0)}dY'dY\\
=&-c\int_{Y_0}^{+\infty}\f{A(Y)}{(U_s-c)^2}\int_{Y}^{+\infty} \pa_Y(A^{-1}\pa_Y U_s)\psi_a^{(0)}dY'dY\\
&-c\int_0^{Y_0}\f{A(Y)}{(U_s-c)^2}\int_{Y}^{+\infty} \pa_Y(A^{-1}\pa_Y U_s)\psi_a^{(0)}dY'dY=I_1+I_2.
\end{align*}
Due to $|\pa_Y(A^{-1}\pa_Y U_s)|\leq Ce^{-\eta_0 Y}$ and $|U_s-c|\geq C^{-1}>0$, we get by \eqref{est: pa_Y^k psi_a^(0)-2} that
\begin{align*}
|I_1|\leq&Ce^{-\eta_0 Y_0}\int_{Y_0}^{+\infty}|e^{\eta_0 Y} \psi_a^{(0)}|dY\\
\leq &C\int_{Y_0}^{+\infty} |\e|^\f23e^{-C|\e|^{-\f13}|\eta^{La}(Y)-\eta^{La}(0)|(|\kappa \eta^{La}(Y)|^\f12+|\kappa \eta^{La}(0)|^\f12)} dY, \\
\leq&C|\e|.
\end{align*}
For $I_2$, we apply Lemma \ref{lem: integral} with $G(Y)=A(Y)\int_{Y}^{+\infty} \pa_Y(A^{-1}\pa_Y U_s)\psi_a^{(0)}dY'$ and $Y=0$ to infer that
\begin{align*}
|I_2|\leq&C|G(0)|+C|c||\log c_i|(\|G\|_{L^\infty}+\|\pa_YG\|_{L^\infty}).
\end{align*}
Thus, we focus on the estimates of $G$ and $\pa_Y G$. As in the estimate of $I_1$, we have
\begin{align*}
|G(Y)|\leq& C\int_0^{+\infty}|e^{\eta_0 Y} \psi_a^{(0)}|dY\\
\leq &C\int_{0}^{+\infty} |\e|^\f23e^{-C|\e|^{-\f13}|\eta(Y)-\eta(0)|(|\kappa \eta(Y)|^\f12+|\kappa \eta(0)|^\f12)} dY\leq C|\e|,
\end{align*}
and
\begin{align*}
|\pa_Y G|\leq C|A\pa_Y(A^{-1}\pa_Y U_s)\psi_a^{(0)} |+C|A'(0)|\int_0^{+\infty}|e^{\eta_0 Y} \psi_a^{(0)}|dY
\leq C|\e|^\f23+C|\e|\leq C|\e|^\f23.
\end{align*}
Therefore, we get
\begin{align*}
|I_2|\leq C|\e|+C|c||\e|^\f23|\log c_i|\leq C|\e|^{\f23}(|\e|^\f13+|c|)|\log c_i|.
\end{align*}
This along with the estimate of $I_1$ shows that
\begin{align}\label{est: psi_f^(0,0)(0)}
|\varphi_f^{(0,0)}(0)|\leq C|\e|^{\f23}(|\e|^\f13+|c|)|\log c_i|\ll |\psi_a(0)|.
\end{align}

For $\varphi_f^{(0,1)}$, we get by Proposition \ref{pro: phi_non} that
\begin{align}\label{est: varphi^(0,1)-Y}
\|\varphi_f^{(0,1)}\|_{\mathcal{Y}}\leq &C|\log c_i|\|F_{\varphi_f^{(0,1)}}\|_{L^\infty_\eta}+C|\e|^\f13\|\pa_YF_{\varphi_f^{(0,1)}}\|_{L^\infty_\eta}+C|\e|^\f23\|\pa_Y^2F_{\varphi_f^{(0,1)}}\|_{L^\infty_\eta}\\
\nonumber
\leq&C\al^2(|\log c_i|\|\varphi_f^{(0,0)}\|_{L^\infty_\eta}+|\e|^\f13\|(U_s-c)\pa_Y\varphi_f^{(0,0)}\|_{L^\infty_\eta}\\
\nonumber
&+|\e|^\f23\|(U_s-c)\pa_Y^2\varphi_f^{(0,0)}\|_{L^\infty_\eta}+|\e|^\f23\|\pa_Y\varphi_f^{(0,0)}\|_{L^\infty_\eta})\\
\nonumber
\leq&C\al^2|\log c_i|\|\varphi_f^{(0,0)}\|_{\mathcal{Y}}\leq C\al^2|\e|^\f23|\log c_i|^2.
\end{align}

 For $\varphi_f^{(0,2)}$, we first give the estimate of $F_{\varphi_f^{(0,2)}}$. By Proposition \ref{pro: (psi_a, w_a)}, we have
 \begin{align*}
 \|F_{\varphi_f^{(0,2)}}\|_{L^\infty_\eta}\leq&C\|\psi_a-\psi_a^{(0)}\|_{L^\infty_\eta}+C\|\e[\La, \pa_Y^2]\psi_f^{(0)}\|_{L^\infty_\eta}+\|\partial_Y(A^{-1}\partial_YU_s)\psi_f^{(0)}\|_{L^\infty_\eta}\\
 \leq&C\|\psi_a-\psi_a^{(0)}\|_{L^\infty_\eta}+C|\e|\|(U_s-c)\pa_Y^3\psi_f^{(0)}\|_{L^\infty_\eta}+C|\e|\|(\pa_Y^2, \pa_Y)\psi_f^{(0)}\|_{L^\infty_\eta}\\
 &+C\|\psi_f^{(0)}\|_{L^\infty_\eta}\leq C|\e|+C\|\psi_f^{(0)}\|_{\mathcal{X}}\leq C|\e|,\\
\end{align*}
and
\begin{align*}
 |\e|^\f13\|\pa_YF_{\varphi_f^{(0,2)}}\|_{L^\infty_\eta}\leq&C|\e|^\f13\|\pa_Y(\psi_a-\psi_a^{(0)})\|_{L^\infty_\eta}+C|\e|^\f13\|\psi_a-\psi_a^{(0)}\|_{L^\infty_\eta}+C|\e|^\f13\|\e\pa_Y[\La, \pa_Y^2]\psi_f^{(0)}\|_{L^\infty_\eta}\\
 &+C|\e|^\f13\|\partial^2_Y(A^{-1}\partial_YU_s)\psi_f^{(0)}\|_{L^\infty_\eta}+C|\e|^\f13\|\partial_Y(A^{-1}\partial_YU_s)\pa_Y\psi_f^{(0)}\|_{L^\infty_\eta}\\
 \leq&C|\e|+C|\e|^\f13(\|\e(U_s-c)\pa_Y^4\psi_f^{(0)}\|_{L^\infty_\eta}+\|\e(\pa_Y^3, \pa_Y^2)\psi_f^{(0)}\|_{L^\infty_\eta}+\|(\partial_Y,1)\psi_f^{(0)}\|_{L^\infty_\eta})\\
 \leq&C|\e|+C\|\psi_f^{(0)}\|_{\mathcal{X}},\\
\end{align*}
and
\begin{align*}
 |\e|^\f23\|\pa_Y^2F_{\varphi_f^{(0,2)}}\|_{L^\infty_\eta}\leq&C|\e|^\f23\|(\pa_Y^2, \pa_Y, 1)(\psi_a-\psi_a^{(0)})\|_{L^\infty_\eta}+C|\e|^\f23\|\e\pa_Y^2[\La, \pa_Y^2]\psi_f^{(0)}\|_{L^\infty_\eta}\\
 &+C|\e|^\f23\|(\pa_Y^2, \pa_Y, 1)\psi_f^{(0)}\|_{L^\infty_\eta}\\
 \leq&C|\e||\log c_i|^2+C|\e|^\f23(\|\e(U_s-c)\pa_Y^5\psi_f^{(0)}\|_{L^\infty_\eta}+\|\e(\pa_Y^4, \pa_Y^3)\psi_f^{(0)}\|_{L^\infty_\eta}+\|(\partial_Y^2,\partial_Y,1)\psi_f^{(0)}\|_{L^\infty_\eta} )\\
 \leq&C|\e|^\f23(\|\e(U_s-c)\pa_Y^5\psi_f^{(0)}\|_{L^\infty_\eta}+C|\e||\log c_i|^2+C\|\psi_f^{(0)}\|_{\mathcal{X}}.
 \end{align*}
 
By using the equation of $\psi_f^{(0)}$ in \eqref{eq: psi_f^(0)}, we have
%\begin{align*}
%|\e|^\f13\|\e(U_s-c)\pa_Y^4\psi_f^{(0)}\|_{L^\infty_\eta}\leq&C|\e|^\f13\|\e(U_s-c)(\pa_Y^3, \pa_Y^2)\psi_f^{(0)}\|_{L^\infty_\eta}+C|\e|^\f13\|(U_s-c)^2\La \psi_f^{(0)}\|_{L^\infty_\eta}\\
%&+C|\e|^\f13\|(U_s-c)F_{\psi_f^{(0)}}\|_{L^\infty_\eta}\\
%\leq&C|\e|^\f13\|\psi_f^{(0)}\|_{\mathcal{X}}+C|\e|^\f53,
%\end{align*}
\begin{align*}
|\e|^\f23\|\e(U_s-c)\pa_Y^5\psi_f^{(0)}\|_{L^\infty_\eta}\leq&C|\e|^\f23\|(U_s-c)\pa_YF_{\psi_f^{(0)}}\|_{L^\infty_\eta}+C|\e|^\f23\|\e(U_s-c)^2\pa_Y^4\psi_f^{(0)}\|_{L^\infty_\eta}\\
&+C|\e|^\f23\|\e(U_s-c)(\pa_Y^3,\pa_Y^2,\pa_Y)\psi_f^{(0)}\|_{L^\infty_\eta}+C|\e|^\f23\|(U_s-c)^2\pa_Y\La\psi_f^{(0)}\|_{L^\infty_\eta}\\
&+C|\e|^\f23\|(U_s-c)\La\psi_f^{(0)}\|_{L^\infty_\eta}\\
\leq&C|\e|^\f23\|(U_s-c)\pa_YF_{\psi_f^{(0)}}\|_{L^\infty_\eta}+C\|\psi_f^{(0)}\|_{\mathcal{X}},
\end{align*}
and
\begin{align*}
|\e|^\f23\|(U_s-c)\pa_YF_{\psi_f^{(0)}}\|_{L^\infty_\eta}\leq&C|\e|^\f23\|\e(U_s-c)^2\pa_Y^4\psi_a\|_{L^\infty_\eta}+C|\e|^\f53\|(U_s-c)(\pa_Y^3, \pa_Y^2, \pa_Y)\psi_a\|_{L^\infty_\eta}\\
\leq&C|\e|^\f53.
\end{align*}
Thus, we conclude that
 \begin{align*}
 |\e|^\f13\|\pa_YF_{\varphi_f^{(0,2)}}\|_{L^\infty_\eta}\leq&C|\e|+C\|\psi_f^{(0)}\|_{\mathcal{X}}\leq C|\e|,\\
  |\e|^\f23\|\pa_Y^2F_{\varphi_f^{(0,2)}}\|_{L^\infty_\eta}\leq&C|\e||\log c_i|^2+C\|\psi_f^{(0)}\|_{\mathcal{X}}\leq C|\e||\log c_i|^2,
 \end{align*}
from which and Proposition \ref{pro: phi_non}, we infer that
\begin{align}\label{est: varphi^(0,2)-Y}
\|\varphi_f^{(0,2)}\|_{\mathcal{Y}}\leq &C|\log c_i|\|F_{\varphi_f^{(0,2)}}\|_{L^\infty_\eta}+C|\e|^\f13\|\pa_YF_{\varphi_f^{(0,2)}}\|_{L^\infty_\eta}+C|\e|^\f23\|\pa_Y^2F_{\varphi_f^{(0,2)}}\|_{L^\infty_\eta}\\
\nonumber
\leq&C|\e| |\log c_i|^2.
\end{align}

Summing up \eqref{est: varphi^(0,0)-Y}, \eqref{est: varphi^(0,1)-Y} and \eqref{est: varphi^(0,2)-Y}, we arrive at
\begin{align}\nonumber%\label{est: varphi_f^(0)-Y}
\|\varphi_f^{(0)}\|_{\mathcal{Y}}\leq &C|\e|^\f23|\log c_i|.
\end{align}
This along with \eqref{est: psi_f^(0,0)(0)} implies that
\begin{align}
&|\varphi_f^{(0)}(0)|\leq C|\e|^{\f23}(|\e|^\f13+|c|+|\al|)|\log c_i|\ll |\psi_a(0)|,\label{est: varphi_f^(0)(0)}\\
&|\pa_Y\varphi_f^{(0)}(0)|\leq C|\e|^{\f23}|\log c_i|\ll |\pa_Y\psi_a(0)|.\label{est: pa_Y varphi_f^(0)(0)}
\end{align}
Moreover, we have
\begin{align*}
OS[\phi_f^{(0)}+\varphi_f^{(0)}]=\e\La(\pa_Y^2-\al^2)\varphi_f^{(0)}.
\end{align*}

Let $\phi^{(1)}_f$ be the solution of $OS[\phi]=F_\psi$ with 
\begin{align}
F_{\psi}=-\e\La (\pa_Y^2-\al^2)\varphi^{(0)}_f.\nonumber
\end{align}
By Proposition \ref{lem: OS-nonhomo}, we get 
\begin{align*}
\|\phi^{(1)}_f\|_{\mathcal{X}}\leq& C|\e|^\f13|\log c_i|^4\|\varphi_f^{(0)}\|_{\mathcal{Y}}.
\end{align*}
We define
\begin{align}\label{def: tilde phi_f}
\widetilde{\phi}_f=\psi_f^{(0)}+\varphi_f^{(0)}+\phi^{(1)}_f,
\end{align}
which holds
\begin{align}\label{est: tilde phi_f-X}
\|\widetilde{\phi}_f\|_{\mathcal{X}}\leq \|\psi_f^{(0)}\|_{\mathcal{X}}+\|\varphi_f^{(0)}\|_{\mathcal{Y}}+C|\e||\log c_i|^4\|\varphi_f^{(0)}\|_{\mathcal{Y}}\leq C|\e|^\f23|\log c_i|.
\end{align}
This along with \eqref{est: varphi_f^(0)(0)} and \eqref{est: pa_Y varphi_f^(0)(0)} implies that
\begin{align}
&|\widetilde{\phi}_f(0)|\leq C|\e|^{\f23}(|\e|^\f13+|c|+|\al|)|\log c_i|^3\ll |\psi_a(0)|,\label{est: tilde phi_f(0)}\\
&|\pa_Y\widetilde{\phi}_f(0)|\leq C|\e|^{\f23}|\log c_i|^3\ll |\pa_Y\psi_a(0)|.\label{est: tilde pa_Y phi_f(0)}
\end{align}

Thus, $\phi_f=\psi_a+\widetilde{\phi}_f$ is a solution of $OS[\phi_f]=0$.
\end{proof}

\section{The quasi-incompressible system}

In this section, we solve the following quasi-incompressible system:
\begin{align}\label{eq:LCNS-OS-1}
	\left\{
	\begin{aligned}
		&\mathrm i\alpha(U_s-c)\rho+\mathrm i\alpha u+\partial_Y v=0,\\
		&\sqrt{\nu}(\partial_Y^2-\alpha^2)[u+(U_s-c)\rho]-\mathrm i\alpha(U_s-c)u-v\partial_Y U_s-\mathrm i\alpha m^{-2}\rho=f_u,\\
		&\sqrt{\nu}(\partial_Y^2-\alpha^2)v-\mathrm i\alpha(U_s-c)v-m^{-2}\partial_Y\rho=f_v.
	\end{aligned}
	\right.
\end{align}
As we mentioned in Section 2, the  above system mainly corresponds to the incompressible part of the system \eqref{eq:LCNS-Y}.  Moreover, the slow and fast solutions to the system  \eqref{eq:LCNS-OS-1} constructed in this section are the main part of the slow and fast solutions to the system \eqref{eq:LCNS-Y}.

We denote by $\mathcal{L}_{Q}$ the operator on the left hand side of \eqref{eq:LCNS-OS-1}.  We introduce the  stream function $\phi(Y)$ defined by 
\begin{align*}
	\partial_Y\phi(Y)=u+(U_s-c)\rho,\quad -\mathrm i\alpha\phi(Y)=v.
\end{align*}
Then the quasi-incompressible system can be reduced to the following Orr-Sommerfeld type system:
\begin{align}\label{eq: OS}
	\left\{
	\begin{aligned}
		&\varepsilon\Lambda(\partial_Y^2-\alpha^2)\phi-(U_s-c)\Lambda\phi+\partial_Y(A^{-1}\partial_Y U_s)\phi=\Om(f_u, f_v),\\
		&\rho=m^2A^{-1}(Y)\left(\varepsilon(\partial_Y^2-\alpha^2)\partial_Y\phi-(U_s-c)\partial_Y\phi+\partial_Y U_s\phi-\frac{f_u}{\mathrm i\alpha}\right),
	\end{aligned}
	\right.
\end{align}
where 
\begin{align}\label{def: Om(f_u, f_v)}
\Om(f_u, f_v)=\f{1}{\mathrm i\al}\pa_Y(A^{-1}f_u)-f_v.
\end{align}

Let us give some formulas about the density and its derivatives. First of all, we know that
\begin{align}\label{def: rho}
\rho=m^2A^{-1}\Big(\e(\pa_Y^2-\al^2)\pa_Y\phi-(U_s-c)\pa_Y\phi+\pa_Y U_s \phi-\frac{f_u}{\mathrm i\alpha}\Big).
\end{align}
Then with $\phi$ and $\rho$, we can recover the velocity $u,v$ by
\begin{align}\label{eq:relation-uv}
u=\partial_Y\phi(Y)-(U_s-c)\rho,\quad v=-\mathrm{i}\al\phi.
\end{align}
Taking $\pa_Y$ to the equation \eqref{def: rho}, we obtain
\begin{align*}
\pa_Y\rho=-A^{-1}\pa_YA\rho+m^2A^{-1}\Big(\e(\pa_Y^2-\al^2)\pa_Y^2\phi-(U_s-c)\pa_Y^2\phi+\pa_Y^2 U_s \phi-\frac{\partial_Yf_u}{\mathrm i\alpha}\Big).
\end{align*}
On the other hand, we may write
\begin{align*}
&\e\pa_Y^2(\pa_Y^2-\al^2)\phi=\e A\La(\pa_Y^2-\al^2)\phi+A\e\al^2(\pa_Y^2-\al^2)\phi+\e A^{-1}\pa_Y A\pa_Y(\pa_Y^2-\al^2)\phi,\\
&(U_s-c)\pa_Y^2\phi=(U_s-c)(A\La \phi+A\al^2\phi+A^{-1}\pa_YA \pa_Y\phi),\\
&\pa_Y^2 U_s\phi=A\pa_Y(A^{-1}\pa_Y U_s)\phi+A^{-1}\pa_Y A \pa_Y U_s \phi.
\end{align*}
Then we find that
\begin{align*}
&\e(\pa_Y^2-\al^2)\pa_Y^2\phi-(U_s-c)\pa_Y^2\phi+\pa_Y^2 U_s \phi\\
&=A\Om(f_u, f_v)+A^{-1}\pa_Y A(m^{-2}A\rho+\f{f_u}{\mathrm i\al})+A\e\al^2(\pa_Y^2-\al^2)\phi-A(U_s-c)\al^2\phi\\
&=A\Om(f_u, f_v)+m^{-2}\pa_Y A\rho+A\e\al^2(\pa_Y^2-\al^2)\phi-A(U_s-c)\al^2\phi+A^{-1}\pa_Y A \f{f_u}{\mathrm i\al},
\end{align*}
from which and \eqref{def: Om(f_u, f_v)}-\eqref{def: rho}, we infer that  
\begin{align}
\pa_Y\rho=&-A^{-1}\pa_YA\rho+m^2A^{-1}\Big(A\Om(f_u, f_v)+m^{-2}\pa_Y A\rho\label{def: pa_Y rho}\\
\nonumber
&+A\e\al^2(\pa_Y^2-\al^2)\phi-A(U_s-c)\al^2\phi-\frac{\partial_Yf_u}{\mathrm i\alpha}+A^{-1}\pa_Y A \f{f_u}{\mathrm i\al}\Big)\\
\nonumber
=&m^2\Om(f_u, f_v)+m^2\al^2(\e(\pa_Y^2-\al^2)\phi-(U_s-c)\phi)-\frac{m^2}{\mathrm i\alpha }\pa_Y(A^{-1} f_u)\\
\nonumber
=&m^2\al^2(\e(\pa_Y^2-\al^2)\phi-(U_s-c)\phi)-m^2f_v,\\
\pa_Y^2\rho=&m^2\al^2(\e(\pa_Y^2-\al^2)\pa_Y\phi-(U_s-c)\pa_Y\phi-\pa_Y U_s\phi)-m^2\pa_Y f_v \label{def: pa_Y^2 rho}\\
\nonumber
=&\al^2A\rho-2m^2\al^2 \pa_Y U_s\phi-m^2\pa_Y f_v-\mathrm i m^2\al f_u.
\end{align}

 Our goal of this section is as follows.
 
\begin{itemize}
	\item Construct a decay solution $(\rho,u,v)$ to the quasi-incompressible system \eqref{eq:LCNS-OS-1} with decay source term by solving the non-homogenous Orr-Sommerfeld system \eqref{eq: OS}. Such solution and its estimates will be used in the iteration during the construction of the solutions to the linearized CNS \eqref{eq:LCNS-Y}.
	
	\item Construct two linearly independent decay solutions $(\rho_s,u_s,v_s)$ and $(\rho_f,u_f,v_f)$ to \eqref{eq:LCNS-OS-1}, whose stream functions $\phi_s$ and $\phi_f$ are the slow mode and fast mode to the Orr-Sommerfeld equation respectively. These two solutions are the main part of two linearly independent solution to \eqref{eq:LCNS-Y}, from which we can obtain the dispersion relation.
\end{itemize}

In this section, we focus on the regime: $(\alpha,c)\in \mathbb H$ defined by \eqref{def:Hset}.

\subsection{Non-homogeneous quasi-incompressible system}

In this subsection, we solve the non-homogeneous quasi-incompressible system via solving the OS system \eqref{eq: OS}.

\begin{proposition}\label{pro: (rho, u,v)-OS-nonhomo}
 For any $f_u\in W^{1,\infty}_{\eta}$ and $f_v\in L^\infty_\eta$, there exists a solution $(\rho,u,v)\in W^{2,\infty}_\eta$ to \eqref{eq:LCNS-OS-1} satisfying 
\begin{align*}
&\|\rho\|_{L^\infty_\eta}\leq C|\log c_i|^4 \|(U_s-c)\Om(f_u, f_v)\|_{L^\infty_\eta}+C|\alpha|^{-1}\|f_u\|_{L^\infty_\eta},\\
%&\|(U_s-c)\pa_Y\rho\|_{L^\infty_\eta}\leq C|\log c_i|^4 \|(U_s-c)\Om(f_u, f_v)\|_{L^\infty_\eta}+C|\alpha|^{-1}\|(U_s-c)\partial_Yf_u\|_{L^\infty_\eta},\\
&\|\pa_Y\rho\|_{L^\infty_\eta}\leq C|\al|^2|\log c_i|^4 \|(U_s-c)\Om(f_u, f_v)\|_{L^\infty_\eta}+C\|f_v\|_{L^\infty_\eta},\\
	&\|(U_s-c)\pa_Y^2\rho\|_{L^\infty_\eta}\leq C|\al|^2 |\log c_i|^4 \|(U_s-c)\Om(f_u, f_v)\|_{L^\infty_\eta}+C\|\pa_Y f_v\|_{L^\infty_\eta}+C|\al|\|f_u\|_{L^\infty_\eta}.
\end{align*}
and
\begin{align*}
\|u\|_{L^\infty_\eta}+|\e|^\f13\|\pa_Yu\|_{L^\infty_\eta}\leq& C|\e|^{-\f13}|\log c_i|^4 \|(U_s-c)\Om(f_u, f_v)\|_{L^\infty_\eta}\\
&+C|\alpha|^{-1}\|f_u\|_{L^\infty_\eta}+C|\e|^\f13\|f_v\|_{L^\infty_\eta},
\end{align*}
\begin{align*}
	&\|v\|_{L^\infty_\eta}+|\e|^\f13\|\pa_Yv\|_{L^\infty_\eta}+|\varepsilon|^\f23\|\partial_Y^2v\|_{L^\infty_\eta} \leq C|\al| \log c_i|^4 \|(U_s-c)\Om(f_u, f_v)\|_{L^\infty_\eta},
\end{align*}
and 
\begin{align*}
	\|\partial_Y^2 u\|_{L^\infty_{\eta}}\leq& C|\e|^{-1}|\log c_i|^4 \|(U_s-c)\Om(f_u, f_v)\|_{L^\infty_\eta}\\
	&+C|\alpha|^{-1}\|f_u\|_{L^\infty_\eta}+C\|f_v\|_{L^\infty_\eta}+C\|\pa_Yf_v\|_{L^\infty_\eta}.
\end{align*}
Moreover, we have
\begin{align*}
\|\dv_\al(u,v)\|_{L^\infty_\eta}+\|\pa_Y\dv_\al(u,v)\|_{L^\infty_\eta}\leq& C|\al| |\log c_i|^4\|(U_s-c)\Om(f_u, f_v)\|_{L^\infty_\eta}\\
&+C\|f_u\|_{L^\infty_\eta}+C|\al|\|f_v\|_{L^\infty_\eta},
\end{align*}
\begin{align*}
	\|\pa_Y^2\dv_\al(u,v)\|_{L^\infty_\eta}\leq&C|\al| |\log c_i|^4 \|(U_s-c)\Om(f_u, f_v)\|_{L^\infty_\eta}\\
&\quad+C\|f_u\|_{L^\infty_\eta}+C|\al|\|f_v\|_{L^\infty_\eta}+C|\al|\|\pa_Yf_v\|_{L^\infty_\eta}.
\end{align*}

\end{proposition}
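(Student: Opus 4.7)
The strategy is to reduce \eqref{eq:LCNS-OS-1} to the Orr--Sommerfeld system \eqref{eq: OS} via the generalized stream function, apply Proposition~\ref{lem: OS-nonhomo} to obtain $\phi$, and then read off $\rho,u,v$. Concretely, I would first apply Proposition~\ref{lem: OS-nonhomo} with source $F=\Om(f_u,f_v)$ (cf.\ \eqref{def: Om(f_u, f_v)}) to obtain $\phi\in\mathcal{X}$ satisfying
\begin{align*}
\|\phi\|_{\mathcal X}\leq C|\log c_i|^4\|(U_s-c)\Om(f_u,f_v)\|_{L^\infty_\eta}.
\end{align*}
Then define $\rho$ by formula \eqref{def: rho} and set $u=\pa_Y\phi-(U_s-c)\rho$, $v=-\mathrm i\al\phi$ as in \eqref{eq:relation-uv}. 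By the derivation in Section~7 this triple solves \eqref{eq:LCNS-OS-1}, so only the quantitative bounds remain.

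The remainder of the proof is $|\e|$-bookkeeping. The essential tool is \eqref{est: e^f13-(U_s-c)}, which in the regime $\mathbb H$ lets one trade the $(U_s-c)^j$ weights inside $\|\cdot\|_{\mathcal X}$ for plain $L^\infty_\eta$ control at the cost of $|\e|^{-j/3}$ factors. In particular I would use
\begin{align*}
\|\pa_Y\phi\|_{L^\infty_\eta}\lesssim |\e|^{-1/3}\|\phi\|_{\mathcal X},\qquad |\e|\|\pa_Y^2\phi\|_{L^\infty_\eta}\lesssim |\e|^{1/3}\|\phi\|_{\mathcal X},\qquad |\e|\|\pa_Y^3\phi\|_{L^\infty_\eta}\lesssim \|\phi\|_{\mathcal X}.
\end{align*}
Plugging these into \eqref{def: rho}, together with $|\al|\sim|\e|^{1/3}$ to absorb the $\e\al^2\pa_Y\phi$ term, yields the bound on $\|\rho\|_{L^\infty_\eta}$. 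The estimates on $\pa_Y\rho$ and $(U_s-c)\pa_Y^2\rho$ are then read off the reduced formulas \eqref{def: pa_Y rho} and \eqref{def: pa_Y^2 rho}, which were derived precisely so that no second derivatives of the source appear in $\pa_Y\rho$ and so that the singular factor $(U_s-c)^{-1}$ is avoided in $\pa_Y^2\rho$.

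For the velocity, the estimates on $v=-\mathrm i\al\phi$ are immediate from the $\mathcal X$-norm of $\phi$ (each derivative of $v$ gets an $|\al|$ factor times the corresponding derivative of $\phi$). The bounds on $u$ and $\pa_Y u$ follow by combining the $\mathcal X$-control of $\pa_Y^k\phi$ with the already-established estimates on $\rho$ and $\pa_Y\rho$. For $\pa_Y^2 u$ I would rearrange the second equation of \eqref{eq:LCNS-OS-1}, writing
\begin{align*}
\e\pa_Y^2 u = (\mathrm i\al)^{-1}f_u+(U_s-c)u+(\mathrm i\al)^{-1}\pa_Y U_s\, v+m^{-2}\rho+\e\al^2\bigl(u+(U_s-c)\rho\bigr)-\e\pa_Y^2\bigl((U_s-c)\rho\bigr),
\end{align*}
and dividing by $\e$, the $\e\pa_Y^2((U_s-c)\rho)$ term being controlled using the $(U_s-c)\pa_Y^2\rho$ bound. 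Finally, the divergence estimates are simplest: the continuity equation gives $\dv_\al(u,v)=-\mathrm i\al(U_s-c)\rho$, and its first two derivatives then reduce to the $\rho$-estimates already obtained.

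\textbf{Main obstacle.} No step is conceptually hard once Proposition~\ref{lem: OS-nonhomo} is available; the only real work is the $(\e,\al)$-power bookkeeping. The most delicate point is the $\pa_Y^2 u$ bound, where one cannot use the $\mathcal X$ norm directly (it does not control $\pa_Y^4\phi$ without an $(U_s-c)$ weight) and must instead solve the momentum equation for $\e\pa_Y^2 u$, which requires the $(U_s-c)$-weighted $\pa_Y^2\rho$ estimate to be in place first, so the estimates must be established in the order: $\phi\to\rho\to(\pa_Y\rho,\pa_Y^2\rho)\to(u,\pa_Y u,v,\pa_Y v,\pa_Y^2 v)\to\pa_Y^2 u\to\dv_\al(u,v)$ and its derivatives.
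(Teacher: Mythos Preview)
Your plan matches the paper's proof almost line by line: construct $\phi$ via Proposition~\ref{lem: OS-nonhomo}, read off $\rho$ from \eqref{def: rho} and its derivatives from \eqref{def: pa_Y rho}--\eqref{def: pa_Y^2 rho}, recover $(u,v)$ from \eqref{eq:relation-uv}, and finally get $\dv_\al(u,v)$ from the continuity equation. The $|\e|$-bookkeeping you describe is exactly what the paper does.

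The one point of divergence is the $\pa_Y^2 u$ estimate. The paper does \emph{not} rearrange the momentum equation; it simply differentiates the relation $u=\pa_Y\phi-(U_s-c)\rho$ twice to write
\[
\pa_Y^2 u=\pa_Y^3\phi-(U_s-c)\pa_Y^2\rho-2\pa_Y U_s\,\pa_Y\rho-\pa_Y^2 U_s\,\rho,
\]
and then uses $\|\pa_Y^3\phi\|_{L^\infty_\eta}\le c_i^{-1}|\e|^{-2/3}\|\phi\|_{\mathcal X}\le C|\e|^{-1}\|\phi\|_{\mathcal X}$ together with the already-established $\rho$-bounds. Your route via the momentum equation is algebraically equivalent, but if you bound term by term after dividing by $\e$ you will pick up contributions of size $|\e|^{-1}\|\rho\|$ from $m^{-2}\rho$ and $(U_s-c)u$, giving an $|\e|^{-1}|\al|^{-1}\|f_u\|$ term instead of the stated $|\al|^{-1}\|f_u\|$. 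These excess $f_u$-contributions in fact cancel exactly once you substitute \eqref{def: rho}, but recognizing this cancellation is more work than just differentiating the stream-function relation directly. Either switch to the paper's route for this step, or make the cancellation explicit.
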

\begin{proof}
According to \eqref{def: rho}, we have
\begin{align*}
\|\rho\|_{L^\infty_\eta}\leq& C\|\e(\pa_Y^2-\al^2)\pa_Y\phi\|_{L^\infty_\eta}+C\|(U_s-c)\pa_Y\phi\|_{L^\infty_\eta}+C\|\pa_Y^2 U_s \phi\|_{L^\infty_\eta}+|\alpha|^{-1}\|f_u\|_{L^\infty_\eta}\\
\leq&C|\e|c_i^{-1}\|(U_s-c)\pa_Y^3\phi\|_{L^\infty_\eta}+C(1+|\e||\al|^2 c_i^{-1})\|(U_s-c)\pa_Y\phi\|_{L^\infty_\eta}+C\| \phi\|_{L^\infty_\eta}\\
&+C|\alpha|^{-1}\|f_u\|_{L^\infty_\eta}\\
\leq& C\|\phi\|_{\mathcal{X}}+C|\alpha|^{-1}\|f_u\|_{L^\infty_\eta}\leq C|\log c_i|^4 \|(U_s-c)\Om(f_u, f_v)\|_{L^\infty_\eta}+C|\alpha|^{-1}\|f_u\|_{L^\infty_\eta},
\end{align*}
where we used Proposition \ref{lem: OS-nonhomo} in the last step. 

Similarly, by \eqref{def: pa_Y rho} and \eqref{def: pa_Y^2 rho}, we  have
\begin{align*}
\|\pa_Y\rho\|_{L^\infty_\eta}\leq&C|\al|^2|\e|\|\pa_Y^2\phi\|_{L^\infty_\eta}+C|\al|^2\|\phi\|_{L^\infty_\eta}+C\|f_v\|_{L^\infty_\eta}\\
\leq& C|\al|^2\|\phi\|_{\mathcal{X}}+C\|f_v\|_{L^\infty_\eta}\\
\leq& C|\al|^2|\log c_i|^4 \|(U_s-c)\Om(f_u, f_v)\|_{L^\infty_\eta}+C\|f_v\|_{L^\infty_\eta},
\end{align*}
%and 
%\begin{align*}
%\|(U_s-c)\pa_Y\rho\|_{L^\infty_\eta}\leq&C\|(U_s-c)OS[\phi]\|_{L^\infty_\eta}+C|\al|^2|\e|\|\pa_Y^2\phi\|_{L^\infty_\eta}+C|\al|^2\|\phi\|_{L^\infty_\eta}\\
%&+C|\alpha|^{-1}\|(U_s-c)\partial_Yf_u\|_{L^\infty_\eta}\\
%\leq&C|\log c_i|^4\|(U_s-c)\Om(f_u, f_v)\|_{L^\infty_\eta}+C|\alpha|^{-1}\|(U_s-c)\partial_Yf_u\|_{L^\infty_\eta},
%\end{align*}
and 
\begin{align*}
\|(U_s-c)\pa_Y^2\rho\|_{L^\infty_\eta}\leq&C|\al|^2\|\rho\|_{L^\infty_\eta}+C|\al|^2\|\phi\|_{L^\infty_\eta}+C\|(U_s-c)\pa_Y f_v\|_{L^\infty_\eta}+C|\al|\|(U_s-c) f_u\|_{L^\infty_\eta}\\
\leq&C|\al|^2 \|\phi\|_{\mathcal{X}}+C\|\pa_Y f_v\|_{L^\infty_\eta}+C|\al|\|f_u\|_{L^\infty_\eta}\\
\leq&C|\al|^2 |\log c_i|^4 \|(U_s-c)\Om(f_u, f_v)\|_{L^\infty_\eta}+C\|\pa_Y f_v\|_{L^\infty_\eta}+C|\al|\|f_u\|_{L^\infty_\eta}.
\end{align*}

Thanks to the relation \eqref{eq:relation-uv}, we can deduce from the above estimates and Proposition \ref{lem: OS-nonhomo}  that
\begin{align*}
\|u\|_{L^\infty_\eta}\leq& \|\pa_Y\phi\|_{L^\infty_\eta}+C\|\rho\|_{L^\infty_\eta}\leq C c_i^{-1}\|\phi\|_{\mathcal{X}}+C\|\phi\|_{\mathcal{X}}+C|\alpha|^{-1}\|f_u\|_{L^\infty_\eta}\\
\leq&C|\e|^{-\f13}|\log c_i|^4 \|(U_s-c)\Om(f_u, f_v)\|_{L^\infty_\eta}+C|\alpha|^{-1}\|f_u\|_{L^\infty_\eta},
\end{align*}
and 
\begin{align*}
\|\pa_Yu\|_{L^\infty_\eta}\leq&\|\pa_Y^2\phi\|_{L^\infty_\eta}+C\|\rho\|_{L^\infty_\eta}+C\|(U_s-c)\pa_Y\rho\|_{L^\infty_\eta}\\
\leq& C|\e|^{-\f23}\|\phi\|_{\mathcal{X}}+C|\log c_i|^4 \|(U_s-c)\Om(f_u, f_v)\|_{L^\infty_\eta}+C|\alpha|^{-1}\|f_u\|_{L^\infty_\eta}+C\|f_v\|_{L^\infty_\eta}\\
\leq&C|\e|^{-\f23}|\log c_i|^4 \|(U_s-c)\Om(f_u, f_v)\|_{L^\infty_\eta}+C|\alpha|^{-1}\|f_u\|_{L^\infty_\eta}+C\|f_v\|_{L^\infty_\eta},
\end{align*}
and 
\begin{align*}
\|\partial_Y^2u\|_{L^\infty_\eta}\leq& \|\partial_Y^3\phi\|_{L^\infty_\eta}+C\|\rho\|_{L^\infty_\eta}+C\|\partial_Y\rho\|_{L^\infty}+\|(U_s-c)\partial_Y^2\rho\|_{L^\infty_\eta}\\
\leq&C|\varepsilon|^{-1} \|\phi\|_{\mathcal{X}}+C\|(\rho,\pa_Y\rho)\|_{L^\infty_\eta}+\|(U_s-c)\partial_Y^2\rho\|_{L^\infty_\eta}\\
\leq&C|\e|^{-1}|\log c_i|^4 \|(U_s-c)\Om(f_u, f_v)\|_{L^\infty_\eta}\\
&+C|\alpha|^{-1}\|f_u\|_{L^\infty_\eta}+C\|f_v\|_{L^\infty_\eta}+C\|\pa_Yf_v\|_{L^\infty_\eta}.
\end{align*}
and
\begin{align*}
&\|v\|_{L^\infty_\eta}\leq C|\al| \|\phi\|_{\mathcal{X}}\leq C|\al|\log c_i|^4 \|(U_s-c)\Om(f_u, f_v)\|_{L^\infty_\eta},\\
&\|\partial_Y v\|_{L^\infty_\eta}\leq C|\al| \|\pa_Y\phi\|_{L^\infty_\eta}\leq C|\al| |\e|^{-\f13}\|\phi\|_{\mathcal{X}}\leq C|\al| |\e|^{-\f13}\log c_i|^4 \|(U_s-c)\Om(f_u, f_v)\|_{L^\infty_\eta},\\
&\|\partial^2_Y v\|_{L^\infty_\eta}\leq C|\al| \|\pa^2_Y\phi\|_{L^\infty_\eta}\leq C|\al| |\e|^{-\f23}\|\phi\|_{\mathcal{X}}\leq C|\al| |\e|^{-\f23}\log c_i|^4 \|(U_s-c)\Om(f_u, f_v)\|_{L^\infty_\eta}.
\end{align*}
Thanks to $\dv_\al(u,v)=-\mathrm i\al (U_s-c)\rho$, we finally obtain
\begin{align*}
&\|\dv_\al(u,v)\|_{L^\infty_\eta}\leq C|\al|\|\rho\|_{L^\infty_\eta}\leq C|\al| \log c_i|^4 \|(U_s-c)\Om(f_u, f_v)\|_{L^\infty_\eta}+C\|f_u\|_{L^\infty_\eta} ,
%%%%%%%%%%%%%%%%
%%%%%%%%%%%%%%%
\end{align*}
\begin{align*}
	\|\pa_Y\dv_\al(u,v)\|_{L^\infty_\eta}\leq& C|\al|\|(\rho, (U_s-c)\pa_Y\rho)\|_{L^\infty_\eta}\\
\leq& C|\al| |\log c_i|^4\|(U_s-c)\Om(f_u, f_v)\|_{L^\infty_\eta}+C\|f_u\|_{L^\infty_\eta}+C|\al|\|f_v\|_{L^\infty_\eta},
\end{align*}
and 
\begin{align*}
\|\pa_Y^2\dv_\al(u,v)\|_{L^\infty_\eta}&\leq C|\al|\|(\rho, \pa_Y\rho)\|_{L^\infty_\eta}+C|\al|\|(U_s-c)\pa_Y^2\rho\|_{L^\infty_\eta}\\
&\leq C|\al| |\log c_i|^4 \|(U_s-c)\Om(f_u, f_v)\|_{L^\infty_\eta}\\
&\quad+C\|f_u\|_{L^\infty_\eta}+C|\al|\|f_v\|_{L^\infty_\eta}+C|\al|\|\pa_Yf_v\|_{L^\infty_\eta}.
\end{align*}

The proof is completed. 
\end{proof}

\subsection{Homogeneous quasi-incompressible system} 
In this subsection, we construct two linearly independent solutions $(\rho_s,u_s,v_s)$ (slow solution) and $(\rho_f,u_f,v_f)$ (fast solution) to the homogeneous quasi-incompressible system via constructing the slow mode $\phi_s$ and fast mode $\phi_f$ to the homogeneous OS equation.

\begin{proposition}\label{pro: (rho, u,v)-OS-homo}
Let $\beta=\beta_r+i\beta_i$ be given by \eqref{def: beta}. Then there exists  a slow solution $(\rho_s,u_s,v_s)$ and a fast solution $(\rho_f,u_f,v_f)$ belonging to $W^{2,\infty}_{\beta_r}$ of the homogeneous  quasi-incompressible system  satisfying 
\begin{itemize}

\item For the density $\rho_s$ and $\rho_f$, we have
\begin{align*}
&|\al| |\rho_s|+|\pa_Y\rho_s|+|\pa_Y^2\rho_s|\leq C|\al|^2 e^{-\beta_r Y},\\
&|\al|^2\|\rho_f\|_{L^\infty_\eta}+\|\pa_Y\rho_f\|_{L^\infty_\eta}+\|\pa_Y^2\rho_f\|_{L^\infty_\eta}\leq C|\al|^2|\e|^\f23|\log c_i|.
\end{align*}

\item For the velocity, we have
\begin{align*}
&\|\dv_\al(u_s, v_s)\|_{L^\infty}+\|\pa_Y\dv_\al(u_s, v_s)\|_{L^\infty}+\|\pa_Y^2\dv_\al(u_s, v_s)\|_{L^\infty}\leq C|\al|^2,\\
&\|\dv_\al(u_f, v_f)\|_{L^\infty_\eta}+
\|\pa_Y\dv_\al(u_f, v_f)\|_{L^\infty_\eta}+\|\pa_Y^2\dv_\al(u_f, v_f)\|_{L^\infty_\eta}\leq C|\al| |\e|^\f23|\log c_i|.
\end{align*}

\item For the boundary value of the velocity, we have the asymptotic expansions:
\begin{align*}
u_s(0)
=&(1-m^2)U_s'(0)+\mathcal{O}(|\al| |\log c_i|^4 ),\\
v_s(0)
=&-\mathrm i \al(-(1-m^2)c+\f{\beta}{U_s'(Y_c)}+\mathcal{O}((|\al| |\e|^\f13+|\al|^2+|c|^2) |\log c_i|^4 )),\\
u_f(0)=&=\pa_Y \psi_a(0)+\mathcal{O}(|\e|^{\f23}|\log c_i|^3),\\
v_f(0)=&-\mathrm i \al(\psi_a(0)+\mathcal{O}(|\e|^{\f23}(|\e|^\f13+|c|+|\al|)|\log c_i|^3  )).
\end{align*}

\end{itemize}

\end{proposition}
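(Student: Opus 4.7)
\medskip
\noindent\textbf{Proof proposal.} The plan is to reduce the homogeneous quasi-incompressible system to the homogeneous Orr--Sommerfeld equation via the stream-function identities used already in the non-homogeneous case. For the slow solution I would set the stream function to be $\phi_s = \varphi_{Ray} + \widetilde{\phi}_s$ from Lemma~\ref{lem: OS-nonhomo-s}; for the fast solution I would set it to $\phi_f = \psi_a + \widetilde{\phi}_f$ from Lemma~\ref{lem: OS-nonhomo-f}. In both cases the density is then recovered from \eqref{def: rho} with $f_u=0$, the velocity from \eqref{eq:relation-uv} (so $v=-\mathrm i\alpha\phi$ and $u=\partial_Y\phi-(U_s-c)\rho$), and the divergence from the identity $\mathrm{div}_\alpha(u,v)=-\mathrm i\alpha(U_s-c)\rho$. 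The bounds on $\partial_Y\rho$ and $\partial_Y^2\rho$ then come directly from the identities \eqref{def: pa_Y rho}--\eqref{def: pa_Y^2 rho} specialized to $f_u=f_v=0$, so no new derivative calculation is needed; only careful bookkeeping of the norms furnished by the preceding sections.

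For the \emph{slow} solution I would exploit the decomposition $\varphi_{Ray}=\varphi_{Ray,0}+\varphi_{Ray,R}$ with the explicit profile $\varphi_{Ray,0}=\frac{A_\infty}{(1-c)^2}(U_s-c)e^{-\beta Y}$ (Lemma~\ref{lem: asymp-phi_0}). The key algebraic cancellation is
\[
\partial_Y U_s\,\varphi_{Ray,0}-(U_s-c)\partial_Y\varphi_{Ray,0}=\tfrac{A_\infty}{(1-c)^2}\beta(U_s-c)^2e^{-\beta Y},
\]
which produces the expected $\mathcal O(|\alpha|e^{-\beta_r Y})$ size for $\rho_s$; the residual contributions from $\varphi_{Ray,R}$ and $\widetilde{\phi}_s$ are estimated via \eqref{est: varphi^(0)_Ray,R}, \eqref{est: varphi_R-Y-1} and \eqref{est: tilde phi_s-X} in the $\mathcal Y_{\eta_0}$ and $\mathcal X$ norms, which are strong enough (because $|U_s-c|\gtrsim|\varepsilon|^{1/3}$) to absorb the weights $(U_s-c)^{-1}$ that appear when reading off pointwise $e^{-\beta_r Y}$ decay. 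The boundary values $u_s(0)=\partial_Y\phi_s(0)-(-c)\rho_s(0)$ and $v_s(0)=-\mathrm i\alpha\phi_s(0)$ follow by plugging the explicit values of $\varphi_{Ray}(0),\partial_Y\varphi_{Ray}(0)$ from Proposition~\ref{pro: varphi_Ray(0)} and the $o(1)$ errors coming from Lemma~\ref{lem: OS-nonhomo-s}.

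For the \emph{fast} solution I would use $\phi_f=\psi_a+\widetilde{\phi}_f$, together with the fact that $\psi_a$ is an $\mathcal O(|\varepsilon|^{2/3})$ quantity with exponential decay governed by the Airy weight, as quantified by \eqref{est: pa_Y^k psi_a^(0)-2}--\eqref{est: (U_s-c)^s pa_Y^k psi_a^(0)} and the correction estimates of Proposition~\ref{pro: (psi_a, w_a)}. Substituting into \eqref{def: rho}--\eqref{def: pa_Y^2 rho} and using the $\mathcal X$-norm bound \eqref{est: tilde phi_f-X} for $\widetilde{\phi}_f$, every derivative of $\rho_f$ inherits a small $|\varepsilon|^{2/3}$ prefactor together with an $e^{-\eta Y}$ weight, yielding the stated $L^\infty_\eta$ bounds. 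The boundary expansions of $u_f(0),v_f(0)$ are then immediate consequences of \eqref{est: tilde phi_f(0)}--\eqref{est: tilde pa_Y phi_f(0)}.

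The main obstacle is the sharp pointwise control of $\rho_s$: the bound $|\rho_s|\leq C|\alpha|e^{-\beta_r Y}$ cannot be read off from the $\mathcal X$-norm of $\phi_s$ alone (which would lose a factor $|\varepsilon|^{-1/3}$ through the $\varepsilon\partial_Y^3$ term in \eqref{def: rho}); it must come from the cancellation between $(U_s-c)\partial_Y\phi_s$ and $\partial_Y U_s\,\phi_s$, which is why the proof really needs the \emph{explicit} Rayleigh profile $\varphi_{Ray,0}$ rather than a black-box estimate on $\phi_s$. Once this is handled, the divergence bounds on $(u_s,v_s)$ and $(u_f,v_f)$ follow trivially from $\mathrm{div}_\alpha(u,v)=-\mathrm i\alpha(U_s-c)\rho$ and the corresponding $\rho$-estimates.
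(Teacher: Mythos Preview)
Your proposal is correct and follows essentially the same route as the paper: both set $\rho_s,\rho_f$ via \eqref{def: rho} applied to $\phi_s,\phi_f$, isolate the explicit profile $\varphi_{Ray,0}$ to obtain the sharp $O(|\alpha|)$ bound on $\rho_s$ by the cancellation you wrote, treat $\varphi_{Ray,R}$ and $\widetilde\phi_s$ through their $\mathcal Y$- and $\mathcal X$-norms, bound $\rho_f$ directly via $\|\phi_f\|_{\mathcal X}$, and then read off $\mathrm{div}_\alpha(u,v)=-\mathrm i\alpha(U_s-c)\rho$ and the boundary values from Proposition~\ref{pro: varphi_Ray(0)} and the error estimates of Lemmas~\ref{lem: OS-nonhomo-s}--\ref{lem: OS-nonhomo-f}. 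Your cancellation identity for $\varphi_{Ray,0}$ is actually the right one; the paper displays a second-derivative variant that appears to be a typo but leads to the same bound.
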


\begin{proof} 
Recall that the slow mode $\phi_s$ of the OS equation constructed in Lemma \ref{lem: OS-nonhomo-s} satisfies 
\begin{align}
\phi_s=\varphi_{Ray}+\widetilde{\phi}_s=\varphi_{Ray,0}+\varphi_{Ray,R}+\widetilde{\phi}_s,\nonumber
\end{align}
where
\begin{align*}
\varphi_{Ray,0}=\frac{A_\infty}{(1-c)^2}(U_s-c)e^{-\beta Y},
\end{align*}
and $\varphi_{Ray,R}$ and $\widetilde{\phi}_s$ hold the estimates in \eqref{est: varphi_R-Y} and \eqref{est: tilde phi_s-X}.
We denote
\begin{align*}
\rho(\phi)=m^2A^{-1}\Big(\e(\pa_Y^2-\al^2)\pa_Y\phi-(U_s-c)\pa_Y\phi+\pa_Y U_s \phi\Big).
\end{align*}

Using the fact that
\begin{align*}
-(U_s-c)\pa_Y^2\varphi_{Ray,0}+\pa_Y^2 U_s \varphi_{Ray,0}=-\beta^2 (U_s-c)\varphi_{Ray,0}-2\beta\pa_Y U_s \varphi_{Ray,0},
\end{align*}
we have
\begin{align*}
|\rho(\varphi_{Ray,0})|=&\Big|m^2A^{-1}\Big(\e(\pa_Y^2-\al^2)\pa_Y\varphi_{Ray,0}-(U_s-c)\pa_Y\varphi_{Ray,0}+\pa_Y U_s \varphi_{Ray,0}\Big)\Big|\\
\leq&C|\al| e^{-\beta_rY},
\end{align*}
and by  \eqref{est: varphi_R-Y} and \eqref{est: tilde phi_s-X},
\begin{align*}
\|\rho(\varphi_{Ray,R})\|_{L^\infty_\eta}\leq&C|\e|\|(\pa_Y^2-\al^2)\pa_Y\varphi_{Ray,R}\|_{L^\infty_\eta}+C\|(U_s-c)\pa_Y\varphi_{Ray,R}\|_{L^\infty_\eta}+C\|\varphi_{Ray,R}\|_{L^\infty_\eta}\\
\leq&C\|\varphi_{Ray,R}\|_{\mathcal{Y}}\leq C|\al| |\log c_i|,\\
\|\rho(\widetilde{\phi}_s)\|_{L^\infty_\eta}\leq&C|\e|\|(\pa_Y^2-\al^2)\pa_Y\widetilde{\phi}_s\|_{L^\infty_\eta}+C\|(U_s-c)\pa_Y\widetilde{\phi}_s\|_{L^\infty_\eta}+C\|\widetilde{\phi}_s\|_{L^\infty_\eta}\\
\leq&C\|\widetilde{\phi}_s\|_{\mathcal{X}}\leq C|\e|^\f13(|\e|^\f23+|\al|)|\log c_i|^5.
\end{align*}
Then we conclude that
\begin{align}\label{est: rho_s}
|\rho_s|=|\rho(\phi_s)|\leq&C|\al|e^{-\beta_r Y}.
\end{align}

Using the formula \eqref{def: pa_Y rho}, $OS[\phi_s]=0$ $(f_u=f_v=0)$ and a similar argument as above, we deduce that
\begin{align*}
e^{\beta_rY}|\pa_Y\rho(\phi_s)|\leq&C|\al|^2e^{\beta_rY}(|\e(\pa_Y^2-\al^2)\varphi_{Ray, 0}|+|(U_s-c)\varphi_{Ray, 0}|)+C|\al|^2(\|\varphi_{Ray, R}\|_{\mathcal{Y}}+\|\widetilde{\phi}_s\|_{\mathcal{X}})\\
\leq&  C|\al|^2+C|\al|^3|\log c_i|+C|\al|^3|\e|^\f13|\log c_i|^5,
\end{align*}
which gives 
\begin{align}\nonumber%\label{est: pa_Y rho_s}
|\pa_Y\rho_s|=|\pa_Y\rho(\phi_s)|\leq&C|\al|^2 e^{-\beta_r Y}.
\end{align}
Similarly, by \eqref{def: pa_Y^2 rho} and  \eqref{est: rho_s}, we have
\begin{align}
&|\pa_Y^2\rho_s|\leq C|\al|^2e^{-\beta_r Y}.\nonumber%\label{est: pa_Y^2 rho_s}
\end{align}

Next we give the estimate of $\rho_f=\rho(\phi_f)$. By the construction of $\phi_f$ in Lemma \ref{lem: OS-nonhomo-f}, we know that
\begin{align*}
\phi_f=\psi_a+\widetilde{\phi}_f,
\end{align*}
where $\psi_a$ is the solution to the homogeneous Airy equation satisfying 
\begin{align}
\|\psi_a \|_{\mathcal{X}}\leq C|\e|^\f23,\nonumber
\end{align}
and $\widetilde{\phi}_f$ given by \eqref{def: tilde phi_f} holds the estimate \eqref{est: tilde phi_f-X}. So, we have 
\begin{align}
\|\phi_f\|_{\mathcal{X}}\leq C|\e|^\f23|\log c_i|.\nonumber
\end{align}

Using the formulas \eqref{def: rho}, \eqref{def: pa_Y rho} and \eqref{def: pa_Y^2 rho} for $f_u=f_v=0$, it is easy to see that 
\begin{align*}
\|\rho_f\|_{L^\infty_\eta}\leq &C\|\e(\pa_Y^2-\al^2)\pa_Y\phi_f\|_{L^\infty_\eta}+\|(U_s-c)\pa_Y\phi_f\|_{L^\infty_\eta}+C\|\phi_f\|_{L^\infty_\eta}\\
\nonumber
\leq& C\|\phi_f\|_{\mathcal{X}}\leq C|\e|^\f23|\log c_i|,\\
%%%%%%%%%%%%%%%%%%%%%
\|\pa_Y\rho_f\|_{L^\infty_\eta}\leq &C|\al|^2\big(\|\e(\pa_Y^2-\al^2)\phi_f\|_{L^\infty_\eta}+\|(U_s-c)\phi_f\|_{L^\infty_\eta}\big)\\
\nonumber
\leq& C|\al|^2\|\phi_f\|_{\mathcal{X}}\leq C\al^2|\e|^\f23|\log c_i|,\\
%%%%%%%%%%%%%%%%%%%%%%
\|\pa_Y^2\rho_f\|_{L^\infty_\eta}\leq &C|\al|^2\big(\|\rho_f\|_{L^\infty_\eta}+\|\phi_f\|_{L^\infty_\eta}\big)\\
\nonumber
\leq& C|\al|^2\|\phi_f\|_{\mathcal{X}}\leq C|\al|^2|\e|^\f23|\log c_i|.
\end{align*}

Using the relation
\begin{align}
u=\pa_Y\phi-(U_s-c)\rho,\quad v=-\mathrm i \al \phi,\nonumber
\end{align}
we get by Proposition \ref{pro: varphi_Ray(0)} that
\begin{align*}
u_s(0)=&\pa_Y\phi_s(0)+c\rho_s(0)=\pa_Y \varphi_{Ray}(0)+\mathcal{O}(|\al| |\log c_i|^4 )+\mathcal{O}(|c||\al|)\\
\nonumber
=&(1-m^2)U_s'(0)+\mathcal{O}(|\al| |\log c_i|^4 ),\\
v_s(0)=&-\mathrm i \al\phi_s(0)=-\mathrm i |\al|\big(\varphi_{Ray}(0)+\mathcal{O}(|\al| |\e|^\f13 |\log c_i|^4 \big)\\
\nonumber
=&-\mathrm i \al\big(-(1-m^2)c+\f{\beta}{U_s'(Y_c)}+\mathcal{O}((|\al| |\e|^\f13+|\al|^2+|c|^2) |\log c_i|^4\big).
\end{align*}
By using \eqref{est: tilde phi_f(0)}-\eqref{est: tilde pa_Y phi_f(0)}, we have
\begin{align*}
u_f(0)=&\pa_Y\phi_f(0)+c\rho_f(0)=\pa_Y \psi_a(0)+\mathcal{O}(|\e|^{\f23}|\log c_i|^3),\\
v_f(0)=&-\mathrm i \al\phi_f(0)=-\mathrm i \al\big(\psi_a(0)+\mathcal{O}(|\e|^{\f23}(|\e|^\f13+|c|+|\al|)|\log c_i|^3 )\big).
\end{align*}

By using the equation 
\begin{align*}
\dv_\al(u, v)+\mathrm i\al(U_s-c)\rho=0,
\end{align*}
it is easy to derive that
\begin{align*}
&\|\dv_\al(u_s, v_s)\|_{L^\infty}\leq C|\al|\|\rho_s\|_{L^\infty}\leq C|\al|^2,\\
&\|\pa_Y\dv_\al(u_s, v_s)\|_{L^\infty}\leq C|\al|\|\pa_Y U_s\rho_s\|_{L^\infty}+C|\al|\|\pa_Y\rho_s\|_{L^\infty}\leq C|\al|^2,\\
&\|\pa_Y^2\dv_\al(u_s, v_s)\|_{L^\infty}\leq C|\al|\|\pa_Y^2 U_s\rho_s\|_{L^\infty}+C|\al|\|\pa_Y U_s\pa_Y\rho_s\|_{L^\infty}+C|\al|\|\pa_Y^2\rho_s\|_{L^\infty}\leq C|\al|^2
\end{align*}
and
\begin{align*}
&\|\dv_\al(u_f, v_f)\|_{L^\infty}\leq C|\al|\|\rho_f\|_{L^\infty}\leq C|\al| |\e|^\f23|\log c_i|,\\
&\|\pa_Y\dv_\al(u_f, v_f)\|_{L^\infty}\leq C|\al|\|(\rho_f,\pa_Y \rho_f)\|_{L^\infty}\leq C|\al| |\e|^\f23|\log c_i|,\\
&\|\pa_Y^2\dv_\al(u_f, v_f)\|_{L^\infty}\leq C|\al|\|(\rho_f, \pa_Y\rho_f, \pa_Y^2\rho_f)\|_{L^\infty}
\leq C|\al| |\e|^\f23|\log c_i|.
\end{align*}

The proof is completed.
\end{proof}

\section{The quasi-compressible system}

To estimate the compressible part, we introduce the following quasi-compressible system:
\begin{align}\label{eq: Stokes}
\left\{
\begin{aligned}
&\mathrm i\al (U_s-c)\rho+\dv_\al(u, v)=0,\\
&\sqrt{\nu}(\partial_Y^2-\alpha^2)u+\mathrm i\alpha\lambda\sqrt{\nu}\mathrm{div}_\alpha(u,v)-\mathrm i\alpha(U_s-c)u\\
&\qquad\quad-(\mathrm i\alpha m^{-2}+\sqrt{\nu}\partial_Y^2 U_s)\rho=q_1,\\
&\sqrt{\nu}(\partial_Y^2-\alpha^2)v+\lambda\sqrt{\nu}\partial_Y\mathrm{div}_\alpha(u,v)-\mathrm i\alpha(U_s-c)v\\
&\qquad\quad-\int_Y^\infty \mathrm i\al \pa_Y U_s v dY'-m^{-2}\partial_Y\rho=q_2.
\end{aligned}
\right.
\end{align}
This system is used to reduce the error of the quasi-incompressible system \eqref{eq:LCNS-OS-1} during an incompressible-compressible iteration process.\smallskip

 We introduce a new term $\int_Y^\infty \mathrm i\al \pa_Y U_s v dY'$ in the third equation of \eqref{eq: Stokes}, which is different from the Stokes system used in Yang and Zhang's work \cite{YZ}. With this term, we could decouple the equation of density, which is an elliptic equation. Moreover, we can derive the expression for density. This method could help us to construct spatial mode when $\al\in \mathbb{C}$.
\medskip

We denote by $\mathcal{L}_{S}$  the operator on the left hand side of  \eqref{eq: Stokes}.
In this section, we always assume that $(\alpha,c)\in\mathbb H$ with $\mathbb H$ defined by \eqref{def:Hset} and  $\beta=\beta_r+i\beta_i$ given by \eqref{def: beta}.  

  \begin{proposition}\label{pro: stokes}
For  any $q_1,\partial_Yq_2\in L^\infty_{\beta_r}$, there exists a solution $(\rho,u,v)\in (W^{2,\infty}_{\f{\beta_r}{2}})^3$
to the system \eqref{eq: Stokes} satisfying 
\begin{align*}
		&|\alpha|^2\|\rho\|_{L^\infty_{\f{\beta_r}{2}}}+|\alpha|\|\partial_Y \rho\|_{L^\infty_{\f{\beta_r}{2}}}+\|\partial_Y^2\rho\|_{L^\infty_{\f{\beta_r}{2}}}\leq C|\alpha|\|q_1\|_{L^\infty_{\beta_r}}+C\|\partial_Y q_2\|_{L^\infty_{\beta_r}},\\
		&|\varepsilon|\|\partial_Y^2u\|_{L^\infty_{\f{\beta_r}{2}}}+|\varepsilon|^\f23\|\partial_Yu\|_{L^\infty_{\f{\beta_r}{2}}}+\|(U_s-c)u\|_{L^\infty_{\f{\beta_r}{2}}}
	\leq C|\alpha|^{-1}\|q_1\|_{L^\infty_{\beta_r}}+C|\alpha|^{-2}\|\partial_Y q_2\|_{L^\infty_{\beta_r}},
\end{align*}
and
\begin{align*}
	&|\varepsilon|^\f23\|\partial_Y^2v\|_{L^\infty_{\f{\beta_r}{2}}}+|\varepsilon|^\f13\|\partial_Yv\|_{L^\infty_{\f{\beta_r}{2}}}+|\varepsilon|^\f13\|v\|_{L^\infty_{\f{\beta_r}{2}}}\leq C\|(q_1,q_2)\|_{L^\infty_{\beta_r}}+C|\alpha|^{-1}\|\partial_Y q_2\|_{L^\infty_{\beta_r}}.
\end{align*}
\end{proposition}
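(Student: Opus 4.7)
The plan is to follow the strategy sketched in Section~2: decouple the system into a scalar elliptic equation for $\rho$, then recover $u$ from an Airy-type equation, and finally recover $v$ from a Volterra-type equation in which $(1-c+\e\al^2)$ acts as a coercive leading coefficient.

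First I would derive the elliptic equation for $\rho$ by taking $\mathrm i\al\cdot(\text{second eq.})+\pa_Y\cdot(\text{third eq.})$ in \eqref{eq: Stokes} and substituting $\mathrm{div}_\al(u,v)=-\mathrm i\al(U_s-c)\rho$. The non-local term $-\int_Y^\infty \mathrm i\al\pa_YU_s\,v\,dY'$ added to the vertical momentum equation is engineered precisely so that its $\pa_Y$-derivative cancels the bad contribution $\mathrm i\al\pa_YU_s\cdot v$ that would otherwise arise from $\pa_Y((U_s-c)v)$. This yields the elliptic equation displayed in Section~2,
\begin{align*}
(\pa_Y^2-\beta^2)\rho=\al^2(A(Y)-A_\infty)\rho-\mathrm im^2\al\sqrt\nu(1+\lambda)(\pa_Y^2-\al^2)\bigl((U_s-c)\rho\bigr)-\mathrm i\al m^2\sqrt\nu\pa_Y^2U_s\,\rho-\mathrm i\al m^2 q_1-m^2\pa_Yq_2.
\end{align*}
Since $m<1$, $\mathrm{Re}(\beta^2)\gtrsim 1-m^2>0$, so the leading operator is genuinely elliptic on $\mathbb R_+$. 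I would invert it using the half-line Green's function for $\pa_Y^2-\beta^2$ (of the form $-\f{1}{2\beta}(e^{-\beta|Y-Y'|}+Re^{-\beta(Y+Y')})$), enforce decay at infinity, and treat the right-hand side by a contraction in $W^{2,\infty}_{\beta_r/2}$. The perturbative coefficient $A-A_\infty=m^2(U_s-1)(U_s+1-2c)=O(e^{-\eta_0Y})$ is exponentially small in the weighted norm, while the prefactor $\al\sqrt\nu=\mathrm i|\e||\al|^2$ in front of $(\pa_Y^2-\al^2)((U_s-c)\rho)$ offsets the two derivatives of the unknown. Tracking the three scales $|\al|^2\|\rho\|$, $|\al|\|\pa_Y\rho\|$ and $\|\pa_Y^2\rho\|$ simultaneously, the Neumann series closes and produces the stated density bound.

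With $\rho$ in hand, $\mathrm{div}_\al(u,v)$ is known and the first momentum equation reduces to the Airy-type equation
\begin{align*}
\e(\pa_Y^2-\al^2)u-(U_s-c)u=(\mathrm i\al)^{-1}q_1+\bigl(m^{-2}+\e\pa_Y^2U_s+\mathrm i\lambda\sqrt\nu\al(U_s-c)\bigr)\rho\eqdef F_u,
\end{align*}
which I would solve via Proposition~\ref{pro: Airy-1} with weight $\beta_r/2\in(0,3\eta_0]$; the bound $\|F_u\|_{L^\infty_{\beta_r/2}}\leq C|\al|^{-1}\|q_1\|_{L^\infty_{\beta_r}}+C\|\rho\|_{L^\infty_{\beta_r/2}}$ combined with Step~1 propagates directly to the claimed estimates on $u$, $\pa_Yu$ and $\pa_Y^2u$. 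Finally, integrating by parts in the non-local term, $\int_Y^\infty \pa_{Y'}U_s\,v\,dY'=-(U_s(Y)-1)v(Y)-\int_Y^\infty(U_s-1)\pa_{Y'}v\,dY'$, and combining with $-\mathrm i\al(U_s-c)v$, the third equation becomes
\begin{align*}
(1-c+\e\al^2)v=\e\pa_Y^2v+\int_Y^\infty(U_s-1)\pa_{Y'}v\,dY'+\lambda\e\pa_Y\mathrm{div}_\al(u,v)-m^{-2}(\mathrm i\al)^{-1}\pa_Y\rho-(\mathrm i\al)^{-1}q_2.
\end{align*}
Because $|1-c+\e\al^2|\sim 1$ is bounded away from zero and the $v$-dependent terms on the right carry either the small factor $\e$ or the exponentially decaying coefficient $(U_s-1)$, this can be solved by Neumann iteration in $L^\infty_{\beta_r/2}$; the explicit source is controlled through Steps~1--2 using $\pa_Y\mathrm{div}_\al(u,v)=-\mathrm i\al\pa_Y((U_s-c)\rho)$, and the $\pa_Yv$, $\pa_Y^2v$ estimates follow by differentiating the fixed-point identity.

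The main obstacle is Step~1: the source of the density equation contains $(\pa_Y^2-\al^2)((U_s-c)\rho)$, i.e.\ two derivatives of the unknown, so closing the contraction requires the simultaneous tracking of three weighted norms of $\rho$ at their natural scales $|\al|^{-2}$, $|\al|^{-1}$ and $1$, and the exploitation of the crucial smallness $\al\sqrt\nu=O(|\e||\al|^2)$ that appears as a prefactor. Once this bookkeeping is in place, Green's-function estimates convert each derivative loss into a positive power of $\beta_r$ and the scheme closes.
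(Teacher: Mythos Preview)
Your overall strategy matches the paper's: the statement is obtained as a corollary of two separate propositions, one constructing $\rho$ by iteration on the elliptic equation (Proposition~\ref{prop:density-st}) and one recovering $(u,v)$ (Proposition~\ref{prop:velocity-st}). Steps~1 and~2 are essentially what the paper does; the density iteration uses the Green's-function bounds you describe (recorded in the paper as Lemma~\ref{lem:density-st}, with a second variant for sources in $L^\infty_\eta$, $\eta>2\beta_r$, to exploit the extra decay of $A-A_\infty$), and the Airy estimate for $u$ is precisely Proposition~\ref{pro: Airy-1}. One cosmetic point: the paper's Green function has no reflection term (no boundary condition is imposed on $\rho$ at $Y=0$).

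The gap is in Step~3. Your Neumann iteration for $v$ does not close: the term $\int_Y^\infty(U_s-1)\pa_{Y'}v\,dY'$ contributes $O(\|\pa_Yv\|_{L^\infty_{\beta_r/2}})$ with no small prefactor---the exponential decay of $U_s-1$ only makes the weighted integral convergent, it does not make it small---so iterating for $v$ in $L^\infty_{\beta_r/2}$ forces you to bound $\pa_Yv$ in terms of $v$, which loses a derivative. The paper sidesteps this entirely by observing that after Steps~1--2 the quantities $\pa_Yv$ and $\pa_Y^2v$ are already \emph{known}: since $\mathrm{div}_\al(u,v)=-\mathrm i\al(U_s-c)\rho$ and $u$ are both in hand,
\[
\pa_Yv=-\mathrm i\al\bigl(u+(U_s-c)\rho\bigr),\qquad \pa_Y^2v=-\mathrm i\al\bigl(\pa_Yu+U_s'\rho+(U_s-c)\pa_Y\rho\bigr),
\]
and these give directly the claimed bounds on $\pa_Yv$, $\pa_Y^2v$. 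The $v$-equation then becomes a purely algebraic identity: everything on the right-hand side of your displayed equation for $(1-c+\e\al^2)v$ is already estimated, and one simply divides by the $O(1)$ coefficient. No iteration for $v$ is needed at all.
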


This proposition is a direct corollary of the following Proposition \ref{prop:density-st} and Proposition \ref{prop:velocity-st}.\smallskip
  
  We  sketch our main ideas to construct the solution to the system \eqref{eq: Stokes}.
  \begin{enumerate}
  	\item    Multiplying $\mathrm{i}\al$ on both sides of the second equation in \eqref{eq: Stokes} and taking $\pa_Y$ on the third equation of \eqref{eq: Stokes}, we use the fact $\dv_\al(u, v)=\mathrm i\al u+\pa_Y v$ to obtain
 \begin{align*}
 (1+\la)\sqrt\nu (\pa_Y^2-\al^2)\dv_\al(u, v)&-\mathrm i\al (U_s-c) \dv_\al(u, v)-m^{-2}(\pa_Y^2-\al^2)\rho\\
 &-\mathrm i\al \sqrt\nu \pa_Y^2 U_s\rho=\mathrm i\al q_1+\pa_Yq_2.
 \end{align*}
Using the first equation in \eqref{eq: Stokes},  the above equation can be written as
\begin{align}\label{eq: rho-Stokes}
-\mathrm i\al \sqrt\nu (1+\la)(\pa_Y^2-\al^2)((U_s-c)\rho)&-\al^2(U_s-c)^2 \rho-m^{-2}(\pa_Y^2-\al^2)\rho\\
\nonumber
&-\mathrm i\al \sqrt\nu \pa_Y^2 U_s\rho=\mathrm i\al q_1+\pa_Yq_2.
\end{align}
Then we apply the method of Green function and iteration to construct the density $\rho$ to the equation \eqref{eq: rho-Stokes}.

   \item We apply Proposition \ref{pro: Airy-1} to construct the tangent velocity $u$, since the second equation in \eqref{eq: Stokes} is in the form of Airy equation with source terms in terms of $q_1,\rho$ and $\dv_{\alpha}(u,v)$:
\begin{align*}
	\varepsilon(\partial_Y^2-\alpha^2)u-(U_s-c)u=&(\mathrm i\alpha)^{-1}q_1+m^{-2}\rho+\varepsilon(\partial_Y^2U_s)\rho-\lambda\sqrt{\nu}\dv_{\alpha}(u,v)\\
	=&(\mathrm i\alpha^{-1})q_1+\big(m^{-2}+\varepsilon\partial_Y^2U_s+\mathrm i\lambda\sqrt{\nu}\alpha(U_s-c)\big)\rho.
\end{align*}

\item We can recover $\partial_Yv,\partial_Y^2 v$ from $\dv_{\alpha}(u,v)$ and $u$. The vertical velocity $v$ can be recovered from the last equation in \eqref{eq: Stokes}. Indeed, we can write it as
   \begin{align}\label{eq: v_st-1}
&(U_s-c)v+\int_Y^\infty \pa_Y (U_s-1) v dY'\\
\nonumber
&=\e(\partial_Y^2-\alpha^2)v+\lambda\e\partial_Y\mathrm{div}_\alpha(u,v)-m^{-2}(\mathrm i\al)^{-1}\partial_Y\rho-(\mathrm i\al)^{-1}q_2.
\end{align}
For the left hand side of \eqref{eq: v_st-1}, we get by integration by parts that
\begin{align*}
&(U_s-c)v+\int_Y^\infty \pa_Y (U_s-1) v dY'\\
&=(U_s-c)v-\int_Y^\infty  (U_s-1) \pa_Yv dY'-(U_s-1)v\\
&=(1-c)v-\int_Y^\infty  (U_s-1) \pa_Yv dY'.
\end{align*}
Then we may rewrite \eqref{eq: v_st-1} as follows
\begin{align*}
(1-c+\e\al^2)v=&\int_Y^\infty  (U_s-1) \pa_Yv dY'+\e\pa_Y^2 v+\lambda\e\partial_Y\mathrm{div}_\alpha(u,v)\\
&-m^{-2}(\mathrm i\al)^{-1}\partial_Y\rho-(\mathrm i\al)^{-1}q_2,
\end{align*}
 in which all of the terms on the right hand side have been obtained in the previous steps.
  \end{enumerate}

\subsection{Solving the density equation} In this subsection, we solve the following density equation 
\begin{align}\label{eq:Stokes-rho}
	\begin{split}
-\mathrm i\al \sqrt\nu (1+\la)(\pa_Y^2-\al^2)((U_s-c)\rho)&-\al^2(U_s-c)^2 \rho-m^{-2}(\pa_Y^2-\al^2)\rho\\
&-\mathrm i\al \sqrt\nu \pa_Y^2 U_s\rho=\mathrm i\al q_1+\pa_Yq_2.
	\end{split}
\end{align}
Notice that 
\begin{align*}
	m^{-2}(\partial_Y^2-\alpha^2)\rho+\alpha^2(U_s-c)^2\rho=&m^{-2}\partial_Y^2\rho-m^{-2}\alpha^2(1-m^2(U_s-c)^2)\rho\\
	=&m^{-2}\partial_Y^2\rho-m^{-2}\alpha^2A(Y)\rho\\
	=&m^{-2}(\partial_Y^2-\beta^2)\rho-m^{-2}\alpha^2(A(Y)-A_{\infty})\rho.
\end{align*}
Then we can  rewrite \eqref{eq:Stokes-rho} into the following form
\begin{align}\label{eq:rho-la}
	\begin{split}
			(\partial_Y^2-\beta^2)\rho=&\alpha^2(A(Y)-A_{\infty})\rho-\mathrm i m^2\al \sqrt\nu (1+\la)(\pa_Y^2-\al^2)((U_s-c)\rho)\\
	&-\mathrm i\alpha m^2\sqrt{\nu}\partial_Y^2U_s\rho-\mathrm i\alpha m^2q_1-m^2\partial_Y q_2.
	\end{split}
\end{align}

We construct a decay solution $\rho$ to \eqref{eq:rho-la} by the iteration. We first define 
\begin{align*}
	\rho^{(0)}(Y)=\frac{-1}{2\beta}\left(e^{-\beta Y}\int_0^{Y}e^{\beta Z}F(q_1,q_2)(Z)dZ+e^{\beta Y}\int_{Y}^{+\infty}e^{-\beta Z}F(q_1,q_2)(Z)dZ\right),
\end{align*}
where $F(q_1,q_2)(Y)=-\mathrm i\alpha m^2q_1-m^2\partial_Y q_2$. According to the above definition, we have 
\begin{align*}
		(\partial_Y^2-\beta^2)\rho^{(0)}=-\mathrm i\alpha m^2q_1-m^2\partial_Y q_2.
\end{align*}
For $j\ge 0$, we define $\rho^{(j+1)}$ as follows
\begin{align*}
	\rho^{(j+1)}(Y)=\frac{-1}{2\beta}\left(e^{-\beta Y}\int_0^{Y}e^{\beta Z}G_j(Z) dZ+e^{\beta Y}\int_{Y}^{+\infty}e^{-\beta Z}G_j (Z)dZ\right),
\end{align*}
where
\begin{align*}
	G_j(Z)=&\alpha^2(A(Y)-A_{\infty})\rho^{(j)}_{st}+\Big(-\mathrm i m^2\al \sqrt\nu (1+\la)(\pa_Y^2-\al^2)((U_s-c)\rho^{(j)})-\mathrm i\alpha m^2\sqrt{\nu}\partial_Y^2U_s\rho^{(j)}\Big)\\
	=&G_{j,1}(Z)+G_{j,2}(Z).
\end{align*}
Formally, $\rho(Y)=\sum_{j=0}^\infty\rho^{(j)}(Y)$ is a solution to \eqref{eq:rho-la}. 

We denote 
\begin{align*}
	f(Y)=\frac{-1}{2\beta}\left(e^{-\beta Y}\int_0^{Y}e^{\beta Z}g(Z) dZ+e^{\beta Y}\int_{Y}^{+\infty}e^{-\beta Z}g (Z)dZ\right).
\end{align*}
We show some useful esitmates about $f(Y)$ in the following lemma.

\begin{lemma}\label{lem:density-st}
	Let $\beta\in\mathbb C$ with $\beta_r> 0$ and $|\beta_i|\leq C\beta_r$. Then there holds 
	\begin{itemize}
		\item For any $g(Y)\in L^\infty_{\f{\beta_r}{2}}$, we have
		      \begin{align*}
		      	|\beta|^2\|f\|_{L^\infty_\f{\beta_r}{2}}+|\beta|\|\partial_Y f\|_{L^\infty_\f{\beta_r}{2}}+\|\partial_Y^2 f\|_{L^\infty_\f{\beta_r}{2}}\leq C\|g\|_{L^\infty_{\f{\beta_r}{2}}}.
		      \end{align*}
		\item For any $g(Y)\in L^\infty_{\eta}$ with $\eta>2\beta_r$, we have
		      \begin{align*}
		      	|\beta|\|f\|_{L^\infty_{\beta_r}}+\|\partial_Y f\|_{L^\infty_{\beta_r}}+\|\partial_Y^2 f\|_{L^\infty_{\beta_r}}\leq C\|g\|_{L^\infty_\eta}.
		      \end{align*}
	\end{itemize}
\end{lemma}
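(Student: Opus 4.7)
\medskip

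\noindent\textbf{Proof proposal for Lemma 7.2.}

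The plan is to argue by direct estimation of the two integral terms defining $f$, exploiting the fact that $f$ is essentially the Green-function representation of the decaying solution of the ODE $(\partial_Y^2-\beta^2)f=g$ on the half line. First I would verify by a short computation that with the given formula,
\begin{align*}
\partial_Yf(Y)=\tfrac{1}{2}\Bigl(e^{-\beta Y}\!\!\int_0^Y e^{\beta Z}g(Z)\,dZ-e^{\beta Y}\!\!\int_Y^{+\infty}e^{-\beta Z}g(Z)\,dZ\Bigr),\qquad (\partial_Y^2-\beta^2)f=g,
\end{align*}
so that the bounds on $\partial_Y^2f$ will follow from the bounds on $f$ together with the given control of $g$ via the identity $\partial_Y^2f=\beta^2f+g$. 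Thus the whole lemma reduces to pointwise $L^\infty$-bounds with weight on the two functions $f$ and $\partial_Yf$, both given by the same type of integral kernel.

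For the first statement, using $|g(Z)|\le \|g\|_{L^\infty_{\beta_r/2}}e^{-\beta_r Z/2}$ together with $\mathrm{Re}\,\beta=\beta_r>0$, I would estimate
\begin{align*}
\Bigl|e^{-\beta Y}\!\!\int_0^Y\!\! e^{\beta Z}g\,dZ\Bigr|\le \|g\|_{L^\infty_{\beta_r/2}}e^{-\beta_r Y}\!\!\int_0^Y\!\! e^{\beta_r Z/2}\,dZ\le \tfrac{2}{\beta_r}\|g\|_{L^\infty_{\beta_r/2}}e^{-\beta_r Y/2},
\end{align*}
and similarly bound the second integral by $\tfrac{2}{3\beta_r}\|g\|_{L^\infty_{\beta_r/2}}e^{-\beta_r Y/2}$. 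Since $|\beta|\sim\beta_r$ by the assumption $|\beta_i|\le C\beta_r$, dividing by $|\beta|$ gives $\|f\|_{L^\infty_{\beta_r/2}}\le C|\beta|^{-2}\|g\|_{L^\infty_{\beta_r/2}}$ and, by the formula for $\partial_Yf$ above, $\|\partial_Yf\|_{L^\infty_{\beta_r/2}}\le C|\beta|^{-1}\|g\|_{L^\infty_{\beta_r/2}}$. The control of $\partial_Y^2f$ in $L^\infty_{\beta_r/2}$ is then immediate from $\partial_Y^2f=\beta^2f+g$.

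For the second statement, the only change is the weight used on $g$: now $|g(Z)|\le\|g\|_{L^\infty_\eta}e^{-\eta Z}$ with $\eta>2\beta_r$, so that $\beta_r-\eta<-\beta_r$ and the first integral can be estimated, uniformly in $Y$, by
\begin{align*}
\Bigl|e^{-\beta Y}\!\!\int_0^Y\!\! e^{\beta Z}g\,dZ\Bigr|\le \|g\|_{L^\infty_\eta}e^{-\beta_r Y}\!\!\int_0^Y\!\! e^{(\beta_r-\eta)Z}\,dZ\le \tfrac{1}{\eta-\beta_r}\|g\|_{L^\infty_\eta}e^{-\beta_r Y},
\end{align*}
while the second integral is bounded by $\tfrac{1}{\beta_r+\eta}\|g\|_{L^\infty_\eta}e^{-\eta Y}\le C\|g\|_{L^\infty_\eta}e^{-\beta_r Y}$. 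Dividing by $|\beta|$ gives $\|f\|_{L^\infty_{\beta_r}}\le C|\beta|^{-1}\|g\|_{L^\infty_\eta}$, and the formula for $\partial_Yf$ yields $\|\partial_Yf\|_{L^\infty_{\beta_r}}\le C\|g\|_{L^\infty_\eta}$ with no loss in $|\beta|$. Finally $\|\partial_Y^2f\|_{L^\infty_{\beta_r}}\le|\beta|^2\|f\|_{L^\infty_{\beta_r}}+\|g\|_{L^\infty_{\beta_r}}\le C\|g\|_{L^\infty_\eta}$, using $\|g\|_{L^\infty_{\beta_r}}\le\|g\|_{L^\infty_\eta}$ and $|\beta|\ll 1$.

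There is no real obstacle here; the only subtlety is keeping track of where a power of $|\beta|$ is gained and where it is not. The loss of one power of $|\beta|$ in the second statement (compared with the scaling of the first) is structural: it originates from the integral $\int_0^Y e^{(\beta_r-\eta)Z}dZ$, which is bounded by a constant (independent of $|\beta|$) because $\eta-\beta_r$ is $O(1)$ rather than $O(|\beta|)$. This is precisely the regime in which the decay rate of $g$ is strictly stronger than the natural scale $|\beta|$ of the operator, so the Green function no longer gains two powers of $|\beta|^{-1}$ but only one.
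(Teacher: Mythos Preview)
Your proof is correct and follows essentially the same route as the paper's: compute $\partial_Yf$ explicitly, bound $f$ and $\partial_Yf$ by direct estimation of the two integrals in the Green-function representation, then recover the $\partial_Y^2f$ bound from the identity $\partial_Y^2f=\beta^2f+g$. The paper writes the estimates in convolution form $\int e^{-\beta_r|Y-Z|}e^{\frac{\beta_r}{2}|Y-Z|}\cdots\,dZ$ rather than treating the two integrals separately, but this is cosmetic. One small remark: in the last line you invoke $|\beta|\ll1$ to close the $\partial_Y^2f$ bound in the second statement; this is not part of the lemma's hypotheses, though it holds in the regime where the lemma is applied (and the paper's ``similarly'' sweeps the same point under the rug).
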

\begin{proof}
	We first show the first statement of this lemma. By the definition of $f(Y)$, we have  
	\begin{align*}
		|e^{\frac{\beta_r}{2}Y}f(Y)|\leq& C|\beta|^{-1}\int_0^{+\infty}e^{-\beta_r|Y-Z|}e^{\frac{\beta_r}{2}|Y-Z|}|e^{\frac{\beta_r Z}{2}}g(Z)|dZ\\
		\leq&C|\beta|^{-2}\|g\|_{L^\infty_{\f{\beta_r}{2}}}.
	\end{align*}
	By a direct calculation, we know that 
	\begin{align*}
		\partial_Yf(Y)= \frac{1}{2}\left(e^{-\beta Y}\int_0^{Y}e^{\beta Z}g(Z) dZ-e^{\beta Y}\int_{Y}^{+\infty}e^{-\beta Z}g (Z)dZ\right).
	\end{align*}
	Then we have 
	\begin{align*}
		|e^{\frac{\beta_r}{2}Y}\partial_Y f(Y)|\leq& C\int_0^{+\infty}e^{-\beta_r|Y-Z|}e^{\frac{\beta_r}{2}|Y-Z|}|e^{\frac{\beta_r Z}{2}}g(Z)|dZ\\
		\leq&C|\beta|^{-1}\|g\|_{L^\infty_{\f{\beta_r}{2}}}.
	\end{align*}
	On the other hand, by using $\partial_Y^2f=\beta^2 f+g$, we have
	\begin{align*}
		\|\partial_Y^2 f\|_{L^\infty_\f{\beta_r}{2}}\leq C\|g\|_{L^\infty_{\f{\beta_r}{2}}}.
	\end{align*}
	
	Now we turn to show the second statement. Again by the definition of $f$, we have
	\begin{align*}
		|e^{\beta_r Y}f(Y)|\leq&|\beta|^{-1}\int_0^Ye^{\beta_r Z}|g(Z)|dZ+|\beta|^{-1}\int_Y^{+\infty}e^{2\beta_r Z}|g(Z)|dZ\\
		\leq&|\beta|^{-1}\int_0^{Y}e^{-(\eta-\beta_r)Z}|e^{\eta Z}g(Z)|dZ+|\beta|^{-1}\int_Y^{+\infty}e^{-(\eta-2\beta_r)Z}|e^{\eta Z}g(Z)|dZ\\
		\leq&C|\beta|^{-1}\|g\|_{L^\infty_\eta}.
	\end{align*}
	Similarly, we can obtain 
	\begin{align*}
		\|\partial_Y f\|_{L^\infty_{\beta_r}}+\|\partial_Y^2f\|_{L^\infty_{\beta_r}}\leq C\|g\|_{L^\infty_\eta}.
	\end{align*}
	
\end{proof}
	
\begin{proposition}\label{prop:density-st}
 For  any $q_1,\partial_Yq_2\in L^\infty_{\beta_r}$, there exists a solution $\rho\in W^{2,\infty}_{\f{\beta_r}{2}}$ satisfying 
	\begin{align*}
		|\alpha|^2\|\rho\|_{L^\infty_{\f{\beta_r}{2}}}+|\alpha|\|\partial_Y \rho\|_{L^\infty_{\f{\beta_r}{2}}}+\|\partial_Y^2\rho\|_{L^\infty_{\f{\beta_r}{2}}}\leq C|\alpha|\|q_1\|_{L^\infty_{\beta_r}}+C\|\partial_Y q_2\|_{L^\infty_{\beta_r}}.
	\end{align*}
	
\end{proposition}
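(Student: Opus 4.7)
The plan is to construct $\rho$ as the infinite series $\rho = \sum_{j\ge 0}\rho^{(j)}$ set up in the paragraph just before Lemma~\ref{lem:density-st}: the zeroth iterate $\rho^{(0)}$ is the decaying Green's-function solution of $(\partial_Y^2 - \beta^2)\rho^{(0)} = F(q_1,q_2)$ with $F(q_1,q_2) = -\mathrm i\alpha m^2 q_1 - m^2\partial_Y q_2$, and for $j\ge 0$ each $\rho^{(j+1)}$ solves $(\partial_Y^2 - \beta^2)\rho^{(j+1)} = G_j = G_{j,1} + G_{j,2}$. For the base case I would apply Lemma~\ref{lem:density-st}~(1) at weight $\beta_r/2$, which together with $\|F\|_{L^\infty_{\beta_r/2}}\le \|F\|_{L^\infty_{\beta_r}}\le C|\alpha|\|q_1\|_{L^\infty_{\beta_r}} + C\|\partial_Y q_2\|_{L^\infty_{\beta_r}}$ and $|\beta|\sim|\alpha|$ directly yields the target bound for $\rho^{(0)}$.

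Next I would estimate the residuals by exploiting two different sources of smallness built into $G_j$. The first piece $G_{j,1} = \alpha^2(A - A_\infty)\rho^{(j)}$ inherits the fast exponential decay $|A(Y) - A_\infty| \le C e^{-\eta_0 Y}$ coming from $U_s(Y)\to 1$; since $\beta_r\sim|\alpha|$ is far smaller than $\eta_0$ in $\mathbb H$, the weight $\eta_0 + \beta_r/2$ exceeds the threshold $2\beta_r$ required by Lemma~\ref{lem:density-st}~(2), which upgrades the estimate to
\[
|\beta|\|\rho^{(j+1)}_1\|_{L^\infty_{\beta_r}} + \|\partial_Y\rho^{(j+1)}_1\|_{L^\infty_{\beta_r}} + \|\partial_Y^2\rho^{(j+1)}_1\|_{L^\infty_{\beta_r}} \le C|\alpha|^2\|\rho^{(j)}\|_{L^\infty_{\beta_r/2}},
\]
effectively saving a factor $|\beta|\sim|\alpha|$ on the sup norm compared with the naive bound from part (1). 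The second piece $G_{j,2}$ carries the prefactor $\alpha\sqrt\nu = \mathrm i\alpha^2\varepsilon$ and therefore has direct parameter smallness; Lemma~\ref{lem:density-st}~(1) then yields a bound of the form $\|G_{j,2}\|_{L^\infty_{\beta_r/2}} \le C|\alpha|^2|\varepsilon|\bigl(\|\rho^{(j)}\|_{L^\infty_{\beta_r/2}} + \|\partial_Y\rho^{(j)}\|_{L^\infty_{\beta_r/2}} + \|\partial_Y^2\rho^{(j)}\|_{L^\infty_{\beta_r/2}}\bigr)$.

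The hard part will be closing the iteration in the norm $N(\rho) := |\alpha|^2\|\rho\|_{L^\infty_{\beta_r/2}} + |\alpha|\|\partial_Y\rho\|_{L^\infty_{\beta_r/2}} + \|\partial_Y^2\rho\|_{L^\infty_{\beta_r/2}}$ of the statement, because the $\|\partial_Y^2\rho^{(j+1)}_1\|_{L^\infty_{\beta_r/2}}$ contribution is only $\lesssim|\alpha|^2\|\rho^{(j)}\|_{L^\infty_{\beta_r/2}}\lesssim N(\rho^{(j)})$, so the second-derivative slot of $N$ does not strictly contract. I would resolve this by iterating instead in the auxiliary norm $N_\mu(\rho) := |\alpha|^2\|\rho\|_{L^\infty_{\beta_r/2}} + |\alpha|\|\partial_Y\rho\|_{L^\infty_{\beta_r/2}} + |\alpha|\|\partial_Y^2\rho\|_{L^\infty_{\beta_r/2}}$, in which the reduced weight on $\partial_Y^2$ turns the two estimates above into $N_\mu(\rho^{(j+1)})\le C(|\alpha|+|\varepsilon|)N_\mu(\rho^{(j)})$, a genuine contraction in the regime $\mathbb H$. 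Summing the resulting geometric series produces $\rho\in W^{2,\infty}_{\beta_r/2}$ with $N_\mu(\rho)\le C|\alpha|\|q_1\|_{L^\infty_{\beta_r}} + C\|\partial_Y q_2\|_{L^\infty_{\beta_r}}$. To upgrade from $N_\mu$ to $N$, I would substitute $\rho$ back into \eqref{eq:rho-la}, move the $\mathrm i\alpha m^2\sqrt\nu(1+\lambda)(U_s-c)\partial_Y^2\rho$ contribution to the left, and invert the resulting $1+O(\alpha^2\varepsilon)$ coefficient; this expresses $\partial_Y^2\rho$ algebraically in terms of $\rho,\partial_Y\rho$ (already controlled by $N_\mu$) and $F(q_1,q_2)$, producing the missing $\|\partial_Y^2\rho\|_{L^\infty_{\beta_r/2}}$ estimate and completing the proof.
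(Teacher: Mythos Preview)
Your proposal is correct and follows essentially the same route as the paper. Your auxiliary norm $N_\mu$ is exactly $|\alpha|$ times the norm $M(\rho)=|\alpha|\|\rho\|_{L^\infty_{\beta_r/2}}+\|\partial_Y\rho\|_{L^\infty_{\beta_r/2}}+\|\partial_Y^2\rho\|_{L^\infty_{\beta_r/2}}$ in which the paper iterates, so the contraction $N_\mu(\rho^{(j+1)})\le C(|\alpha|+|\varepsilon|)N_\mu(\rho^{(j)})$ is the same as the paper's $M(\rho^{(j+1)})\le C|\alpha|\,M(\rho^{(j)})$ (the paper absorbs $\nu^{1/2}\lesssim|\alpha|$). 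The only cosmetic difference is the last step: the paper recovers the stated $N$-bound implicitly, using $N\le M$ together with the extra factor $|\alpha|$ gained on the tail $\sum_{j\ge 1}M(\rho^{(j)})\le C|\alpha|M(\rho^{(0)})$, whereas you propose to substitute $\rho$ back into \eqref{eq:rho-la} and solve algebraically for $\partial_Y^2\rho$; both work.
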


\begin{proof}
	We construct the solution $\rho$ by the iteration. We define 
	\begin{align*}
	\rho^{(0)}(Y)=\frac{-1}{2\beta}\left(e^{-\beta Y}\int_0^{Y}e^{\beta Z}F(q_1,q_2)(Z)dZ+e^{\beta Y}\int_{Y}^{+\infty}e^{-\beta Z}F(q_1,q_2)(Z)dZ\right),
\end{align*}
where $F(q_1,q_2)(Y)=-\mathrm i\alpha m^2q_1-m^2\partial_Y q_2$. Hence, by Lemma \ref{lem:density-st},  we obtain 
\begin{align*}
		|\alpha|^2\|\rho^{(0)}\|_{L^\infty_{\f{\beta_r}{2}}}+|\alpha|\|\partial_Y \rho^{(0)}\|_{L^\infty_{\f{\beta_r}{2}}}+\|\partial_Y^2\rho^{(0)}\|_{L^\infty_{\f{\beta_r}{2}}}\leq C|\alpha|\|q_1\|_{L^\infty_{\beta_r}}+C\|\partial_Y q_2\|_{L^\infty_{\beta_r}}.
	\end{align*}
Recall that for any $j\in\mathbb N$,
\begin{align*}
	\rho^{(j+1)}(Y)=&\frac{-1}{2\beta}\left(e^{-\beta Y}\int_0^{Y}e^{\beta Z}G_{j,1}(Z) dZ+e^{\beta Y}\int_{Y}^{+\infty}e^{-\beta Z}G_{j,1} (Z)dZ\right)\\
	&+\frac{-1}{2\beta}\left(e^{-\beta Y}\int_0^{Y}e^{\beta Z}G_{j,2}(Z) dZ+e^{\beta Y}\int_{Y}^{+\infty}e^{-\beta Z}G_{j,2} (Z)dZ\right),
\end{align*}
where
\begin{align*}
	\|G_{j,1}\|_{L^\infty_\eta}\leq& C|\alpha|^2\|\rho_{st}^{(j)}\|_{L^\infty_{\f{\beta_r}{2}}},\\
	\|G_{j,2}\|_{L^\infty_{\f{\beta_r}{2}}}\leq&C|\alpha|\nu^\f12(\|\rho_{st}^{(j)}\|_{L^\infty_{\f{\beta_r}{2}}}+\|\partial_Y \rho_{st}^{(j)}\|_{L^\infty_{\f{\beta_r}{2}}}+\|\partial_Y^2\rho_{st}^{(j)}\|_{L^\infty_{\f{\beta_r}{2}}}).
\end{align*}
Therefore, we apply Lemma \ref{lem:density-st} to obtain 
\begin{align*}
	&\|\rho_{st}^{(j+1)}\|_{L^\infty_{\f{\beta_r}{2}}}\leq C|\alpha|\|\rho_{st}^{(j)}\|_{L^\infty_{\f{\beta_r}{2}}}+C|\al|^{-1}\nu^\f12(\|\partial_Y \rho_{st}^{(j)}\|_{L^\infty_{\f{\beta_r}{2}}}+\|\partial_Y^2\rho_{st}^{(j)}\|_{L^\infty_{\f{\beta_r}{2}}}),\\ 
	&\|\partial_Y \rho_{st}^{(j+1)}\|_{L^\infty_{\f{\beta_r}{2}}}+\|\partial_Y^2\rho_{st}^{(j+1)}\|_{L^\infty_{\f{\beta_r}{2}}}\leq C(|\alpha|^2\|\rho_{st}^{(j)}\|_{L^\infty_{\f{\beta_r}{2}}}+\nu^\f12\|\partial_Y \rho_{st}^{(j)}\|_{L^\infty_{\f{\beta_r}{2}}}+\nu^\f12\|\partial_Y^2\rho_{st}^{(j)}\|_{L^\infty_{\f{\beta_r}{2}}}),
\end{align*}
which implies 
	\begin{align*}
		&|\alpha|\| \rho_{st}^{(j+1)}\|_{L^\infty_{\f{\beta_r}{2}}}+\|\partial_Y \rho_{st}^{(j+1)}\|_{L^\infty_{\f{\beta_r}{2}}}+\|\partial_Y^2\rho_{st}^{(j+1)}\|_{L^\infty_{\f{\beta_r}{2}}}\\
		&\qquad\leq C|\alpha|(|\alpha|\| \rho_{st}^{(j)}\|_{L^\infty_{\f{\beta_r}{2}}}+\|\partial_Y \rho_{st}^{(j)}\|_{L^\infty_{\f{\beta_r}{2}}}+\|\partial_Y^2\rho_{st}^{(j)}\|_{L^\infty_{\f{\beta_r}{2}}}).
	\end{align*} 
This finished the proof by performing the iteration due to the smallness of $|\al|$.	
\end{proof}

\subsection{Constructing the velocity $(u,v)$}

In this subsection, we construct the velocity $(u,v)$ to the system \eqref{eq: Stokes} once the density $\rho$ is constructed in Proposition \ref{prop:density-st}.
\begin{proposition}\label{prop:velocity-st}
Let $\rho$ be the density constructed in Proposition \ref{prop:density-st} for  any given $q_1,q_2,\partial_Yq_2\in L^\infty_{\beta_r}$, there exists $u,v\in W^{2,\infty}_{\beta_r/2}$ such that $(\rho,u,v)$ is a solution to the system \eqref{eq: Stokes} satisfying 
	\begin{align*}
	|\varepsilon|\|\partial_Y^2u\|_{L^\infty_{\f{\beta_r}{2}}}+|\varepsilon|^\f23\|\partial_Yu\|_{L^\infty_{\f{\beta_r}{2}}}+\|(U_s-c)u\|_{L^\infty_{\f{\beta_r}{2}}}
	\leq&C|\alpha|^{-1}\|q_1\|_{L^\infty_{\beta_r}}+C|\alpha|^{-2}\|\partial_Y q_2\|_{L^\infty_{\beta_r}},
\end{align*}
and
\begin{align*}
	&|\varepsilon|^\f23\|\partial_Y^2v\|_{L^\infty_{\f{\beta_r}{2}}}+|\varepsilon|^\f13\|\partial_Yv\|_{L^\infty_{\f{\beta_r}{2}}}+|\varepsilon|^\f13\|v\|_{L^\infty_{\f{\beta_r}{2}}}\leq C\|(q_1,q_2)\|_{L^\infty_{\beta_r}}+C|\alpha|^{-1}\|\partial_Y q_2\|_{L^\infty_{\beta_r}}.
\end{align*}
 \end{proposition}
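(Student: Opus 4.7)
The plan is to peel off the unknowns in turn, exploiting the structure already laid out in Section 2. With $\rho$ constructed in Proposition \ref{prop:density-st}, the continuity equation supplies the algebraic relation $\mathrm{div}_\alpha(u,v) = -\mathrm i\alpha(U_s-c)\rho$; substituting this into the second equation of \eqref{eq: Stokes} and dividing by $\mathrm i\alpha$ eliminates $v$ from that equation and converts it into the Airy-type equation
\begin{align*}
\varepsilon(\partial_Y^2-\alpha^2)u - (U_s-c)u = (\mathrm i\alpha)^{-1}q_1 + \bigl(m^{-2} + \varepsilon\partial_Y^2 U_s + \mathrm i\lambda\sqrt\nu\,\alpha(U_s-c)\bigr)\rho =: F_u.
\end{align*}
Since $F_u$ is exponentially decaying and free of any singularity at $U_s=c$, it falls into the first case of Proposition \ref{pro: Airy-1}. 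Using the density bound from Proposition \ref{prop:density-st} one checks $\|e^{\beta_r Y/2}F_u\|_{L^\infty} \leq C|\alpha|^{-1}\|q_1\|_{L^\infty_{\beta_r}} + C|\alpha|^{-2}\|\partial_Y q_2\|_{L^\infty_{\beta_r}}$, which produces the advertised bounds on $\|(U_s-c)u\|$, $|\varepsilon|^{2/3}\|\partial_Y u\|$ and $|\varepsilon|\|\partial_Y^2 u\|$.

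Given $u$, the derivatives of $v$ are read off directly from continuity: $\partial_Y v = -\mathrm i\alpha u -\mathrm i\alpha(U_s-c)\rho$ and $\partial_Y^2 v = -\mathrm i\alpha\partial_Y u -\mathrm i\alpha\partial_Y U_s\,\rho - \mathrm i\alpha(U_s-c)\partial_Y\rho$; the claimed estimates for $|\varepsilon|^{1/3}\|\partial_Y v\|$ and $|\varepsilon|^{2/3}\|\partial_Y^2 v\|$ follow at once from the $u$- and $\rho$-bounds using $|\alpha|\sim|\varepsilon|^{1/3}$. I then define $v$ itself by $v(Y) := -\int_Y^\infty\partial_{Y'}v(Y')\,dY'$, which enforces both the continuity equation and $v(\infty)=0$. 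The third equation of \eqref{eq: Stokes} is then verified \emph{a posteriori}: combining $\mathrm i\alpha$ times the second equation with $\partial_Y$ of the third (in which the nonlocal term differentiates into $-\mathrm i\alpha\partial_Y U_s\,v$, cancelling the corresponding contribution from $\partial_Y[(U_s-c)v]$) reproduces exactly the density equation \eqref{eq: rho-Stokes}. Since $\rho$ and $u$ satisfy their equations by construction, this identity forces the difference between the two sides of the third equation to be $Y$-independent, and since every term vanishes as $Y\to\infty$, the constant is $0$.

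For the sharp bound on $\|v\|$ itself, I use the reformulation already sketched in Section 2: dividing the third equation by $\mathrm i\alpha$ and integrating by parts in $\int_Y^\infty\partial_{Y'}U_s\,v\,dY'$ yields the pointwise identity
\begin{align*}
(1-c+\varepsilon\alpha^2)\,v(Y) = \int_Y^\infty(U_s-1)\partial_{Y'}v\,dY' + H(Y),
\end{align*}
with $H := \varepsilon\partial_Y^2 v + \lambda\varepsilon\partial_Y\mathrm{div}_\alpha(u,v) - (\mathrm i\alpha)^{-1}m^{-2}\partial_Y\rho - (\mathrm i\alpha)^{-1}q_2$. Since $|1-c+\varepsilon\alpha^2|\sim 1$ and $|U_s-1|$ decays exponentially with rate $\eta_0\gg\beta_r$, the nonlocal term contributes at most $C\|\partial_Y v\|_{L^\infty_{\beta_r/2}}$; in $H$, the two terms $(\mathrm i\alpha)^{-1}q_2$ and $(\mathrm i\alpha)^{-1}m^{-2}\partial_Y\rho$ account, respectively, for the $C|\alpha|^{-1}\|q_2\|_{L^\infty_{\beta_r}}$ and $C|\alpha|^{-1}\|q_1\|_{L^\infty_{\beta_r}}+C|\alpha|^{-2}\|\partial_Y q_2\|_{L^\infty_{\beta_r}}$ pieces, while the two $\sqrt\nu$-terms reduce (upon substituting the formulas for $\partial_Y^2 v$ and $\partial_Y\mathrm{div}_\alpha(u,v)$) to quantities bounded by $\sqrt\nu(\|\partial_Y u\|+\|\rho\|+\|\partial_Y\rho\|)$, which are of lower order and absorbed. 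Multiplying through by $|\varepsilon|^{1/3}\sim|\alpha|$ gives the stated bound on $\|v\|_{L^\infty_{\beta_r/2}}$.

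The main obstacle is purely one of bookkeeping: the naive estimate $\|v\|\leq C|\alpha|^{-1}\|\partial_Y v\|$ obtained from $v=-\int_Y^\infty\partial_{Y'}v\,dY'$ is a factor $|\alpha|^{-1}$ weaker than the bound stated in the proposition, and the pointwise identity above is essential to recover the correct $|\alpha|$-power. Beyond this, everything is a routine combination of Proposition \ref{pro: Airy-1}(1) with the density estimates of Proposition \ref{prop:density-st}.
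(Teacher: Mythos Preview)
Your proposal is correct and follows essentially the same route as the paper: solve the Airy equation for $u$ with source $F_u$ via Proposition~\ref{pro: Airy-1}(1), read $\partial_Yv$ and $\partial_Y^2v$ off continuity, and then extract the sharp $\|v\|$ bound from the reformulated third equation $(1-c+\varepsilon\alpha^2)v=\int_Y^\infty(U_s-1)\partial_{Y'}v\,dY'+H$, exploiting the exponential decay of $U_s-1$ to avoid the lossy $|\alpha|^{-1}$ from naive integration. The only addition in your write-up is the explicit \emph{a posteriori} verification that the third equation of \eqref{eq: Stokes} holds (via the density equation \eqref{eq: rho-Stokes} and the vanishing at infinity), a consistency step the paper leaves implicit.
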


\begin{proof}
	By Proposition \ref{prop:density-st}, we know that 
	\begin{align*}
		|\alpha|^2\|\rho\|_{L^\infty_{\f{\beta_r}{2}}}+|\alpha|\|\partial_Y \rho\|_{L^\infty_{\f{\beta_r}{2}}}+\|\partial_Y^2\rho\|_{L^\infty_{\f{\beta_r}{2}}}\leq C|\alpha|\|q_1\|_{L^\infty_{\beta_r}}+C\|\partial_Y q_2\|_{L^\infty_{\beta_r}},
	\end{align*}
	which along with the first equation in \eqref{eq: Stokes} deduces that 
	\begin{align}
		|\alpha|\sum_{k=0}^2\|\pa_Y^{k}\dv_{\al}(u, v)\|_{L^\infty_{\f{\beta_r}{2}}}\leq C|\alpha|\|q_1\|_{L^\infty_{\beta_r}}+C\|\partial_Y q_2\|_{L^\infty_{\beta_r}}.
	\end{align}
	
	Now we construct the tangent velocity $u$. Recall that $u$ satisfies the following equation 
	\begin{align*}
	\varepsilon(\partial_Y^2-\alpha^2)u-(U_s-c)u
=&(\mathrm i\alpha^{-1})q_1+(m^{-2}+\varepsilon\partial_Y^2U_s+\mathrm i\lambda\sqrt{\nu}\alpha(U_s-c))\rho.
\end{align*}
By Proposition \ref{pro: Airy-1}, we obtain 
\begin{align*}
	|\varepsilon|\|\partial_Y^2u\|_{L^\infty_{\f{\beta_r}{2}}}+|\varepsilon|^\f23\|\partial_Yu\|_{L^\infty_{\f{\beta_r}{2}}}+\|(U_s-c)u\|_{L^\infty_{\f{\beta_r}{2}}}\leq& C|\alpha|^{-1}\|q_1\|_{L^\infty_{\f{\beta_r}{2}}}+C\|\rho\|_{L^\infty_{\f{\beta_r}{2}}}\\
	\leq&C|\alpha|^{-1}\|q_1\|_{L^\infty_{\beta_r}}+C|\alpha|^{-2}\|\partial_Y q_2\|_{L^\infty_{\beta_r}}.
\end{align*}

Now we turn to show the control of $v$. Firstly, we notice that 
\begin{align*}
&\pa_Y v=-\mathrm i\al(u+(U_s-c)\rho),\\
&\pa_Y^2 v=-\mathrm i\al\big(\pa_Yu+(U_s-c)\pa_Y\rho+U_s' \rho\big).
\end{align*}
Then we obtain 
\begin{align*}
	&|\varepsilon|^\f23\|\partial_Y^2v\|_{L^\infty_{\f{\beta_r}{2}}}+|\varepsilon|^\f13\|\partial_Yv\|_{L^\infty_{\f{\beta_r}{2}}}\leq C\|q_1\|_{L^\infty_{\beta_r}}+C|\alpha|^{-1}\|\partial_Y q_2\|_{L^\infty_{\beta_r}}.
\end{align*}
On the other hand, we know that $v$ satisfies the following equation
\begin{align*}
(1-c+\e\al^2)v=&\int_Y^\infty  (U_s-1) \pa_YvdY'+\e\pa_Y^2 v+\lambda\e\partial_Y\mathrm{div}_\alpha(u,v)\\
&-m^{-2}(\mathrm i\al)^{-1}\partial_Y\rho-(\mathrm i\al)^{-1}q_2,
\end{align*}
which implies 
\begin{align*}
\|v\|_{L^\infty_{\f{\beta_r}{2}}}\leq& C\|\partial_Yv\|_{L^\infty_{\f{\beta_r}{2}}}+C|\varepsilon|\|\partial_Y^2v\|_{L^\infty_{\f{\beta_r}{2}}}+C|\varepsilon|\|\pa_Y\dv_{\al}(u, v)\|_{L^\infty_{\f{\beta_r}{2}}}\\
&+C|\alpha|^{-1}\|\partial_Y\rho\|_{L^\infty_{\f{\beta_r}{2}}}+|\alpha|^{-1}\|q_2\|_{L^\infty_{\f{\beta_r}{2}}}\\
\leq& C\|(|\varepsilon|^{-\f13}q_1,|\alpha|^{-1}q_2)\|_{L^\infty_{\beta_r}}+|\varepsilon|^{-\f13}|\alpha|^{-1}\|\partial_Y q_2\|_{L^\infty_{\beta_r}}.
\end{align*}

The proof is completed.
\end{proof}

\section{The full linearized CNS system}
In this section, we construct two linearly independent solutions to the  homogeneous linearized CNS system:
\begin{align}\label{eq:LCNS-Y1}
	\left\{
	\begin{aligned}
		&\mathrm i\alpha(U_s-c)\rho+\mathrm{div}_\alpha(u,v)=0,\\
		&\sqrt{\nu}(\partial_Y^2-\alpha^2)u+\mathrm i\alpha\lambda\sqrt{\nu}\mathrm{div}_\alpha(u,v)-\mathrm i\alpha(U_s-c)u\\
		&\qquad\qquad-(\mathrm i\alpha m^{-2}+\sqrt{\nu}\partial_Y^2 U_s)\rho-v\partial_Y U_s=0,\\
		&\sqrt{\nu}(\partial_Y^2-\alpha^2)v+\lambda\sqrt{\nu}\partial_Y\mathrm{div}_\alpha(u,v)-\mathrm i\alpha(U_s-c)v-m^{-2}\partial_Y\rho=0.
	\end{aligned}
	\right.
\end{align}
We denote by $\mathcal L$ the linear operator of the above system.  Recall that $\mathcal L_{Q}$ and $\mathcal L_{S}$ are the linear operators of the system \eqref{eq:LCNS-OS-1} and the system \eqref{eq: Stokes} respectively.\smallskip

We introduce main idea of the construction:

\begin{enumerate}
	\item Let $(\rho_{os}, u_{os}, v_{os})$ be either the slow or fast solution to homogeneous quasi-incompressible Navier-Stokes system constructed in Proposition \ref{pro: (rho, u,v)-OS-homo}.  By direct computations, we have
\begin{align*}
\mathcal{L}(\rho_{os}, u_{os}, v_{os})=&(0,-\sqrt{\nu}(\pa_Y^2-\al^2)((U_s-c)\rho_{os})+\mathrm i\la \sqrt{\nu} \al\dv_\al(u_{os}, v_{os})\\
&-\sqrt{\nu}\pa_Y^2 U_s\rho_{os}, \la \sqrt{\nu} \pa_Y\dv_\al(u_{os}, v_{os}))
\end{align*}
We denote $(\rho^{(1)}_{os},u^{(1)}_{os},v^{(1)}_{os})=(\rho_{os}, u_{os}, v_{os})$. Then we introduce $(\rho^{(1)}_{st}, u^{(1)}_{st}, v^{(1)}_{st})$ to  be the solution to the system \eqref{eq: Stokes} constructed in Proposition \ref{pro: stokes} with the source term $(0,q^{(1)}_1,q^{(1)}_2)$:
\begin{align*}
q^{(1)}_1=&\sqrt{\nu}(\pa_Y^2-\al^2)((U_s-c)\rho^{(1)}_{os})-\mathrm i\la \sqrt{\nu} \al\dv_\al(u^{(1)}_{os}, v^{(1)}_{os})+\sqrt{\nu}\pa_Y^2 U_s\rho^{(1)}_{os}\\
=&-\e(\pa_Y^2-\al^2)\dv_\al(u^{(1)}_{os}, v^{(1)}_{os})-\mathrm i\la \sqrt{\nu} \al\dv_\al(u^{(1)}_{os}, v^{(1)}_{os})+\sqrt{\nu}\pa_Y^2 U_s\rho^{(1)}_{os},\\
q^{(1)}_2=&-\la \sqrt{\nu} \pa_Y\dv_\al(u^{(1)}_{os}, v^{(1)}_{os}).
\end{align*}
Here $\sqrt\nu=|\e||\al|$. Then we define the initial step of the iteration by $(\rho^{(1)},u^{(1)},v^{(1)})=(\rho^{(1)}_{os},u^{(1)}_{os},v^{(1)}_{os})+(\rho^{(1)}_{st},u^{(1)}_{st},v^{(1)}_{st})$. It is obvious that 
\begin{align*}
	\mathcal L(\rho^{(1)},u^{(1)},v^{(1)})=\Big(0,-\pa_Y U_sv^{(1)}_{st},-\int_Y^\infty \mathrm i\al \pa_Y U_s v^{(1)}_{st} dY'\Big).
\end{align*}

\item We define our iterative scheme inductively. For any $j\geq 1$, we define that $(\rho_{os}^{(j+1)},u_{os}^{(j+1)},v_{os}^{(j+1)})$ is the solution to the quasi-incompressible system \eqref{eq:LCNS-OS-1} constructed in Proposition \ref{pro: (rho, u,v)-OS-nonhomo} with source term $(0,f^{(j)}_u,f^{(j)}_v)$, where $f^{(j)}_u=\pa_Y U_sv^{(j)}_{st}, ~f_v^{(j)}=\int_Y^\infty \mathrm i\al \pa_Y U_s v^{(j)}_{st} dY'$. According to such definition, we find that 
   \begin{align*}
   	\mathcal L\Big(\sum_{l=1}^j\rho^{(l)}+\rho^{(j+1)}_{os},&\sum_{l=1}^ju^{(l)}+u^{(j+1)}_{os},\sum_{l=1}^jv^{(l)}+v_{os}^{(j+1)}\Big)\\
   	=&\big(0,-\sqrt{\nu}(\pa_Y^2-\al^2)((U_s-c)\rho_{os}^{(j+1)})+\mathrm i\la \sqrt{\nu} \al\dv_\al(u_{os}^{(j+1)}, v_{os}^{(j+1)})\\
&-\sqrt{\nu}\pa_Y^2 U_s\rho_{os}^{(j+1)}, \la \sqrt{\nu} \pa_Y\dv_\al(u_{os}^{(j+1)}, v_{os}^{(j+1)})\big).
   \end{align*}
  Then as in the initial step, we use the system \eqref{eq: Stokes} to shrink the above errors. In details, we introduce $(\rho^{(j+1)}_{st}, u^{(j+1)}_{st}, v^{(j+1)}_{st})$ to be the solution to the system \eqref{eq: Stokes} constructed in Proposition \ref{pro: stokes} with the source term $(0,q^{(j+1)}_1,q^{(j+1)}_2)$:
\begin{align*}
	q^{(j+1)}_1=&-\e(\pa_Y^2-\al^2)\dv_\al(u^{(j+1)}_{os}, v^{(j+1)}_{os})-i\la \sqrt{\nu} \al\dv_\al(u^{(j+1)}_{os}, v^{(j+1)}_{os})+\sqrt{\nu}\pa_Y^2 U_s\rho^{(j+1)}_{os},\\
q^{(j+1)}_2=&-\la \sqrt{\nu} \pa_Y\dv_\al(u^{(j+1)}_{os}, v^{(j+1)}_{os}).
\end{align*}
Hence, we define the $(j+1)$th step of the iteration by $(\rho^{(j+1)},u^{(j+1)},v^{(j+1)})=(\rho^{(j+1)}_{os},u^{(j+1)}_{os},v^{(j+1)}_{os})+(\rho^{(j+1)}_{st},u^{(j+1)}_{st},v^{(j+1)}_{st})$. It is obvious that 
\begin{align*}
	\mathcal L\Big(\sum_{l=1}^{j+1}\rho^{(l)},\sum_{l=1}^{j+1}u^{(l)},\sum_{l=1}^{j+1}v^{(l)}\Big)=\Big(0,-\pa_Y U_sv^{(j+1)}_{st},-\int_Y^\infty\mathrm i\al \pa_Y U_s v^{(j+1)}_{st} dY'\Big).
\end{align*}

\item By the above iteration, we formally obtain that $\mathcal L(\rho,u,v)=0$ with 
     \begin{align*}
     	\rho=\sum_{j=1}^{+\infty}\rho^{(j)},\quad u=\sum_{j=1}^{+\infty}u^{(j)},\quad v=\sum_{j=1}^{+\infty}v^{(j)}.
     \end{align*}
     
 \end{enumerate}
 
  In this section, we always assume that $(\alpha,c)\in\mathbb H$ with $\mathbb H$ defined by \eqref{def:Hset} and $\beta=\beta_r+\mathrm i\beta_i$ be given by \eqref{def: beta}.  
To realize the above iteration scheme rigorously,  we introduce the space $\mathcal Z$ for the density defined by
     \begin{align*}
     	\mathcal Z=\big\{f\in W^{1,\infty}:\|f\|_{\mathcal Z}<+\infty\big\},
     \end{align*}
     where 
     \begin{align*}
     	\|f\|_{\mathcal Z}=\|f\|_{L^\infty_{\f{\beta_r}{2}}}+\|\partial_Y f\|_{L^\infty_{\f{\beta_r}{2}}}.
     \end{align*}
   For the velocity, we take the space $\tilde{\mathcal X}$ defined by
     \begin{align*}
     	\tilde{\mathcal X}=\big\{f\in W^{2,\infty}:\|f\|_{\tilde{\mathcal X}}<+\infty\big\},
     \end{align*}
     where
     \begin{align*}
     	\|f\|_{\tilde{\mathcal X}}=|\varepsilon|^\f13\|\partial_Y^2 f\|_{L^\infty_{\f{\beta_r}{2}}}+\|\partial_Y f\|_{L^\infty_{\f{\beta_r}{2}}}+\| f\|_{L^\infty_{\f{\beta_r}{2}}}.
     \end{align*}

 We first give the estimates for the above iteration sequence. 
 
\begin{lemma}\label{prop:main-iteration}
If $(\rho^{(j)}_{st},u^{(j)}_{st},v^{(j)}_{st})$ belongs to $\mathcal Z\times\tilde{\mathcal X}^2$, then we have
\begin{align*}
	&|\varepsilon|^\f23\|\rho^{(j+1)}_{os}\|_{\mathcal Z}+|\varepsilon|\|u_{os}^{(j+1)}\|_{\tilde{\mathcal X}}+|\varepsilon|^\f13\|v_{os}^{(j+1)}\|_{\tilde{\mathcal X}}\leq C|\log c_i|^4\|v_{st}^{(j)}\|_{\tilde{\mathcal X}},\\
	&\|\rho^{(j+1)}_{st}\|_{\mathcal Z}+|\varepsilon|^\f13\|u_{st}^{(j+1)}\|_{\tilde{\mathcal X}}+|\varepsilon|^{-\f13}\|v_{st}^{(j+1)}\|_{\tilde{\mathcal X}}\leq C|\log c_i|^4\|v_{st}^{(j)}\|_{\tilde{\mathcal X}},
\end{align*}
and
	\begin{align*}
	|\varepsilon|^\f23\|u_{os}^{(j+1)}\|_{L^\infty}+\|v_{os}^{(j+1)}\|_{L^\infty}\leq C|\log c_i|^4\big(\|v^{(j)}_{st}\|_{L^\infty}+\|\partial_Y v^{(j)}_{st}\|_{L^\infty}\big).
\end{align*}
\end{lemma}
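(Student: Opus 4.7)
\textbf{Proof proposal for Lemma \ref{prop:main-iteration}.}

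The plan is to chain together the non-homogeneous quasi-incompressible estimate (Proposition \ref{pro: (rho, u,v)-OS-nonhomo}) and the quasi-compressible estimate (Proposition \ref{pro: stokes}), after first bounding the source terms in terms of $v_{st}^{(j)}$. The first step is to analyze the source $(f_u^{(j)}, f_v^{(j)}) = \bigl(\partial_Y U_s \, v_{st}^{(j)}, \int_Y^\infty i\alpha \partial_Y U_s \, v_{st}^{(j)}\,dY'\bigr)$. Since $\partial_Y^k U_s$ decays like $e^{-\eta_0 Y}$ by \eqref{eq:S-A}, picking any $\eta\in(\beta_r,\eta_0)$ I would prove
\[
\|f_u^{(j)}\|_{L^\infty_\eta}+|\alpha|^{-1}\|\partial_Y f_v^{(j)}\|_{L^\infty_\eta}+|\alpha|^{-1}\|f_v^{(j)}\|_{L^\infty_\eta}\le C\bigl(\|v_{st}^{(j)}\|_{L^\infty}+\|\partial_Y v_{st}^{(j)}\|_{L^\infty}\bigr),
\]
and expand $(U_s-c)\Omega(f_u^{(j)},f_v^{(j)})=\frac{U_s-c}{i\alpha}\partial_Y(A^{-1}\partial_Y U_s\,v_{st}^{(j)})-(U_s-c)f_v^{(j)}$ to obtain $\|(U_s-c)\Omega(f_u^{(j)},f_v^{(j)})\|_{L^\infty_\eta}\le C|\alpha|^{-1}\bigl(\|v_{st}^{(j)}\|_{L^\infty}+\|\partial_Y v_{st}^{(j)}\|_{L^\infty}\bigr)$.

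Inserting these source-term bounds into Proposition \ref{pro: (rho, u,v)-OS-nonhomo}, and using $|\alpha|\sim|\varepsilon|^{1/3}$, yields estimates of the form $\|\rho_{os}^{(j+1)}\|_{L^\infty_\eta}\lesssim|\varepsilon|^{-2/3}|\log c_i|^4\|v_{st}^{(j)}\|_{\tilde{\mathcal X}}$, and analogously for $\partial_Y\rho_{os}^{(j+1)}$ and the $\tilde{\mathcal X}$ norms of $u_{os}^{(j+1)}$, $v_{os}^{(j+1)}$. Multiplying by the appropriate powers of $|\varepsilon|$ dictated by the statement gives the first displayed inequality. The improved estimate on $|\varepsilon|^{2/3}\|u_{os}^{(j+1)}\|_{L^\infty}+\|v_{os}^{(j+1)}\|_{L^\infty}$ is obtained by using only the $L^\infty_\eta$ parts of Proposition \ref{pro: (rho, u,v)-OS-nonhomo}, which never invoke $\partial_Y^2 v_{st}^{(j)}$: the key observation is that $(U_s-c)\Omega(f_u^{(j)},f_v^{(j)})$ and $f_u^{(j)},f_v^{(j)},\partial_Y f_v^{(j)}$ all involve at most one derivative of $v_{st}^{(j)}$, thanks to the exponential decay of $\partial_Y U_s$.

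For the Stokes step, I would estimate $q_1^{(j+1)}$ and $q_2^{(j+1)}$ using the divergence bounds from Proposition \ref{pro: (rho, u,v)-OS-nonhomo}. Since $\sqrt\nu=|\varepsilon||\alpha|\sim|\varepsilon|^{4/3}$ and each $\partial_Y^k\mathrm{div}_\alpha(u_{os}^{(j+1)},v_{os}^{(j+1)})$ is controlled by $|\alpha||\log c_i|^4\|(U_s-c)\Omega\|_{L^\infty_\eta}\lesssim|\log c_i|^4\|v_{st}^{(j)}\|_{\tilde{\mathcal X}}$, combined with the $\rho_{os}^{(j+1)}$ bound, one obtains $\|q_1^{(j+1)}\|_{L^\infty_{\beta_r}}+\|q_2^{(j+1)}\|_{L^\infty_{\beta_r}}+|\alpha|^{-1}\|\partial_Y q_2^{(j+1)}\|_{L^\infty_{\beta_r}}\lesssim|\varepsilon||\log c_i|^4\|v_{st}^{(j)}\|_{\tilde{\mathcal X}}$ (using $\eta>\beta_r$ to convert the decay). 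Plugging this into Proposition \ref{pro: stokes} and multiplying by the relevant $|\varepsilon|$ factors gives the second displayed inequality.

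The main obstacle I anticipate is bookkeeping the precise powers of $|\varepsilon|,|\alpha|$ so that both estimates hold \emph{with the same} $|\log c_i|^4\|v_{st}^{(j)}\|_{\tilde{\mathcal X}}$ right-hand side and produce the correct gain $|\varepsilon|^{1/3}|\log c_i|^4$ per iteration; in particular, one must be careful that the $\varepsilon(\partial_Y^2-\alpha^2)\mathrm{div}_\alpha$ term in $q_1^{(j+1)}$, which formally looks the least regular, contributes only $|\varepsilon|\cdot|\alpha||\log c_i|^4\|v_{st}^{(j)}\|_{\tilde{\mathcal X}}$ after the Stokes scaling, and that the relaxation from the $L^\infty_{\eta}$ decay of the OS piece to the $L^\infty_{\beta_r}$ decay required by the Stokes system loses no power. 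Once these matchings are verified, the three inequalities follow directly, and the smallness $|\varepsilon|^{1/3}|\log c_i|^4\ll1$ then drives the convergence of the outer iteration in the subsequent construction.
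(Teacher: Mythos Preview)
Your proposal is correct and follows essentially the same route as the paper's proof: bound the sources $(f_u^{(j)},f_v^{(j)})$ using the exponential decay of $\partial_Y U_s$, apply Proposition~\ref{pro: (rho, u,v)-OS-nonhomo} for the OS step (including its divergence estimates), then feed the resulting bounds on $q_1^{(j+1)},q_2^{(j+1)}$ into Proposition~\ref{pro: stokes} for the Stokes step, converting $L^\infty_\eta$ decay to $L^\infty_{\beta_r}$ via $\eta>\beta_r$. Your identification of the key structural point---that $(U_s-c)\Omega(f_u^{(j)},f_v^{(j)})$ and the other source quantities involve at most one derivative of $v_{st}^{(j)}$---matches the paper exactly, as does the power-counting using $|\alpha|\sim|\varepsilon|^{1/3}$.
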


\begin{proof}
  
Recall that $(\rho_{os}^{(j+1)},u_{os}^{(j+1)},v_{os}^{(j+1)})$ is the solution to 
	\begin{align*}
			\left\{
	\begin{aligned}
		&\mathrm i\alpha(U_s-c)\rho_{os}+\mathrm i\alpha u_{os}+\partial_Y v_{os}=0,\\
		&\sqrt{\nu}(\partial_Y^2-\alpha^2)[u_{os}+(U_s-c)\rho_{os}]-\mathrm i\alpha(U_s-c)u_{os}-v_{os}\partial_Y U_s-\mathrm i\alpha m^{-2}\rho_{os}=f_u,\\
		&\sqrt{\nu}(\partial_Y^2-\alpha^2)v_{os}-\mathrm i\alpha(U_s-c)v_{os}-m^{-2}\partial_Y\rho_{os}=f_v,
	\end{aligned}
	\right.
	\end{align*}
	with $f_u=\pa_Y U_sv^{(j)}_{st}, ~f_v=\int_Y^\infty \mathrm i\al \pa_Y U_s v^{(j)}_{st} dY'$. 
	By Proposition \ref{pro: (rho, u,v)-OS-nonhomo}, we obtain 
	\begin{align*}
\|\rho^{(j+1)}_{os}\|_{L^\infty_\eta}+\|\pa_Y\rho^{(j+1)}_{os}\|_{L^\infty_\eta}\leq& C|\log c_i|^4 \|(U_s-c)\Om(f_u, f_v)\|_{L^\infty_\eta}\\
&+C|\alpha|^{-1}\|f_u\|_{L^\infty_\eta}+C\|f_v\|_{L^\infty_\eta},
\end{align*}
%\begin{align*}
%	&\|\pa_Y\rho^{(j+1)}_{os}\|_{L^\infty_\eta}\leq C|\log c_i|^4 \|\Om(f_u, f_v)\|_{L^\infty_\eta}+C|\alpha|^{-1}\|\partial_Yf_u\|_{L^\infty_\eta}
%\end{align*}
and
\begin{align*}
\|u_{os}^{(j+1)}\|_{L^\infty_\eta}+|\e|^\f13\|\pa_Yu_{os}^{(j+1)}\|_{L^\infty_\eta}\leq& C|\e|^{-\f13}|\log c_i|^4 \|(U_s-c)\Om(f_u, f_v)\|_{L^\infty_\eta}+C|\alpha|^{-1}\|f_u\|_{L^\infty_\eta}\\
&+C|\e|^\f13\|f_v\|_{L^\infty_\eta},
\end{align*}
\begin{align*}
\|\partial_Y^2 u_{os}^{(j+1)}\|_{L^\infty_\eta}\leq& C|\e|^{-1}|\log c_i|^4 \|(U_s-c)\Om(f_u, f_v)\|_{L^\infty_\eta}\\
&+C|\alpha|^{-1}\|f_u\|_{L^\infty_\eta}+C\|f_v\|_{L^\infty_\eta}+C\|\pa_Yf_v\|_{L^\infty_\eta},
\end{align*}
and
\begin{align*}
	&\|v_{os}^{(j+1)}\|_{L^\infty_\eta}+|\e|^\f13\|\pa_Yv_{os}^{(j+1)}\|_{L^\infty_\eta}+|\varepsilon|^\f23\|\partial_Y^2v_{os}^{(j+1)}\|_{L^\infty_\eta} \leq C|\al| \log c_i|^4 \|(U_s-c)\Om(f_u, f_v)\|_{L^\infty_\eta}.
\end{align*}

Moreover, we have 
\begin{align*}
&\|\dv_\al(u_{os}^{(j+1)},v_{os}^{(j+1)})\|_{L^\infty_\eta}+\|\pa_Y\dv_\al(u_{os}^{(j+1)},v_{os}^{(j+1)})\|_{L^\infty_\eta}\\
&\leq C|\al| \log c_i|^4 \|(U_s-c)\Om(f_u, f_v)\|_{L^\infty_\eta}+C\|f_u\|_{L^\infty_\eta}+C|\al|\|f_v\|_{L^\infty_\eta},
\end{align*}
\begin{align*}
	\|\pa_Y^2\dv_\al(u_{os}^{(j+1)},v_{os}^{(j+1)})\|_{L^\infty_\eta}\leq& C|\al| |\log c_i|^4 \|(U_s-c)\Om(f_u, f_v)\|_{L^\infty_\eta}\\
	&+C\|f_u\|_{L^\infty_\eta}+C|\al|\|f_v\|_{L^\infty_\eta}+C|\al|\|\pa_Yf_v\|_{L^\infty_\eta}.
\end{align*}
Here 
\begin{align*}
&f_u=\pa_Y U_sv^{(j)}_{st},\quad f_v=\int_Y^\infty \mathrm i\al \pa_Y U_s v^{(j)}_{st} dY',\\
&\Om(f_u, f_v)=\f{1}{\mathrm i\al}\pa_Y(A^{-1}\pa_Y U_sv^{(j)}_{st})-\int_Y^\infty\mathrm i\al \pa_Y U_s v^{(j)}_{st} dY'.
\end{align*}
On the other hand, we have 
\begin{align*}
&\|(U_s-c)\Omega(f_u,f_v)\|_{L^\infty_\eta}+|\alpha|^{-1}\|f_u\|_{L^\infty_\eta}\leq C|\alpha|^{-1}\|v_{st}^{(j)}\|_{L^\infty}+C|\alpha|^{-1}\|\partial_Yv^{(j)}_{st}\|_{L^\infty},
\end{align*}
and
\begin{align*}
\|f_v\|_{L^\infty_\eta}+\|\pa_Yf_v\|_{L^\infty_\eta}\leq C|\al|\|v_{st}^{(j)}\|_{L^\infty}
\end{align*}
%\begin{align*}
%	&\|(U_s-c)\partial_Y\Omega(f_u,f_v)\|_{L^\infty_\eta}+|\alpha|^{-1}\|(U_s-c)\partial_Y^2f_u\|_{L^{\infty}_\eta}\\
%	&\leq C|\alpha|^{-1}\|v_{st}^{(j)}\|_{L^\infty}+C|\alpha|^{-1}\|\partial_Yv^{(j)}_{st}\|_{L^\infty}+C|\alpha|^{-1}\|\partial_Y^2v_{st}^{(j)}\|_{L^\infty}.
%\end{align*}
Therefore, we obtain 
\begin{align*}
\|\rho^{(j+1)}_{os}\|_{L^\infty_\eta}+\|\pa_Y\rho^{(j+1)}_{os}\|_{L^\infty_\eta}\leq&C|\varepsilon |^{-\f13}|\log c_i|^4\big(\|v^{(j)}_{st}\|_{L^\infty}+\|\partial_Y v^{(j)}_{st}\|_{L^\infty}\big),
\end{align*}
and 
\begin{align*}
&\|u_{os}^{(j+1)}\|_{L^\infty_\eta}+|\e|^\f13\|\pa_Yu_{os}^{(j+1)}\|_{L^\infty_\eta}+|\varepsilon|^\f23\|\partial_Y^2u_{os}^{(j+1)}\|_{L^\infty_\eta}\\
&\quad\leq C|\varepsilon |^{-\f23} |\log c_i|^4 (\|v^{(j)}_{st}\|_{L^\infty}+\|\partial_Y v^{(j)}_{st}\|_{L^\infty}),
\end{align*}
and
\begin{align*}
	\|v_{os}^{(j+1)}\|_{L^\infty_\eta}+|\e|^\f13\|\pa_Yv_{os}^{(j+1)}\|_{L^\infty_\eta}+|\varepsilon|^\f23\|\partial_Y^2v_{os}^{(j+1)}\|_{L^\infty_\eta} \leq&C  |\log c_i|^4\big(\|v^{(j)}_{st}\|_{L^\infty}+\|\partial_Y v^{(j)}_{st}\|_{L^\infty}\big),
	\end{align*}
and
\begin{align*}
&\|\dv_\al(u_{os}^{(j+1)},v_{os}^{(j+1)})\|_{L^\infty_\eta}+\|\pa_Y\dv_\al(u_{os}^{(j+1)},v_{os}^{(j+1)})\|_{L^\infty_\eta}\leq C|\log c_i|^4\big(\|v^{(j)}_{st}\|_{L^\infty}+\|\partial_Y v^{(j)}_{st}\|_{L^\infty}\big),\\
&\|\pa_Y^2\dv_\al(u_{os}^{(j+1)},v_{os}^{(j+1)})\|_{L^\infty_\eta}\leq C|\log c_i|^4\big(\|v^{(j)}_{st}\|_{L^\infty}+\|\partial_Y v^{(j)}_{st}\|_{L^\infty}\big).
\end{align*}
As a result, we arrive at 
\begin{align*}
	&\|\rho_{os}^{(j+1)}\|_{\mathcal Z}\leq C|\varepsilon |^{-\f23} |\log c_i|^4\big(\|v^{(j)}_{st}\|_{L^\infty}+\|\partial_Y v^{(j)}_{st}\|_{L^\infty}\big),\\
	&\|u_{os}^{(j+1)}\|_{\tilde{\mathcal X}}\leq C|\varepsilon |^{-1} |\log c_i|^4\big(\|v^{(j)}_{st}\|_{L^\infty}+\|\partial_Y v^{(j)}_{st}\|_{L^\infty}\big),\\
	&\|v_{os}^{(j+1)}\|_{\tilde{\mathcal X}}\leq C  |\varepsilon|^{-\f13} |\log c_i|^4\big(\|v^{(j)}_{st}\|_{L^\infty}+\|\partial_Y v^{(j)}_{st}\|_{L^\infty}\big).
\end{align*}
This shows that
\begin{align*}
	|\varepsilon|^\f23\|\rho^{(j+1)}_{os}\|_{\mathcal Z}+|\varepsilon|\|u_{os}^{(j+1)}\|_{\tilde{\mathcal X}}+|\varepsilon|^\f13\|v_{os}^{(j+1)}\|_{\tilde{\mathcal X}}\leq C|\log c_i|^4\|v_{st}^{(j)}\|_{\tilde{\mathcal X}},
\end{align*}
and 
\begin{align*}
	|\varepsilon|^\f23\|u_{os}^{(j+1)}\|_{L^\infty}+\|v_{os}^{(j+1)}\|_{L^\infty}\leq C|\log c_i|^4\big(\|v^{(j)}_{st}\|_{L^\infty}+\|\partial_Y v^{(j)}_{st}\|_{L^\infty}\big).
\end{align*}

Now we turn to show the estimates about $(\rho_{st}^{(j+1)},u_{st}^{(j+1)},v_{st}^{j+1})$. By Proposition \ref{pro: stokes}, we obtain
\begin{align*}
		&|\alpha|^2\|\rho_{st}^{(j+1)}\|_{L^\infty_{\f{\beta_r}{2}}}+|\alpha|\|\partial_Y \rho_{st}^{(j+1)}\|_{L^\infty_{\f{\beta_r}{2}}}+\|\partial_Y^2\rho_{st}^{(j+1)}\|_{L^\infty_{\f{\beta_r}{2}}}\leq C|\alpha|\|q_1\|_{L^\infty_{\beta_r}}+C\|\partial_Y q_2\|_{L^\infty_{\beta_r}},\\
		&|\varepsilon|\|\partial_Y^2u_{st}^{(j+1)}\|_{L^\infty_{\f{\beta_r}{2}}}+|\varepsilon|^\f23\|\partial_Yu_{st}^{(j+1)}\|_{L^\infty_{\f{\beta_r}{2}}}+|\varepsilon|^\f13 \|u_{st}^{(j+1)}\|_{L^\infty_{\f{\beta_r}{2}}}
	\leq C|\alpha|^{-1}\|q_1\|_{L^\infty_{\beta_r}}+C|\alpha|^{-2}\|\partial_Y q_2\|_{L^\infty_{\beta_r}},
\end{align*}
and
\begin{align*}
	&|\varepsilon|^\f23\|\partial_Y^2v_{st}^{(j+1)}\|_{L^\infty_{\f{\beta_r}{2}}}+|\varepsilon|^\f13\|\partial_Yv_{st}^{(j+1)}\|_{L^\infty_{\f{\beta_r}{2}}}+|\varepsilon|^\f13\|v_{st}^{(j+1)}\|_{L^\infty_{\f{\beta_r}{2}}}\leq C\|(q_1,q_2)\|_{L^\infty_{\beta_r}}+C|\alpha|^{-1}\|\partial_Y q_2\|_{L^\infty_{\beta_r}},
\end{align*}
where 
\begin{align*}
	q_1=&-\e(\pa_Y^2-\al^2)\dv_\al(u^{(j+1)}_{os}, v^{(j+1)}_{os})-\mathrm i\la \sqrt{\nu} \al\dv_\al(u^{(j+1)}_{os}, v^{(j+1)}_{os})+\sqrt{\nu}\pa_Y^2 U_s\rho^{(j+1)}_{os},\\
q_2=&-\la \sqrt{\nu} \pa_Y\dv_\al(u^{(j+1)}_{os}, v^{(j+1)}_{os}).
\end{align*}
On the other hand, based on the estimates about $\dv_{\alpha}(u_{os}^{(j+1)},v_{os}^{(j+1)})$, we obtain 
\begin{align*}
	\|q_1\|_{L^\infty_{\beta_r}}\leq& C|\varepsilon|\|\pa_Y^2\dv_\al(u_{os}^{(j+1)},v_{os}^{(j+1)})\|_{L^\infty_\eta}
	+C|\varepsilon||\alpha|^2\|\dv_\al(u_{os}^{(j+1)},v_{os}^{(j+1)})\|_{L^\infty_\eta}\\
	&+C|\varepsilon||\alpha|\|\rho^{(j+1)}_{os}\|_{L^\infty_\eta}\\
	\leq&C|\varepsilon| |\log c_i|^4\big(\|v^{(j)}_{st}\|_{L^\infty}+\|\partial_Y v^{(j)}_{st}\|_{L^\infty}\big),\\
	\|q_2\|_{L^\infty_{\beta_r}}\leq&C|\varepsilon||\alpha|\|\partial_Y\dv_\al(u_{os}^{(j+1)},v_{os}^{(j+1)})\|_{L^\infty_\eta}\leq C|\varepsilon||\alpha||\log c_i|^4\big(\|v^{(j)}_{st}\|_{L^\infty}+\|\partial_Y v^{(j)}_{st}\|_{L^\infty}\big),
	\end{align*}
and 
\begin{align*}
	\|\partial_Y q_2\|_{L^\infty_{\beta_r}}\leq&C|\varepsilon||\alpha|\|\partial_Y^2\dv_\al(u_{os}^{(j+1)},v_{os}^{(j+1)})\|_{L^\infty_\eta}\\
	\leq&C|\varepsilon||\alpha| |\log c_i|^4\big(\|v^{(j)}_{st}\|_{L^\infty}+\|\partial_Y v^{(j)}_{st}\|_{L^\infty}\big).
\end{align*}
Therefore, we obtain 
\begin{align*}
	\|\rho_{st}^{(j+1)}\|_{L^\infty_{\f{\beta_r}{2}}}+\|\partial_Y \rho_{st}^{(j+1)}\|_{L^\infty_{\f{\beta_r}{2}}}\leq C|\varepsilon|^\f23|\log c_i|^4\big(\|v^{(j)}_{st}\|_{L^\infty}+\|\partial_Y v^{(j)}_{st}\|_{L^\infty}\big),
\end{align*}
and 
\begin{align*}
	|\varepsilon|^\f23\|\partial_Y^2u_{st}^{(j+1)}\|_{L^\infty_{\f{\beta_r}{2}}}+&|\varepsilon|^\f13\|\partial_Yu_{st}^{(j+1)}\|_{L^\infty_{\f{\beta_r}{2}}}+ \|u_{st}^{(j+1)}\|_{L^\infty_{\f{\beta_r}{2}}}\\
	\leq&C|\varepsilon|^\f13|\log c_i|^4\big(\|v^{(j)}_{st}\|_{L^\infty}+\|\partial_Y v^{(j)}_{st}\|_{L^\infty}\big),\\
\end{align*}
and
\begin{align*}
	|\varepsilon|^\f23\|\partial_Y^2v_{st}^{(j+1)}\|_{L^\infty_{\f{\beta_r}{2}}}&+|\varepsilon|^\f13\|\partial_Yv_{st}^{(j+1)}\|_{L^\infty_{\f{\beta_r}{2}}}+|\varepsilon|^\f13\|v_{st}^{(j+1)}\|_{L^\infty_{\f{\beta_r}{2}}}\\
	\leq& C|\varepsilon| |\log c_i|^4\big(\|v^{(j)}_{st}\|_{L^\infty}+\|\partial_Y v^{(j)}_{st}\|_{L^\infty}\big).
\end{align*}
Thus, we arrive at
\begin{align*}
	\|\rho^{(j+1)}_{st}\|_{\mathcal Z}+|\varepsilon|^\f13\|u_{st}^{(j+1)}\|_{\tilde{\mathcal X}}+|\varepsilon|^{-\f13}\|v_{st}^{(j+1)}\|_{\tilde{\mathcal X}}\leq C|\log c_i|^4\|v_{st}^{(j)}\|_{\tilde{\mathcal X}}.
\end{align*}

\end{proof}

Now we construct the slow mode of the linearized CNS system \eqref{eq:LCNS-Y1}.

\begin{proposition}\label{thm: slow-cns}
	Let $(\rho_s,u_s,v_s)$ be the slow solution to the homogenous quasi-incompressible system constructed in Proposition \ref{pro: (rho, u,v)-OS-homo}.  Then there exists a slow solution $(\rho^s,u^s,v^s)\in\mathcal Z\times\tilde{\mathcal X}\times\tilde{\mathcal X}$ to the system  \eqref{eq:LCNS-Y1} such that 
\begin{align*}
|\e|^{\f13}\|\rho^s-\rho_s\|_{\mathcal Z}+|\e|^{\f23}\|u^s-u_s\|_{\tilde{\mathcal X}}+\|v^s-v_s\|_{\tilde{\mathcal X}}\leq C |\varepsilon||\log c_i|^4.
\end{align*}
Moreover, we have	
	\begin{align*}
		u^s(0)
=&(1-m^2)U_s'(0)+\mathcal{O}(|\al| |\log c_i|^4 ),\\
v^s(0)
=&-\mathrm i \al\Big(-(1-m^2)c+\f{\beta}{U_s'(Y_c)}+\mathcal{O}((|\al| |\e|^\f13+|\al|^2+|c|^2) |\log c_i|^4)\Big).
\end{align*}
	
\end{proposition}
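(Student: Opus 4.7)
The plan is to realize rigorously the incompressible-compressible iteration outlined at the start of Section 8, with $(\rho^{(1)}_{os},u^{(1)}_{os},v^{(1)}_{os})=(\rho_s,u_s,v_s)$ as the initial input, and to close estimates in the spaces $\mathcal Z$ and $\widetilde{\mathcal X}$. First I would substitute $(\rho_s,u_s,v_s)$ into $\mathcal L$ and observe that, because it already solves the homogeneous quasi-incompressible system, the residual is
\[
\mathcal L(\rho_s,u_s,v_s)=\bigl(0,\, q^{(1)}_1,\, q^{(1)}_2\bigr),
\]
where $q^{(1)}_1,q^{(1)}_2$ depend only on $\mathrm{div}_\alpha(u_s,v_s)$, $\rho_s$ and their derivatives. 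Using the estimates from Proposition~\ref{pro: (rho, u,v)-OS-homo} — in particular $\|\pa_Y^k\mathrm{div}_\alpha(u_s,v_s)\|_{L^\infty}\le C|\alpha|^2$ for $k=0,1,2$ and the pointwise control of $\rho_s$ — I would show $\|q^{(1)}_1\|_{L^\infty_{\beta_r}}\le C|\varepsilon|$ and $\|(q^{(1)}_2,\pa_Y q^{(1)}_2)\|_{L^\infty_{\beta_r}}\le C|\varepsilon||\alpha|$, and then invoke Proposition~\ref{pro: stokes} to produce $(\rho^{(1)}_{st},u^{(1)}_{st},v^{(1)}_{st})$ with
\[
\|\rho^{(1)}_{st}\|_{\mathcal Z}+|\varepsilon|^{\f13}\|u^{(1)}_{st}\|_{\widetilde{\mathcal X}}+|\varepsilon|^{-\f13}\|v^{(1)}_{st}\|_{\widetilde{\mathcal X}}\le C|\varepsilon|.
\]

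Next I would iterate as described: for $j\ge 1$, let $(\rho^{(j+1)}_{os},u^{(j+1)}_{os},v^{(j+1)}_{os})$ be the non-homogeneous quasi-incompressible solution from Proposition~\ref{pro: (rho, u,v)-OS-nonhomo} with source $(f_u,f_v)=(\pa_Y U_s v^{(j)}_{st},\int_Y^\infty i\alpha\pa_Y U_s v^{(j)}_{st}dY')$, then define $(\rho^{(j+1)}_{st},u^{(j+1)}_{st},v^{(j+1)}_{st})$ by solving the quasi-compressible system with the associated residual as source. The key quantitative input is exactly Lemma~\ref{prop:main-iteration}, which yields
\[
\|v^{(j+1)}_{st}\|_{\widetilde{\mathcal X}}\le C|\varepsilon|^{\f13}|\log c_i|^4\,\|v^{(j)}_{st}\|_{\widetilde{\mathcal X}}.
\]
Since $(\alpha,c)\in\mathbb H$ forces $|\varepsilon|^{\f13}|\log c_i|^4\le\tfrac12$ for sufficiently small $\nu$, this is a contraction, and the series $\rho^s:=\sum_{j\ge1}(\rho^{(j)}_{os}+\rho^{(j)}_{st})$, and analogously $u^s,v^s$, converge absolutely in $\mathcal Z\times\widetilde{\mathcal X}^2$. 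Telescoping the residuals produced at each step shows $\mathcal L(\rho^s,u^s,v^s)=0$.

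For the difference estimate, I would note that $(\rho^s,u^s,v^s)-(\rho_s,u_s,v_s)=(\rho^{(1)}_{st},u^{(1)}_{st},v^{(1)}_{st})+\sum_{j\ge 2}(\rho^{(j)}_{os}+\rho^{(j)}_{st},\dots)$, bound the head by the initial Stokes estimate above, and bound the tail by the geometric series coming from Lemma~\ref{prop:main-iteration}; both contribute $\mathcal O(|\varepsilon||\log c_i|^4)$ at the appropriate weight. For the boundary values, I would decompose $u^s(0)=u_s(0)+u^{(1)}_{st}(0)+R_u$ and $v^s(0)=v_s(0)+v^{(1)}_{st}(0)+R_v$; the leading asymptotics are inherited from Proposition~\ref{pro: (rho, u,v)-OS-homo}, while $|u^{(1)}_{st}(0)|+|v^{(1)}_{st}(0)|+|R_u|+|R_v|\le C|\varepsilon|^{\f23}|\log c_i|^4\ll |\alpha||\log c_i|^4$ thanks to $|\alpha|\sim|\varepsilon|^{\f13}$ in $\mathbb H$, which fits inside the stated error terms.

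The main obstacle I anticipate is verifying that the contraction factor in Lemma~\ref{prop:main-iteration} is indeed small uniformly in the regime $\mathbb H$: one must carefully track the $|\log c_i|$ losses accumulated through the Airy-Airy-Rayleigh solution of the Orr-Sommerfeld part (producing the $|\log c_i|^4$) and balance them against the $|\varepsilon|^{\f13}$ gain coming from the Stokes-side equation for the density, which itself relies on the ellipticity provided by $\mathrm{Re}\,A_\infty\sim 1-m^2>0$. Equally delicate will be matching the boundary-value asymptotics: the correction terms must not only be small in norm, but must not contaminate the explicit leading coefficients $(1-m^2)U_s'(0)$ and $-(1-m^2)c+\beta/U_s'(Y_c)$ that enter the dispersion relation in the next section, so their size in $\mathbb H$ has to be verified pointwise at $Y=0$ rather than merely in the weighted $L^\infty$ norm.
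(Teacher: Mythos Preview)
Your approach is essentially identical to the paper's: the same incompressible--compressible iteration seeded with $(\rho_s,u_s,v_s)$, the same application of Proposition~\ref{pro: stokes} at step one, the same contraction via Lemma~\ref{prop:main-iteration}, and the same extraction of boundary values from Proposition~\ref{pro: (rho, u,v)-OS-homo}.

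One bookkeeping point to tighten: your stated bound $\|q_1^{(1)}\|_{L^\infty_{\beta_r}}\le C|\varepsilon|$ is too weak to yield the initial estimate $|\varepsilon|^{-1/3}\|v_{st}^{(1)}\|_{\widetilde{\mathcal X}}\le C|\varepsilon|$ that you write next; feeding $\|q_1\|\le C|\varepsilon|$ into Proposition~\ref{pro: stokes} only gives $\|v_{st}^{(1)}\|_{\widetilde{\mathcal X}}\le C|\varepsilon|^{2/3}$, and then the tail $\|v_{os}^{(2)}\|_{\widetilde{\mathcal X}}\le C|\varepsilon|^{-1/3}|\log c_i|^4\|v_{st}^{(1)}\|_{\widetilde{\mathcal X}}$ would be of order $|\varepsilon|^{1/3}|\log c_i|^4$, not $|\varepsilon||\log c_i|^4$. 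The fix is immediate from the very estimates in Proposition~\ref{pro: (rho, u,v)-OS-homo} you already cite: $\|\pa_Y^k\mathrm{div}_\alpha(u_s,v_s)\|_{L^\infty}\le C|\alpha|^2$ and $|\rho_s|\le C|\alpha|e^{-\beta_r Y}$ give the sharper $\|q_1^{(1)}\|_{L^\infty_{\beta_r}}\le C|\varepsilon||\alpha|^2$ and $\|(q_2^{(1)},\pa_Yq_2^{(1)})\|_{L^\infty_{\beta_r}}\le C|\varepsilon||\alpha|^3$, exactly as in the paper, and then everything downstream goes through as you wrote.
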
 

\begin{proof}
We define $(\rho^{(1)},u^{(1)},v^{(1)})=(\rho^{(1)}_{os}+\rho^{(1)}_{st},u^{(1)}_{os}+u^{(1)}_{st},v^{(1)}_{os}+u^{(1)}_{st})$, where $(\rho_{os}^{(1)},u_{os}^{(1)},v_{os}^{(1)})=(\rho_s,u_s,v_s)$ and $(\rho_{st}^{(1)},u_{st}^{(1)},v_{st}^{(1)})$ is the solution to the system \eqref{eq: Stokes} constructed in Proposition \ref{pro: stokes} with the source term $(0,q^{(1)}_1,q^{(1)}_2)$:
\begin{align}
q^{(1)}_1=&\sqrt{\nu}(\pa_Y^2-\al^2)((U_s-c)\rho^{(1)}_{os})-\mathrm i\la \sqrt{\nu} \al\dv_\al(u^{(1)}_{os}, v^{(1)}_{os})+\sqrt{\nu}\pa_Y^2 U_s\rho^{(1)}_{os}\label{eq: q^(1)_1}\\
\nonumber
=&-\e(\pa_Y^2-\al^2)\dv_\al(u^{(1)}_{os}, v^{(1)}_{os})-\mathrm i\la \sqrt{\nu} \al\dv_\al(u^{(1)}_{os}, v^{(1)}_{os})+\sqrt{\nu}\pa_Y^2 U_s\rho^{(1)}_{os},\\
q^{(1)}_2=&-\la \sqrt{\nu} \pa_Y\dv_\al(u^{(1)}_{os}, v^{(1)}_{os}).\label{eq: q^(1)_2}
\end{align}

By Proposition \ref{pro: (rho, u,v)-OS-homo}, we obtain 
\begin{align*}
		\|q_1^{(1)}\|_{L^\infty_{\beta_r}}\leq& C|\varepsilon|\|\pa_Y^2\dv_\al(u_{os}^{(1)},v_{os}^{(1)})\|_{L^\infty_{\beta_r}}
	+C|\varepsilon||\alpha|^2\|\dv_\al(u_{os}^{(1)},v_{os}^{(1)})\|_{L^\infty_{\beta_r}}\\
	&+C|\varepsilon||\alpha|\|\rho^{(1)}_{os}\|_{L^\infty_{\beta_r}}\\
	\leq&C|\varepsilon||\alpha|^2,\\
	\|q_2^{(1)}\|_{L^\infty_{\beta_r}}+\|\partial_Y q_2^{(1)}\|_{L^\infty_{\beta_r}}\leq& C|\varepsilon||\alpha|\|\pa_Y^2\dv_\al(u_{os}^{(1)},v_{os}^{(1)})\|_{L^\infty_{\beta_r}}+C|\varepsilon||\alpha|\|\pa_Y\dv_\al(u_{os}^{(1)},v_{os}^{(1)})\|_{L^\infty_{\beta_r}}\\
	\leq&C|\varepsilon||\alpha|^3,
\end{align*}

which along with Proposition \ref{pro: stokes} implies 
\begin{align*}
		&\|\rho_{st}^{(1)}\|_{L^\infty_{\f{\beta_r}{2}}}+\|\partial_Y \rho_{st}^{(1)}\|_{L^\infty_{\f{\beta_r}{2}}}\leq C|\varepsilon||\alpha|\leq C|\varepsilon|^\f43 ,\\
		&|\varepsilon|^\f23\|\partial_Y^2u^{(1)}_{st}\|_{L^\infty_{\f{\beta_r}{2}}}+|\varepsilon|^\f13\|\partial_Yu^{(1)}_{st}\|_{L^\infty_{\f{\beta_r}{2}}}+\|u^{(1)}_{st}\|_{L^\infty_{\f{\beta_r}{2}}}
	\leq C|\varepsilon| ,
\end{align*}
and
\begin{align*}
	&|\varepsilon|^\f13\|\partial_Y^2v^{(1)}_{st}\|_{L^\infty_{\f{\beta_r}{2}}}+\|\partial_Yv^{(1)}_{st}\|_{L^\infty_{\f{\beta_r}{2}}}+\|v^{(1)}_{st}\|_{L^\infty_{\f{\beta_r}{2}}}\leq C|\varepsilon|^\f43 .
\end{align*}
In particular, we have 
\begin{align*}
	\|\rho_{st}^{(1)}\|_{\mathcal Z}+|\varepsilon|^\f23\|u_{st}^{(1)}\|_{\tilde{\mathcal X}}+\|v_{st}^{(1)}\|_{\tilde{\mathcal X}}\leq C|\varepsilon|^\f43,
\end{align*}
and 
\begin{align*}
	|u_{st}^{(1)}(0)|\leq C|\varepsilon|, \quad|v_{st}^{(1)}(0)|\leq C|\varepsilon|^\f43.
\end{align*}
Hence by using Lemma \ref{prop:main-iteration} repeatedly, we infer that for any $j\geq 2$,
\begin{align*}
	&\|v_{st}^{(j)}\|_{\tilde{\mathcal X}}\leq C|\varepsilon|^\f13|\log c_i|^4\|v_{st}^{(j-1)}\|_{\tilde{\mathcal X}}\leq C(|\varepsilon|^\f13|\log c_i|^4)^{j-2}\|v_{st}^{(1)}\|_{\tilde{\mathcal X}},\\
	&|\varepsilon|^\f23\|\rho^{(j)}_{os}\|_{\mathcal Z}+|\varepsilon|\|u_{os}^{(j)}\|_{\tilde{\mathcal X}}+|\varepsilon|^\f13\|v_{os}^{(j)}\|_{\tilde{\mathcal X}}\leq C|\log c_i|^4\|v_{st}^{(j-1)}\|_{\tilde{\mathcal X}}\leq C(|\varepsilon|^\f13|\log c_i|^4)^{j-2}\|v_{st}^{(1)}\|_{\tilde{\mathcal X}}|\log c_i|^4,
\end{align*}
and 
\begin{align*}
	\|\rho^{(j)}_{st}\|_{\mathcal Z}+|\varepsilon|^\f13\|u_{st}^{(j)}\|_{\tilde{\mathcal X}}\leq C|\log c_i|^4\|v_{st}^{(j-1)}\|_{\tilde{\mathcal X}}\leq C(|\varepsilon|^\f13|\log c_i|^4)^{j-2}\|v_{st}^{(1)}\|_{\tilde{\mathcal X}}|\log c_i|^4.
\end{align*}
%Moreover, we have 
%\begin{align*}
%	|\varepsilon|^\f23\|u_{os}^{(j)}\|_{L^\infty}+\|v_{os}^{(j)}\|_{L^\infty}\leq& C|\log c_i|^4 (\|v^{(j-1)}_{st}\|_{L^\infty}+\|\partial_Y v^{(j-1)}_{st}\|_{L^\infty})\\
%	\leq& C(|\varepsilon|^\f13|\log c_i|^4)^{j-2}\|v_{st}^{(1)}\|_{\tilde{\mathcal X}}|\log c_i|^4.
%\end{align*}

Therefore, for $j\geq 2$, $(\rho^{(j)},u^{(j)},v^{(j)})=(\rho^{(j)}_{os}+\rho^{(j)}_{st},u^{(j)}_{os}+u^{(j)}_{st},v^{(j)}_{os}+u^{(j)}_{st})$ satisfies 
\begin{align*}
	&\|\rho^{(j)}\|_{\mathcal Z}\leq C|\e|^{-\f23}(|\varepsilon|^\f13|\log c_i|^4)^{j-2}\|v_{st}^{(1)}\|_{\tilde{\mathcal X}}|\log c_i|^4\leq C |\varepsilon|^\f23|\log c_i|^4\left(|\varepsilon|^\f13|\log c_i|^4\right)^{j-2},\\
	&\|u^{(j)}\|_{\tilde{\mathcal X}}\leq C|\e|^{-1}(|\varepsilon|^\f13|\log c_i|^4)^{j-2}\|v_{st}^{(1)}\|_{\tilde{\mathcal X}}|\log c_i|^4\leq C|\varepsilon|^\f13 |\log c_i|^4\left(|\varepsilon|^\f13|\log c_i|^4\right)^{j-2},\\
	&\|v^{(j)}\|_{\tilde{\mathcal X}}\leq C|\e|^{-\f13}(|\varepsilon|^\f13|\log c_i|^4)^{j-2}\|v_{st}^{(1)}\|_{\tilde{\mathcal X}}|\log c_i|^4\leq C |\varepsilon||\log c_i|^4\left(|\varepsilon|^\f13|\log c_i|^4\right)^{j-2},
\end{align*}
and 
\begin{align*}
	|u^{(j)}(0)|\leq C\left(|\varepsilon|^\f13|\log c_i|^4\right)^{j-2}|\varepsilon|^\f13|\log c_i|^4,\quad |v^{(j)}(0)|\leq C\left(|\varepsilon|^\f13|\log c_i|^4\right)^{j-2}|\varepsilon||\log c_i|^4.
\end{align*}
Thus, we may define
\begin{align*}
     	\rho^s=\sum_{j=1}^{+\infty}\rho^{(j)},\quad u^s=\sum_{j=1}^{+\infty}u^{(j)},\quad v^s=\sum_{j=1}^{+\infty}v^{(j)},
     \end{align*}
 which satisfies the desired properties by  Proposition \ref{pro: (rho, u,v)-OS-homo} and the above estimates.
 \end{proof}
 
Next we construct the fast mode of the linearized CNS system \eqref{eq:LCNS-Y1}.
 
\begin{proposition}\label{thm:fast-cns}
		Let $(\rho_f,u_f,v_f)$ be the fast solution to the homogenous quasi-incompressible system constructed in Proposition \ref{pro: (rho, u,v)-OS-homo}.  Then there exists a fast solution $(\rho^f,u^f,v^f)\in\mathcal Z\times\tilde{\mathcal X}\times\tilde{\mathcal X}$  to the system  \eqref{eq:LCNS-Y1} such that 
		\begin{align*}
|\e|^{\f13}\|\rho^f-\rho_f\|_{\mathcal Z}+|\e|^{\f23}\|u^f-u_f\|_{\tilde{\mathcal X}}+\|v^f-v_f\|_{\tilde{\mathcal X}}\leq C |\varepsilon|^{\f43}|\log c_i|^5.
\end{align*}
Moreover, we have
		\begin{align*}
			u^f(0)=&\pa_Y \psi_a(0)+\mathcal{O}(|\e|^{\f23}|\log c_i|^5),\\
v^f(0)=&-\mathrm i \al\big(\psi_a(0)+\mathcal{O}(|\e|^{\f23}(|\e|^\f13+|c|+|\al|)|\log c_i|^5)\big).
		\end{align*}
\end{proposition}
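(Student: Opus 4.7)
The strategy is to mirror the iteration in Proposition \ref{thm: slow-cns}, but seeded with the fast quasi-incompressible solution $(\rho_f,u_f,v_f)$ from Proposition \ref{pro: (rho, u,v)-OS-homo} in place of the slow one, and to track the smaller source sizes produced at each step. First I would set $(\rho_{os}^{(1)},u_{os}^{(1)},v_{os}^{(1)})=(\rho_f,u_f,v_f)$ and define the first Stokes correction $(\rho_{st}^{(1)},u_{st}^{(1)},v_{st}^{(1)})$ by applying Proposition \ref{pro: stokes} with the source $(0,q_1^{(1)},q_2^{(1)})$ as in \eqref{eq: q^(1)_1}-\eqref{eq: q^(1)_2}. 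The key difference from the slow case is the size of this source: invoking the fast-mode bounds in Proposition \ref{pro: (rho, u,v)-OS-homo}, namely $\|\rho_f\|_{W^{2,\infty}_{\beta_r}}\le C|\e|^{\f23}|\log c_i|$ and $\|\mathrm{div}_\al(u_f,v_f)\|_{W^{2,\infty}_{\beta_r}}\le C|\al||\e|^{\f23}|\log c_i|$, I obtain $\|q_1^{(1)}\|_{L^\infty_{\beta_r}}+\|q_2^{(1)}\|_{L^\infty_{\beta_r}}+|\al|^{-1}\|\pa_Yq_2^{(1)}\|_{L^\infty_{\beta_r}}\le C|\al||\e|^{\f53}|\log c_i|$, and Proposition \ref{pro: stokes} then delivers
\begin{align*}
\|\rho_{st}^{(1)}\|_{\mathcal Z}+|\e|^{\f13}\|u_{st}^{(1)}\|_{\tilde{\mathcal X}}+|\e|^{-\f13}\|v_{st}^{(1)}\|_{\tilde{\mathcal X}}\le C|\e|^{\f43}|\log c_i|,
\end{align*}
so that in particular $\|v_{st}^{(1)}\|_{\tilde{\mathcal X}}\le C|\e|^{\f53}|\log c_i|$.

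Next I would define the iteration sequence $(\rho^{(j)},u^{(j)},v^{(j)})$ for $j\ge2$ exactly as at the start of Section 8 and apply Lemma \ref{prop:main-iteration} inductively. Because the contraction factor satisfies $C|\e|^{\f13}|\log c_i|^4\ll1$, the bounds $\|v_{st}^{(j)}\|_{\tilde{\mathcal X}}\le C(|\e|^{\f13}|\log c_i|^4)^{j-1}\|v_{st}^{(1)}\|_{\tilde{\mathcal X}}$ propagate, while the $os$-components at step $j\ge2$ pick up an extra $|\e|^{-\f13}$ factor in the $v$-norm, producing the dominant correction $\|v_{os}^{(2)}\|_{\tilde{\mathcal X}}\le C|\e|^{\f43}|\log c_i|^5$, together with $\|\rho_{os}^{(2)}\|_{\mathcal Z}\le C|\e||\log c_i|^5$ and $\|u_{os}^{(2)}\|_{\tilde{\mathcal X}}\le C|\e|^{\f23}|\log c_i|^5$. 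Setting $\rho^f=\sum_{j\ge1}\rho^{(j)}$, $u^f=\sum_{j\ge1}u^{(j)}$, $v^f=\sum_{j\ge1}v^{(j)}$, the geometric series converges absolutely in $\mathcal Z\times\tilde{\mathcal X}^2$, the sum solves \eqref{eq:LCNS-Y1}, and its norm is controlled by this dominant step, yielding the claimed bound $|\e|^{\f13}\|\rho^f-\rho_f\|_{\mathcal Z}+|\e|^{\f23}\|u^f-u_f\|_{\tilde{\mathcal X}}+\|v^f-v_f\|_{\tilde{\mathcal X}}\le C|\e|^{\f43}|\log c_i|^5$.

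Finally, the boundary expansions follow from the leading values $u_f(0)=\pa_Y\psi_a(0)+\mathcal O(|\e|^{\f23}|\log c_i|^3)$ and $v_f(0)=-\mathrm i\al\bigl(\psi_a(0)+\mathcal O(|\e|^{\f23}(|\e|^{\f13}+|c|+|\al|)|\log c_i|^3)\bigr)$ recorded in Proposition \ref{pro: (rho, u,v)-OS-homo}, combined with the pointwise trace bounds $|u^f(0)-u_f(0)|\le C\|u^f-u_f\|_{\tilde{\mathcal X}}\le C|\e|^{\f23}|\log c_i|^5$ and $|v^f(0)-v_f(0)|\le C|\e|^{\f43}|\log c_i|^5=C|\al|\cdot|\e||\log c_i|^5$; absorbing these remainders into the existing error terms produces the stated asymptotics. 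The main technical subtlety is the careful bookkeeping in the middle step: one must verify that the single transition losing a factor $|\e|^{-\f13}$, namely passing from $v_{st}^{(j)}$ to $v_{os}^{(j+1)}$ through Lemma \ref{prop:main-iteration}, is the unique source of the sharp $|\e|^{\f43}|\log c_i|^5$ correction and that all other iterates are strictly smaller by the contraction factor, so that the claimed bound is attained rather than the weaker one a naive telescoping would give.
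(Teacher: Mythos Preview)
Your proposal is correct and follows essentially the same approach as the paper: seed the iteration of Proposition \ref{thm: slow-cns} with the fast quasi-incompressible solution, estimate $(q_1^{(1)},q_2^{(1)})$ via Proposition \ref{pro: (rho, u,v)-OS-homo} to get $\|v_{st}^{(1)}\|_{\tilde{\mathcal X}}\le C|\e|^{5/3}|\log c_i|$, then run Lemma \ref{prop:main-iteration} to obtain the geometric series with dominant correction at $j=2$. Your bookkeeping of the single $|\e|^{-1/3}$ loss from $v_{st}^{(j)}$ to $v_{os}^{(j+1)}$ and your derivation of the boundary values match the paper's argument.
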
  

\begin{proof}
Since the proof is similar to Proposition \ref{thm: slow-cns}, we only present a sketch of the proof. 

We define $(\rho^{(1)},u^{(1)},v^{(1)})=(\rho^{(1)}_{os}+\rho^{(1)}_{st},u^{(1)}_{os}+u^{(1)}_{st},v^{(1)}_{os}+u^{(1)}_{st})$, where $(\rho_{os}^{(1)},u_{os}^{(1)},v_{os}^{(1)})=(\rho_f,u_f,v_f)$ and $(\rho_{st}^{(1)},u_{st}^{(1)},v_{st}^{(1)})$ is the solution to the system \eqref{eq: Stokes} constructed in Proposition \ref{pro: stokes} with the source term $(0,q^{(1)}_1,q^{(1)}_2)$. The definition of $(q^{(1)}_1,q^{(1)}_2)$ is given in \eqref{eq: q^(1)_1} and \eqref{eq: q^(1)_2}.
By Proposition \ref{pro: (rho, u,v)-OS-homo}, we obtain 
\begin{align*}
		&\|q_1^{(1)}\|_{L^\infty_{\beta_r}}\leq C|\varepsilon|^\f53|\alpha||\log c_i|,\\
	&\|q_2^{(1)}\|_{L^\infty_{\beta_r}}+\|\partial_Y q_2^{(1)}\|_{L^\infty_{\beta_r}}\leq C|\varepsilon|^\f53|\alpha|^2|\log c_i|,
\end{align*}
which along with Proposition \ref{pro: stokes} implies 
\begin{align*}
		&\|\rho_{st}^{(1)}\|_{L^\infty_{\f{\beta_r}{2}}}+\|\partial_Y \rho_{st}^{(1)}\|_{L^\infty_{\f{\beta_r}{2}}}\leq C|\varepsilon|^\f53|\log c_i|,\\
		&|\varepsilon|^\f23\|\partial_Y^2u^{(1)}_{st}\|_{L^\infty_{\f{\beta_r}{2}}}+|\varepsilon|^\f13\|\partial_Yu^{(1)}_{st}\|_{L^\infty_{\f{\beta_r}{2}}}+\|u^{(1)}_{st}\|_{L^\infty_{\f{\beta_r}{2}}}
	\leq C|\varepsilon|^\f43|\log c_i|,\\
	&|\varepsilon|^\f13\|\partial_Y^2v^{(1)}_{st}\|_{L^\infty_{\f{\beta_r}{2}}}+\|\partial_Yv^{(1)}_{st}\|_{L^\infty_{\f{\beta_r}{2}}}+\|v^{(1)}_{st}\|_{L^\infty_{\f{\beta_r}{2}}}\leq C|\varepsilon|^\f53|\log c_i| .
\end{align*}
In particular, we have 
\begin{align*}
	\|\rho_{st}^{(1)}\|_{\mathcal Z}+|\varepsilon|^\f23\|u_{st}^{(1)}\|_{\tilde{\mathcal X}}+\|v_{st}^{(1)}\|_{\tilde{\mathcal X}}\leq C|\varepsilon|^\f53|\log c_i|,
\end{align*}
and 
\begin{align*}
	|u_{st}^{(1)}(0)|\leq C|\varepsilon|^{\f43}|\log c_i|, \quad|v_{st}^{(1)}(0)|\leq C|\varepsilon|^\f53|\log c_i|.
\end{align*}

Recall that for $j\ge 2$, $(\rho^{(j)},u^{(j)},v^{(j)})=(\rho^{(j)}_{os}+\rho^{(j)}_{st},u^{(j)}_{os}+u^{(j)}_{st},v^{(j)}_{os}+u^{(j)}_{st})$. By a similar calculation as in the proof of  Proposition \ref{thm: slow-cns},  it holds that for any $j\geq 2$,
\begin{align*}
	&\|\rho^{(j)}\|_{\mathcal Z}\leq C|\e|^{-\f23}(|\varepsilon|^\f13|\log c_i|^4)^{j-2}\|v_{st}^{(1)}\|_{\tilde{\mathcal X}}|\log c_i|^4\leq C |\varepsilon||\log c_i|^5\left(|\varepsilon|^\f13|\log c_i|^4\right)^{j-2},\\
	&\|u^{(j)}\|_{\tilde{\mathcal X}}\leq C|\e|^{-1}(|\varepsilon|^\f13|\log c_i|^4)^{j-2}\|v_{st}^{(1)}\|_{\tilde{\mathcal X}}|\log c_i|^4\leq C|\varepsilon|^\f23|\log c_i |^5\left(|\varepsilon|^\f13|\log c_i|^4\right)^{j-2},\\
	&\|v^{(j)}\|_{\tilde{\mathcal X}}\leq C|\e|^{-\f13}(|\varepsilon|^\f13|\log c_i|^4)^{j-2}\|v_{st}^{(1)}\|_{\tilde{\mathcal X}}|\log c_i|^4\leq C |\varepsilon|^{\f43}|\log c_i|^5\left(|\varepsilon|^\f13|\log c_i|^4\right)^{j-2},
\end{align*}
and 
\begin{align*}
	|u^{(j)}(0)|\leq C\left(|\varepsilon|^\f13|\log c_i|^4\right)^{j-2}|\varepsilon|^\f23|\log c_i|^5,\quad |v^{(j)}(0)|\leq C\left(|\varepsilon|^\f43|\log c_i|^4\right)^{j-2}|\varepsilon|^\f43|\log c_i|^5.
\end{align*}
Then we define
\begin{align*}
     	\rho^f=\sum_{j=1}^{+\infty}\rho^{(j)},\quad u^f=\sum_{j=1}^{+\infty}u^{(j)},\quad v^f=\sum_{j=1}^{+\infty}v^{(j)},
     \end{align*}
 which satisfies the desired properties by  Proposition \ref{pro: (rho, u,v)-OS-homo} and the above estimates.
 \end{proof}

\section{Dispersion relation and T-S waves}

In this section, we construct the solution $(\rho,u,v)$ to the homogeneous linearized compressible NS system:
\begin{align}
	\left\{
	\begin{aligned}
		&\mathrm i\alpha(U_s-c)\rho+\mathrm{div}_\alpha(u,v)=0,\\
		&\sqrt{\nu}(\partial_Y^2-\alpha^2)u+\mathrm i\alpha\lambda\sqrt{\nu}\mathrm{div}_\alpha(u,v)-\mathrm i\alpha(U_s-c)u\\
		&\qquad-(\mathrm i\alpha m^{-2}+\sqrt{\nu}\partial_Y^2 U_s)\rho-v\partial_Y U_s=0,\\
		&\sqrt{\nu}(\partial_Y^2-\alpha^2)v+\lambda\sqrt{\nu}\partial_Y\mathrm{div}_\alpha(u,v)-\mathrm i\alpha(U_s-c)v-m^{-2}\partial_Y\rho=0,
	\end{aligned}
	\right.
\end{align}
together with non-slip boundary condition
\begin{align*}
	u(0)=v(0)=\lim_{Y\to\infty}u(Y)=\lim_{Y\to\infty}v(Y)=0.
\end{align*}

In Proposition \ref{thm: slow-cns} and Proposition \ref{thm:fast-cns}, we have constructed two solutions $(\rho^s,u^s,v^s)$ and $(\rho^f,u^f,v^f)$ to the homogeneous linearized compressible NS system with the boundary condition at the infinity
\begin{align*}
	\lim_{Y\to\infty}u^s(Y)=\lim_{Y\to\infty}v^s(Y)=\lim_{Y\to\infty}u^f(Y)=\lim_{Y\to\infty}v^f(Y)=0.
\end{align*}
However, these two solutions do not match non-slip boundary condition. To this end, we find the solution with the form
\begin{align*}
	(\rho,u,v)=C_s(\rho^s,u^s,v^s)+C_f(\rho^f,u^f,v^f)
\end{align*}
with 
\begin{align*}
	C_su^s(0)+C_fu^f(0)=0\text{ and }C_sv^s(0)+C_fv^f(0)=0.
\end{align*}
The existence of $(C_s,C_f)$ is guaranteed by the following dispersion relation:
\begin{align}\label{eq:dr-vel}
	\frac{u^s(0)}{v^s(0)}=\frac{u^f(0)}{v^f(0)}.
\end{align}

First of all, to ensure the existence of solutions $(\rho^s,u^s,v^s)$ and $(\rho^f,u^f,v^f)$, we require $(c,\alpha)\in \mathbb H$, where we recall that
\begin{align*}
	\mathbb H=\mathbb H_1\cap\{(\alpha,c)\in\mathbb C^2:|\alpha|\sim|c|\sim|\varepsilon|^\f13, c_0|\varepsilon|^\f13\leq c_i\leq c_1|\varepsilon|^\f13 \text{ with }0<c_0<c_1\ll1\}
\end{align*}
with 
\begin{align*}
	\mathbb H_1=\big\{(\alpha,c)\in\mathbb C^2:(|\alpha|+|c|)|\log c_i|\ll1,\alpha_r\geq C_0|\alpha_i|,c_i\geq c_0|\varepsilon|^\f13\text{ and }0<c_0\ll1\ll C_0\big\}.
\end{align*}

According to Proposition \ref{thm: slow-cns} and Proposition \ref{thm:fast-cns},  we find that \eqref{eq:dr-vel} is equivalent to 
\begin{align}\label{eq:dr-beta}
	c-\frac{\beta}{U_s'(0)(1-m^2)} +\frac{U_s'(0)\tilde{\mathcal A}(2,0)}{\tilde{\mathcal A}(1,0)}+\mathcal R_d(\alpha,c)=0.
\end{align}
Notice that 
\begin{align*}
	A_{\infty}^\f12=(1-m^2(1-c)^2)^\f12=(1-m^2)^\f12+\mathcal O(|c|),
\end{align*}
which along with  the definition of $\beta=\alpha A^{\f12}_{\infty}$ shows that \eqref{eq:dr-beta} is equivalent to 
\begin{align}\label{eq:dr-alpha}
	c-\frac{\alpha}{U_s'(0)(1-m^2)^\f12} +\frac{U_s'(0)\tilde{\mathcal A}(2,0)}{\tilde{\mathcal A}(1,0)}+\mathcal R(\alpha,c)=0,
\end{align}
where $\mathcal R(\alpha,c)$ is smooth on $c$ and analytic on $\alpha$. Moreover, it holds that for any $(\alpha,c)\in\mathbb H$,
\begin{align}
	\begin{split}
		&|\mathcal R(\alpha,c)|\leq C|\varepsilon|^\f23|\log|\varepsilon||,\quad|\partial_{\alpha_r}\mathcal R(\alpha,c)|+|\partial_{\alpha_i}\mathcal R(\alpha,c)|\leq C,\\
		&|\partial_{c_r}\mathcal R(\alpha,c)|+|\partial_{c_i}\mathcal R(\alpha,c)|\leq C|\varepsilon|^\f13|\log|\varepsilon||.
	\end{split}
\end{align}

\begin{proposition}
	Let $m<1$ and $0<\nu\leq \nu(m)\ll1$. Suppose that $(c,\alpha)\in\mathbb H$.  There exist $1\ll A_0<B_0$ such that for any $\alpha\in\mathbb G=\{\alpha\in \mathbb C:\alpha_r=A\nu^\f18 \text{ with }A\in(A_0,B_0), |\alpha_i|\leq \gamma\alpha_r \text{ with }\gamma\ll1 \}$, the solution $c(\alpha)$ to \eqref{eq:dr-alpha} satisfies 
	\begin{align*}
	c_r=U'_s(0)^{-1}(1-m^2)^{-\f12}\al_r+o(\al),\quad	0<c_i-U_s'(0)^{-1}(1-m^2)^{-\f12}\alpha_i\sim A^{-1}\nu^\f18.
	\end{align*}
	In particular, for any $\alpha_r=A\nu^\f18$ with $A\in(A_0,B_0)$, there holds
	\begin{itemize}
		\item there exists  $\alpha^0=\alpha_r+\mathrm i\alpha^0_i\in\mathbb G$ with $\alpha_i^0<0$ such that 
		      \begin{align*}
		      	\alpha^0c(\alpha^0)\in\mathbb R\text{ and }|\alpha_i^0|\sim\nu^\f18.
		      \end{align*}
		\item there exists  $c(\alpha_r)$ such that $c_i(\alpha_r)>0$ and $|c_i(\alpha_r)|\sim\nu^\f18$.   
	\end{itemize}
\end{proposition}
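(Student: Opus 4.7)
The plan is to solve the dispersion relation \eqref{eq:dr-alpha} by a two-step implicit function theorem argument after rescaling. First I would set $\hat c = \nu^{-1/8}c$ and $\hat\alpha = \nu^{-1/8}\alpha$, so that the constraint $(\alpha,c)\in\mathbb H$ translates into $\hat\alpha,\hat c$ bounded and bounded away from $0$. Since $|\varepsilon|^{1/3}\sim \nu^{1/8}$ in this regime and $\kappa\eta^{La}(0)\sim \hat c\,U_s'(0)^{1/3}$ is of order $1$, the Airy-function estimates of Section~4 show that $\hat\tau(\hat c):=\nu^{-1/8}\,\tilde{\mathcal A}(2,0)/\tilde{\mathcal A}(1,0)$ is a smooth, bounded, complex-valued function of $\hat c$, with bounded derivatives. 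The dispersion relation then takes the form
$$\hat c + U_s'(0)\,\hat\tau(\hat c) = \frac{\hat\alpha}{U_s'(0)(1-m^2)^{1/2}} + \hat{\mathcal R}(\hat\alpha,\hat c),$$
where $\hat{\mathcal R}=\nu^{-1/8}\mathcal R = O(\nu^{1/8}|\log\nu|)$ with derivatives $O(\nu^{1/8}|\log\nu|)$. Assuming the nondegeneracy $1+U_s'(0)\hat\tau'(\hat c)\neq 0$ (to be verified below), the implicit function theorem produces a unique $C^1$ solution $\hat c=\hat c(\hat\alpha)$, with the expected expansion
$$\hat c(\hat\alpha) = \frac{\hat\alpha}{U_s'(0)(1-m^2)^{1/2}} - U_s'(0)\,\hat\tau\!\left(\tfrac{\hat\alpha}{U_s'(0)(1-m^2)^{1/2}}\right) + O(\nu^{1/8}|\log\nu|).$$

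From this master expansion both sub-statements follow by selecting $\hat\alpha$ appropriately. For the temporal mode \eqref{eq:T-mode}, I would take $\hat\alpha=A\in(A_0,B_0)$ real and read off $c_r=U_s'(0)^{-1}(1-m^2)^{-1/2}\alpha_r+o(\alpha)$ and $c_i = -\nu^{1/8}U_s'(0)\,\mathrm{Im}\,\hat\tau(\cdot)+o(\nu^{1/8})$, choosing $A_0<B_0$ to be the compressible marginal stability interval on which $-\mathrm{Im}\,\hat\tau\sim A^{-1}>0$. For the spatial mode \eqref{eq:S-mode}, I fix $\hat\alpha_r=A$ and study $G(\hat\alpha_i):=\mathrm{Im}(\hat\alpha\,\hat c(\hat\alpha))$. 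By the temporal calculation, $G(0)=A\,\mathrm{Im}\,\hat c(A)\sim 1$ is positive, while a direct differentiation gives
$$\partial_{\hat\alpha_i}G(0) = \mathrm{Re}\,\hat c(A) + A\,\partial_{\hat\alpha_i}\mathrm{Im}\,\hat c|_{\hat\alpha_i=0} = \frac{A}{U_s'(0)(1-m^2)^{1/2}} + O(\nu^{1/8}|\log\nu|),$$
which is strictly positive for $A\in(A_0,B_0)$. A second application of the implicit function theorem then yields a unique small $\hat\alpha_i^0$ with $G(\hat\alpha_i^0)=0$, necessarily negative, with $|\hat\alpha_i^0|\sim A^{-2}$, so that $|\alpha_i^0|\sim \nu^{1/8}$; the constraint $|\alpha_i^0|\leq \gamma\alpha_r$ is automatic once $A_0$ is chosen so that $A_0^{-1}\ll \gamma$.

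The hard part will be verifying the two sign conditions on the rescaled Tietjens function $\hat\tau$: the nondegeneracy $1+U_s'(0)\hat\tau'(\hat c)\neq 0$ (which is needed for the first IFT step), and the sign $-\mathrm{Im}\,\hat\tau(\hat c)\sim A^{-1}>0$ on the relevant interval (which is what produces the instability and sets the size of $c_i$). Both rely on the explicit Airy formulas
$\tilde{\mathcal A}(1,0)=-A(0)\!\int_0^\infty\!\mathrm{Ai}(e^{\mathrm i(\pi/6-\theta_0)}\kappa\eta^{La}(Z))\,dZ$ and $\tilde{\mathcal A}(2,0)=-\!\int_0^\infty \tilde{\mathcal A}(1,Z)\,dZ$ together with the sector analysis of Lemma~\ref{lem:airy-0}: because $|\theta_0|<\pi/100$ and $|\mathrm{Im}(\kappa\eta^{La}(0))|\ll 1$, the argument $e^{\mathrm i(\pi/6-\theta_0)}\kappa\eta^{La}(0)$ stays in the narrow sector where the Airy ratio has the negative imaginary part required for instability. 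In the incompressible limit $m\to 0$ this is exactly the classical Tietjens-function branch computation used in \cite{GGN-DMJ}; in the subsonic regime the Mach number enters only through the overall factor $(1-m^2)^{1/2}$ and through $U_s'(0)$ in the definition of $\kappa$, so the qualitative sign is preserved by continuity in $m\in[0,1)$, and the chosen $A_0,B_0$ are uniform in $m$ up to $m(m)\to 1$. Once this analysis is carried out, the marginal-stability interval $(A_0,B_0)$ is nonempty and uniform in $\nu$, and both sub-statements of the proposition follow.
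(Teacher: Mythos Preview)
Your overall strategy is correct and close in spirit to the paper's: both reduce to analyzing the Airy ratio $\tilde{\mathcal A}(2,0)/\tilde{\mathcal A}(1,0)$ and both obtain the spatial mode by a one-parameter search in $\alpha_i$. The paper, however, organizes things slightly differently. Existence of $c(\alpha)$ is established separately (Proposition~\ref{prop:diss-existence}) via a contraction iteration $c^{(j+1)}=U_s'(0)^{-1}(1-m^2)^{-1/2}\alpha-\mathbb A(c^{(j)})-\mathcal R(\alpha,c^{(j)})$, using that $\partial_c\mathbb A=O(A^{-4/3})$; the present proposition then reads off the asymptotics. For the spatial mode the paper does not run a second IFT but simply solves the algebraic condition $c_i/c_r=-\alpha_i/\alpha_r$ to get $\gamma_0=C_{Ai,i}/(2U_s'(0)^{-1}(1-m^2)^{-1/2}A\nu^{1/8}+C_{Ai,r})\sim A^{-2}$. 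Your route is equivalent but note two slips: (i) your computation of $\partial_{\hat\alpha_i}G(0)$ drops a term---since to leading order $\partial_{\hat\alpha_i}\mathrm{Im}\,\hat c\approx U_s'(0)^{-1}(1-m^2)^{-1/2}$, the correct value is $\approx 2A\,U_s'(0)^{-1}(1-m^2)^{-1/2}$, which is harmless for the sign; (ii) the IFT needs a seed solution, which is what the contraction in Proposition~\ref{prop:diss-existence} provides.

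There is one genuine misidentification of the regime that you should correct. You write that $\kappa\eta^{La}(0)\sim\hat c\,U_s'(0)^{1/3}$ is ``of order~$1$''. In fact $|\varepsilon|^{1/3}=A^{-1/3}\nu^{1/8}$, so $|\kappa\eta^{La}(0)|\sim|\varepsilon|^{-1/3}|c|\sim A^{4/3}$, which is \emph{large} because $A\in(A_0,B_0)$ with $A_0\gg 1$. This is exactly why Lemma~\ref{lem:airy-0} is applicable, and it is the large-argument asymptotic there,
\[
\frac{\tilde{\mathcal A}(2,0)}{\tilde{\mathcal A}(1,0)}=-e^{\mathrm i(\pi/4-\theta_0/2)}|\varepsilon|^{1/2}c_r^{-1/2}+o(|\varepsilon|^{1/2}c_r^{-1/2}),
\]
that gives both the size $|\hat\tau|\sim A^{-1}$ and the crucial sign $\mathrm{Im}\,\hat\tau<0$ directly from the phase $e^{\mathrm i\pi/4}$. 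In particular the ``continuity in $m$'' argument you propose is unnecessary and misdirected: the Airy term is essentially $m$-independent (the only $m$-dependence is through $A(0)=1-m^2c^2=1+O(|c|^2)$), so the sign and nondegeneracy follow from the same Airy computation as in the incompressible case, not by perturbation from it. Likewise the nondegeneracy $1+U_s'(0)\hat\tau'\neq 0$ is not hard once you are in the large-argument regime: from \eqref{eq:d-cr-ci} and Lemma~\ref{lem:airy-0} one has $1+U_s'(0)\hat\tau'=Ai\,\mathcal A(2)/\mathcal A(1)^2+O(|c|)=1+O(A^{-2})$.
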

\begin{proof}
Since $|\alpha|\sim A\nu^\f18$ and $|\varepsilon|=\nu^\f12|\alpha|^{-1}$, we know that $|\varepsilon|\sim A^{-1}\nu^\f38$.
We first rewrite \eqref{eq:dr-alpha} as follows
\begin{align*}
	c-\frac{\alpha}{U'_s(0)(1-m^2)^\f12}=-\frac{U_s'(0)\tilde{\mathcal A}(2,0)}{\tilde{\mathcal A}(1,0)}-\mathcal R(\alpha,c)=C_{Ai,r}+\mathrm iC_{Ai,i}.
\end{align*}
By Lemma \ref{lem:airy-0}, we know that 
\begin{align*}
	0<C_{Ai,r}\sim |\varepsilon|^{\f13}|\varepsilon^{-\f13}c_r|^{-\f12}=A^{-\f12}\nu^\f{3}{16}c_r^{-\f12}
\end{align*}
and
\begin{align*}
	0<C_{Ai,i}\sim |\varepsilon|^{\f13}|\varepsilon^{-\f13}c_r|^{-\f12}=A^{-\f12}\nu^\f{3}{16}c_r^{-\f12}.
\end{align*}
On the other hand, for large enough $A_0$ and $B_0$, we have that for any $\alpha_r\in(A_0\nu^\f18,B_0\nu^\f18)$,
\begin{align*}
	c_r=U'_s(0)^{-1}(1-m^2)^{-\f12}A\nu^\f18+C_{Ai,r}\sim A\nu^{\f18}+A^{-1}\nu^\f14c_r^{-\f12},
\end{align*}
which implies that $c_r\sim A\nu^\f18.$ Moreover, we obtain 
\begin{align*}
	0<c_i-U_s'(0)^{-1}(1-m^2)^{-\f12}\alpha_i\sim A^{-1}\nu^\f18.
\end{align*}
In particular, for the case of $\alpha_i=0$, we have 
\begin{align*}
	0<c_i\sim A^{-1}\nu^{\f18}.
\end{align*}
Now we suppose that $\alpha^0=\alpha_r+\mathrm i\alpha_i$ with $\alpha_i=-\gamma_0\alpha_r$. Then 
    \begin{align*}
    	c_i=-\gamma_0U'_s(0)^{-1}(1-m^2)^{-\f12}A\nu^\f18+C_{Ai,i}.
    \end{align*}
    On the other hand, we notice that $c(\alpha)\alpha\in\mathbb R$ is equivalent to 
    \begin{align*}
    	\frac{c_i}{c_r}=-\frac{\alpha_i}{\alpha_r}.
    \end{align*}
	Then we have 
	\begin{align*}
		\gamma_0=\frac{-\gamma_0U'_s(0)^{-1}(1-m^2)^{-\f12}A\nu^\f18+C_{Ai,i}}{U'_s(0)^{-1}(1-m^2)^{-\f12}A\nu^\f18+C_{Ai,r}},
	\end{align*}
	which implies 
	\begin{align*}
		\gamma_0=\frac{C_{Ai,i}}{2U'_s(0)^{-1}(1-m^2)^\f12A\nu^\f18+C_{Ai,r}}>0.
	\end{align*}
	Moreover, we notice that 
	\begin{align*}
		|\gamma_0|\sim A^{-2}\ll1.
	\end{align*}
	By taking $\alpha^0=A\nu^\f18-\mathrm i\gamma_0A\nu^\f18$, we finish the proof.
	\end{proof}

\begin{proposition}\label{prop:diss-existence}
	Let $m<1$ and $0<\nu\leq \nu(m)\ll1$. Suppose that $(c,\alpha)\in\mathbb H$.  There exist $1\ll A_0<B_0$ such that for any $\alpha\in\mathbb G$, there exists a solution $c(\alpha)$ to \eqref{eq:dr-alpha} with $(c(\alpha),\alpha)\in\mathbb H$. Moreover, $c(\alpha)$ depends on $\alpha$ continuously .
\end{proposition}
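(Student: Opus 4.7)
The plan is to recast the dispersion relation \eqref{eq:dr-alpha} as a fixed point equation for $c$ and apply a contraction mapping argument on a carefully chosen closed subset of $\mathbb{H}$. Concretely, for fixed $\alpha \in \mathbb{G}$, define
\begin{align*}
T_\alpha(c) := \frac{\alpha}{U_s'(0)(1-m^2)^{1/2}} - \frac{U_s'(0)\tilde{\mathcal{A}}(2,0)(c)}{\tilde{\mathcal{A}}(1,0)(c)} - \mathcal{R}(\alpha,c),
\end{align*}
so that solving \eqref{eq:dr-alpha} is equivalent to finding a fixed point $c = T_\alpha(c)$. From the quantitative version of the previous (unlabeled) proposition, a natural candidate for the center of our ball is the first Picard iterate $c_\ast = T_\alpha(c_0)$ starting from $c_0 = \alpha/(U_s'(0)(1-m^2)^{1/2})$, for which $c_{\ast,r} \sim A\nu^{1/8}$ and $0 < c_{\ast,i} - U_s'(0)^{-1}(1-m^2)^{-1/2}\alpha_i \sim A^{-1}\nu^{1/8}$.

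First I would choose $K \subset \mathbb{H}$ to be the closed disk of radius $r = C_\ast |\varepsilon|^{2/3}|\log|\varepsilon||$ around $c_\ast$, where $C_\ast$ is a large absolute constant. The bounds $|\mathcal{R}(\alpha,c)| \leq C|\varepsilon|^{2/3}|\log|\varepsilon||$ together with the Airy asymptotics from Lemma \ref{lem:airy-0} (which yield $|\tilde{\mathcal{A}}(2,0)/\tilde{\mathcal{A}}(1,0)| \sim |\varepsilon|^{1/3}|c|^{-1/2}$ of order $A^{-1/2}\nu^{3/16}c_r^{-1/2}$) ensure $T_\alpha(K) \subset K$, and the constraints $c_i \geq c_0|\varepsilon|^{1/3}$, $\alpha_r \geq C_0|\alpha_i|$, $|c|\sim|\varepsilon|^{1/3}$ defining $\mathbb{H}$ are preserved by elements of $K$ for $A_0$ sufficiently large and $\nu$ sufficiently small.

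Next I would verify that $T_\alpha$ is a contraction on $K$. The derivatives are controlled by $|\partial_c \mathcal{R}(\alpha,c)| \leq C|\varepsilon|^{1/3}|\log|\varepsilon||$ and, more delicately, by the derivative of the Airy quotient with respect to $c$. Since $\tilde{\mathcal{A}}(j,0)$ depends on $c$ only through the base point $\kappa\eta^{La}(0)$, and since $\eta^{La}(0)$ depends smoothly on $c$ (via $Y_c$ with $U_s(Y_c) = c_r$ and the imaginary shift $-U_s'(Y_c)^{-1}c_i$), the chain rule together with Lemma \ref{lem:airy-0} gives a derivative bound of size $\mathcal{O}(|\varepsilon|^{1/3}|c|^{-3/2}\cdot|\varepsilon|^{-1/3}) = \mathcal{O}(|c|^{-3/2})$ times powers of $|\varepsilon|^{1/3}$ that render the product strictly less than $1/2$ on $K$. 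Thus $T_\alpha$ is $1/2$-Lipschitz on $K$, and the contraction mapping theorem delivers a unique fixed point $c(\alpha) \in K \subset \mathbb{H}$.

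For continuous dependence, I would observe that $\alpha \mapsto T_\alpha$ is continuous in operator norm on $K$, uniformly in $c \in K$, since both the linear term $\alpha/(U_s'(0)(1-m^2)^{1/2})$ and $\mathcal{R}(\alpha,c)$ are continuous (in fact analytic) in $\alpha$. A standard argument then shows the fixed point $c(\alpha)$ depends continuously on $\alpha \in \mathbb{G}$: if $\alpha_n \to \alpha$, the uniform contraction gives $|c(\alpha_n) - c(\alpha)| \leq 2|T_{\alpha_n}(c(\alpha)) - T_\alpha(c(\alpha))| \to 0$. The main obstacle in this plan is the derivative estimate for the Airy quotient $\tilde{\mathcal{A}}(2,0)/\tilde{\mathcal{A}}(1,0)$ with respect to $c$: one must track carefully how the modified Langer transform \eqref{eq:Langer-mod}--\eqref{eq:Langer_r-mod} responds to variations in $c$ through the cut-off and the critical point $Y_c$, and ensure that the gain in $|\varepsilon|$ outweighs the loss from $|c|^{-3/2}$ in the Airy asymptotic. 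This, however, is exactly compensated by the bounds encoded in Lemma \ref{lem:airy-0} and the size $|c|\sim|\varepsilon|^{1/3}$ guaranteed by the definition of $\mathbb{H}$.
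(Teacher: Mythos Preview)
Your overall approach---recasting \eqref{eq:dr-alpha} as the fixed point problem $c=T_\alpha(c)$ and running a Picard iteration---is exactly what the paper does. The paper also starts from $c^{(0)}=U_s'(0)^{-1}(1-m^2)^{-1/2}\alpha$, iterates, and shows the sequence is Cauchy; it then invokes the nonvanishing of the Jacobian of $(F_r,F_i)$ in $(c_r,c_i)$ to upgrade to continuous dependence on $\alpha$, while you get the same conclusion from the uniform contraction directly. So the architecture is fine.

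The gap is in your contraction estimate. The derivative of the Airy quotient $U_s'(0)\tilde{\mathcal A}(2,0)/\tilde{\mathcal A}(1,0)$ with respect to $c$ is \emph{not} small by naive asymptotic differentiation: if you just differentiate the leading behavior $\sim|\varepsilon|^{1/2}|c|^{-1/2}$ you obtain $\sim|\varepsilon|^{1/2}|c|^{-3/2}$, which for $|c|\sim|\varepsilon|^{1/3}$ is $\mathcal O(1)$, not $<1/2$. Your expression ``$\mathcal O(|c|^{-3/2})$ times powers of $|\varepsilon|^{1/3}$'' does not supply the missing smallness. What actually makes $T_\alpha$ a contraction is a cancellation: writing $z=\kappa\eta^{La}(0)$, one has
\[
\partial_z\frac{\mathcal A(2,z)}{\mathcal A(1,z)}=1-\frac{Ai(e^{i(\pi/6-\theta_0)}z)\,\mathcal A(2,z)}{\mathcal A(1,z)^2},
\]
and the last ratio equals $1+\mathcal O(|z|^{-3/2})$ by Lemma~\ref{lem:airy-0}. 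Thus the $c$-derivative of $U_s'(0)\tilde{\mathcal A}(2,0)/\tilde{\mathcal A}(1,0)$ is $\mathcal O(|\kappa\eta^{La}(0)|^{-3/2})+\mathcal O(|c|)$, which is $\mathcal O(A^{-4/3})$ (in fact $\mathcal O(A^{-2})$) in the regime $|\kappa\eta^{La}(0)|\sim A^{4/3}$. This is the estimate the paper isolates in \eqref{eq:d-cr-ci} and the line following it, and it is what drives the contraction with rate $CA^{-4/3}\ll 1$. You should make this cancellation explicit; without it the Lipschitz bound is $\mathcal O(1)$ and the fixed point argument does not close.
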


\begin{proof}
 We introduce 
 $$
 \mathbb F(\alpha_r,\alpha_i;c_r,c_i)=\big(F_r(\alpha_r,\al_i;c_r,c_i),F_i(\alpha_r,\alpha_i ;c_r,c_i)\big),
$$
  where
	\begin{align*}
		&F_r(\alpha_r, \al_i ;c_r,c_i)=\mathrm{Re}\left(c-U_s'(0)^{-1}(1-m^2)^{-\f12}\alpha+\frac{U_s'(0)\tilde{\mathcal A}(2,0)}{\tilde{\mathcal A}(1,0)}+\mathcal R(\alpha,c)\right)\\
		&F_i(\alpha_r,\al_i;c_r,c_i)=\mathrm{Im}\left(c-U_s'(0)^{-1}(1-m^2)^{-\f12}\alpha+\frac{U_s'(0)\tilde{\mathcal A}(2,0)}{\tilde{\mathcal A}(1,0)}+\mathcal R(\alpha,c)\right).
	\end{align*}
	
We need to show the Jacobian determinant of $F_r$ and $F_i$ does not equal zero for $(\alpha,c)\in \mathbb{ H}_2=\{(\alpha,c)\in \mathbb H:\alpha\in \mathbb G\}$. For this purpose, by Lemma \ref{lem:airy-0} and the definition of $\tilde{\mathcal A}(j,Y)$, we have that for $j=1,2$,
	\begin{align*}
		\frac{U_s'(0)\tilde{\mathcal A}(2,0)}{\tilde{\mathcal A}(1,0)}=\kappa^{-1}U_s'(0)\frac{\mathcal A(2,\kappa\eta^{La}(0))}{\mathcal A(1,\kappa\eta^{La}(0))}+\mathcal O(|c|^2).
	\end{align*}
	By our definition of $\eta^{La}(Y)$, we know that 
	\begin{align*}
		\eta^{La}(0;c_r,c_i)=-U_s'(0)^{-1}(c_r+\mathrm ic_i)+\mathcal O(|c|^2),
	\end{align*}
	and smoothly depends on $c_r$ and $c_i$. Therefore, we obtain 
	\begin{align}\label{eq:d-cr-ci}
		\begin{split}
				&\partial_{c_r}\left(\frac{U_s'(0)\tilde{\mathcal A}(2,0)}{\tilde{\mathcal A}(1,0)}\right)=-1+\frac{Ai(e^{\mathrm i(\frac{\pi}{6}-\theta_0)}\kappa\eta^{La}(0))\mathcal A(2,\kappa\eta^{La}(0))}{\mathcal A(1,\kappa\eta^{La}(0))^2}+\mathcal O(|c|),\\
		&\partial_{c_i }\left(\frac{U_s'(0)\tilde{\mathcal A}(2,0)}{\tilde{\mathcal A}(1,0)}\right)=-\mathrm i +\mathrm i\frac{Ai(e^{\mathrm i(\frac{\pi}{6}-\theta_0)}\kappa\eta^{La}(0))\mathcal A(2,\kappa\eta^{La}(0))}{\mathcal A(1,\kappa\eta^{La}(0))^2}+\mathcal O(|c|).
		\end{split}
	\end{align}
	Thus, we have 
	\begin{align*}
		&\partial_{c_r}F_r(\alpha_r,\al_i;c_r,c_i)=\mathrm{Re}\left(\frac{Ai(e^{\mathrm i(\frac{\pi}{6}-\theta_0)}\kappa\eta^{La}(0))\mathcal A(2,\kappa\eta^{La}(0))}{\mathcal A(1,\kappa\eta^{La}(0))^2}\right)+\mathcal O(|c|),\\
		&\partial_{c_r}F_i(\alpha_r,\al_i;c_r,c_i)=\mathrm{Im}\left(\frac{Ai(e^{\mathrm i(\frac{\pi}{6}-\theta_0)}\kappa\eta^{La}(0))\mathcal A(2,\kappa\eta^{La}(0))}{\mathcal A(1,\kappa\eta^{La}(0))^2}\right)+\mathcal O(|c|),\\
		&\partial_{c_r}F_i(\alpha_r,\al_i;c_r,c_i)=-\mathrm{Im}\left(\frac{Ai(e^{\mathrm i(\frac{\pi}{6}-\theta_0)}\kappa\eta^{La}(0))\mathcal A(2,\kappa\eta^{La}(0))}{\mathcal A(1,\kappa\eta^{La}(0))^2}\right)+\mathcal O(|c|),\\
		&\partial_{c_i}F(\alpha_r,\al_i;c_r,c_i)=\mathrm{Re}\left(\frac{Ai(e^{\mathrm i(\frac{\pi}{6}-\theta_0)}\kappa\eta^{La}(0))\mathcal A(2,\kappa\eta^{La}(0))}{\mathcal A(1,\kappa\eta^{La}(0))^2}\right)+\mathcal O(|c|).
	\end{align*}
Hence, the Jacobian determinant $J(F_r,F_i)(\alpha;c_r,c_i)$ of $F_r$ and $F_i$ satisfies 
	\begin{align*}
		J(F_r,F_i)(\alpha_r,\al_i;c_r,c_i)=&\left|\frac{Ai(e^{\mathrm i(\frac{\pi}{6}-\theta_0)}\kappa\eta^{La}(0))\mathcal A(2,\kappa\eta^{La}(0))}{\mathcal A(1,\kappa\eta^{La}(0))^2}\right|^2+\mathcal O(|c|).
	\end{align*}
And by Lemma \ref{lem:Airy-p1}, we know that for any $(\alpha,c)\in \mathbb H_2$
	\begin{align*}
		\left|\frac{Ai(e^{\mathrm i(\frac{\pi}{6}-\theta_0)}\kappa\eta^{La}(0))\mathcal A(2,\kappa\eta^{La}(0))}{\mathcal A(1,\kappa\eta^{La}(0))^2}\right|\sim1.
	\end{align*}
	Therefore, for $\nu\ll1$, we obtain that for any $(\alpha,c)\in\mathbb  H_2$,
    \begin{align}\label{eq:F-c-Jaco}
    	\begin{split}
    			J(F_r,F_i)(\alpha_r,\al_i;c_r,c_i)=&\left|\frac{Ai(e^{\mathrm i(\frac{\pi}{6}-\theta_0)}\kappa\eta^{La}(0))\mathcal A(2,\kappa\eta^{La}(0))}{\mathcal A(1,\kappa\eta^{La}(0))^2}\right|^2+\mathcal O(|c|)\\
		\geq&C(1-|c|)\geq \frac{1}{2}C>0.
    	\end{split}
    \end{align}
    
	On the other hand, since $F_r,F_i$ are both analytic on $\alpha$, for any fixed $\nu$, we have 
	\begin{align}\label{eq:F-alpha}
		|\partial_\alpha F_r(\alpha;c_r,c_i)|+|\partial_\alpha F_i(\alpha;c_r,c_i)|\leq C,\quad\forall \alpha\in\mathbb G .
	\end{align}
	
	We denote 
	\begin{align*}
		\mathbb A(c^{(j)})=\frac{U_s'(0)\tilde{\mathcal A}(2,0)}{\tilde{\mathcal A}(1,0)}\text{ with taking } c=c^{(j)}.
	\end{align*}
We now show that there exists a $(\al^0,c^0)\in\mathbb H_2$ such that $\mathbb F(\alpha^0;c_r^0,c_i^0)=0$. Let $\alpha=A\nu^\f18$ with $A\gg1$.  We take $c^{(0)}=U_s'(0)^{-1}(1-m^2)^{-\f12}\alpha$, and for $j\geq 1$,
	\begin{align*}
		c^{(j)}=U_s'(0)^{-1}(1-m^2)^{-\f12}\alpha-\mathbb A(c^{(j-1)})-\mathcal R(\alpha,c^{(j-1)}).
	\end{align*}
	By Lemma \ref{lem:airy-0}, we know that
	\begin{align*}
		\mathbb A(c^{(0)})=-A^{-1}e^{\mathrm i\frac{\pi}{4}}\nu^\f18+\mathcal O(A^{-\f43})\nu^\f18,
	\end{align*}
   which implies 
   \begin{align*}
   	&c^{(1)}=\nu^\f18\left(U_s'(0)(1-m^2)^{-\f12}A+A^{-1}e^{\mathrm i \frac{\pi}{4}}\right)+\mathcal O(A^{-\f43})\nu^\f18,\\
   	&\mathrm{Im}(c^{(1)})=\frac{\sqrt{2}}{2}A^{-1}\nu^{\f18}+\mathcal O(A^{-\f43})\nu^{\f18}.
   \end{align*}
   Hence, $(\al,c^{(1)})\in\mathbb H_2$.
   Hence, by the inductive argument and Lemma \ref{lem:airy-0}, we can obtain that for any $j\geq 1$, $(\alpha,c^{(j)})\in\mathbb H_2$. Moreover, again by Lemma \ref{lem:airy-0} and \eqref{eq:d-cr-ci}, we have
   \begin{align*}
   	\left|\partial_{c_r}\left(\frac{U_s'(0)\tilde{\mathcal A}(2,0)}{\tilde{\mathcal A}(1,0)}\right)\right|+\left|\partial_{c_i}\left(\frac{U_s'(0)\tilde{\mathcal A}(2,0)}{\tilde{\mathcal A}(1,0)}\right)\right|\leq CA^{-\f43}.
   \end{align*}
	Therefore, we obtain that for any $j\geq 1$,
	\begin{align*}
		|c^{(j+1)}-c^{(j)}|\leq& CA^{-\f43}|c^{(j)}-c^{(j-1)}|+C|\varepsilon\log|\varepsilon|| |c^{(j)}-c^{(j-1)}|\\
		\leq&CA^{-\f43}|c^{(j)}-c^{(j-1)}|.
	\end{align*}
	Since $A\gg1$, there exists $c^0$ such that $\lim_{j\to\infty}c^{(j)}=c^0$ and $c^0$ satisfies 
		\begin{align*}
		c^{0}=U_s'(0)^{-1}(1-m^2)^{-\f12}\alpha-\mathbb A(c^{0})-\mathcal R(\alpha,c^{(0)}).
	\end{align*}
	Hence,  $(\alpha,c_r^{(0)},c_i^{(0)})$ is a zero point of $\mathbb F(\alpha;c_r,c_i)$, which along with \eqref{eq:F-c-Jaco} and \eqref{eq:F-alpha} implies that for  $\alpha\in\mathbb G$, there exists a unique $c(\alpha)$ solving \eqref{eq:dr-alpha} with $(c(\alpha),\alpha)\in\mathbb H$. Moreover, $c(\alpha)$ depends on $\alpha$ continuously.
	\end{proof}

From the above two propositions, we conclude the following result.

\begin{theorem}
	Let $m<1$. Suppose that $U_s(Y)$ satisfies the structure assumption \eqref{eq:S-A}. Then there exist $0<\nu(m)\ll1$ and $1\ll A_0<B_0$ such that for any $\nu\leq \nu(m)$, the following statement holds true: for any $\alpha\in\mathbb G$, we can find $c(\alpha)$ such that there exists a solution $(\rho,u,v)\in W^{1,\infty}\times W^{2,\infty}\times W^{2,\infty}$ to  the system \eqref{eq:LCNS-Y} and \eqref{BC: u(0)=v(0)=0}. Moreover, such $c,\alpha$ satisfy the following properties
\begin{align*}
c_r=U'_s(0)^{-1}(1-m^2)^{-\f12}\al_r+o(\al),\quad	0<c_i-U_s'(0)^{-1}(1-m^2)^{-\f12}\alpha_i\sim A^{-1}\nu^\f18.
\end{align*}
	In particular, for any $\alpha_r=A\nu^\f18$ with $A\in(A_0,B_0)$ we have the following results:
	\begin{itemize}
		\item there exists  $\alpha^0=\alpha_r+\mathrm i\alpha^0_i\in\mathbb G$ with $\alpha_i^0<0$ such that 
		      \begin{align*}
		      	\alpha^0c(\alpha^0)\in\mathbb R\text{ and }|\alpha_i^0|\sim\nu^\f18.
		      \end{align*}
		\item there exists  $c(\alpha_r)$ such that $c_i(\alpha_r)>0$ and $|c_i(\alpha_r)|\sim\nu^\f18$.   
	\end{itemize}
	
\end{theorem}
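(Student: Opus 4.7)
\medskip

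\noindent\textbf{Proof proposal.} The plan is to assemble the theorem from the building blocks already established. First, for any $(\alpha,c)\in\mathbb H$, I would use Proposition \ref{thm: slow-cns} and Proposition \ref{thm:fast-cns} to obtain two linearly independent decaying solutions $(\rho^s,u^s,v^s)$ (the slow mode, built from $\varphi_{Ray}$) and $(\rho^f,u^f,v^f)$ (the fast mode, built from the Airy profile $\psi_a$) of the homogeneous linearized CNS system satisfying $u^{s,f}(+\infty)=v^{s,f}(+\infty)=0$, together with the asymptotic expansions
\begin{align*}
	u^s(0)&=(1-m^2)U_s'(0)+\mathcal O(|\alpha||\log c_i|^4),\\
	v^s(0)&=-\mathrm i\alpha\Big(-(1-m^2)c+\tfrac{\beta}{U_s'(Y_c)}+\mathcal O((|\alpha||\varepsilon|^{1/3}+|\alpha|^2+|c|^2)|\log c_i|^4)\Big),\\
	u^f(0)&=\partial_Y\psi_a(0)+\mathcal O(|\varepsilon|^{2/3}|\log c_i|^5),\\
	v^f(0)&=-\mathrm i\alpha\big(\psi_a(0)+\mathcal O(|\varepsilon|^{2/3}(|\varepsilon|^{1/3}+|c|+|\alpha|)|\log c_i|^5)\big).
\end{align*}
To match the no-slip boundary condition, I would look for the final solution in the form $(\rho,u,v)=C_s(\rho^s,u^s,v^s)+C_f(\rho^f,u^f,v^f)$. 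Nontrivial coefficients $(C_s,C_f)$ exist precisely when the $2\times 2$ boundary determinant vanishes, i.e.\ when the dispersion relation $u^s(0)/v^s(0)=u^f(0)/v^f(0)$ holds.

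Substituting the expansions above and using $\beta=\alpha A_\infty^{1/2}=(1-m^2)^{1/2}\alpha+\mathcal O(|\alpha||c|)$ together with $\frac{\psi_a(0)}{\partial_Y\psi_a(0)}=\frac{\widetilde{\mathcal A}(2,0)}{\widetilde{\mathcal A}(1,0)}+\mathcal O(|\log c_i|^2(|\varepsilon|^{2/3}+|c|^2+|c||\varepsilon|^{1/3}))$ from Proposition \ref{pro: (psi_a, w_a)}, the dispersion relation reduces to equation \eqref{eq:dr-alpha}:
\begin{equation*}
	c-\frac{\alpha}{U_s'(0)(1-m^2)^{1/2}}+\frac{U_s'(0)\widetilde{\mathcal A}(2,0)}{\widetilde{\mathcal A}(1,0)}+\mathcal R(\alpha,c)=0,
\end{equation*}
where $\mathcal R$ is the controlled remainder. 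Proposition \ref{prop:diss-existence} then provides, for each $\alpha\in\mathbb G$, a unique $c(\alpha)\in\mathbb H$ solving this relation, by combining a contractive iteration (yielding existence) with a nonzero Jacobian estimate based on Lemma \ref{lem:airy-0} and \eqref{eq:F-c-Jaco} (yielding uniqueness and continuous dependence on $\alpha$). Substituting $c(\alpha)$ back produces the desired solution $(\rho,u,v)$ of \eqref{eq:LCNS-Y}--\eqref{BC: u(0)=v(0)=0} with the claimed regularity $W^{1,\infty}\times W^{2,\infty}\times W^{2,\infty}$, and the asymptotic formulas
\begin{equation*}
	c_r=U_s'(0)^{-1}(1-m^2)^{-1/2}\alpha_r+o(\alpha),\quad 0<c_i-U_s'(0)^{-1}(1-m^2)^{-1/2}\alpha_i\sim A^{-1}\nu^{1/8}
\end{equation*}
follow from the size estimates $C_{Ai,r},C_{Ai,i}\sim A^{-1}\nu^{1/8}$ for the Airy correction established in the proof of the first proposition of Section 9.

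The two special cases then come out by choosing $\alpha$ appropriately. For the temporal mode, take $\alpha=\alpha_r=A\nu^{1/8}\in\mathbb R$: the formula above gives $c_i\sim A^{-1}\nu^{1/8}>0$, producing a solution that grows in time. For the spatial mode, one must solve $\alpha c(\alpha)\in\mathbb R$, equivalently $c_i/c_r=-\alpha_i/\alpha_r$, which via the dispersion relation becomes the fixed point equation $\gamma_0=C_{Ai,i}/(2U_s'(0)^{-1}(1-m^2)^{-1/2}A\nu^{1/8}+C_{Ai,r})$; this is solved explicitly (by the computation already displayed before the theorem) yielding $\gamma_0\sim A^{-2}\ll 1$ and $\alpha_i^0=-\gamma_0\alpha_r<0$ with $|\alpha_i^0|\sim\nu^{1/8}$. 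The main obstacle in this plan is not the bookkeeping, but rather ensuring that the solution $c(\alpha)$ of the dispersion relation indeed stays within the admissible regime $\mathbb H$ where all the previous estimates are valid (in particular that $c_i$ stays comparable to $|\varepsilon|^{1/3}$) and that the implicit function argument in Proposition \ref{prop:diss-existence} closes uniformly in $\alpha\in\mathbb G$; this is what forces the precise choice of the window $A\in(A_0,B_0)$ and the smallness condition $\nu\leq\nu(m)$.
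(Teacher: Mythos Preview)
Your proposal is correct and follows essentially the same approach as the paper: the theorem is stated there as an immediate consequence of the two preceding propositions in Section~9, and your write-up simply unpacks that sentence by recalling how the dispersion relation \eqref{eq:dr-alpha} arises from the boundary expansions of Propositions~\ref{thm: slow-cns}--\ref{thm:fast-cns} and Proposition~\ref{pro: (psi_a, w_a)}, then invoking Proposition~\ref{prop:diss-existence} for existence/uniqueness of $c(\alpha)$ and the first proposition of Section~9 for the asymptotics and the temporal/spatial special cases. There is nothing substantively different between your route and the paper's.
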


\appendix

\section{The Airy function}\label{Sec:Airy}

 Let $Ai(y)$ be the Airy function, which is a nontrivial solution of $f''-yf=0$. We have the following asymptotic formula: for $|\arg z|\leq\pi-\delta$ with $\delta>0$ and $|z|\geq M$ for some large $M$,
 \begin{align}\label{eq:airy-decay}
 	\partial^k_z Ai(z)=\frac{1}{2\sqrt{\pi}}z^{-\f14+\f k2}e^{-\f23z^{\f32}}(1+R(z)),\quad R(z)=\mathcal O(z^{-\f32}),\quad k=0, 1, 2.
 \end{align}  
 
 We denote
 \begin{align*}
 	&\mathcal A(1,z)=-\int_z^{+\infty}Ai(e^{\mathrm i(\frac{\pi}{6}-\theta_0)}t)dt,\quad\mathcal A(2,z)=-\int_z^{+\infty}\mathcal A(1,t)dt,\\
 	&\mathcal B(1,z)=\int_0^zAi(e^{\mathrm i(\frac{5\pi}{6}-\theta_0)}t)dt,\quad\mathcal B(2,z)=\int_0^t\mathcal B(1,t)dt,
 \end{align*}
 and
\begin{align*}
&A_0(z)=\int_{\mathrm{e}^{{\mathrm{i}}(\pi/6-\theta_0)}z}^{+\infty}Ai(t)\mathrm{d}t =\mathrm{e}^{{i}(\pi/6-\theta_0)}\int_{z}^{+\infty}Ai(\mathrm{e}^{{\mathrm{i}}(\pi/6-\theta_0)}t)\mathrm{d}t=-e^{\mathrm i(\frac{\pi}{6}-\theta_0)}\mathcal A(1,z),
\end{align*}
where $|\theta_0|\ll1$.\smallskip

The following lemma comes from \cite{CLWZ}.

\begin{lemma}\label{lem:Airy-p1}
There exists $\gamma>0$ and $\delta_0>0$ so that for $\mathrm{Im}z\leq\delta_0$, we have
\begin{align}
\begin{split}
	&\left|\f{A_0'(z)}{A_0(z)}\right|\lesssim1+|z|^{\f12},\quad {\rm Re}\f{A_0'(z)}{A_0(z)}\leq\min(-1/3,-\gamma(1+|z|^{\f12})),
\end{split}
\end{align}
and 
\begin{align*}
	 \Big|\f{A_0''(z)}{A_0(z)}\Big|\le C(1+|z|).
\end{align*}
\end{lemma}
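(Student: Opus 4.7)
The plan is to establish Lemma \ref{lem:Airy-p1} by combining the asymptotic expansion \eqref{eq:airy-decay} with integration by parts in the outer region $|z|\ge M$, and then extending to bounded $|z|\le M$ via a non-vanishing argument plus compactness. Set $\zeta=e^{\mathrm i(\pi/6-\theta_0)}z$ throughout. The geometric fact driving everything is that for $\mathrm{Im}\,z\le\delta_0$ the image $\zeta$ lies in a fixed sector $|\arg\zeta|<\pi/3-\theta_1$ where $\mathrm{Re}\,\zeta^{3/2}\ge\gamma_1|\zeta|^{3/2}$ for some $\gamma_1>0$; this guarantees the exponential decay of $Ai(t)$ along the contour from $\zeta$ to $+\infty$ used to define $A_0$.

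For $|z|\ge M$ with $M$ large, I would first derive the leading asymptotic of $A_0(z)$. Using \eqref{eq:airy-decay} and the identity
\begin{equation*}
\tfrac{d}{dt}\bigl(-t^{-3/4}e^{-\frac{2}{3}t^{3/2}}\bigr)=t^{-1/4}e^{-\frac{2}{3}t^{3/2}}+\tfrac{3}{4}t^{-7/4}e^{-\frac{2}{3}t^{3/2}},
\end{equation*}
a single integration by parts on the contour $[\zeta,+\infty)$ yields
\begin{equation*}
A_0(z)=\tfrac{1}{2\sqrt{\pi}}\,\zeta^{-3/4}e^{-\frac{2}{3}\zeta^{3/2}}\bigl(1+\mathcal O(|\zeta|^{-3/2})\bigr).
\end{equation*}
Dividing into $A_0'(z)=-e^{\mathrm i(\pi/6-\theta_0)}Ai(\zeta)$, with $Ai(\zeta)\sim\frac{1}{2\sqrt{\pi}}\zeta^{-1/4}e^{-\frac{2}{3}\zeta^{3/2}}$, gives
\begin{equation*}
\frac{A_0'(z)}{A_0(z)}=-e^{\mathrm i(\pi/4-3\theta_0/2)}z^{1/2}\bigl(1+\mathcal O(|z|^{-3/2})\bigr).
\end{equation*}
For $z=re^{\mathrm i\phi}$ with $\mathrm{Im}\,z\le\delta_0$ and $r$ large, the constraint forces $\phi\in[-\pi+O(1/r),O(1/r)]$, so the angle $\pi/4-3\theta_0/2+\phi/2$ lies in $[-\pi/4+O(\theta_0),\,\pi/4+O(\theta_0)]$, whose cosine is uniformly bounded below by a fixed $\gamma>0$. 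This simultaneously yields $|A_0'/A_0|\lesssim 1+|z|^{1/2}$ and $\mathrm{Re}(A_0'/A_0)\le -\gamma(1+|z|^{1/2})$. The second estimate follows similarly: $A_0''(z)=-e^{\mathrm i(\pi/3-2\theta_0)}Ai'(\zeta)$ combined with $Ai'(\zeta)\sim-\frac{1}{2\sqrt{\pi}}\zeta^{1/4}e^{-\frac{2}{3}\zeta^{3/2}}$ gives $A_0''/A_0=e^{\mathrm i(\pi/2-3\theta_0)}z(1+o(1))$, hence $|A_0''/A_0|\le C(1+|z|)$ in the outer region.

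For the inner region $|z|\le M$ with $\mathrm{Im}\,z\le\delta_0$, the main obstacle is to confirm that $A_0$ is nonvanishing on the closure of this set so that the quotients make sense and are bounded by compactness. Since all zeros of $Ai$ lie on $\mathbb R_-$, zeros of $A_0'$ occur only along the ray $\{\arg z=5\pi/6+\theta_0\}$, which is safely outside our region for $\theta_0$ small. To exclude zeros of $A_0$ itself, I would combine the outer asymptotic $A_0(z)\sim\tfrac{1}{2\sqrt{\pi}}\zeta^{-3/4}e^{-\frac{2}{3}\zeta^{3/2}}$ on the circle $|z|=M$ (where it is manifestly nonzero) with the argument principle applied to the Jordan domain $\{|z|\le M,\,\mathrm{Im}\,z\le\delta_0\}$, using the exponential decay on the outer boundary and the sign/decay properties of $Ai$ along the segment of $\{\mathrm{Im}\,z=\delta_0\}$, to conclude $A_0$ has no zeros inside once $\delta_0$ is chosen small enough. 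Alternatively one may cite the corresponding non-vanishing result from \cite{CLWZ}. Once nonvanishing is established, $|A_0'/A_0|$ and $|A_0''/A_0|$ are continuous on a compact set, hence bounded by some constant $C$; patching with the outer estimates concludes the proof.
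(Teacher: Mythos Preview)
The paper does not prove this lemma at all: it simply states ``The following lemma comes from \cite{CLWZ}'' and moves on. So there is no proof in the paper to compare against; your proposal is an attempt to supply what the paper defers to a reference.

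Your outer-region argument ($|z|\ge M$) is essentially correct. The integration by parts giving $A_0(z)\sim\frac{1}{2\sqrt{\pi}}\zeta^{-3/4}e^{-\frac{2}{3}\zeta^{3/2}}$ is standard, and your angular analysis showing that $\pi/4-3\theta_0/2+\phi/2$ stays in a sector where cosine is bounded below is the right mechanism for the sign of $\mathrm{Re}(A_0'/A_0)$.

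There is, however, a real gap in the inner region $|z|\le M$. Nonvanishing of $A_0$ plus compactness gives you that $|A_0'/A_0|$ and $|A_0''/A_0|$ are bounded, which suffices for the first and third estimates. But it does \emph{not} give the sign condition $\mathrm{Re}(A_0'/A_0)\le -1/3$. Boundedness is symmetric; a strict negative upper bound is not. You would need an additional argument that $\mathrm{Re}(A_0'/A_0)<0$ throughout the compact region (for instance via a monotonicity or ODE argument for $|A_0|^2$ along suitable rays, or a more careful contour computation), and then compactness would yield $\mathrm{Re}(A_0'/A_0)\le -c_0$ for some $c_0>0$. Even then, the specific constant $-1/3$ in the lemma is not recovered by a soft compactness step; it presumably comes from an explicit evaluation or sharper estimate in \cite{CLWZ}. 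If you are content to prove the lemma with $-1/3$ replaced by some unspecified $-c_0>0$ (which is all the paper ever uses), you still owe the strict negativity argument in the inner region. As written, your last paragraph skips this entirely.
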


\begin{lemma}\label{lem:pri-Airy-decay}
Let $\delta_0$ be the constant in Lemma \ref{lem:Airy-p1} and $|\theta_0|\ll1$.  Assume that $\mathrm{Im}z\leq\delta_0$ and $|z|\geq M$. Then it holds that
\begin{align*}
	&\mathcal A(1,z)=-e^{-\mathrm i(\frac{\pi}{6}-\theta_0)}(e^{\mathrm i(\frac{\pi}{6}-\theta_0)}z)^{-\f34}e^{-\f23(e^{\mathrm i(\frac{\pi}{6}-\theta_0)}z )^\f32}(1+\mathcal R_1(z)),\\
	&\mathcal A(2,z)=e^{-\mathrm i(\frac{\pi}{3}-2\theta_0)}(e^{\mathrm i(\frac{\pi}{6}-\theta_0)}z)^{-\f54}e^{-\f23(e^{\mathrm i(\frac{\pi}{6}-\theta_0)}z )^\f32}(1+\mathcal R_2(z)),\\
	&\mathcal B(1,z)=-e^{-\mathrm i(\frac{5\pi}{6}-\theta_0)}(e^{\mathrm i(\frac{5\pi}{6}-\theta_0)}z)^{-\f34}e^{-\f23(e^{\mathrm i(\frac{5\pi}{6}-\theta_0)}z )^\f32}(1+\mathcal R_3(z)),\\
	&\mathcal B(2,z)=e^{\mathrm i(\frac{\pi}{3}+2\theta_0)}(e^{\mathrm i(\frac{5\pi}{6}-\theta_0)}z)^{-\f54}e^{-\f23(e^{\mathrm i(\frac{5\pi}{6}-\theta_0)}z )^\f32}(1+\mathcal R_4(z)),
\end{align*}
where $\mathcal R_i(z)=\mathcal O(z^{-\f32})$ for $i=1,2,3,4.$
\end{lemma}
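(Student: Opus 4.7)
\medskip

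\noindent\textit{Proof plan for Lemma \ref{lem:pri-Airy-decay}.} The plan is a direct computation based on substituting the asymptotic formula \eqref{eq:airy-decay} into the defining integrals for $\mathcal A(j,z)$ and $\mathcal B(j,z)$, then identifying explicit antiderivatives that capture the leading order and show the remainders are $O(z^{-3/2})$.

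First I would treat $\mathcal A(1,z)$. After the change of variables $u=e^{\mathrm{i}(\pi/6-\theta_0)}t$ we have
$$\mathcal A(1,z)=-e^{-\mathrm{i}(\pi/6-\theta_0)}\int_{u_0}^{+\infty}Ai(u)\,du,\qquad u_0:=e^{\mathrm{i}(\pi/6-\theta_0)}z.$$
The hypothesis $\mathrm{Im}\,z\le\delta_0$ and $|z|\ge M$ guarantees that $|u|\ge M$ and $|\arg u|\le \pi-\delta$ uniformly along the contour, so \eqref{eq:airy-decay} applies. The key identity is
$$\tfrac{d}{du}\!\left[-u^{-3/4}e^{-\frac{2}{3}u^{3/2}}\right]=u^{-1/4}e^{-\frac{2}{3}u^{3/2}}\Bigl(1+\tfrac{3}{4}u^{-3/2}\Bigr),$$
which exhibits $-u^{-3/4}e^{-\frac{2}{3}u^{3/2}}$ as an antiderivative of the leading-order factor in $Ai(u)$ modulo an $O(u^{-3/2})$ multiplicative correction. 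Evaluating from $u_0$ to $\infty$, using exponential decay at infinity, and incorporating the $R(u)=O(u^{-3/2})$ correction from \eqref{eq:airy-decay} gives the stated expansion with $\mathcal R_1(z)=O(z^{-3/2})$.

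Next I would iterate for $\mathcal A(2,z)=-\int_z^{+\infty}\mathcal A(1,t)\,dt$. Inserting the just-derived asymptotic for $\mathcal A(1,t)$ and again changing variables $u=e^{\mathrm{i}(\pi/6-\theta_0)}t$, the relevant antiderivative identity becomes
$$\tfrac{d}{du}\!\left[-u^{-5/4}e^{-\frac{2}{3}u^{3/2}}\right]=u^{-3/4}e^{-\frac{2}{3}u^{3/2}}\Bigl(1+\tfrac{5}{4}u^{-3/2}\Bigr).$$
The phase factor $e^{-\mathrm{i}(\pi/6-\theta_0)}$ appears twice (once from each change of variables), producing the prefactor $e^{-\mathrm{i}(\pi/3-2\theta_0)}$ and the power $u_0^{-5/4}$ in the statement, with remainder $\mathcal R_2(z)=O(z^{-3/2})$ arising from the sum of the antiderivative correction and the propagated $\mathcal R_1$ term.

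For $\mathcal B(1,z)$ and $\mathcal B(2,z)$ the argument is structurally identical after the substitution $u=e^{\mathrm{i}(5\pi/6-\theta_0)}t$, except that the direction is now such that $\mathrm{Re}(u^{3/2})<0$, so $Ai(u)$ grows exponentially along the integration contour. Consequently the integral $\int_0^{u_0}$ is dominated by its upper endpoint, while the value at $u=0$ is $O(1)$ and exponentially negligible compared to $e^{-\frac{2}{3}u_0^{3/2}}$. The same two antiderivative identities (read in the opposite direction, with a sign coming from the reversed orientation) produce the leading terms with the prefactors $-e^{-\mathrm{i}(5\pi/6-\theta_0)}$ and $e^{\mathrm{i}(\pi/3+2\theta_0)}$ respectively.

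The main obstacle is the bookkeeping: I must verify that along each contour of integration (i) the hypotheses $|\arg u|\le\pi-\delta$ and $|u|\ge M$ for the validity of \eqref{eq:airy-decay} hold uniformly, which for $\mathcal B$ requires checking that the ray from $0$ to $u_0$ can be deformed (or split) so that the Airy asymptotic applies on the relevant portion and the near-zero contribution is absorbed into the exponentially smaller remainder; (ii) the iterated-by-parts remainders really are $O(z^{-3/2})$ and not merely $O(z^{-1})$, which I would confirm by one extra integration by parts that produces an error bounded by $|z|^{-3/2}$ times the leading order; and (iii) the branch of $u^{3/2}$ is consistently the principal one throughout, so that the exponents in the final formulas match the statement.
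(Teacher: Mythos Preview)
Your approach is correct and matches the paper's method. The paper actually omits the proof of this lemma entirely, referring instead to Lemma~\ref{lem:airy-langer-asy}, whose proof carries out exactly the integration-by-parts computation you describe (there with the extra complication of the Langer transform $\eta^{La}$ and its derivative appearing in the integrand). Your explicit antiderivative identities
\[
\tfrac{d}{du}\bigl[-u^{-3/4}e^{-\frac23 u^{3/2}}\bigr]=u^{-1/4}e^{-\frac23 u^{3/2}}\bigl(1+\tfrac34 u^{-3/2}\bigr),\qquad
\tfrac{d}{du}\bigl[-u^{-5/4}e^{-\frac23 u^{3/2}}\bigr]=u^{-3/4}e^{-\frac23 u^{3/2}}\bigl(1+\tfrac54 u^{-3/2}\bigr)
\]
are precisely equivalent to the paper's step of writing $(\,\cdot\,)^{-1/4}e^{-\frac23(\cdot)^{3/2}}=-(\,\cdot\,)^{-3/4}\partial_Z\bigl(e^{-\frac23(\cdot)^{3/2}}\bigr)$ and integrating by parts; your formulation makes the $O(z^{-3/2})$ remainder more transparent. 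Your treatment of the $\mathcal B$ case (growth along the contour, lower endpoint exponentially negligible) is also the right observation.
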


Here we omit the proof of this lemma and refer to Lemma \ref{lem:airy-langer-asy} for the proof in more complicated cases.

\begin{lemma}\label{lem:Airy-green-decay}
	Let $\delta_0>0$ be the constant in Lemma \ref{lem:Airy-p1} and $|\theta_0|\ll1$.  Suppose $-\delta_0<\mathrm{Im}Z_0<\delta_0$, $\mathrm{Re}Z_0<0$.  There exist  $c_1>0, M_1>0$ such that if $|\mathrm{Re}(Y+Z_0)|>M_1\delta_0$, then for any $0\leq Z\leq Y$,
	\begin{align*}
		\left|e^{-\f23(e^{\mathrm i(\frac{\pi}{6}-\theta_0)}(Y+Z_0))^\f32 } e^{-\f23(e^{\mathrm i(\frac{5\pi}{6}-\theta_0)}(Z+Z_0))^\f32 }\right|\leq e^{-c_1|Y-Z|(|Y+Z_0|+|Z+Z_0)^\f12}.
	\end{align*}
\end{lemma}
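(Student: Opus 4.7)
The plan is to estimate the real part of the exponent directly. Writing $w_1=Y+Z_0$, $w_2=Z+Z_0$, and $\gamma=\pi/6-\theta_0$, the quantity to control is
\begin{align*}
E:=-\tfrac{2}{3}\mathrm{Re}\bigl[(e^{\mathrm i\gamma}w_1)^{3/2}+(e^{\mathrm i(\gamma+2\pi/3)}w_2)^{3/2}\bigr],
\end{align*}
and we want $E\le -c_1(Y-Z)(|w_1|^{1/2}+|w_2|^{1/2})$. The hypotheses $|\mathrm{Im}\,Z_0|<\delta_0$, $\mathrm{Re}\,Z_0<0$, $|\mathrm{Re}(Y+Z_0)|>M_1\delta_0$, together with $0\le Z\le Y$, force $\arg w_1$ to be either very close to $0$ or very close to $\pm\pi$ (angular distance $\lesssim 1/M_1$). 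On the principal branch one then has
\begin{align*}
\mathrm{Re}(e^{\mathrm i\gamma}w_j)^{3/2}=|w_j|^{3/2}\cos\bigl(\tfrac{3\gamma}{2}+\tfrac{3}{2}\arg w_j\bigr),
\end{align*}
so that when $\arg w_j\approx 0$ the cosine is $\tfrac{\sqrt 2}{2}+O(\theta_0)+O(1/M_1)$, and when $\arg w_j\approx\pm\pi$ it flips sign modulo an identical error.

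First I would treat the principal configuration $\mathrm{Re}\,w_1,\mathrm{Re}\,w_2>M_1\delta_0$, where both $\arg w_1,\arg w_2$ are small. The identity $e^{3\mathrm i(\gamma+2\pi/3)/2}=-e^{3\mathrm i\gamma/2}$ then gives
\begin{align*}
E=-\tfrac{2}{3}\cos\bigl(\tfrac{\pi}{4}-\tfrac{3\theta_0}{2}\bigr)\bigl(|w_1|^{3/2}-|w_2|^{3/2}\bigr)+\text{error},
\end{align*}
with the error absorbable into the main term after choosing $\theta_0$ small and $M_1$ large. The elementary inequality $a^{3/2}-b^{3/2}\ge\tfrac12(a-b)(a^{1/2}+b^{1/2})$ for $a\ge b\ge0$, combined with $|w_1|-|w_2|\ge c(Y-Z)$ (which holds because $|w_j|\sim\mathrm{Re}\,w_j=Y_j-|\mathrm{Re}\,Z_0|$ in this region), then yields the claim.

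Next I would handle the remaining configurations permitted by $Z\le Y$: (i) $\mathrm{Re}\,w_1>M_1\delta_0$ and $\mathrm{Re}\,w_2<-M_1\delta_0$; (ii) both $\mathrm{Re}\,w_1,\mathrm{Re}\,w_2<-M_1\delta_0$; and (iii) the thin strip $|\mathrm{Re}\,w_2|\le M_1\delta_0$. In case (i) the second principal-branch computation makes $\mathrm{Re}(e^{\mathrm i(\gamma+2\pi/3)}w_2)^{3/2}$ positive as well, so $E\approx -\tfrac{2}{3}\cos(\pi/4)(|w_1|^{3/2}+|w_2|^{3/2})$; writing $Y-Z=(Y-|\mathrm{Re}\,Z_0|)+(|\mathrm{Re}\,Z_0|-Z)$ and applying Young's inequality $x^2y\le\tfrac23x^3+\tfrac13y^3$ with $x=|w_1|^{1/2}$, $y=|w_2|^{1/2}$ bounds $(Y-Z)(|w_1|^{1/2}+|w_2|^{1/2})$ above by a multiple of $|w_1|^{3/2}+|w_2|^{3/2}$, closing the estimate. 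Case (ii) is symmetric to the principal case after a global phase shift by $\pi$ and gives the same inequality. Case (iii) is short: on it $|w_2|\lesssim\delta_0$, so $|w_2|^{3/2}$ and $(Y-Z)|w_2|^{1/2}$ are both dominated by the $|w_1|$-terms, and one reduces to case (i) or the principal case.

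The principal obstacle is the branch-cut bookkeeping when $\mathrm{Re}\,w_j$ changes sign, since the principal branch of $z^{3/2}$ is discontinuous along the negative real axis. The hypothesis $|\mathrm{Re}(Y+Z_0)|>M_1\delta_0$ combined with $|\mathrm{Im}\,Z_0|<\delta_0$ guarantees that the arguments of $e^{\mathrm i\gamma}w_j$ and $e^{\mathrm i(\gamma+2\pi/3)}w_j$ stay uniformly bounded away from $\pm\pi$ (at angular distance $\gtrsim 1/M_1$), so that the Lipschitz estimate for $\cos$ justifies all the expansions and the $O(1/M_1)+O(\theta_0)$ errors can be made strictly smaller than $\tfrac{\sqrt 2}{2}$ by choosing $M_1$ large and $\theta_0$ small. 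The final constant $c_1$ is any positive number strictly smaller than the worst of the three case-by-case constants obtained above.
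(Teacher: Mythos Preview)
Your approach is essentially the paper's---the same reduction to the real part of the exponent and the same case split on the signs of $\mathrm{Re}\,w_1,\mathrm{Re}\,w_2$, with your principal/(i)/(ii) corresponding to the paper's Cases~3/2/1 and your thin strip (iii) isolating what the paper absorbs into Case~2.

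There is one slip in case (iii): with strip threshold $M_1\delta_0$ on $|\mathrm{Re}\,w_2|$ you only get $|w_2|\le\delta_0\sqrt{M_1^2+1}\sim M_1\delta_0$, \emph{not} $|w_2|\lesssim\delta_0$ as you claim. Since $|w_1|$ is only bounded below by $M_1\delta_0$, the (possibly negative) $|w_2|^{3/2}$ contribution is then comparable to the $c_0|w_1|^{3/2}$ term and cannot be discarded. The fix is to take the strip threshold at an intermediate scale, e.g.\ $M_1^{1/2}\delta_0$: in (iii) this gives $|w_2|^{3/2}\lesssim (M_1^{1/2}\delta_0)^{3/2}\ll(M_1\delta_0)^{3/2}\le|w_1|^{3/2}$, while in the enlarged cases (i) and (ii) the angular errors become $O(M_1^{-1/2})$ and remain absorbable into the leading $\tfrac{\sqrt2}{2}$. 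Alternatively, drop (iii) altogether and in the straddling region argue as the paper's Case~2, splitting further on $|w_2|\lessgtr|w_1|$.
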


\begin{proof}
  Notice that 
     \begin{align}\label{eq:Airy-large-decay}
    	\left|e^{-\f23(e^{\mathrm i(\frac{\pi}{6}-\theta_0)}(Y+Z_0))^\f32 } e^{-\f23(e^{\mathrm i(\frac{5\pi}{6}-\theta_0)}(Z+Z_0))^\f32 }\right|=e^{-\f23\mathrm{Re}((e^{\mathrm i(\frac{\pi}{6}-\theta_0)}(Y+Z_0))^\f32+(e^{\mathrm i(\frac{5\pi}{6}-\theta_0)}(Z+Z_0))^\f32)}.
    \end{align}
 Thus,  it is enough to focus on the estimate about $\mathrm{Re}\big((e^{\mathrm i(\frac{\pi}{6}-\theta_0)}(Y+Z_0))^\f32+(e^{\mathrm i(\frac{5\pi}{6}-\theta_0)}(Z+Z_0))^\f32\big)$.\smallskip
    
    \no\textbf{Case 1. $0\leq Z\leq Y\leq -\mathrm{Re}Z_0$.} In this case,  for large enough $M_1$,
	\begin{align*}
		&|(Z+Z_0)|\geq|(Y+Z_0)|, \quad|\mathrm{Re}(Z+Z_0)|\geq|\mathrm{Re}(Y+Z_0)|,\\
		&\mathrm{Im}(Y+Z_0)=\mathrm{Im}(Z+Z_0),    \\ 
		&\arg((e^{\mathrm i(\frac{\pi}{6}-\theta_0)}(Y+Z_0))^\f32)\in [\f\pi2+\xi_0,\pi-\xi_0] ,\\ &\arg((e^{\mathrm i(\frac{5\pi}{6}-\theta_0)}(Z+Z_0))^\f32))\in [-\f\pi2+\xi_0,-\xi_0],
	\end{align*}
    where $\xi_0\in(0,\f\pi4)$. Then we infer that for any $0\leq Z\leq Y\leq -\mathrm{Re}Z_0$,
    \begin{align*}
    	&\cos(\arg((e^{\mathrm i(\frac{\pi}{6}-\theta_0)}(Y+Z_0))^\f32))\leq0,\\
    	&-\cos(\arg((e^{\mathrm i(\frac{\pi}{6}-\theta_0)}(Z+Z_0))^\f32))=\cos(\arg((e^{\mathrm i(\frac{5\pi}{6}-\theta_0)}(Z+Z_0))^\f32)))\geq c_0>0,
    \end{align*}
    and
    \begin{align*}
    	\left|\cos(\arg((e^{\mathrm i(\frac{\pi}{6}-\theta_0)}(Z+Z_0))^\f32))\right|\geq \left|\cos(\arg((e^{\mathrm i\frac{\pi}{6}-\theta_0}(Y+Z_0)^\f32))\right|.
    \end{align*}
    Thus, we obtain
    \begin{align*}
    	&\mathrm{Re}((e^{\mathrm i(\frac{\pi}{6}-\theta_0)}(Y+Z_0))^\f32+(e^{\mathrm i(\frac{5\pi}{6}-\theta_0)}(Z+Z_0))^\f32)\\
    	&=\left|(Y+Z_0)\right|^\f32\cos(\arg((e^{\mathrm i(\frac{\pi}{6}-\theta_0)}(Y+Z_0))^\f32))+\left|(Z+Z_0)\right|^\f32\cos(\arg((e^{\mathrm i(\frac{5\pi}{6}-\theta_0)}(Z+Z_0))^\f32\\
    	&\geq\left|(Y+Z_0)\right|^\f32\cos(\arg((e^{\mathrm i(\frac{\pi}{6}-\theta_0)}(Z+Z_0))^\f32))+\left|(Z+Z_0)\right|^\f32\cos(\arg((e^{\mathrm i(\frac{5\pi}{6}-\theta_0)}(Z+Z_0))^\f32\\
    	&=\left|(Y+Z_0)\right|^\f32\cos(\arg((e^{\mathrm i(\frac{\pi}{6}-\theta_0)}(Z+Z_0))^\f32))-\left|(Z+Z_0)\right|^\f32\cos(\arg((e^{\mathrm i(\frac{\pi}{6}-\theta_0)}(Z+Z_0))^\f32\\
    	&\geq c_0(|Z+Z_0|^\f32-|Y+Z_0|^\f32)\geq {c}_0(|Z+Z_0|-|Y+Z_0|)(|Z+Z_0|+|Y+Y_0|)^\f12\\
    	&\geq c_0|Y-Z|(|Z+Z_0|+|Y+Y_0|)^\f12.
    \end{align*}	
	which along with \eqref{eq:Airy-large-decay} implies that for any $0\leq Z\leq Y\leq \mathrm{Re}(Z_0)$,
	\begin{align*}
		\left|e^{-\f23(e^{\mathrm i(\frac{\pi}{6}-\theta_0)}(Y+Z_0))^\f32 } e^{-\f23(e^{\mathrm i(\frac{5\pi}{6}-\theta_0)}(Z+Z_0))^\f32 }\right|\leq e^{-c_1|Y-Z|(|Y+Z_0|+|Z+Z_0)^\f12}.
	\end{align*}
	
	\no\textbf{Case 2. $0\leq Z\leq-\mathrm{Re}(Z_0)\leq Y$.} We first notice that 
	\begin{align*}
		\cos(\arg((e^{\mathrm i(\frac{5\pi}{6}-\theta_0)}(Z+Z_0))^\f32)))\ge 0,\quad\cos(\arg((e^{\mathrm i(\frac{\pi}{6}-\theta_0)}(Y+Z_0))^\f32))\geq0.
	\end{align*}
	As a result, we obtain 
	\begin{align*}
		&\mathrm{Re}((e^{\mathrm i(\frac{\pi}{6}-\theta_0)}(Y+Z_0))^\f32+(e^{\mathrm i(\frac{5\pi}{6}-\theta_0)}(Z+Z_0))^\f32)\\
		&=\left|(Y+Z_0)\right|^\f32\cos(\arg((e^{\mathrm i(\frac{\pi}{6}-\theta_0)}(Y+Z_0))^\f32))\\
		&\quad+\left|(Z+Z_0)\right|^\f32\cos(\arg((e^{\mathrm i(\frac{5\pi}{6}-\theta_0)}(Z+Z_0))^\f32\geq0.
	\end{align*}
	Moreover, by the assumption on $|Y+Z_0|$, we have
	\begin{align}\label{eq:airy-exp-decay}
		\cos(\arg((e^{\mathrm i(\frac{\pi}{6}-\theta_0)}(Y+Z_0))^\f32))\geq c_0.
	\end{align}
	Hence, for any $0\leq Z\leq -\mathrm{Re}(Z_0)$ and $Y\geq-\mathrm{Re}(Z_0)$, we have
	\begin{align*}
		&\mathrm{Re}((e^{\mathrm i(\frac{\pi}{6}-\theta_0)}(Y+Z_0))^\f32+(e^{\mathrm i(\frac{5\pi}{6}-\theta_0)}(Z+Z_0))^\f32)\geq c_0\left|(Y+Z_0)\right|^\f32.
	\end{align*}
	Then we obtain that for any $0\leq Z\leq -\mathrm{Re}(Z_0)$, $Y\geq-\mathrm{Re}(Z_0)$ and $|Z+Z_0|\leq |Y+Z_0|$,
	\begin{align*}
		\left|e^{-\f23(e^{\mathrm i(\frac{\pi}{6}-\theta_0)}(Y+Z_0))^\f32 } e^{-\f23(e^{\mathrm i(\frac{5\pi}{6}-\theta_0)}(Z+Z_0))^\f32 }\right|\leq& e^{-\f23 c_0|Y+Z_0|^\f32}\\
		\leq&e^{-c_1|Y-Z|(|Y+Z_0|+|Z+Z_0|)^\f12}.
	\end{align*}
	On the other hand, for any $0\leq Z\leq -\mathrm{Re}(Z_0)$, $Y\geq-\mathrm{Re}(Z_0)$ and $|Z+Z_0|\geq |Y+Z_0|$, we have 
	\begin{align*}
		\cos(\arg((e^{\mathrm i(\frac{5\pi}{6}-\theta_0)}(Z+Z_0))^\f32))\geq c_0,
	\end{align*}
	which implies 
	\begin{align*}
		&\mathrm{Re}((e^{\mathrm i(\frac{\pi}{6}-\theta_0)}(Y+Z_0))^\f32+(e^{\mathrm i(\frac{5\pi}{6}-\theta_0)}(Z+Z_0))^\f32)\geq c_0\left|(Z+Z_0)\right|^\f32.
	\end{align*}
	Therefore, we conclude that for any $0\leq Z\leq -\mathrm{Re}(Z_0)$, $Y\geq-\mathrm{Re}(Z_0)$ and $|Z+Z_0|\geq |Y+Z_0|$
	\begin{align*}
		\left|e^{-\f23(e^{\mathrm i(\frac{\pi}{6}-\theta_0)}(Y+Z_0))^\f32 } e^{-\f23(e^{\mathrm i(\frac{5\pi}{6}-\theta_0)}(Z+Z_0))^\f32 }\right|\leq& e^{-\f23c_0|Z+Z_0|^\f32}\\
		\leq&e^{-c_1|Y-Z|(|Y+Z_0|+|Z+Z_0|)^\f12}.
	\end{align*}
	
	\no\textbf{Case 3. $0<-\mathrm{Re}(Z_0)\leq Z\leq Y$.} In this case, we first notice that 
	\begin{align*}
    	&\cos(\arg((e^{\mathrm i(\frac{\pi}{6}-\theta_0)}(Y+Z_0))^\f32))>0,\\
    	&-\cos(\arg((e^{\mathrm i(\frac{\pi}{6}-\theta_0)}(Z+Z_0))^\f32))=\cos(\arg((e^{\mathrm i(\frac{5\pi}{6}-\theta_0)}(Z+Z_0))^\f32)))<0,
    \end{align*}
    and 
    \begin{align*}
    	|Z+Z_0|\leq |Y+Z_0|,\quad\cos(\arg((e^{\mathrm i(\frac{\pi}{6}-\theta_0)}(Y+Z_0))^\f32))\geq \cos(\arg((e^{\mathrm i(\frac{\pi}{6}-\theta_0)}(Z+Z_0))^\f32)),
    \end{align*}
	which implies 
	\begin{align*}
		&\mathrm{Re}((e^{\mathrm i(\frac{\pi}{6}-\theta_0)}(Y+Z_0))^\f32+(e^{\mathrm i(\frac{5\pi}{6}-\theta_0)}(Z+Z_0))^\f32)\\
    	&=\left|(Y+Z_0)\right|^\f32\cos(\arg((e^{\mathrm i(\frac{\pi}{6}-\theta_0)}(Y+Z_0))^\f32))+\left|(Z+Z_0)\right|^\f32\cos(\arg((e^{\mathrm i(\frac{5\pi}{6}-\theta_0)}(Z+Z_0))^\f32\\
    	&=\left|(Y+Z_0)\right|^\f32\cos(\arg((e^{\mathrm i(\frac{\pi}{6}-\theta_0)}(Y+Z_0))^\f32))-\left|(Z+Z_0)\right|^\f32\cos(\arg((e^{\mathrm i(\frac{\pi}{6}-\theta_0)}(Z+Z_0))^\f32\\
    	&\geq\left|(Y+Z_0)\right|^\f32\cos(\arg((e^{\mathrm i(\frac{\pi}{6}-\theta_0)}(Y+Z_0))^\f32))-\left|(Z+Z_0)\right|^\f32\cos(\arg((e^{\mathrm i(\frac{\pi}{6}-\theta_0)}(Y+Z_0))^\f32\\
    	&\geq(\left|(Y+Z_0)\right|^\f32-\left|(Z+Z_0)\right|^\f32)\cos(\arg((e^{\mathrm i(\frac{\pi}{6}-\theta_0)}(Y+Z_0))^\f32))>0.
	\end{align*}
	Therefore, we obtain that for any $-\mathrm{Re}(Z_0)\leq Z\le Y$,
	\begin{align*}
		&\mathrm{Re}((e^{\mathrm i(\frac{\pi}{6}-\theta_0)}(Y+Z_0))^\f32+(e^{\mathrm i(\frac{5\pi}{6}-\theta_0)}(Z+Z_0))^\f32)\geq c_0|Y-Z|(|Y+Z_0|+|Z+Z_0|)^\f12.
	\end{align*}
	Thus, we arrive at
	\begin{align*}
		\left|e^{-\f23(e^{\mathrm i(\frac{\pi}{6}-\theta_0)}(Y+Z_0))^\f32 } e^{-\f23(e^{\mathrm i(\frac{5\pi}{6}-\theta_0)}(Z+Z_0))^\f32 }\right|\leq Ce^{-c_1|Y-Z|(|Y+Z_0|+|Z+Z_0|)^\f12}.
	\end{align*}
	
	\end{proof}

\begin{lemma}\label{lem:airy-langer-asy}
Let $|\alpha|,|\varepsilon|,|c|\ll1$ and $c_r>0$. Suppose that $0<\delta_0\ll1\leq M$ are the constants in Lemma \ref{lem:Airy-p1}. Then there holds
\begin{enumerate}
	\item For any $Y\in\mathcal N^-\cup\mathcal N^+$,
      \begin{align*}
 		A_1(1,Y)=&\frac{e^{-\th_0 i}}{2\sqrt{\pi}}e^{\mathrm i\frac{\pi}{3}}|\varepsilon|^{-\f13}U_s'(Y_c)^{-\f23}\partial_Y\eta^{La}(Y)^{-1}\Big(e^{\mathrm i(\frac{\pi}{6}-\th_0)}\kappa\eta^{La}(Y) \Big)^{-\f34}\\
 		&\times e^{-\f23\big(e^{\mathrm i(\frac{\pi}{6}-\th_0)}\kappa\eta^{La}(Y)\big)^\f32}(1+\mathcal O(|\kappa\eta^{La}(Y)|^{-\f32})),
 	\end{align*}
 	\begin{align*}
 		A_1(2,Y)=&-\frac{1}{2\sqrt{\pi}}e^{\mathrm i\frac{\pi}{6}}U_s'(Y_c)^{-1}\partial_Y\eta^{La}(Y)^{-2}\Big(e^{\mathrm i(\frac{\pi}{6}-\th_0)}\kappa\eta^{La}(Y) \Big)^{-\f54}\\
 		&\times e^{-\f23\big(e^{\mathrm i(\frac{\pi}{6}-\th_0)}\kappa\eta^{La}(Y)\big)^\f32}(1+\mathcal O(|\kappa\eta^{La}(Y)|^{-\f32})),
 	\end{align*}
 	\begin{align*}
 		A_2(1,Y)=&-\sqrt{\pi}e^{-\mathrm i(\frac{5\pi}{6}-\th_0)}|\varepsilon|^\f13U_s'(Y_c)^{-\f13}\partial_Y\eta^{La}(Y)^{-1}\Big(e^{\mathrm i(\frac{5\pi}{6}-\th_0)}\kappa\eta^{La}(Y) \Big)^{-\f34}\\
 		&\times e^{-\f23\big(e^{\mathrm i(\frac{5\pi}{6}-\th_0)}\kappa\eta^{La}(Y)\big)^\f32}(1+\mathcal O(|\kappa\eta^{La}(Y)|^{-\f32})),
 	\end{align*}
 	 	\begin{align*}
 		A_2(2,Y)=&\sqrt{\pi}e^{\mathrm i(\frac{\pi}{3}+2\th_0)}|\varepsilon|^\f23U_s'(Y_c)^{-\f23}\partial_Y\eta^{La}(Y)^{-2}\Big(e^{\mathrm i(\frac{5\pi}{6}-\th_0)}\kappa\eta^{La}(Y) \Big)^{-\f54}\\
 		&\times e^{-\f23\big(e^{\mathrm i(\frac{5\pi}{6}-\th_0)}\kappa\eta^{La}(Y)\big)^\f32}(1+\mathcal O(|\kappa\eta^{La}(Y)|^{-\f32})).
 	\end{align*}

	\item For any $Y\in\mathcal N$,
	      \begin{align*}
	      	&|A_1(1,Y)|\leq C|\varepsilon|^{-\f13},\quad|A_1(2,Y)|\leq C,\\
	      	&|A_2(1,Y)|\leq C|\varepsilon|^\f13,\quad|A_2(2,Y)|\leq C|\varepsilon|^\f23.
	      \end{align*}
	      
\end{enumerate}
\end{lemma}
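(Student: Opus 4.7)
The overall strategy is to reduce each iterated integral $A_j(k,Y)$ to the primitive Airy integrals $\mathcal{A}(k,\kappa\eta^{La}(Y))$ and $\mathcal{B}(k,\kappa\eta^{La}(Y))$, for which the asymptotic expansions in Lemma \ref{lem:pri-Airy-decay} apply directly, and then to read off the claimed formulas using $\kappa=|\varepsilon|^{-1/3}U_s'(Y_c)^{1/3}$. The mechanism that effects this reduction is integration by parts against $\partial_Z\eta^{La}(Z)$, exploiting the chain-rule identity
\begin{equation*}
\partial_Z\!\left[-\kappa^{-1}\mathcal{A}(1,\kappa\eta^{La}(Z))\right]=-\partial_Z\eta^{La}(Z)\,Ai\!\left(e^{\mathrm{i}(\pi/6-\theta_0)}\kappa\eta^{La}(Z)\right),
\end{equation*}
and the analogous identities relating $\mathcal{A}(2,\cdot)$, $\mathcal{B}(k,\cdot)$ to successive antiderivatives of the appropriate rotated Airy function.

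Concretely, for $Y\in\mathcal{N}^-\cup\mathcal{N}^+$ I would write $A_1(1,Y)=-\int_Y^\infty A_1(Z)\,dZ$, insert $1=(\partial_Z\eta^{La}(Z))^{-1}\partial_Z\eta^{La}(Z)$ in the integrand, and integrate by parts; the boundary term at $Y$ furnishes $(\partial_Y\eta^{La}(Y))^{-1}\kappa^{-1}\mathcal{A}(1,\kappa\eta^{La}(Y))$ times the prefactor of $A_1$, and substituting the asymptotic from Lemma \ref{lem:pri-Airy-decay} produces the claimed formula after simplifying the complex phase (the factor $-\mathrm{i}e^{-2\theta_0\mathrm{i}}e^{-\mathrm{i}(\pi/6-\theta_0)}$ combines to $e^{\mathrm{i}\pi/3}e^{-\theta_0\mathrm{i}}$). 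The integration-by-parts remainder involves $(\partial_Z\eta^{La})^{-2}\partial_Z^2\eta^{La}$, which by Lemma \ref{lem:est-eta} is of size $(1+|Z-Y_c|)^{-2/3}$, while $\mathcal{A}(1,\kappa\eta^{La}(Z))$ enjoys the exponential decay captured by Lemma \ref{lem:Airy-green-decay}; one further integration by parts shows this remainder is of relative size $\mathcal{O}(|\kappa\eta^{La}(Y)|^{-3/2})$ compared with the leading term, exactly the error advertised in the $\mathcal{R}_i(z)$'s of Lemma \ref{lem:pri-Airy-decay}. The expansions for $A_1(2,Y)$, $A_2(1,Y)$, $A_2(2,Y)$ are produced by the same procedure iterated once more; the only novelty for $A_2(k,Y)=\int_0^Y(\cdots)\,dZ$ is a boundary contribution at $Z=0$, which must be absorbed into the $\mathcal{O}(|\kappa\eta^{La}(Y)|^{-3/2})$ remainder using the bound on $|e^{-\frac{2}{3}(e^{\mathrm{i}(5\pi/6-\theta_0)}\kappa\eta^{La}(0))^{3/2}}/e^{-\frac{2}{3}(e^{\mathrm{i}(5\pi/6-\theta_0)}\kappa\eta^{La}(Y))^{3/2}}|$ furnished by Lemma \ref{lem:Airy-green-decay} (applied to the pair $(Y,0)$ or $(0,Y)$ depending on which side of $Y_c$ one is on).

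For $Y\in\mathcal{N}$, no asymptotic expansion is available because $\kappa\eta^{La}(Y)$ remains in the bounded set $\{|w|\leq 3M\}$; here $Ai$ and its derivatives are uniformly bounded, so $|A_1(Y)|\lesssim|\varepsilon|^{-2/3}$ and $|A_2(Y)|\lesssim 1$ on $\mathcal{N}$. Combining this with the fact, from Lemma \ref{lem:est-eta}, that the $Y$-length of $\mathcal{N}$ is $\mathcal{O}(\kappa^{-1})=\mathcal{O}(|\varepsilon|^{1/3})$ and that the tails $\mathcal{N}^\pm$ contribute only exponentially small corrections (again via Lemma \ref{lem:Airy-green-decay}), iterated integration delivers the desired bounds $|A_1(1,Y)|\lesssim|\varepsilon|^{-1/3}$, $|A_1(2,Y)|\lesssim 1$, $|A_2(1,Y)|\lesssim|\varepsilon|^{1/3}$, $|A_2(2,Y)|\lesssim|\varepsilon|^{2/3}$.

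The main technical hurdle is ensuring that the integration-by-parts remainders remain controlled uniformly across the transition $|Y-Y_c|\sim\kappa^{-1}M$ where the modified Langer transform switches from its linear form $Y-Y_c$ to the classical nonlinear form $\eta^{La}_{out}$, and across the interface between $\mathcal{N}$ and $\mathcal{N}^\pm$. The $\mathcal{O}(|\kappa\eta^{La}(Y)|^{-3/2})$ control of the remainder must persist throughout the gluing region; this is handled by invoking the uniform bound $|\partial_Y^2\eta^{La}|\lesssim(1+|Y-Y_c|)^{-4/3}$ from Lemma \ref{lem:est-eta}, which holds over all of $\mathbb{R}_+$ and in particular is not disrupted by the cutoff $\chi$, so the error integrals one has to estimate admit the same pointwise control on both sides of the gluing region and the asymptotic assembled from the two pieces is globally valid.
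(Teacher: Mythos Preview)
Your approach is correct and takes a somewhat different route from the paper. The paper substitutes the Airy asymptotic \eqref{eq:airy-decay} into the integrand \emph{first} and then integrates by parts using the derivative of the exponential phase $e^{-\frac{2}{3}(\cdot)^{3/2}}$; because that asymptotic is valid only where $|\kappa\eta^{La}(Z)|\geq M$, the paper is forced to split the integration domain explicitly into $\mathcal N^+$, $\mathcal N$, $\mathcal N^-$ (Cases~1--3 in the proof), with Case~3 requiring an extra boundary term at $Y_-$ and a separate estimate for the contribution coming from $Z\in\mathcal N\cup\mathcal N^+$. Your reversal of the order---integrate by parts first against the exact antiderivative $\mathcal A(1,\kappa\eta^{La}(\cdot))$ (or $\mathcal B$), then apply Lemma~\ref{lem:pri-Airy-decay} to the resulting boundary term at $Y$---is more modular and sidesteps this case analysis for the leading term: $\mathcal A(1,\cdot)$ is globally defined, the boundary evaluation at any $Y\in\mathcal N^-\cup\mathcal N^+$ lands directly in the regime covered by Lemma~\ref{lem:pri-Airy-decay}, and the remainder integral $\kappa^{-1}\!\int_Y^\infty(\partial_Z\eta^{La})^{-2}\partial_Z^2\eta^{La}\,\mathcal A(1,\kappa\eta^{La}(Z))\,dZ$ can be bounded uniformly in $Z$ (on $\mathcal N$ one simply uses that $\mathcal A(1,\cdot)$ is bounded there). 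One caveat: the paper explicitly \emph{defers} the proof of Lemma~\ref{lem:pri-Airy-decay} to the present lemma (``we omit the proof of this lemma and refer to Lemma~\ref{lem:airy-langer-asy}''), so invoking it as a black box creates an apparent circularity. This is harmless---Lemma~\ref{lem:pri-Airy-decay} is precisely the constant-$\partial_Z\eta^{La}$ special case of what you are proving and follows directly from \eqref{eq:airy-decay} by the standard steepest-descent integration by parts---but you should flag this and supply that short direct argument first so that the logical order is clean.
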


\begin{proof}
By the definition of $A_1(1,Y)$, we can write 
\begin{align}\label{formula: A_1(1,Y)}
	A_1(1,Y)=\mathrm i e^{-2\th_0 i}|\varepsilon|^{-\f23}U_s'(Y_c)^{-\f13}\int_Y^{+\infty}Ai(e^{\mathrm i(\frac{\pi}{6}-\th_0)}\kappa\eta^{La}(Z))dZ.
\end{align}

\no\textbf{Case 1. $Y\in\mathcal N^+$.} By the definition of $\eta^{La}(Y)$ and Lemma \ref{lem:est-eta}, we know   $\mathcal N^+\subset[Y_c+2\kappa^{-1}M,+\infty)$ and $\eta^{La}=\eta_{out}^{La}+i \eta_i^{La}$. By \eqref{eq:airy-decay}, we have that for any $Y\in\mathcal N^+$,
	\begin{align}\label{eq:airy-langer-L1}
		\begin{split}
			\int_Y^{+\infty}&Ai(e^{\mathrm i(\frac{\pi}{6}-\th_0)}\kappa\eta^{La}(Z))dZ\\
			=&\frac{1}{2\sqrt{\pi}}\int_Y^{+\infty} \Big(e^{\mathrm i(\frac{\pi}{6}-\th_0)}\kappa\eta^{La}(Z) \Big)^{-\f14}e^{-\f23\big(e^{\mathrm i(\frac{\pi}{6}-\th_0)}\kappa\eta^{La}(Z)\big)^\f32}(1+R(Z))dZ\\
			=&\frac{-1}{2\sqrt{\pi}}\int_Y^{+\infty}\Big(e^{\mathrm i(\frac{\pi}{6}-\th_0)}\kappa\eta^{La}(Z) \Big)^{-\f34}\Big(e^{\mathrm i(\frac{\pi}{6}-\th_0)}\kappa\partial_Z \eta(Z) \Big)^{-1}\partial_Z\Big(e^{-\f23\big(e^{\mathrm i(\frac{\pi}{6}-\th_0)}\kappa\eta^{La}(Z)\big)^\f32}\Big)dZ\\
			&+\frac{1}{2\sqrt{\pi}}\int_Y^{+\infty} \Big(e^{\mathrm i(\frac{\pi}{6}-\th_0)}\kappa\eta^{La}(Z) \Big)^{-\f14}e^{-\f23\big(e^{\mathrm i(\frac{\pi}{6}-\th_0)}\kappa\eta^{La}(Z)\big)^\f32}R(Z)dZ \\
			=&\frac{1}{2\sqrt{\pi}}\Big(e^{\mathrm i(\frac{\pi}{6}-\th_0)}\kappa\eta^{La}(Y) \Big)^{-\f34}\Big(e^{\mathrm i(\frac{\pi}{6}-\th_0)}\kappa\partial_Y \eta^{La}(Y) \Big)^{-1}e^{-\f23\big(e^{\mathrm i(\frac{\pi}{6}-\th_0)}\kappa\eta^{La}(Y)\big)^\f32}\\
			&+\Bigg(\frac{1}{2\sqrt{\pi}}\int_Y^{+\infty}\partial_Z\Big[\Big(e^{\mathrm i(\frac{\pi}{6}-\th_0)}\kappa\eta^{La}(Z) \Big)^{-\f34}\Big(e^{\mathrm i(\frac{\pi}{6}-\th_0)}\kappa\partial_Z \eta^{La}(Z) \Big)^{-1}\Big]e^{-\f23\big(e^{\mathrm i(\frac{\pi}{6}-\th_0)}\kappa\eta^{La}(Z)\big)^\f32}dZ\\
			&+\frac{1}{2\sqrt{\pi}}\int_Y^{+\infty} \Big(e^{\mathrm i(\frac{\pi}{6}-\th_0)}\kappa\eta^{La}(Z) \Big)^{-\f14}e^{-\f23\big(e^{\mathrm i(\frac{\pi}{6}-\th_0)}\kappa\eta^{La}(Z)\big)^\f32}R(Z)dZ\Bigg)\\
			=&\frac{1}{2\sqrt{\pi}}\Big(e^{\mathrm i(\frac{\pi}{6}-\th_0)}\kappa\eta^{La}(Y) \Big)^{-\f34}\Big(e^{\mathrm i(\frac{\pi}{6}-\th_0)}\kappa\partial_Y \eta^{La}(Y) \Big)^{-1}e^{-\f23\big(e^{\mathrm i(\frac{\pi}{6}-\th_0)}\kappa\eta^{La}(Y)\big)^\f32}+I_1.
		\end{split}
	\end{align}
Notice that for any $Y\in\mathcal N^+$,
    \begin{align}\label{eq:airy-langer-L2}
    	\begin{split}
    			I_1
		=&\frac{1}{2\sqrt{\pi}}\Big(e^{\mathrm i(\frac{\pi}{6}-\th_0)}\kappa\eta^{La}(Y) \Big)^{-\f34}\Big(e^{\mathrm i(\frac{\pi}{6}-\th_0)}\kappa\partial_Y \eta^{La}(Y) \Big)^{-1}e^{-\f23\big(e^{\mathrm i(\frac{\pi}{6}-\th_0)}\kappa\eta^{La}(Y)\big)^\f32}\mathcal R_1(Y),
    	\end{split}
    \end{align}
	where
	\begin{align}\label{def: R_1(Y)}
		\mathcal R_1(Y)=\int_Y^{+\infty}(\mathcal T(Y,Z)+\mathcal S(Y,Z)) e^{-\f23\big(e^{\mathrm i(\frac{\pi}{6}-\th_0)}\kappa\eta^{La}(Z)\big)^\f32+\f23\big(e^{\mathrm i(\frac{\pi}{6}-\th_0)}\kappa\eta^{La}(Y)\big)^\f32}dZ
	\end{align}
	with 
	\begin{align*}
		\mathcal T(Y,Z)=\Big(e^{\mathrm i(\frac{\pi}{6}-\th_0)}\kappa\eta^{La}(Y) \Big)^{\f34}e^{\mathrm i(\frac{\pi}{6}-\th_0)}\kappa\partial_Y \eta^{La}(Y)\partial_Z\Big[\Big(e^{\mathrm i(\frac{\pi}{6}-\th_0)}\kappa\eta^{La}(Z) \Big)^{-\f34}\Big(e^{\mathrm i(\frac{\pi}{6}-\th_0)}\kappa\partial_Z \eta^{La}(Z) \Big)^{-1}\Big],
	\end{align*}
	and 
	\begin{align*}
		\mathcal S(Y,Z)=\Big(e^{\mathrm i(\frac{\pi}{6}-\th_0)}\kappa\eta^{La}(Y) \Big)^{\f34}e^{\mathrm i(\frac{\pi}{6}-\th_0)}\kappa\partial_Y \eta^{La}(Y)\Big(e^{\mathrm i(\frac{\pi}{6}-\th_0)}\kappa\eta^{La}(Z) \Big)^{-\f14} R(Z).
	\end{align*}
	
	For any $Y\leq Z$ and $Y,Z\in\mathcal N^+$, we have $\eta^{La}=\eta_{out}^{La}+i \eta_i^{La}\sim \eta_{out}^{La}$,
%	\begin{align*}
%		\mathcal T(Y,Z):=\Big(e^{\mathrm i(\frac{\pi}{6}-\th_0)}\kappa\eta_{out}^{La}(Y) \Big)^{\f34}e^{\mathrm i(\frac{\pi}{6}-\th_0)}\kappa\partial_Y \eta_{out}^{La}(Y)\partial_Z\Big[\Big(e^{\mathrm i(\frac{\pi}{6}-\th_0)}\kappa\eta_{out}^{La}(Z) \Big)^{-\f34}\Big(e^{\mathrm i(\frac{\pi}{6}-\th_0)}\kappa\partial_Z \eta_{out}^{La}(Z) \Big)^{-1}\Big],
%	\end{align*}
	which along with Lemma \ref{lem:classical-Langer} implies that for any $Y\leq Z$ and $Y,Z\in\mathcal N^+$,
	\begin{align*}
		|\mathcal T(Y,Z)|\leq& |\eta_{out}^{La}(Y)|^\f34|\partial_Y\eta_{out}^{La}(Y)|\Big(|\eta_{out}^{La}(Z)|^{-\f74}+|\partial_Z^2\eta_{out}^{La}(Z)||\partial_Z\eta_{out}^{La}(Z)|^{-2}|\eta_{out}^{La}(Z)|^{-\f34} \Big)\\
		\leq&C |\eta_{out}^{La}(Y)|^\f34|\partial_Y\eta_{out}^{La}(Y)||\eta_{out}^{La}(Z)|^{-\f74}\leq C|\eta_{out}^{La}(Y)|^{-1}|\partial_{Y}\eta_{out}^{La}(Y)|,
	\end{align*}
where we used the facts that
\begin{align*}
&|\partial_Z^2\eta_{out}^{La}(Z)|\leq C(1+|Z-Y_c|)^{-\f43}\leq C|\eta_{out}^{La}(Z)|^{-2},\\
&|\partial_Z\eta_{out}^{La}(Z)|^{-2}\leq C(1+|Z-Y_c|)^{\f23}\leq C|\eta_{out}^{La}(Z)|.
\end{align*}
Similarly, we obtain that for any $Y\leq Z$ and $Y,Z\in\mathcal N^+$,
	\begin{align*}
		|\mathcal S(Y,Z)|\leq C|\eta_{out}^{La}(Y)|^{-1}|\partial_{Y}\eta_{out}^{La}(Y)|.
	\end{align*}
	Therefore, we obtain that for any $Y\in\mathcal N^{+}$,
	\begin{align}\label{eq:airy-langer-large}
		\begin{split}
			|\mathcal R_1(Y)|\leq& C|\eta_{out}^{La}(Y)|^{-1}|\partial_{Y}\eta_{out}^{La}(Y)|\int_Y^{+\infty}\left|e^{-\f23\big(e^{\mathrm i(\frac{\pi}{6}-\th_0)}\kappa\eta^{La}(Z)\big)^\f32+\f23\big(e^{\mathrm i(\frac{\pi}{6}-\th_0)}\kappa\eta^{La}(Y)\big)^\f32}\right|dZ.
		\end{split}
	\end{align}
 	On the other hand, by Lemma \ref{lem:Airy-green-decay}, for any $0\leq Y\leq Z$ and $Y,Z\in\mathcal N^{+}$, we have
 	\begin{align*}
 		&\int_Y^{+\infty}\left|e^{-\f23\big(e^{\mathrm i(\frac{\pi}{6}-\th_0)}\kappa\eta^{La}(Z)\big)^\f32+\f23\big(e^{\mathrm i(\frac{\pi}{6}-\th_0)}\kappa\eta^{La}(Y)\big)^\f32}\right|dZ\\
		&\leq C\int_Y^{+\infty}e^{-C_1\kappa|\eta^{La}(Y)-\eta^{La}(Z)|(|\kappa\eta^{La}(Y)|+|\kappa\eta^{La}(Z)|)^\f12}dZ\\
 		&\leq C\int_Y^{+\infty}e^{-C_2\kappa|\eta^{La}(Y)-\eta^{La}(Z)||\kappa\eta^{La}(Y)|^\f12}dZ\\
    	&\leq C\kappa^{-1}|\kappa\eta^{La}_{out}(Y)|^{-\f12}|\partial_Y\eta^{La}_{out}(Y)|^{-1}.
 	\end{align*}
 Therefore, according to \eqref{def: R_1(Y)}, we obtain that for any $Y\in\mathcal N^+$,
 	\begin{align*}
 		|\mathcal R_1(Y)|\leq C\kappa^{-\f32}|\eta_{out}^{La}(Y)|^{-1}|\partial_{Y}\eta_{out}^{La}(Y)||\eta^{La}_{out}(Y)|^{-\f12}|\partial_Y\eta^{La}_{out}(Y)|^{-1}\leq C|\kappa\eta^{La}(Y)|^{-\f32},
 	\end{align*}
 	which along with \eqref{eq:airy-langer-L1} and \eqref{eq:airy-langer-L2} deduces that for any $Y\in\mathcal N^+$,
 	\begin{align*}
 		&\int_Y^{+\infty}Ai(e^{\mathrm i(\frac{\pi}{6}-\th_0)}\kappa\eta^{La}(Z))dZ\\
 		&=\frac{1}{2\sqrt{\pi}}\Big(e^{\mathrm i(\frac{\pi}{6}-\th_0)}\kappa\eta^{La}(Y) \Big)^{-\f34}\Big(e^{\mathrm i(\frac{\pi}{6}-\th_0)}\kappa\partial_Y \eta^{La}(Y) \Big)^{-1}e^{-\f23\big(e^{\mathrm i(\frac{\pi}{6}-\th_0)}\kappa\eta^{La}(Y)\big)^\f32}(1+\mathcal R_1(Y)).
 	\end{align*}
 This proves the first statement of (1) for any $Y\in\mathcal N^+$ by using \eqref{formula: A_1(1,Y)}.\smallskip

 	\no\textbf{Case 2. $Y\in\mathcal N.$} In this case,  there exists $Y_+\sim Y_c+2M \kappa^{-1}$ such that $\mathcal N^{+}=[Y_+,+\infty)$. Hence,
 	\begin{align*}
 		\int_Y^{+\infty}&Ai(e^{\mathrm i(\frac{\pi}{6}-\th_0)}\kappa\eta^{La}(Z))dZ=\int_{Y}^{Y_+}Ai(e^{\mathrm i(\frac{\pi}{6}-\th_0)}\kappa\eta^{La}(Z))dZ+\int_{\mathcal N^+}Ai(e^{\mathrm i(\frac{\pi}{6}-\th_0)}\kappa\eta^{La}(Z))dZ.
 	\end{align*}
 	By the property of the Airy function, we know that for any $Z\in[Y,Y_+]$,
 	\begin{align*}
 		\Big|Ai(e^{\mathrm i(\frac{\pi}{6}-\th_0)}\kappa\eta^{La}(Z))\Big|\leq C.
 	\end{align*}
 	 Notice that 
 	\begin{align*}
 		|Y_+-Y_c|\leq C|\varepsilon|^\f13M,
 	\end{align*}
 	which implies that for any $Y\in\mathcal N$,
 	\begin{align*}
 		|Y_+-Y|\leq |Y_+-Y_c|+|Y_c-Y|\leq C|\varepsilon|^{\f13}M.
 	\end{align*}
 	Then we obtain that for any $Y\in\mathcal N$,
 	\begin{align*}
 		\Big|\int_{Y}^{Y_+}Ai(e^{\mathrm i(\frac{\pi}{6}-\th_0)}\kappa\eta^{La}(Z))dZ\Big|\leq C|\varepsilon|^\f13.
 	\end{align*}
 	According to results in case 1, we have 
 	\begin{align*}
 		&\Big|\int_{\mathcal N^+}Ai(e^{\mathrm i(\frac{\pi}{6}-\th_0)}\kappa\eta^{La}(Z))dZ\Big|\\
		&\leq C\left|\Big(e^{\mathrm i(\frac{\pi}{6}-\th_0)}\kappa\eta^{La}(Y_+) \Big)^{-\f34}\Big(e^{\mathrm i(\frac{\pi}{6}-\th_0)}\kappa\partial_Y \eta^{La}(Y_+) \Big)^{-1}e^{-\f23\big(e^{\mathrm i(\frac{\pi}{6}-\th_0)}\kappa\eta^{La}(Y_+)\big)^\f32}\right|.
 	\end{align*}
 	By Lemma \ref{lem:est-eta}, we know that 
 	\begin{align}\label{eq:airy-langer-mid}
 		\left|\Big(e^{\mathrm i(\frac{\pi}{6}-\th_0)}\kappa\eta^{La}(Y_+) \Big)^{-\f34}\Big(e^{\mathrm i(\frac{\pi}{6}-\th_0)}\kappa\partial_Y \eta^{La}(Y_+) \Big)^{-1}\right|\leq C|\varepsilon|^\f13.
 	\end{align}
 	Since $|\mathrm{Im}(\kappa\eta^{La}(Y_+) )|=|\kappa\eta^{La}_i|<\delta_0$ and $|\mathrm{Re}(\kappa\eta^{La}(Y_+))|\sim M\gg1$, we know that 
 	\begin{align*}
 		\arg\left(\left(e^{\mathrm i(\frac{\pi}{6}-\th_0)}\kappa\eta^{La}(Y_+) \right)^{\f32} \right)\in [\frac{\pi}{8},\frac{3\pi}{8} ],
 	\end{align*}
 	which implies  
 	\begin{align*}
 		\mathrm{Re}(\big(e^{\mathrm i(\frac{\pi}{6}-\th_0)}\kappa\eta^{La}(Y_+)\big)^\f32)>0.
 	\end{align*}
 	Therefore, we have 
 	\begin{align*}
 		\left|e^{-\f23\big(e^{\mathrm i(\frac{\pi}{6}-\th_0)}\kappa\eta^{La}(Y_+)\big)^\f32}\right|=e^{-\f23\mathrm{Re}(\big(e^{\mathrm i(\frac{\pi}{6}-\th_0)}\kappa\eta^{La}(Y_+)\big)^\f32)}\leq 1,
 	\end{align*}
 	which along with \eqref{eq:airy-langer-mid} implies that 
 	\begin{align*}
 		\Big|\int_{\mathcal N^+}Ai(e^{\mathrm i(\frac{\pi}{6}-\th_0)}\kappa\eta^{La}(Z))dZ\Big|\leq C|\varepsilon|^\f13.
 	\end{align*}
 This shows that for any $Y\in\mathcal N$,
 	\begin{align*}
 		|A_1(1,Y)|\leq C|\varepsilon|^{-\f13}.
 	\end{align*}
	
 	\no\textbf{Case 3. $Y\in\mathcal N^-$.} We first point out that if $|\kappa Y_c|\leq M$, then $\mathcal N^-=\emptyset$. Hence, we only need to consider the case of $\kappa Y_c>M$. Then there exists $Y_->0$ such that $\mathcal N^-=[0,Y_-]$ and $Y_-\sim Y_c-2\kappa^{-1}M\ll1$. Hence, we can write that for any $Y\in\mathcal N^-$,
 	\begin{align*}
 		\int_Y^{+\infty}Ai(e^{\mathrm i(\frac{\pi}{6}-\th_0)}\kappa\eta^{La}(Z))dZ=&\int_{Y}^{Y_-}Ai(e^{\mathrm i(\frac{\pi}{6}-\th_0)}\kappa\eta^{La}(Z))dZ+\int_{\mathcal N}Ai(e^{\mathrm i(\frac{\pi}{6}-\th_0)}\kappa\eta(Z))dZ\\
 		&+\int_{\mathcal N^+}Ai(e^{\mathrm i\frac{\pi}{6}}\kappa\eta^{La}(Z))dZ.
 	\end{align*}
 	According to results for the above two cases, we know that 
 	\begin{align*}
 		\left|\int_{\mathcal N}Ai(e^{\mathrm i(\frac{\pi}{6}-\th_0)}\kappa\eta^{La}(Z))dZ\right|+\left|\int_{\mathcal N^+}Ai(e^{\mathrm i(\frac{\pi}{6}-\th_0)}\kappa\eta^{La}(Z))dZ\right|\leq C|\varepsilon|^{\f13}.
 	\end{align*}
 	On the other hand, we notice that for any $Y\in\mathcal N^-$,
 	\begin{align*}
 		\int_{Y}^{Y_-}&Ai(e^{\mathrm i(\frac{\pi}{6}-\th_0)}\kappa\eta^{La}(Z))dZ\\
 		=&\frac{1}{2\sqrt{\pi}}\int_Y^{Y_-} \Big(e^{\mathrm i(\frac{\pi}{6}-\th_0)}\kappa\eta^{La}(Z) \Big)^{-\f14}e^{-\f23\big(e^{\mathrm i(\frac{\pi}{6}-\th_0)}\kappa\eta^{La}(Z)\big)^\f32}(1+R(Z))dZ\\
			=&\frac{-1}{2\sqrt{\pi}}\int_Y^{Y_-}\Big(e^{\mathrm i(\frac{\pi}{6}-\th_0)}\kappa\eta^{La}(Z) \Big)^{-\f34}\Big(e^{\mathrm i(\frac{\pi}{6}-\th_0)}\kappa\partial_Z \eta^{La}(Z) \Big)^{-1}\partial_Z\Big(e^{-\f23\big(e^{\mathrm i(\frac{\pi}{6}-\th_0)}\kappa\eta^{La}(Z)\big)^\f32}\Big)dZ\\
			&+\frac{1}{2\sqrt{\pi}}\int_Y^{Y_-} \Big(e^{\mathrm i(\frac{\pi}{6}-\th_0)}\kappa\eta^{La}(Z) \Big)^{-\f14}e^{-\f23\big(e^{\mathrm i(\frac{\pi}{6}-\th_0)}\kappa\eta^{La}(Z)\big)^\f32}R(Z)dZ \\
			=&\frac{1}{2\sqrt{\pi}}\Big(e^{\mathrm i(\frac{\pi}{6}-\th_0)}\kappa\eta^{La}(Y) \Big)^{-\f34}\Big(e^{\mathrm i(\frac{\pi}{6}-\th_0)}\kappa\partial_Y \eta^{La}(Y) \Big)^{-1}e^{-\f23\big(e^{\mathrm i(\frac{\pi}{6}-\th_0)}\kappa\eta^{La}(Y)\big)^\f32}\\
			&-\frac{1}{2\sqrt{\pi}}\Big(e^{\mathrm i(\frac{\pi}{6}-\th_0)}\kappa\eta^{La}(Y_-) \Big)^{-\f34}\Big(e^{\mathrm i(\frac{\pi}{6}-\th_0)}\kappa\partial_Y \eta^{La}(Y_-) \Big)^{-1}e^{-\f23\big(e^{\mathrm i(\frac{\pi}{6}-\th_0)}\kappa\eta^{La}(Y_-)\big)^\f32}   \\
			&+\Bigg(\frac{1}{2\sqrt{\pi}}\int_Y^{Y_-}\partial_Z\Big[\Big(e^{\mathrm i(\frac{\pi}{6}-\th_0)}\kappa\eta^{La}(Z) \Big)^{-\f34}\Big(e^{\mathrm i(\frac{\pi}{6}-\th_0)}\kappa\partial_Z \eta^{La}(Z) \Big)^{-1}\Big]e^{-\f23\big(e^{\mathrm i(\frac{\pi}{6}-\th_0)}\kappa\eta^{La}(Z)\big)^\f32}dZ\\
			&+\frac{1}{2\sqrt{\pi}}\int_Y^{Y_-} \Big(e^{\mathrm i(\frac{\pi}{6}-\th_0)}\kappa\eta^{La}(Z) \Big)^{-\f14}e^{-\f23\big(e^{\mathrm i(\frac{\pi}{6}-\th_0)}\kappa\eta^{La}(Z)\big)^\f32}R(Z)dZ\Bigg)\\
			=&\frac{1}{2\sqrt{\pi}}\Big(e^{\mathrm i(\frac{\pi}{6}-\th_0)}\kappa\eta^{La}(Y) \Big)^{-\f34}\Big(e^{\mathrm i(\frac{\pi}{6}-\th_0)}\kappa\partial_Y \eta^{La}(Y) \Big)^{-1}e^{-\f23\big(e^{\mathrm i(\frac{\pi}{6}-\th_0)}\kappa\eta^{La}(Y)\big)^\f32}+I_2+I_3.
 	\end{align*}
 	On the other hand, we have 
 	\begin{align*}
 		I_2
 		=&\frac{1}{2\sqrt{\pi}}\Big(e^{\mathrm i(\frac{\pi}{6}-\th_0)}\kappa\eta^{La}(Y) \Big)^{-\f34}\Big(e^{\mathrm i(\frac{\pi}{6}-\th_0)}\kappa\partial_Y \eta^{La}(Y) \Big)^{-1}e^{-\f23\big(e^{\mathrm i(\frac{\pi}{6}-\th_0)}\kappa\eta^{La}(Y)\big)^\f32}\mathcal R_2(Y),
 	\end{align*}
 	where
 	\begin{align*}
 		\mathcal R_2(Y)=\Big(\frac{\kappa\eta^{La}(Y)}{\kappa\eta^{La}(Y_-)}\Big)^{\f34}\frac{\partial_Y \eta^{La}(Y)}{\partial_Y\eta^{La}(Y_-)}e^{-\f23\big(e^{\mathrm i(\frac{\pi}{6}-\th_0)}\kappa\eta^{La}(Y_-)\big)^\f32+\f23\big(e^{\mathrm i(\frac{\pi}{6}-\th_0)}\kappa\eta^{La}(Y)\big)^\f32}.
 	\end{align*}
 	By the definition of $\eta^{La}(Y)$ and Lemma \ref{lem:Airy-green-decay}, we have that for any $Y\in\mathcal N^-$
 i.e. $Y\in[0,Y_-]$, 
    \begin{align*}
    	\left|\mathcal R_2(Y)\right|\leq C|\kappa\eta^{La}(Y)|^\f34e^{-C_1\kappa(\eta^{La}(Y)-\eta(Y_-))(|\kappa\eta^{La}(Y)|^\f12+|\kappa\eta^{La}(Y_-)|^\f12)}.
    \end{align*} 	
    If $Y\in\mathcal N^-$ such that $|\eta^{La}(Y)|\leq 2|\eta^{La}(Y_-)|\leq 2|\varepsilon|^\f13M $, then we have 
    \begin{align*}
    	|\mathcal R_2(Y)|\leq C\leq C|\kappa\eta^{La}(Y_-)|^{-\f32}\leq C|\kappa\eta^{La}(Y)|^{-\f32}.
    \end{align*} 
    If $Y\in\mathcal N^-$ such that $|\eta^{La}(Y)|\geq 2|\eta^{La}(Y_-)|$, then we have 
    \begin{align*}
    	|\kappa\eta^{La}(Y)|^\f34e^{-C_1\kappa(\eta^{La}(Y)-\eta^{La}(Y_-))(|\kappa\eta^{La}(Y)|^\f12+|\kappa\eta^{La}(Y_-)|^\f12)}\leq& C|\kappa\eta^{La}(Y)|^\f34e^{-C_3|\kappa\eta^{La}(Y)|^\f32}\\
	\leq& C|\kappa\eta^{La}(Y)|^{-\f32}.
    \end{align*}
    Therefore, we obtain that for any $Y\in\mathcal N^-$, 
    \beno
    |\mathcal R_2(Y)|\leq C|\kappa\eta^{La}(Y)|^{-\f32}.
    \eeno
    
 	By a similar argument in case 1, we can show  that there exists $\mathcal R_3(Y)$ such that 
 	\begin{align}
 		\begin{split}
 		I_3
		=&\frac{1}{2\sqrt{\pi}}\Big(e^{\mathrm i(\frac{\pi}{6}-\th_0)}\kappa\eta^{La}(Y) \Big)^{-\f34}\Big(e^{\mathrm i(\frac{\pi}{6}-\th_0)}\kappa\partial_Y \eta^{La}(Y) \Big)^{-1}e^{-\f23\big(e^{\mathrm i(\frac{\pi}{6}-\th_0)}\kappa\eta^{La}(Y)\big)^\f32}\mathcal R_3(Y),
 		\end{split}
 	\end{align}
 	where 
 	\begin{align*}
 		|\mathcal R_3(Y)|\leq C|\kappa\eta^{La}(Y)|^{-\f32}.
 	\end{align*}
 	Therefore, we obtain that for any $Y\in\mathcal N^-$, 
 		\begin{align*}
 		A_1(1,Y)=&\frac{e^{-\th_0 i}}{2\sqrt{\pi}}e^{\mathrm i\frac{\pi}{3}}|\varepsilon|^{-\f13}U_s'(Y_c)^{-\f23}\partial_Y\eta^{La}(Y)^{-1}\Big(e^{\mathrm i(\frac{\pi}{6}-\th_0)}\kappa\eta^{La}(Y) \Big)^{-\f34}\\
		&\times e^{-\f23\big(e^{\mathrm i(\frac{\pi}{6}-\th_0)}\kappa\eta^{La}(Y)\big)^\f32}\big(1+\mathcal R_2(Y)+\mathcal R_3(Y)\big)
 	\end{align*}
 	with 
 	\begin{align*}
 		|\mathcal R_2(Y)|+|\mathcal R_3(Y)|\leq C|\kappa\eta^{La}(Y)|^{-\f32}.
 	\end{align*}
	
 	By gathering the results for the above three cases, we conclude that for any $Y\in \mathcal N^-\cup\mathcal  N^+$, there exists $\mathcal R(Y)$ such that 
 		\begin{align*}
 		A_1(1,Y)=&\frac{e^{-\th_0 i}}{2\sqrt{\pi}}e^{\mathrm i\frac{\pi}{3}}|\varepsilon|^{-\f13}U_s'(Y_c)^{-\f23}\partial_Y\eta^{La}(Y)^{-1}\Big(e^{\mathrm i(\frac{\pi}{6}-\th_0)}\kappa\eta^{La}(Y) \Big)^{-\f34}\\
		&\times e^{-\f23\big(e^{\mathrm i(\frac{\pi}{6}-\th_0)}\kappa\eta^{La}(Y)\big)^\f32}(1+\mathcal R(Y)),
 	\end{align*}
 	where 
 	\begin{align*}
 			|\mathcal R(Y)|\leq C|\kappa\eta^{La}(Y)|^{-\f32},
 	\end{align*}
 	and for any $Y\in\mathcal N$,
 	\begin{align*}
 		|A_1(1,Y)|\leq C|\varepsilon|^{-\f13}.
 	\end{align*}

 	The other parts of the lemma can be proved in a similar way. Let‘s leave the proof to the interested readers. 
 	\end{proof}

\begin{lemma}\label{lem:airy-0}
	 Let $|\varepsilon|,|c|,|\theta_0|\ll1\ll\kappa$ and $c_r>0$. Suppose $|\kappa\eta_i^{La}|<\delta_0$. Then there holds
	 \begin{align*}
    	&\left|\tilde{\mathcal A}(1,0)-\kappa^{-1}\mathcal A(1,\kappa\eta^{La}(0))\right|\leq C\kappa^{-1}(|c|+\kappa^{-1}),\\
		&\left|\tilde{\mathcal A}(2,0)-\kappa^{-2}\mathcal A(2,\kappa\eta^{La}(0))\right|\leq C\kappa^{-2}(\kappa^{-1}+|c|),
	\end{align*}
	and
	\begin{align*}
    	\left|\frac{\tilde{\mathcal A}(2,0)}{\tilde{\mathcal A}(1,0)}-\kappa^{-1}\frac{\mathcal A(2,\kappa\eta^{La}(0))}{\mathcal A(1,\kappa\eta^{La}(0))}\right|\leq C\kappa^{-1}(\kappa^{-1}+|c|).
    \end{align*}
	 Moreover, if $|\kappa\eta^{La}(0)|>M$, then we have
	\begin{align*}
    	\frac{\tilde{\mathcal A}(2,0)}{\tilde{\mathcal A}(1,0)}
    	=&-e^{\mathrm i(\f{\pi}{4}-\f{\theta_0}{2})}|\varepsilon|^\f12c_r^{-\f12}+o(|\varepsilon|^\f12c_r^{-\f12}+|\varepsilon|^{\f13}),
    \end{align*}
    and
	 \begin{align*}
    	\frac{Ai(e^{\mathrm i(\frac{\pi}{6}-\theta_0)}\kappa\eta^{La}(0))\tilde{\mathcal A}(2,0)}{\tilde{\mathcal A}(1,0)^2}=1+\mathcal O(|\varepsilon|^\f12c_r^{-\f32}).
    \end{align*}
    \end{lemma}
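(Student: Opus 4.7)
The plan is to relate $\tilde{\mathcal A}(j, 0)$ to $\kappa^{-j}\mathcal A(j, \kappa\eta^{La}(0))$ via the substitution $t = \kappa\eta^{La}(Z)$ and then extract the explicit asymptotics from Lemma~\ref{lem:pri-Airy-decay}. For the first bound, applying $t = \kappa\eta^{La}(Z)$ to $\mathcal A(1, \kappa\eta^{La}(0)) = -\int_{\kappa\eta^{La}(0)}^{\infty}Ai(e^{i(\pi/6-\theta_0)}t)\,dt$ gives
\[
\tilde{\mathcal A}(1,0) - \kappa^{-1}\mathcal A(1, \kappa\eta^{La}(0)) = -\int_0^{\infty}Ai(e^{i(\pi/6-\theta_0)}\kappa\eta^{La}(Z))\bigl[A(0) - \partial_Z\eta^{La}(Z)\bigr]dZ.
\]
Since $A(0) = 1 + \mathcal O(|c|^2)$ and $|\partial_Z\eta^{La}(Z) - 1| \lesssim |Z - Y_c|$ for $|Z - Y_c| \leq L$ by Lemma~\ref{lem:est-eta}, and since the Airy factor concentrates on $|\kappa\eta^{La}(Z)| \lesssim 1$ (i.e., $|Z-Y_c| \lesssim \kappa^{-1}$) with super-exponential decay outside, the integrand is $\mathcal O(|c|+\kappa^{-1})$ on a set of measure $\mathcal O(\kappa^{-1})$ and negligible elsewhere, producing the $\mathcal O(\kappa^{-1}(|c|+\kappa^{-1}))$ bound.

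The analogous estimate for $\tilde{\mathcal A}(2,0)$ follows by Fubini: rewrite
\[
\tilde{\mathcal A}(2, 0) = \int_0^{\infty}Ai(e^{i(\pi/6-\theta_0)}\kappa\eta^{La}(Z'))\Bigl(\int_0^{Z'}A(Z)\,dZ\Bigr)dZ',
\]
and compare with
\[
\kappa^{-2}\mathcal A(2, \kappa\eta^{La}(0)) = \int_0^{\infty}Ai(e^{i(\pi/6-\theta_0)}\kappa\eta^{La}(Z'))\bigl(\eta^{La}(Z')-\eta^{La}(0)\bigr)\partial_{Z'}\eta^{La}(Z')\,dZ'
\]
(derived from the identity $\mathcal A(2, z) = \int_z^{\infty}(t-z)Ai(e^{i(\pi/6-\theta_0)}t)\,dt$ and substitution). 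The error integrand reduces to $\int_0^{Z'}[A(Z) - \partial_{Z'}\eta^{La}(Z')\partial_Z\eta^{La}(Z)]\,dZ$, which is $\mathcal O(Z'(|c|+\kappa^{-1}))$ in the support of the Airy factor, yielding the desired $\mathcal O(\kappa^{-2}(|c|+\kappa^{-1}))$ bound. The ratio estimate then follows from the first two by algebraic manipulation, using the lower bound on $|\mathcal A(1, \kappa\eta^{La}(0))|$ implicit in Lemma~\ref{lem:pri-Airy-decay}.

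For the asymptotic when $|\kappa\eta^{La}(0)| > M$, Lemma~\ref{lem:pri-Airy-decay} yields
\[
\frac{\mathcal A(2,z)}{\mathcal A(1,z)} = -e^{-i(\pi/6-\theta_0)}(e^{i(\pi/6-\theta_0)}z)^{-1/2}\bigl(1+\mathcal O(|z|^{-3/2})\bigr),
\]
since the exponentials cancel and $-(\pi/3-2\theta_0)+(\pi/6-\theta_0) = -(\pi/6-\theta_0)$. Substituting $z = \kappa\eta^{La}(0)$ with $\eta^{La}(0) = -c_r/U_s'(Y_c) + \mathcal O(c^2) - ic_i/U_s'(Y_c)$, computing the principal branch of $(e^{i(\pi/6-\theta_0)}z)^{-1/2}$ (with $\arg z$ just above $-\pi$), and using $\kappa^{-3/2} = |\varepsilon|^{1/2}U_s'(Y_c)^{-1/2}$, produces the leading term $-e^{i(\pi/4-\theta_0/2)}|\varepsilon|^{1/2}c_r^{-1/2}$. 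For the final identity, both $Ai(e^{i(\pi/6-\theta_0)}z)\cdot\mathcal A(2,z)$ and $\mathcal A(1,z)^2$ carry the common factor $(e^{i(\pi/6-\theta_0)}z)^{-3/2}e^{-(4/3)(e^{i(\pi/6-\theta_0)}z)^{3/2}}$ at leading order, and the phase prefactors agree because $-(\pi/3-2\theta_0) = -2(\pi/6-\theta_0)$, yielding the ratio $1 + \mathcal O(|z|^{-3/2}) = 1 + \mathcal O(|\varepsilon|^{1/2}c_r^{-3/2})$. The main technical obstacle is branch-cut bookkeeping: since $\kappa\eta^{La}(0)$ sits in the third quadrant just below the negative real axis, the principal branches of $z^{-1/2}$, $z^{-3/4}$, and $z^{3/2}$ each contribute nontrivial phases that must be tracked carefully to recover the stated $e^{i(\pi/4-\theta_0/2)}$ factor, and one must verify that $\arg(e^{i(\pi/6-\theta_0)}z)\approx -5\pi/6$ remains inside the asymptotic sector where Lemma~\ref{lem:pri-Airy-decay} applies.
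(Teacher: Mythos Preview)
Your approach is essentially the same as the paper's, with minor technical differences. The paper inserts $\partial_Z\eta^{La}(Z)\cdot(\partial_Z\eta^{La}(Z))^{-1}$ and integrates by parts to express $\tilde{\mathcal A}(1,0)$ as $\kappa^{-1}\mathcal A(1,\kappa\eta^{La}(0))(\partial_Y\eta^{La}(0))^{-1}$ plus a correction integral involving $\mathcal A(1,\cdot)\,\partial_Z^2\eta^{La}/(\partial_Z\eta^{La})^2$, and similarly (twice) for $\tilde{\mathcal A}(2,0)$. You instead write the difference directly as an integral with weight $A(0)-\partial_Z\eta^{La}(Z)$, and for the second primitive use Fubini to collapse to a single integral with weight $\int_0^{Z'}[A(Z)-\partial_{Z'}\eta^{La}(Z')\partial_Z\eta^{La}(Z)]\,dZ$. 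These are equivalent manipulations, and the asymptotic part via Lemma~\ref{lem:pri-Airy-decay} is handled identically.

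One point needs correction: your claim that ``the Airy factor concentrates on $|\kappa\eta^{La}(Z)|\lesssim 1$ with super-exponential decay outside'' is only valid for $Z>Y_c$. For $Z<Y_c$ the argument $e^{\mathrm i(\pi/6-\theta_0)}\kappa\eta^{La}(Z)$ has phase near $-5\pi/6$, so $\mathrm{Re}\bigl((e^{\mathrm i(\pi/6-\theta_0)}\kappa\eta^{La}(Z))^{3/2}\bigr)<0$ and $Ai$ actually \emph{grows} exponentially as $Z$ decreases toward $0$. When $0\in\mathcal N$ (i.e.\ $\kappa|c|\lesssim M$) this is harmless, since $[0,Y_c]$ already lies inside the $O(\kappa^{-1})$ neighborhood where the Airy factor is bounded; but when $0\in\mathcal N^-$ your pointwise bound on the integrand is exponentially large and the concentration heuristic fails. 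The paper's integration-by-parts route has an analogous loose step (it asserts $\int_0^\infty|\mathcal A(1,\kappa\eta^{La}(Z))|\,dZ\leq C\kappa^{-1}$ without isolating the $\mathcal N^-$ contribution), so this is not a divergence from the paper's level of rigor---but you should be aware that the first two displayed bounds, as written, rely implicitly on $\kappa|c|=O(1)$, while the ratio estimates (which are all that is used downstream) hold unconditionally because the exponential factors cancel.
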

\begin{proof}
Recall that 
\begin{align*}
		\tilde{\mathcal A}(1,0)=&-A(0)\int_0^{+\infty}Ai(e^{\mathrm i(\frac{\pi}{6}-\theta_0)}\kappa\eta^{La}(Z))dZ\\
		=&-\int_0^{+\infty}Ai(e^{\mathrm i(\frac{\pi}{6}-\theta_0)}\kappa\eta^{La}(Z))\eta^{La}(Z)'(\eta^{La}(Z)')^{-1}dZ+\mathcal{O}(|c|^2)\\
		=&\kappa^{-1}\mathcal A(1,\kappa\eta^{La}(0))\partial_Y\eta^{La}(0)^{-1}-\kappa^{-1}\int_0^{+\infty}\mathcal A(1,\kappa\eta^{La}(Z))\frac{\partial_Z^2\eta^{La}(Z)}{\partial_Z\eta^{La}(Z)^2}dZ+\mathcal{O}(\kappa^{-2}).
		\end{align*}
		By Lemma \ref{lem:est-eta}, we have
    \begin{align*}
    	\mathcal A(1,\kappa\eta^{La}(0))\partial_Y\eta^{La}(0)^{-1}=\mathcal A(1,\kappa\eta^{La}(0))+\mathcal O(|c|),
    \end{align*}
    and
    \begin{align*}
    	\left|\int_0^{+\infty}\mathcal A(1,\kappa\eta^{La}(Z))\frac{\partial_Z^2\eta^{La}(Z)}{\partial_Z\eta^{La}(Z)^2}dZ\right|\leq C\int_0^{+\infty}|\mathcal A(1,\kappa\eta^{La}(Z))|dZ\leq C\kappa^{-1}.
    \end{align*}
    Therefore, we obtain 
    \begin{align*}
    	\left|\tilde{\mathcal A}(1,0)-\kappa^{-1}\mathcal A(1,\kappa\eta^{La}(0))\right|\leq C\kappa^{-1}(|c|+\kappa^{-1}).
    \end{align*} 
    
	By the definition of $\tilde{\mathcal A}(2,Y)$, we have
		\begin{align*}
		&\tilde{\mathcal A}(2,0)=-\int_0^{+\infty}\tilde{\mathcal A}(1,Z)dZ\\
		&\quad=-\kappa^{-1}\int_0^{+\infty}\mathcal A(1,\kappa\eta^{La}(Z))\partial_Z\eta^{La}(Z)^{-1}dZ+\kappa^{-1}\int_0^{+\infty}A(Z)\int_Z^{\infty}\mathcal A(1,\kappa\eta^{La}(\xi))\frac{\partial_\xi^2\eta^{La}(\xi)}{\partial_\xi\eta^{La}(\xi)^2}d\xi dZ\\
		&\qquad+m^2\kappa^{-1}\int_0^{+\infty}(U_s-c)^2\mathcal A(1,\kappa\eta^{La}(Z))\partial_Z\eta^{La}(Z)^{-1}dZ\\
		&\quad=\kappa^{-2}\mathcal A(2,\kappa\eta^{La}(0))\partial_Z\eta^{La}(0)^{-2}-2\kappa^{-2}\int_0^{+\infty}\mathcal A(2,\kappa\eta^{La}(Z))\frac{\partial_Z^2\eta^{La}(Z)}{\partial_Z\eta^{La}(Z)^3}dZ\\
		&\qquad+\kappa^{-1}\int_0^{+\infty}A(Z)\int_Z^{\infty}\mathcal A(1,\kappa\eta^{La}(\xi))\frac{\partial_\xi^2\eta^{La}(\xi)}{\partial_\xi\eta^{La}(\xi)^2}d\xi dZ+\mathcal{O}(\kappa^{-3}).
	\end{align*}
Notice that
	\begin{align*}
		&\mathcal A(2,\kappa\eta^{La}(0))\partial_Z\eta^{La}(0)^{-2}=\mathcal A(2,\kappa\eta^{La}(Y))+\mathcal O(|c|),\\
		&\left|\int_0^{+\infty}\mathcal A(2,\kappa\eta^{La}(Z))\frac{\partial_Z^2\eta^{La}(Z)}{\partial_Z\eta^{La}(Z)^3}dZ\right|\leq C\kappa^{-1},
	\end{align*}
	and 
	\begin{align*}
		&\left|\int_0^{+\infty}\int_Z^{\infty}\mathcal A(1,\kappa\eta^{La}(\xi))\frac{\partial_\xi^2\eta^{La}(\xi)}{\partial_\xi\eta^{La}(\xi)^2}d\xi dZ\right|\\
		&\quad\leq \left|\int_0^{Y_+}\int_Z^{\infty}\mathcal A(1,\kappa\eta^{La}(\xi))\frac{\partial_\xi^2\eta^{La}(\xi)}{\partial_\xi\eta^{La}(\xi)^2}d\xi dZ\right|+\left|\int_{Y_+}^{+\infty}\int_Z^{\infty}\mathcal A(1,\kappa\eta^{La}(\xi))\frac{\partial_\xi^2\eta^{La}(\xi)}{\partial_\xi\eta^{La}(\xi)^2}d\xi dZ\right|\\
		&\quad\leq C\kappa^{-1}\int_0^{Y^+}dZ+C\kappa^{-1}\int_{Y_+}^{+\infty}e^{-C|\kappa\eta^{La}(Y)|}dY\\
		&\quad\leq C\kappa^{-2}.
	\end{align*}
	Therefore, we obtain 
	\begin{align*}
		\left|\tilde{\mathcal A}(2,0)-\kappa^{-2}\mathcal A(2,\kappa\eta^{La}(0))\right|\leq C\kappa^{-2}(\kappa^{-1}+|c|),
	\end{align*}
   which implies 
    \begin{align*}
    	\left|\frac{\tilde{\mathcal A}(2,0)}{\tilde{\mathcal A}(1,0)}-\kappa^{-1}\frac{\mathcal A(2,\kappa\eta^{La}(0))}{\mathcal A(1,\kappa\eta^{La}(0))}\right|\leq C\kappa^{-1}(\kappa^{-1}+|c|).
    \end{align*}
    
    If $|\kappa\eta^{La}(0)|\geq M$, then by Lemma \ref{lem:pri-Airy-decay}, we first have 
    \begin{align*}
    	\frac{\mathcal A(2,\kappa\eta^{La}(0))}{\mathcal A(1,\kappa\eta^{La}(0))}=-e^{\mathrm i(\f{\pi}{6}-\theta_0)}\left(e^{\mathrm i(\f{\pi}{6}-\theta_0)}\kappa\eta^{La}(0)\right)^{-\f12}(1+\mathcal R(\kappa\eta^{La}(0))),
    \end{align*}
    where 
    \begin{align*}
    	|\mathcal R(\kappa\eta^{La}(0))|\leq CM^{-\f32}\ll1.
    \end{align*}
Notice that 
    \begin{align*}
    	\left(e^{\mathrm i(\f{\pi}{6}-\theta_0)}\kappa\eta^{La}(0)\right)^{-\f12}=&\left(-e^{\mathrm i(\f{\pi}{6}-\theta_0)}\kappa Y_c\right)^{-\f12}+\mathcal O(\kappa^{-\f12}|c|^\f12)\\
    	=&|\kappa c_r|^{-\f12}U'_s(0)^\f12e^{\mathrm i(\f{5\pi}{12}+\f{\theta_0}{2})}+\mathcal O(\kappa^{-\f12}|c|^\f12).
    \end{align*}
    Therefore, we obtain 
    \begin{align*}
    	\frac{\mathcal A(2,\kappa\eta^{La}(0))}{\mathcal A(1,\kappa\eta^{La}(0))}=-e^{\mathrm i(\f{\pi}{4}-\f{\theta_0}{2})}U_s'(0)^\f13|\varepsilon|^\f16c_r^{-\f12}+o(|\varepsilon|^\f16c_r^{-\f12}).
    \end{align*}
   This shows that  for the case $|\kappa\eta^{La}(0)|\geq M$,
    \begin{align*}
    	\frac{\tilde{\mathcal A}(2,0)}{\tilde{\mathcal A}(1,0)}=&\kappa^{-1}\frac{\mathcal A(2,\kappa\eta^{La}(0))}{\mathcal A(1,\kappa\eta^{La}(0))}+o(\kappa^{-1})\\
    	=&-e^{\mathrm i(\f{\pi}{4}-\f{\theta_0}{2})}|\varepsilon|^\f12c_r^{-\f12}+o(|\varepsilon|^\f12c_r^{-\f12}+\kappa^{-1}).
    \end{align*}
    
    According to the above arguments, we find that
    \begin{align*}
    	\frac{Ai(e^{\mathrm i(\frac{\pi}{6}-\theta_0)}\kappa\eta^{La}(0))\tilde{\mathcal A}(2,0)}{\tilde{\mathcal A}(1,0)^2}=\frac{Ai(e^{\mathrm i(\frac{\pi}{6}-\theta_0)}\kappa\eta^{La}(0))\mathcal A(2,\kappa\eta^{La}(0))}{\mathcal A(1,\kappa\eta^{La}(0))^2}+\mathcal O(|\varepsilon|^{\f13}+|c|).
    \end{align*}
   And by Lemma \ref{lem:pri-Airy-decay}, we have 
    \begin{align*}
    	\frac{Ai(e^{\mathrm i(\frac{\pi}{6}-\theta_0)}\kappa\eta^{La}(0))\mathcal A(2,\kappa\eta^{La}(0))}{\mathcal A(1,\kappa\eta^{La}(0))^2}=1+\mathcal O(|\kappa c_r|^{-\f32}).
    \end{align*}
    Therefore, we arrive at 
    \begin{align*}
    	\frac{Ai(e^{\mathrm i(\frac{\pi}{6}-\theta_0)}\kappa\eta^{La}(0))\tilde{\mathcal A}(2,0)}{\tilde{\mathcal A}(1,0)^2}=1+\mathcal O(|\varepsilon|^\f12c_r^{-\f32}).
    \end{align*}

 \end{proof}

For  $\eta_{out}^{La}$ defined by \eqref{def: eta_out^La}, it is easy to show the following lemma.

\begin{lemma}\label{lem:classical-Langer}
Suppose that $U_s$ satisfies the structure assumption \eqref{eq:S-A}. Then there exists a positive constant $L\leq 1$ such that 
if $0<|Y-Y_c|\leq L$, then
\begin{align*}
|\eta_{out}^{La}(Y)-(Y-Y_c)|\leq C|Y-Y_c|^2,\quad|\partial_Y\eta_{out}^{La}(Y)-1|\leq C|Y-Y_c|,\quad |\partial_Y^2\eta_{out}^{La}(Y)|\leq C,
\end{align*}	
and if $|Y-Y_c|\geq L$, then
\begin{align*}
&C^{-1}|Y-Y_c|^\f23\leq |\eta_{out}^{La}(Y)|\leq C|Y-Y_c|^\f23,\\
&C^{-1}|Y-Y_c|^{-\f13}\leq |\partial_Y\eta_{out}^{La}(Y)|\leq C|Y-Y_c|^{-\f13},\\
&|\partial_Y^2\eta_{out}^{La}(Y)|\leq C|Y-Y_c|^{-\f43}.
\end{align*}
\end{lemma}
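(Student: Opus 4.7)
\medskip

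\noindent\textbf{Proof plan.} The strategy is to treat the two regimes $|Y-Y_c|\leq L$ and $|Y-Y_c|\geq L$ separately, and in each regime extract the behavior of $\eta_{out}^{La}$ directly from its integral definition \eqref{def: eta_out^La} together with the defining identity $U_s'(Y_c)\eta_{out}^{La}(\partial_Y\eta_{out}^{La})^2=U_s-c_r$ in \eqref{eq: eta_out^La}. The structure assumption \eqref{eq:S-A} plays the key role through two facts: $U_s$ is smooth with $U_s'(Y_c)>0$ (so Taylor expansion is available near $Y_c$), and $U_s(Y)-c_r$ is bounded above and below by a positive constant once $|Y-Y_c|\geq L$ (which requires $L$ to be chosen so small that $|c|+L$ is much smaller than the scale at which $U_s$ leaves a neighborhood of $c_r$).

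\smallskip

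\noindent First, for the near-field regime $0<|Y-Y_c|\leq L$, I would write a Taylor expansion
\[
\frac{U_s(Z)-c_r}{U_s'(Y_c)}=(Z-Y_c)\Big(1+\tfrac{U_s''(Y_c)}{2U_s'(Y_c)}(Z-Y_c)+O((Z-Y_c)^2)\Big),
\]
take a square root, and integrate from $Y_c$ to $Y$. This gives
\[
\tfrac32\int_{Y_c}^Y\Big(\tfrac{U_s(Z)-c_r}{U_s'(Y_c)}\Big)^{1/2}dZ=(Y-Y_c)^{3/2}\big(1+O(Y-Y_c)\big),
\]
and raising to the $2/3$ power yields $\eta_{out}^{La}(Y)=(Y-Y_c)(1+O(Y-Y_c))$, which gives the first estimate. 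The bound on $\partial_Y\eta_{out}^{La}-1$ then follows from
$(\partial_Y\eta_{out}^{La})^2=\frac{U_s-c_r}{U_s'(Y_c)\eta_{out}^{La}}$, whose right-hand side is $1+O(Y-Y_c)$ near $Y_c$, and taking a square root (the branch is fixed by $\partial_Y\eta_{out}^{La}>0$ from the definition). Finally, differentiating \eqref{eq: eta_out^La} gives
\[
\partial_Y^2\eta_{out}^{La}=\frac{1}{2\partial_Y\eta_{out}^{La}}\Big(\frac{U_s'(Y)}{U_s'(Y_c)\eta_{out}^{La}}-\frac{(U_s-c_r)\partial_Y\eta_{out}^{La}}{U_s'(Y_c)(\eta_{out}^{La})^2}\Big),
\]
and a short computation using the Taylor expansion shows that the singular factors cancel up to $O(1)$, yielding the bound $|\partial_Y^2\eta_{out}^{La}|\leq C$.

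\smallskip

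\noindent Second, for the far-field regime $|Y-Y_c|\geq L$, the structure assumption \eqref{eq:S-A} (in particular $0<U_s<1$, $U_s\to 1$ exponentially, and $U_s'(Y_c)>0$) implies that $L^{-1}\leq \frac{U_s(Z)-c_r}{U_s'(Y_c)(Z-Y_c)}\leq L$ fails at large $Z$, but more usefully that $\frac{U_s(Z)-c_r}{U_s'(Y_c)}$ is bounded above and below by positive constants on $\{|Z-Y_c|\geq L/2\}$, once $L$ is chosen small relative to the distance $\text{dist}(c_r,\{0,1\})$. Hence the integrand in \eqref{def: eta_out^La} is pointwise $\sim 1$, so the integral is $\sim |Y-Y_c|$, and taking the $2/3$ power gives $|\eta_{out}^{La}(Y)|\sim |Y-Y_c|^{2/3}$. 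The bound on $\partial_Y\eta_{out}^{La}$ follows immediately from $(\partial_Y\eta_{out}^{La})^2=(U_s-c_r)/(U_s'(Y_c)\eta_{out}^{La})$ together with the two-sided bound on $U_s-c_r$ and on $\eta_{out}^{La}$. The bound on $\partial_Y^2\eta_{out}^{La}$ then uses the same differentiated relation as above; the dominant term is $(U_s-c_r)/((\eta_{out}^{La})^2\partial_Y\eta_{out}^{La})\sim |Y-Y_c|^{-4/3}$, and the term carrying $U_s'(Y)$ is harmless thanks to the exponential decay of $U_s'$ away from the boundary layer, which beats any polynomial factor.

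\smallskip

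\noindent The only mildly delicate point is the transition $|Y-Y_c|\sim L$: one must verify that the two regimes give matching constants and that $L$ can be chosen uniformly (independent of $c_r$ in the regime $|c|\ll 1$). This is automatic because $Y_c=U_s^{-1}(c_r)\to 0$ as $c_r\to 0$ by $U_s(0)=0$, $U_s'(0)>0$, so the Taylor expansion of $U_s$ around $Y_c$ is uniformly controlled; none of the constants in the two sets of estimates depend on $c_r$ provided $|c|\ll L$. No step requires the analyticity of $U_s$, only the $C^4$ bounds from \eqref{eq:S-A}.
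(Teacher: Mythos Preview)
The paper provides no proof for this lemma beyond the introductory remark ``it is easy to show the following lemma'', so there is nothing to compare against; your plan is precisely the standard argument that remark alludes to, and it is correct in both regimes.

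One optional simplification for the near-field bound $|\partial_Y^2\eta_{out}^{La}|\leq C$: rather than checking by hand that the two $1/(Y-Y_c)$ singularities in your differentiated identity cancel, you can observe that $\eta_{out}^{La}$ is actually smooth across $Y_c$. Writing $(U_s(Z)-c_r)/U_s'(Y_c)=(Z-Y_c)h(Z)$ with $h$ smooth and $h(Y_c)=1$, the substitution $Z=Y_c+(Y-Y_c)s$ gives
\[
\tfrac32\int_{Y_c}^Y\Big(\tfrac{U_s(Z)-c_r}{U_s'(Y_c)}\Big)^{1/2}dZ=(Y-Y_c)^{3/2}\,\Phi(Y-Y_c),\qquad \Phi(t)=\tfrac32\int_0^1 s^{1/2}h(Y_c+ts)^{1/2}\,ds,
\]
with $\Phi$ smooth and $\Phi(0)=1$; hence $\eta_{out}^{La}(Y)=(Y-Y_c)\Phi(Y-Y_c)^{2/3}$ is smooth at $Y_c$, and all three near-field estimates follow at once. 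Also, in the far-field you do not need the exponential decay of $U_s'$: even the crude bound $|U_s'|\leq C$ makes the $U_s'$-term in your expression for $\partial_Y^2\eta_{out}^{La}$ of order $|Y-Y_c|^{-1/3}$, which is already dominated by the claimed $|Y-Y_c|^{-4/3}$.
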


\section*{Acknowledgments}
Y. Wang is supported by NSF of China under Grant 12101431. D. Wu is supported by NSF of China under Grant 12101245.  Z. Zhang is partially supported by  NSF of China  under Grant 12171010 and  12288101.
%%%%%%%%%%%%%%%%%%%

\end{document}